\numberwithin{equation}{section}
\theoremstyle{plain}
\newtheorem{theorem}{Theorem}[section]
\newtheorem{proposition}[theorem]{Proposition}
\newtheorem{corollary}[theorem]{Corollary}
\newtheorem{lemma}[theorem]{Lemma}
\newtheorem*{question*}{Question}
\newtheorem{remark}[theorem]{Remark}
\theoremstyle{definition}
\newtheorem{definition}[theorem]{Definition}
\newtheorem{notation}[theorem]{Notation}
\DeclareMathOperator{\Id}{Id}
\DeclareMathOperator{\supp}{supp}
\DeclareMathOperator{\e}{e}
\DeclareMathOperator{\hi}{hi}
\DeclareMathOperator{\lo}{lo}
\DeclareMathOperator{\vol}{Vol}
\DeclareMathOperator{\meas}{meas}
\newcommand{\nb}{{\bf n}}
\newcommand{\kb}{{\bf k}}
\newcommand{\lb}{{\bf l}}
\newcommand{\mb}{{\bf m}}
\newcommand{\Nb}{{\bf \mathfrak{N}}}
\newcommand{\N}{\mathbb{N}}
\newcommand{\Z}{\mathbb{Z}}
\newcommand{\R}{\mathbb{R}}
\newcommand{\C}{\mathbb{C}}
\newcommand{\T}{\mathbb{T}}
\newcommand{\im}{\mbox{Im}}
\newcommand{\re}{\mbox{Re}}
\newcommand{\1}{\mathbf{1}}
\newcommand{\Clo}{\mathscr{C}_{\lo}}
\newcommand{\Chigh}{\mathscr{C}_{\hi}}
\newcommand{\Ysup}[1]{Y_{#1}^{\mathrm{sup}}}
\newcommand{\Ylip}[1]{Y_{#1}^{\mathrm{lip}}}
\renewcommand{\epsilon}{\varepsilon}
\begin{document}


\title[Stability for NLS on flat tori]{Long time stability for cubic nonlinear Schr{\"o}dinger equations on non-rectangular flat tori}

\author{Joackim Bernier}

\address{\small{Nantes Universit\'e, CNRS, Laboratoire de Math\'ematiques Jean Leray, LMJL,
F-44000 Nantes, France
}}

\email{joackim.bernier@univ-nantes.fr}

\author{Nicolas Camps}

\address{\small{Nantes Universit\'e, CNRS, Laboratoire de Math\'ematiques Jean Leray, LMJL,
F-44000 Nantes, France
}}

\email{nicolas.camps@univ-nantes.fr}

\subjclass[2020]{35B34,35B35, 35Q55, 37K45, 37K55}

\keywords{}


\begin{abstract}
We consider nonlinear Schr\"odinger equations on flat tori satisfying a simple and explicit Diophantine non-degeneracy condition. Provided that the nonlinearity contains a cubic term, we prove the almost global existence and stability of most of the small solutions in high regularity Sobolev spaces. 
To this end, we develop a normal form approach designed to
handle general resonant Hamiltonian partial differential equations for which it is possible to modulate the frequencies by using the initial data.

\end{abstract} 

\ \vskip -1cm  \hrule \vskip 1cm \vspace{-8pt}
 \maketitle 
{ \textwidth=4cm \hrule}

\maketitle
\setcounter{tocdepth}{1}
\tableofcontents

\section{Introduction}

\subsection{Context} We consider the nonlinear Schr\"odinger equation
\begin{equation}
\label{eq:nls}
\tag{NLS}
i\partial_{t}u + \mathrm{div} \, G \nabla u = f(|u|^{2})u\,,\quad (t,x)\in\R\times\mathbb{T}^{d}\,
\end{equation}
where $G$ is a real symmetric $d\times d$ positive matrix and $f:\R\to\R$ is a $C^\infty$ function satisfying $f'(0)\neq 0$ (to ensure the existence of a cubic nonlinear term in the equation). Without loss of generality, we assume that $f(0)=0$.

\medskip

This is a convenient way of rewriting nonlinear Schr\"odinger equations on rescaled tori. Indeed, $d$-dimensional flat tori writes $\mathbb{T}_{\mathscr{L}}^d=  \R^d/\mathscr{L}$ where $\mathscr{L}=: V \mathbb{Z}^d$ is a lattice of $\R^d$ ($V$ being a $d\times d$ real invertible matrix) and so by setting $\operatorname{G} = V^{-1} \, ^{t}(\operatorname{V}^{-1})$ and by applying a linear change of coordinate  $x\mapsto Vx$, \eqref{eq:nls} is equivalent to 
\begin{equation*}
i\partial_tu + \Delta u = f(|u|^2)u\,,\quad (t,x)\in \R\times\T_{\mathscr{L}}^d\,.
\end{equation*}

\medskip

By considering small solutions, \eqref{eq:nls} can be seen as a perturbation of the linear integrable system 
\begin{equation}
\label{eq:Schrolin}
i\partial_{t}u + \mathrm{div} \, G \nabla u = 0
\end{equation}
whose actions $I_n(u) = |u_n|^2$  are the square modulus of the Fourier coefficients
$$
 u_{n} = \frac{1}{(2\pi)^d}\int_{\T^{d}}\e^{-in\cdot x}u(x)\, \mathrm{d}x.
$$
Since the actions are constants of motion for the linear evolution \eqref{eq:Schrolin}, a natural question in this context of perturbation theory is then to understand how much the nonlinear flow of \eqref{eq:nls} preserves these actions. Or more formally, by setting
\[
\|u\|_{H^{s}}:=(\sum_{n\in\Z^{d}}\langle n\rangle^{2s}|u_{n}|^{2})^{\frac{1}{2}}\,,\quad \langle n\rangle:= (1+|n|^{2})^{\frac{1}{2}}\,,
\]
for initial data of size $\varepsilon$ in $H^{s}(\mathbb{T}^{d})$, on what time scales $T(\epsilon)$ are the integrable dynamics orbitally stable under the flow of \eqref{eq:nls}, in the sense that the actions of the solution are slow-variables:
\begin{equation}
\label{eq:stab11}
\sup_{|t|\leq T(\epsilon)}\sum_{n\in\Z^d} \langle n\rangle^{2s}
	\Big|
	|u_{n}(t)|^{2}
	-
	|u_{n}(0)|^{2}
	\Big| \ll \varepsilon^{2} ?
\end{equation}
A sub-question  is the one of the stability of the zero solution $u(t,x)=0$. Or more precisely, for initial data of size $\varepsilon$ in $H^{s}(\mathbb{T}^{d})$, on what time scales $T(\epsilon)$ do we have that
\begin{equation}
\label{eq:stab12}
\sup_{|t|\leq T(\epsilon)}\|u(t)\|_{H^{s}} \leq C_s\|u(0)\|_{H^{s}} ?
\end{equation}
(where $C_s>0$ is constant depending only on $s$). The statement \eqref{eq:stab11} is stronger than~\eqref{eq:stab12}. Note that the local Cauchy theory ensures that $T(\varepsilon)$ is larger than or equal to the linear time-scale $\sim\varepsilon^{-2}$, at least when $s>\frac{d}{2}$. 

The above question is a mathematical formulation of the (absence) of transfer of energy from large to small scales of oscillation. Extensive research has been conducted over the past decades to construct energy cascades out of the resonant interactions, and we refer to~\cite{CKSTT,GG22,GK15} for partial results about norm amplification for the cubic Schr{\"o}dinger equation on tori. In this setting, the existence of infinite energy cascades dynamics conjectured by Bourgain~\cite{Bourgain-conj-00} have only been established when the NLS is posed on the waveguide $\R\times\T^{d}$ manifold. The unbounded direction provides a stronger dispersion that reduces the dynamic to the effective one of the resonant system \cite{HPTV15}.  In contract, the second author and Staffilani~\cite{CS24,C25} proved that the Sobolev norms remain bounded in infinite time when NLS is posed on a Diophantine waveguide (namely $\R\times\T_{\mathscr{L}}^{d}$ for $\mathscr{L}$ having the same Diophantine property as in the present paper).

\medskip 

In recent decades, important developments have been made in Birkhoff normal form techniques to prove the stability of small solution over longer time scales, for many different Hamiltonian PDEs (see e.g. \cite{BG06,BDGS07,GIP,Del12,FGL13,BD17,BMM22}). All these results ensure the \emph{almost global existence and stability of the small solutions} in the sense that \eqref{eq:stab12} holds for $T(\varepsilon) \sim \varepsilon^{-r}$ with $r$ arbitrarily large, provided that $s\gg r$. The case of low regularity is still wide open (see e.g. \cite{BG22,BGR23} for results in this direction). However, all these results apply to PDEs associated with non-resonant Hamiltonians: the eigenvalues of the operator associated with the linearized equation (also called the frequencies) must be rationally independent. For \eqref{eq:nls}, the frequencies are
$$
\lambda_n^2:= g(n,n) \quad \mathrm{where} \quad g(a,b):= ^t\operatorname{a}G \operatorname{b}.
$$
They are clearly\footnote{they belong to the finite dimensional $\mathbb{Q}$ vector space generated by the coefficients of $G$.} not rationally independent, and therefore \eqref{eq:nls} is resonant. A common way to overcome this obstacle is to add a random convolution potential $V$ (with real Fourier coefficients) to the equation, and the equation \eqref{eq:nls} becomes
\begin{equation}
\label{eq:nlsVast}
\tag{NLS*}
i\partial_{t}u + \mathrm{div} \, G \nabla u = V\ast u+ f(|u|^{2})u
\end{equation}
See for example \cite{BG06,YZ14,FI19,BMP20,BG22,BFM22}. In this case, the frequencies are modulated and become $\omega_n = \lambda_n^2+ V_n$. They are almost surely rationally independent. For rational tori (i.e. $G\in \mathbb{Q}^{d\times d}$ up to multiplication by a scalar), the almost global existence and stability of the small solutions of \eqref{eq:nlsVast} was proved by \cite{BG06}. For irrational flat tori, small divisors are degenerate which may generate high frequencies instability. Nevertheless, in \cite{BFG20b}, Bernier, Faou and Gr\'ebert developed an approach to prove almost global existence of small solutions for semi-linear equations enjoying such small divisor estimates. More recently, Bambusi, Feola and Montalto \cite{BFM22} proved the almost global stability of small solutions of \eqref{eq:nlsVast}.

\medskip

In this paper, we aim to extend these results by removing the Fourier multiplier $V\ast u$. Since \eqref{eq:nls} is resonant, the previous results do not hold, i.e. the stability \eqref{eq:stab11} or \eqref{eq:stab12} with $T(\varepsilon) \sim \varepsilon^{-r}$ and $r\gg1$ arbitrarily large cannot be derived from the Birkhoff normal form theorem. The situation is even worse, and we expect \eqref{eq:stab11} and \eqref{eq:stab12} to be false after some polynomial time scales $T(\varepsilon) \sim \varepsilon^{-\alpha}$. For example, one can deduce from \cite{CKSTT} that \eqref{eq:stab12} is false on the square torus $\mathbb{T}^2$ (i.e. $G= I_2$) for some initial data after $T(\varepsilon)=\varepsilon^{-2} \log(\varepsilon^{-1})$.

\medskip

The only resonant equations for which stability results were proved have a special algebraic property: the four-waves interactions must be trivial or, in other words, the quartic terms of the Birkhoff normal form of the equation must be integrable. This property allows to modulate the frequencies using the initial data as parameters and leads to the almost global existence and stability of most of the small solutions: an exceptional set of initial data has to be excluded. Such results have been proved for \eqref{eq:nls} in dimension $d=1$ \cite{Bou00,KillBill}, for the generalised Korteweg--de Vries and a modified version of Benjamin--Ono equations \cite{KtA} and for the Kirchhoff equation \cite{BH22} but only for $T(\varepsilon)\sim \varepsilon^{-6}$. We also mention  \cite{LX23,zhiqiang}, who proved stability in Gevrey spaces over  exponentially long times.

\medskip

Kuksin and P\"oschel observed in \cite{Kuksin-Poschel} that four-waves interactions are trivial for \eqref{eq:nls} in dimension $d=1$. The point is that this property is also true in any dimension provided that
\begin{equation}
\label{eq:nondegbase}
g(a,b) \neq 0 , \quad \forall a,b \in \mathbb{Z}^d \setminus \{0\}.
\end{equation}
Indeed, as in dimension $1$, if $n_1-n_2+n_3-n_4 = 0$, we have
$$
 \Omega_{\vec{n}}:= \lambda_{n_1}^2 - \lambda_{n_2}^2 + \lambda_{n_3}^2 - \lambda_{n_4}^2 = 2 g(n_1-n_2,n_1-n_4)
$$
which ensure that if $ \Omega_{\vec{n}}=0$ then $\{n_1,n_3\} = \{n_2,n_4\}$.

\subsection{Main theorem and comments}

In order to establish our main result, we have to make the non-degeneracy condition  \eqref{eq:nondegbase} quantitative.
\begin{definition}[Admissible tori]\label{def:tori} The flat tori $\mathbb{T}^d_{\mathscr{L}}$ is admissible if there exist $C>0$ and $\tau^{\ast}>0$ such that for all $a,b \in \mathbb{Z}^{d} \setminus \{0\}$, 
\begin{equation}
\label{eq:adm}
|g(a,b)| \geq \frac{C}{\|a\|_{2}^{\tau_{\ast}}\|b\|_{2}^{\tau_{\ast}}}\,.
\end{equation}
In particular, $g(a,b)=0$ if and only if $a=0$ or $b=0$.
\end{definition}

\begin{remark}
If the vector $(G_{i,j})_{i\leq j}$ of the coefficients of $G$ is Diophantine then the torus is admissible. Note that the converse is not true (see \eqref{eq:monjoliexemple}). In any case, as a consequence, since this property is true for almost all matrix $G$, we can say that almost all flat torus are admissible (as soon as $\tau_\ast > \frac{d(d+1)}{2}$).
\end{remark}

To state our result, we have to introduce $\Pi_{M}$, $M>0$, the orthogonal projectors onto frequencies smaller than $M$:
\[
\Pi_{M}u = \sum_{|n|\leq M}u_{n}\e^{in\cdot x}\,.
\]
Moreover, by abuse of notation, we denote by $\meas$ the canonical Lebesgue measure on $\Pi_M L^2(\mathbb{T}^d; \mathbb{C})$, for $M>0$.

\begin{theorem}\label{thm:main} Let $\mathbb T_{\mathscr{L}}^{d}$ be an admissible torus in the sense of Definition \ref{def:tori} and $r\gtrsim1$. There exist $\mu_d>0$ and $\nu\lesssim_{r,\mathscr{L}} 1$ such that for all $s\gtrsim_{r}1$ and for all $\varepsilon \lesssim_{r,s} 1$, provided that $M>0$ satisfies
\begin{equation}
\label{eq:contrainte_pour_M}
\varepsilon^{- \mu_d \frac{r}s} \leq M \leq \varepsilon^{-\nu} ,
\end{equation}
there exists an open set $\Theta_{\varepsilon}\subset\Pi_{M}B_{s}(\varepsilon)$
for which the following holds: local solutions to~\eqref{eq:nls} in $H^{s}$ initiated from initial data
\begin{equation}
\label{eq:haut_mode_sympas}
u(0)\in B_{s}(2\varepsilon)\cap\Pi_{M}^{-1}\Theta_{\varepsilon}
\end{equation}
exist in $C([-T_\varepsilon,T_\varepsilon],H^{s})$ for $T_\varepsilon =\varepsilon^{-r}$, and satisfy for all $|t|\leq T_{\epsilon}$
\begin{equation}
\label{eq:thm-dyn}
\|u(t)\|_{H^{s}}\leq 2^{s+1} \| u(0) \|_{H^s}.
\end{equation}
Moreover, these initial data are typical, in the sense that
\begin{equation}
\label{eq:thm-meas}
 \meas(\Theta_{\varepsilon})\geq (1-\varepsilon^{\frac{1}{40}})\meas(\Pi_{M}B_{s}(\varepsilon))\,.
\end{equation}
\end{theorem}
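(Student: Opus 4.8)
The plan is to implement a normal form reduction adapted to the resonant structure of \eqref{eq:nls}, using the high modes $\Pi_M u(0)$ as external parameters to modulate the frequencies. The starting observation, already recorded in the excerpt, is that on an admissible torus the four-waves interactions are trivial: if $n_1-n_2+n_3-n_4=0$ and $\Omega_{\vec n}=0$ then $\{n_1,n_3\}=\{n_2,n_4\}$. Hence the quartic part of the Birkhoff normal form of the Hamiltonian is integrable, i.e. it is a function of the actions $I_n=|u_n|^2$ alone. After one step of Birkhoff normal form one therefore reaches a Hamiltonian of the shape $H = \sum_n \lambda_n^2 I_n + Z_4(I) + R$, where $Z_4$ is the integrable quartic term and $R$ collects quintic-and-higher contributions (together with the non-resonant remainder one can always push to arbitrarily high order at a fixed cost in regularity loss, which is why the lower bound on $M$ in \eqref{eq:contrainte_pour_M} scales like $\varepsilon^{-\mu_d r/s}$).

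**Next I would** freeze the high modes. Writing $u = u^{\mathrm{lo}} + u^{\mathrm{hi}}$ with $u^{\mathrm{hi}} = (\Id-\Pi_M)u$, the idea is that over the time scale $T_\varepsilon=\varepsilon^{-r}$ the high modes barely move, so $u^{\mathrm{hi}}$ behaves like a fixed parameter. The effect of $Z_4(I)$ is then to shift the low-mode frequencies by an amount linear in the actions of $u^{\mathrm{hi}}$: the modulated frequencies become $\omega_n(u^{\mathrm{hi}}) = \lambda_n^2 + 2\sum_{k} c_{n,k} I_k^{\mathrm{hi}} + \cdots$ for suitable structure constants $c_{n,k}$. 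The key measure-theoretic step is to show that for a set $\Theta_\varepsilon$ of high-mode data of nearly full measure (as quantified in \eqref{eq:thm-meas}), these modulated frequencies satisfy a Diophantine lower bound on all low-mode small divisors $\sum_j (\omega_{n_j} \text{ with signs})$ up to multi-indices of length $\lesssim r$, with the admissibility exponent $\tau_\ast$ controlling the polynomial loss. This is where the admissibility condition \eqref{eq:adm} is used a second time, now to guarantee that the map from high-mode data to the frequency vector is sufficiently non-degenerate that a classical sub-level set estimate (à la Bourgain--Eliasson, or the resonant small-divisor scheme of \cite{BFG20b}) applies; the $\varepsilon^{1/40}$ in \eqref{eq:thm-meas} is just a bookkeeping choice coming from balancing the number of divisors, their size, and the radius $\varepsilon$.

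**With the Diophantine bound in hand**, I would run the Birkhoff normal form iteration up to order $r+2$ on the low-mode Hamiltonian with the modulated (hence non-resonant, in the relevant range) frequencies: each cohomological equation is solvable with a polynomial loss, and after finitely many steps the Hamiltonian is $\sum_n \omega_n(u^{\mathrm{hi}}) I_n + (\text{integrable terms}) + (\text{remainder of size } \varepsilon^{r+2})$. A bootstrap/continuity argument then shows that on $[-T_\varepsilon,T_\varepsilon]$ the low actions $I_n^{\mathrm{lo}}$ and the high modes $u^{\mathrm{hi}}$ stay within a factor close to $1$ of their initial values — in particular $u^{\mathrm{hi}}$ genuinely stays in the parameter regime for which the Diophantine estimate was established, closing the argument — which yields \eqref{eq:thm-dyn} and in fact the stronger action-preservation \eqref{eq:stab11}. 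The constant $2^{s+1}$ comes from cheap interpolation/triangle-inequality losses between the normal form coordinates and the original variable.

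**The main obstacle** I expect is the simultaneous control of the measure estimate and the normal form: the modulated frequencies depend on $u^{\mathrm{hi}}$, but $u^{\mathrm{hi}}$ is only \emph{approximately} conserved, so one cannot fix the parameter once and for all — one needs the small-divisor bounds to be stable under an $O(\varepsilon^{?})$ perturbation of the parameter, which forces a quantitative (and essentially uniform in $M$) version of the sub-level set estimate, and this is delicate because the number of frequencies involved grows with $M$. Relatedly, the degeneracy of the small divisors on an irrational torus means that the polynomial losses in the cohomological equations are worse than in the non-resonant rational case and must be tracked carefully against the regularity budget $s\gtrsim_r 1$; reconciling these competing requirements — enough regularity to absorb the losses, enough modes $M$ to make the remainder small, few enough divisors to keep the excluded set tiny — is the technical heart of the proof.
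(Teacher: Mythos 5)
There is a genuine gap, and it sits at the heart of your scheme: the choice of modulation parameters. You propose to freeze the high modes $u^{\mathrm{hi}}=(\Id-\Pi_M)u$ and use their actions to shift the low-mode frequencies through the integrable quartic term. But on an admissible torus the resonant quartic term is exactly $\tfrac{f'(0)}{2}\big(\sum_n|u_n|^2\big)^2-\tfrac{f'(0)}{4}\sum_n|u_n|^4$, so $\partial_{I_n}Z_4 = f'(0)\sum_k I_k-\tfrac{f'(0)}{2}I_n$: the cross coefficients $c_{n,k}$, $k\neq n$, are all equal (they come only from the $\ell^2$-mass), hence they cancel in every resonance combination $\sum_n(k_n-\ell_n)\omega_n$ with $\sum_n(k_n-\ell_n)=0$ and are removable by a gauge transform. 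The actions of the high modes therefore produce \emph{no} effective separation of the low-mode small divisors at quartic order; the only useful modulation is each mode's \emph{own} action $-\tfrac{f'(0)}{2}|u_n|^2$. This is why the paper takes the opposite route: the parameters are the actions of the \emph{low} modes $|n|\le M$ (internal parameters $\xi_n=|u_n|^2$), and the Diophantine set $\Theta_\varepsilon$ is a subset of $\Pi_M B_s(\varepsilon)$, exactly as the statement requires. Your version also cannot deliver the measure estimate \eqref{eq:thm-meas}: that estimate is for the Lebesgue measure on the finite-dimensional low-mode ball, whereas your excluded set lives on the infinite-dimensional high-mode data, where no such measure is available and where the theorem imposes no condition at all (that is the whole point of \eqref{eq:haut_mode_sympas}).

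A second, independent gap is your treatment of the high frequencies. In dimension $d\ge2$ on an irrational torus you cannot "push the non-resonant remainder to arbitrarily high order at a fixed cost in regularity": the small divisors $|\lambda_{n_1}^2-\lambda_{n_2}^2+\cdots|$ admit lower bounds only in terms of $\max_1\langle n\rangle$, not $\max_3\langle n\rangle$, so the losses are not absorbed by smoothness. The paper circumvents this by truncating to a finite-dimensional low-frequency system and controlling the high part only through super-actions attached to the Bourgain/Berti--Maspero cluster decomposition; the dyadic structure of the clusters is precisely the origin of the constant $2^{s+1}$ in \eqref{eq:thm-dyn} (not an interpolation loss), and it is also why only low actions and high super-actions are preserved — your claimed full action preservation \eqref{eq:stab11} is not reachable by this kind of argument in $d\ge2$ and is open even for the non-resonant model \eqref{eq:nlsVast}. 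Finally, with the correct (internal, low-mode) parameters the remaining difficulty you gesture at — stability of the small-divisor bounds under the flow — is real but takes a different form: since the modulated frequencies $\omega_n$ decay like $\langle n\rangle^{-2s}$ and are dominated by higher-order corrections, a naive Birkhoff iteration with first-order frequencies fails, and the paper instead runs a Bourgain-type two-scale normal form on re-centred polynomials, tracking Lipschitz dependence on the parameters so that the Diophantine conditions, imposed in the final variables, can be pulled back to a full-measure set of original initial data.
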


Roughly speaking, for any $r\gtrsim1$, $s\gtrsim_{r}1$ and $\epsilon>0$ sufficiently small, we prove that, under a generic condition on finitely many low Fourier modes, initial data of size $\epsilon$ in $H^{s}(\T^{d})$ lead to stable solutions over time scales of order $\epsilon^{-r}$.

\subsubsection{Comments about the literature}  \begin{itemize} \item Theorem \ref{thm:main} is the first almost global existence result for a resonant PDE in dimension~$d\geq2$. 
\item It extends the results \cite{Bou00,KillBill} in the one-dimensional case which is much more favorable. In particular, the almost global preservation of all the actions (i.e. \eqref{eq:stab11}) holds when $d=1$. When $d\geq2$, however, we prove the stability of the low-frequency actions and of the high-frequency super-actions, as discussed subsection \ref{sec:techni}.
\item In a similar finite dimensional setting the geometric part of the Nekhoroshev theorem allows to prove the long time stability (in a sense weaker than  \eqref{eq:stab11} and \eqref{eq:stab12}) of any small solution, removing the restriction \eqref{eq:haut_mode_sympas}. In contrast, \cite{BouKal} strongly suggests that such a result should not extend to the infinite dimensional setting.  We nevertheless mention \cite{Bam99,BaG23} for results in this direction.

\end{itemize}

\subsubsection{Technical comments}\label{sec:techni}  \begin{itemize} \item We propose a new formulation \eqref{eq:thm-meas} to quantify the proportion of initial data leading to almost global and stable solutions. In this formalism, we only need to impose a condition on the low Fourier coefficients. Moreover, we do not need to assume any additional decay of the Fourier coefficients as it is usually done (e.g. in \cite{Bou00,KillBill,KtA,LX23}). We defer comments on this formulation to subsection \ref{sub:proba}.

\item The constraint $s\gtrsim_r 1$ could be refined in $s \gtrsim r^2$ (which is a classical constraint for this kind of problems, see e.g. \cite{KtA}). It mainly comes from the existence of a number $M$ satisfying \eqref{eq:contrainte_pour_M} and the fact that in the proof we impose $\nu \lesssim r^{-1}$, as specified in \eqref{eq:upsilon}.
\item In dimension $d\geq 2$, we are not able to control the variation of all the actions (it is an open problem even for \eqref{eq:nlsVast}). Actually (in \eqref{eq:a_citer_dans_lintro}), we only control the variation of the low actions 
$$
\sup_{|t|\leq \varepsilon^{-r}}\sum_{ n \in \Clo} \langle n \rangle^{2s} \big| |u_n(t)|^2 -|u_n(0)|^2  \big| \ll \varepsilon^2
$$
where $\Clo \subset \mathbb{Z}^d$ is a finite set, with $\{ n\in \mathbb{Z}^d \ | \ |n| \leq (17r)^{-1} M \} \subset \Clo$. For the high modes, however, as \cite{BFM22}, we only control the variations of the super-actions based on a the cluster decomposition of the frequency space due to \cite{Bou98,Berti-Maspero-19}. We refer to Proposition \ref{prop:ap-hi} for a precise estimate (the truncation parameter $M$ of this proposition in not the same as the one of Theorem \ref{thm:main}). The unusual exponent $2^{s+1}$ in \eqref{eq:thm-dyn} is a consequence of the dyadic structure of the cluster decomposition.  

\end{itemize} 

\subsubsection{Comments on the set of initial data} \label{sub:proba} \begin{itemize} \item To overcome the possible instabilities generated by the resonances of the equation, we
modulate the frequencies by using initial data. This leads (very technical) Diophantine conditions on the set of initial data. Therefore, in order to prove that the stability result is generic (and even not empty), we have to prove that this set of stable initial data is large in some sense. In the finite dimensional setting, we would prove that the set of admissible initial data is open and asymptotically of full Lebesgue measure. 

In the infinite dimensional setting, however, there is no Lebesgue measure and therefore no canonical choice to express such a result. Fortunately, the constraints we impose on the initial datum are quite weak, and it is possible to prove that this set is big for any measure that is not too much degenerated (see e.g. \cite{KillBill,KtA,LX23,zhiqiang} for different possible choices and \cite{KtA} for a discussion about the topology of this set). We do not know if there is an intrinsic way to express that this set is large.

\item In this paper, as in \cite{zhiqiang}, we stressed out the fact that it suffices  to impose a condition on the low Fourier modes of the initial (i.e. $u_n$ with $|n|\leq M$, with $M\sim \varepsilon^{- \mu_d \frac{r}s}$ with $s\gtrsim r$). This is essentially the meaning of  \eqref{eq:haut_mode_sympas}.

\item The dimension of the finite-dimensional space $\Pi_{M}H^{s}(\T^{d}, \C)$, which is of order $M^d \geq \varepsilon^{- d\mu_d \frac{r}s}$, is large with respect to the relative measure of the set of the initial data we exclude ($\varepsilon^{\frac{1}{40}}$). Hence, our probability framework is closer to an infinite dimensional one than a finite dimensional one, even if our statement involves the (finite-dimensional) Lebesgue measure. 
 Indeed, in the limit $\epsilon\to0$ the dimension of $\Pi_{M}H^{s}(\T^{d},\C)$ goes to infinity, and we can show that the Lebesgue measure on $\Pi_{M}B_{s}(\epsilon)$ concentrates its mass on the sphere of radius $\epsilon$:
\begin{equation*}
\begin{split}
\frac{1}{\meas(\Pi_{M}B_{s}(\epsilon))}
\meas
	\Big\{
	u\in\Pi_{M}B_{s}(\epsilon)\ |\ &\|u\|_{H^{s}}\leq \epsilon(1-M^{-\frac{1}{2}})
	\Big\}
	\\
	&=
	 (1- M^{-\frac{1}{2}})^{\mathrm{dim} \Pi_{M}L^2} 
	\ll 
	\epsilon^{\frac{1}{40}}
	\,.
	\end{split}
\end{equation*}
Therefore, to prove Theorem \ref{thm:main} we could assume for free that $\| \Pi_M u(0) \|_{H^s} \geq  \varepsilon (1- M^{-\frac{1}{2}})$. Nevertheless, we do not use such a property in the proof and, to avoid any hidden smallness assumption of this kind on $\| u(0) - \Pi_M  u(0)\|_{H^s}$,  we allow $\|u(0)\|_{H^s}$ to be as large as $2\varepsilon$ in~\eqref{eq:haut_mode_sympas}. 

\item A popular way to draw initial data is to draw the Fourier coefficients independently, see e.g. \cite{Bou00,KillBill,KtA,LX23}. For example, here it could consist in considering random initial data of the form\footnote{Note that $V^{(\varepsilon)} $ also lives on a sphere in the sense that $\|V^{(\varepsilon)}  \|_{\ell^\infty_{s+\alpha}} = c\varepsilon$ almost surely.}
$$
V^{(\varepsilon)} = c \, \varepsilon \sum_{k\in \mathbb{Z}^d} V_k \langle k \rangle^{-s-\alpha}
$$ 
where the random variables $V_k$ are independent and uniformly distributed in the complex unit disk $\mathbb{D}(0,1)$,  $\alpha>\frac{d}{2}$ and $c(\alpha)>0$ is a normalizing constant to ensure that $\| V^{(\varepsilon)} \|_{H^s}< \varepsilon$ almost surely. With minor changes in the proof\footnote{in measure estimates, it suffices to replace round balls $\Pi_M B_s(0,\varepsilon)$ by rectangular ones  $\Pi_M B_{\ell^\infty_{s+\alpha}}(0,c\varepsilon)$.}, we could easily replace the measure estimate \eqref{eq:thm-meas} of Theorem \ref{thm:main} by
 \begin{equation}
 \label{eq:loi_unif}
\mathbb{P}( \Pi_M V^{(\varepsilon)} \in \Theta_{\varepsilon}) \geq 1 -\varepsilon^{\frac{1}{40}}\,.
 \end{equation}
We would deduce from this that the set of the stable initial data $\bigcup_{\varepsilon} B_{s}(2\varepsilon)\cap\Pi_{M}^{-1}\Theta_{\varepsilon}$ is asymptotically of full measure:
$$
\mathbb{P}(   \varepsilon V^{(1)} \in \Pi_{M}^{-1}\Theta_{\varepsilon} ) \geq 1 -\varepsilon^{\frac{1}{40}}.
$$
Approaches of this type have the disadvantage of requiring initial data smoother than necessary. We believe that one of the strengths of our probabilistic formulation in Theorem \ref{thm:main} is to describe dynamics in $H^{s}(\T^{d})$ for solutions no more regular than $H^s(\T^{d})$.

\item In contrast with papers using rational normal forms,  the set $\Theta_{\varepsilon}$ a priori also depends on the angles of the initial data: we do not prove that it is invariant by rotation of the angles, in the sense that
\begin{equation}
\label{eq:inv_angle}
u \in \Theta_\varepsilon \iff \sum_{k\in \mathbb{Z}^d} |u_k| e^{ik\cdot x}.
\end{equation}
That is why contrary to \cite{KillBill,KtA,zhiqiang}, we do not only draw the actions of the initial data randomly but also their angles. We point out that nevertheless, here, \eqref{eq:inv_angle} is true up to conjugation by a diffeomorphism ($\Psi$, defined in \eqref{eq:psi}).

\end{itemize}

\subsubsection{Comments about flat tori} \begin{itemize} \item The strength of our method with respect to rational normal form methods is to only require a very weak and explicit condition on the external parameters $G$ associated to the tori $\mathbb{T}^d_{\mathscr{L}}$. Explicit examples with Diophantine numbers can be easily produced, e.g. 
\begin{equation}
\label{eq:monjoliexemple}
G = \begin{pmatrix} 1 & \sqrt{2} \\
\sqrt{2} & 3
\end{pmatrix} \ \mathrm{is} \ \mathrm{admissible}.
\end{equation}
\item Note that admissible tori are not rectangular but can be chosen arbitrarily close to rectangular tori. For rectangular tori, however, the four-wave interaction set is not trivial. It can be decoupled into two one-dimensional systems (see \cite{Staffilani-Wilson-20}). We believe that it is an interesting problem to understand the situation on a Diophantine rectangular tori. 
\item  Most of our analysis can be easily transferred to the case of a hyperbolic Laplacian (when the symmetric matrix $G$ is indefinite). However, the finite-dimensional reduction, which is based on the separation property of the frequencies (see Lemma \ref{lem:cluster}), may no longer be possible.
\item Theorem \ref{thm:main} is in line with the study of nonlinear Schrödinger equations on irrational tori that has been developed in recent years. The general idea is to mimic the dispersion in compact settings. Under some Diophantine conditions on the torus, the refocusing time for the waves is actually longer \cite{dgg17,dggmr-22} than it is on the square torus, and the four-wave resonant system is smaller \cite{Staffilani-Wilson-20}. On Diophantine rectangular tori, Deng--Germain \cite{dg19} improved polynomial upper bounds for the growth of Sobolev norms, while Deng \cite{Deng-19} achieved polynomial growth in the energy-critical case for small energy solutions (a challenging open problem on the square torus). In a similar spirit, Collot--Germain \cite{CG20} derived the kinetic wave equation for larger set of scalings by considering dispersion relations associated with non-rectangular Diophantine tori. 
\item Our result goes in the same direction, showing stronger stability properties on non-rectangular Diophantine tori, in contrast to the norm amplification observed in \cite{CKSTT}. We stress out that Guilani--Guardia \cite{GG22} proved that the same norm amplification mechanism occurs on rectangular Diophantine tori as well through quasi-resonant quartets, but after exponential time-scales. This is not contradictory to our Theorem \ref{thm:main}, which achieves polynomial time-scales. 
\end{itemize}

\subsection{Discussions on the proof} In this paper we develop a normal form approach for proving stability of solutions to resonant Hamiltonian systems whose modulated frequencies are highly degenerate (more than what the rational normal form approach requires). We believe that beyond the almost global existence result stated in Theorem \ref{thm:main}, this method is one of the main interests of this paper. In the case of \eqref{eq:nls} on flat tori, one advantage of this method is that it results in the simple (and quite minimal) non-degeneracy assumption \eqref{eq:adm} for admissible tori. 

\medskip

Our normal form is inspired by an approach developed by Bourgain in the paper \cite{Bou00} for \eqref{eq:nls} in dimension $d=1$. Apart from the fact that we overcome the degeneracy of linear frequencies which is specific to the dimension $d\geq 2$, the main difference is that we also provide a formalism that allows us to prove and quantify the sense in which most small solutions are stable over very long times. 

\medskip

In his proof, Bourgain describes a generic step of his normal form procedure: after a preliminary change of variable (let us say $\tau^0$) to put the Hamiltonian under resonant normal form, he explains how to construct a new change of variable to get better properties. The point is that (due to the modulated small divisors) the transformation $\tau^0$ itself depends strongly on $u(0)$ (let us denote it by $\tau^0 =: \tau^0_{u(0)}$). Therefore, in order to construct the new transformation, he has to assume that the new initial data $v(0):= \tau^0_{u(0)}(u(0))$ belong to a set $\Xi_{u(0)}$ encoding some Diophantine conditions and depending on $u(0)$. He proves that $\Xi_{u(0)}$ is asymptotically of full measure, but he does not explain why this implies that $v(0)=\tau^0_{u(0)}(u(0))\in \Xi_{u(0)}$ for most real initial data $u(0)$. 

\medskip

This type of problem is quite classical in standard KAM theory and is usually solved by introducing Lipschitz norms to track the dependencies with respect to the internal parameters. This is the strategy we implement in this paper. However, due to the small divisor degeneracy  this is far from obvious and has required the introduction of a whole technical framework (and associated technical estimates), which largely explains the length of this paper. It has also required us to refine some of the fundamental estimates of \cite{Bou00} (see e.g. Remark \ref{rq:comp_bou} for a more detailed discussion on this point). This difficulty is inherent in modulating the frequencies with internal parameters. It is a well known obstacle in the open problem of constructing infinite dimensional invariant tori for \eqref{eq:nls}\footnote{Note that since in dimension $d=1$ we control the variation of all the actions, what we prove in this paper, as well as what is proved in \cite{Bou00,KillBill}, is somehow not so far from such a result: we prove that most of the small solution to \eqref{eq:nls} stay very close to some infinite dimensional tori for very long times.} in dimension $d=1$ (see e.g. \cite{Pos}).

\medskip

 To continue the discussion of the proof, we need to introduce the Hamiltonian structure of \eqref{eq:nls}.  More precisely, \eqref{eq:nls} rewrites  
 \begin{equation}
\label{eq:hamNLS}
i\partial_t u = \nabla H(u) \quad \mathrm{with} \quad H(u) = Z_2(u)+ \frac12 \int_{\mathbb{T}^d} F(|u(x)|^2) \, \mathrm{d}x. 
\end{equation}
 where $F$ is the primitive of $f$ vanishing at the origin and 
$$
Z_2(u) = \frac12 \sum_{n\in \mathbb{Z}^d} \lambda_n^2 |u_n|^2 .
$$
The classical notations (like $\nabla$) are defined in subsection \ref{sec:notations} below.

\subsubsection{Modulated frequencies, small divisors and comparison with rational normal forms} First, let us assume that\footnote{Due to the degeneracy of the small divisor associated with the linear frequencies, this is an open problem in dimension $d\geq 2$ discussed in subsection \ref{sub:resonant}.} \eqref{eq:nls} can be put in Birkhoff normal form (at least up to some high order $2r \gg 1$). This means that there would be a canonical change of variable $\tau$ close to the identity in $H^s$ such that 
\begin{equation}
\label{eq:Birk_reve}
H\circ \tau^{-1}(u) = Z_2(u) + Q(u) + \mathcal{O}(u^{2r}) \quad \mathrm{and} \quad \{ Z_2,Q \} =0.
\end{equation}
Then, using the admissibility condition on $\mathbb{T}^d_{\mathscr{L}}$, the four-waves interactions are integrable and we deduce that (up to a gauge transform), setting $v=e^{it \, \mathrm{div} \, G \nabla }\tau(u)$, \eqref{eq:nls} rewrites
 $$
 i\partial_t v_n = \omega_n(v) v_n + \mathcal{O}(v^5) \quad \mathrm{where} \quad  \omega_n(v) =  - f'(0) |v_n|^2.
 $$ 
Since $f'(0)\neq 0$, this allows to modulate the frequencies using the initial data as parameters. Note, however, that since $v\in H^s$ this modulation is highly  degenerate in the high frequency regime since  $\omega_n$ decays at least like $\langle n\rangle^{-2s}$ with $s\gg 1$. Under some generic conditions on $v$, this provides small divisors estimates of type
 \begin{equation}
 \label{eq:discussion_sd}
\big| \sum_{n\in \mathbb{Z}^d} h_n \omega_{n}(v) \big| \gtrsim \| v\|_{H^s}^2  (\mu_{\max}(h))^{-2|h|_{\ell^1}} (\mu_{\min}(h))^{-2s} ,
 \end{equation}
where $\mu_{\min}(h) = \min \{ \langle n \rangle \ | \ h_n \neq 0 \}$, $\mu_{\max}$ is defined similarly and $h\in \mathbb{Z}^{(\mathbb{Z}^d)}$ is a family of integers with finite support. We prove such an estimate in Section \ref{sec:smd}. This type of small divisors seems to be ubiquitous for resonant Hamiltonian PDEs (e.g. it is also true for KdV, Benjamin--Ono and their generalizations \cite{KtA}, Kirchhoff \cite{BH22}, the pure-gravity water waves \cite{BFP23}) to the notable exception of the Schr\"odinger--Poisson equation (see \cite{KillBill,zhiqiang}).

\medskip

There are three different contributions in the right-hand-side of the small divisor estimate \eqref{eq:discussion_sd}. 
\begin{itemize}
\item The first one, $\| v\|_{H^s}^2$, comes from homogeneity arguments and is harmless. 
\item The second one, $(\mu_{\max}(h))^{-2|h|_{\ell^1}}$, comes from counting estimates and could be very critical but truncation arguments (presented in subsection \ref{sub:resonant}) allow to handle these resonances. 
\item The last contribution $(\mu_{\min}(h))^{-2s}$ is the most critical. In this sense, we say that the small divisor estimate~\eqref{eq:discussion_sd} is degenerate. To explain why it is so critical, let us focus on just one aspect that we think is the most important. 
\end{itemize}

To describe the dynamics over longer timescales one has to conjugate (at least locally) \eqref{eq:nls} to an integrable system up to some higher order terms. In other words, up to a new canonical change of variables $z = \tau^\sharp(v)$,  \eqref{eq:nls} rewrites
 $$
 i\partial_t z_n = \omega_n^\sharp(z) z_n + \mathcal{O}(z^{2q+1}) \quad \mathrm{where} \quad  \omega_n^\sharp(z) = \omega_n(z) + \mathcal{O}(z^4).
 $$ 
and $3\leq q \leq r$. To transform the remainder terms of order $\mathcal{O}(z^{2q+1})$ into terms of order $\mathcal{O}(z^{2q+3})$\footnote{of course, in practice, as we will see in the proof, the gain could be much weaker.}, a naive approach (in the spirit of Birkhoff normal forms) would be to average the terms of order $\mathcal{O}(z^{2q+1})$ by the flow generated by the frequencies $\omega_n(z)$\footnote{i.e. those of $i\partial_t z_n = \omega_n(z) z_n$.} to make them integrable. The point, identified and discussed in \cite{KillBill}, is that usually, when $n$ is large enough,  $\omega_n$ is negligible with respect to $\omega_n^\sharp$, which makes this naive approach fail. More precisely, due to higher order correction terms $\omega_n^\sharp$ has no reason to decay like $\langle n \rangle^{-2s}$ (i.e. like $\omega_n$\footnote{actually for some equations like Beanjamin--Ono \cite{KtA} or the pure gravity water waves \cite{BFP23}, $\omega_n$ contains some terms that do decay like $\langle n \rangle^{-2s}$ but they do not contribute to some of the small divisors.}; for example, for \eqref{eq:nls} in dimension $d=1$, explicit computation of the corrections of order 6 in \cite{KillBill} proves that it actually decays at most like $\langle n \rangle^{-2}$). In any case, more terms of  $\omega_n^\sharp$ have to be considered to average the terms of order $\mathcal{O}(z^{2q+1})$.

\medskip

We discuss two strategies to overcome this problem: either we identify a higher order term correction term allowing to prove that the small divisors are stable by perturbation, or we use that the derivative of the small divisors is stable by perturbation. 
\begin{itemize}
\item The first strategy, used in the papers following the rational normal form approach \cite{KillBill,KtA,LX23}, relies on an explicit computation of the correction terms of order $4$ of $\omega_n^\sharp$ (coming from the terms of order 6 in the Hamiltonian) and on the hope that they are much less degenerate than $\omega_n$. For example, here, we could prove that they are of the form 
$$
\widetilde{\omega}_n(z) = (f'(0))^2 \sum_{m \neq n} \frac{|z_m|^4}{\lambda_{n-m}^2} + \mathrm{other \ terms}.
$$
In the favorable cases it leads to small divisors estimates of the form\footnote{where $C_{|h|_{\ell^1}}>0$ is an explicit function of $|h|_{\ell^1}$ which plays no role.}
\begin{equation}
\label{eq:sd2}
\big| \sum_{n\in \mathbb{Z}^d} h_n (\omega_{n} + \widetilde{\omega}_n)(z) \big| \gtrsim  \| z\|_{H^s}^2 (\mu_{\max}(h))^{-C_{|h|_{\ell^1}}} \max\big( \| z\|_{H^s}^2,  (\mu_{\min}(h))^{-2s} \big) ,
\end{equation}
which are much less degenerate than the one associated with $\omega_n$ (see \eqref{eq:discussion_sd}). Since, as before, the factor $ (\mu_{\max}(h))^{-C_{|h|_{\ell^1}}}$ can be overcome by assuming high-regularity ($s$ large enough), such estimates are stable and imply similar small divisor estimates for the full modulated frequencies $\omega_n^\sharp$. It also implies that it suffices to average the terms of order $\mathcal{O}(z^{2q+1})$ using the flow generated by the modulated frequencies $\omega_n+\widetilde{\omega}_n$. Moreover, it also implies that it is not necessary to impose new Diophantine conditions on the initial data at each step of the normal form procedure, it suffices to ensure that \eqref{eq:sd2} holds for $z=u(0)$.
\item The second strategy, used in \cite{Bou00} and which we implement in this paper, is to average the terms of order $\mathcal{O}(z^{2q+1})$ using the flow generated by all the modulated frequencies $\omega_n^\sharp$. To prove that they satisfy acceptable small divisor estimates, it is important to note that even if $\omega_n$ is not the leading part of $\omega_n^\sharp$, $\partial_{|z_n|^2} \omega_n  = - f'(0)$ is necessarily the leading part of $\partial_{|z_n|^2}  \omega_n^\sharp$. This allows to prove that if $z(0)$ lives in a set of asymptotically full measure, the modulated frequencies $\omega_n^\sharp$ enjoy the same small divisor estimate \eqref{eq:discussion_sd} as $\omega_n$. As discussed above, a significant part of the proof (not considered in \cite{Bou00}) then consists in proving that this corresponds to a set of initial data $u(0)$ of asymptotically full measure.
\end{itemize}
The first strategy suffers from several limitations compared to the second one, which is more flexible and which we follow in this paper. 
\begin{itemize}
\item The higher order correction terms (such as $\widetilde{\omega}_n$) depend on the linear frequencies (here $\lambda_n^2$). To get non-degenerate small divisor estimates of the type \eqref{eq:sd2}, it is then necessary to impose additional assumptions on the linear frequencies (e.g. here we would have to impose additional Diophantine conditions on the numbers $\lambda_n^{-2}$, which would be much less explicit than the admissibility condition \eqref{eq:adm}).
\item For some equations these higher order correction terms may also be degenerate. For the Benjamin--Ono equation they vanish exactly. In \cite{KtA} is was then necessary to assume the existence a term of the form $\partial_x u^3$ in the perturbed equation in order to follow the rational normal form approach.
\item Depending on the equation at hand, and possibly also on preliminary Birkhoff normal form reductions (to remove the non-resonant terms from the Hamiltonian), obtaining explicit expressions for these higher order correction terms requires heavy computation. In \cite{KtA} it was necessary to use of a formal computation software. 
\end{itemize}

\subsubsection{Algebraic framework: rational fractions versus re-centered polynomials} The algebraic framework of the papers based on rational normal forms is quite different from that of this paper and \cite{Bou00}. Rational normal forms are quite close to standard Birkhoff normal form since it consists in replacing polynomial expansions by rational fraction expansions (which explains their name). Conversely, here, as in \cite{Bou00}, we consider expansions in re-centered\footnote{around the actions in the final variables, which is quite implicit.} polynomial expansions which hold much more locally (more in a KAM spirit: see \cite{Bou05,BMP21} where a similar formalism is used in this context). The second approach offers more flexibility in the construction whereas the first one provides a normal form which is more global. 
This choice of algebraic framework does not seem to be related to small divisor considerations: it is likely that we could replace rational fractions by re-centered polynomials in \cite{KillBill,KtA,LX23,zhiqiang}. Nevertheless, here and in \cite{Bou00}, to overcome the small divisor degeneracy (i.e. that we do not have \eqref{eq:sd2}), the normal form is constructed more locally (around some high-dimensional tori in $H^s$) and it is unclear how a formalism based on rational fractions would allow to recover the smallness estimates provided by the fact that the solution remains very close to high-dimensional tori. We refer to the end of Subsection \ref{sub:fin_de_lintro} for further discussions about the way we exploit this extra smallness.

\subsubsection{Reduction to a finite dimensional system}
\label{sub:resonant}

In order to put \eqref{eq:nls} in Birkhoff normal form, the frequencies of the system must satisfy for all $q\geq2$ a Diophantine condition of type
$$
|\lambda_{n_1}^2 -\lambda_{n_2}^2 + \cdots - \lambda_{n_{2q}}^2   | \neq 0 \quad \implies \quad |\lambda_{n_1}^2 -\lambda_{n_2}^2 + \cdots - \lambda_{n_{2q}}^2   | \gtrsim_{q} \mathrm{max}_{3} \langle n \rangle^{-\beta_q}\,,
$$
where $\beta_q>0$, $n_1,\cdots,n_{2q}\in \mathbb{Z}^d$ satisfy the zero momentum condition $n_1-n_2 + \cdots -n_{2q}=0$ and $\mathrm{max}_{3} \langle n \rangle$ denotes the third largest number among $\langle n_1 \rangle, \cdots, \langle n_{2q} \rangle$. Such small divisor estimates are obvious for integer frequencies, and also holds for generic choices of convolution potential in \eqref{eq:nlsVast} on rational tori. In contrast, for Diophantine flat tori this estimate does not hold in general with $\max_{3}\langle n\rangle$ but with $\max_{1}\langle n\rangle$. For this reason the almost global stability of small solution of \eqref{eq:nlsVast} for irrational tori remained an open problem until the recent work of Bambusi--Feola--Montalto \cite{BFM22}.

\medskip

To address this, \cite{BFM22} used cluster decomposition and frequency separation properties between different clusters, initially established by Bourgain~\cite{Bou98} for the square torus $\T^d$ and later generalized by Berti--Maspero~\cite{Berti-Maspero-19} for any flat tori (see Lemma \ref{lem:cluster}). This allows them, while leaving certain quasi-resonant terms in their normal form, to prove the quasi-preservation of the super actions associated with this decomposition and therefore to control the $H^s$ norm of the solution for very long times. 

\medskip

In Section \ref{sec:low-freq} we detail the finite-dimensional approximation. We use the cluster decomposition to prove that, up to a first change of variable $\Phi_\chi^1$ and a bootstrap assumption, the super actions are almost preserved and that the low-frequency part of the solution $\Pi_{\Clo} u(t) $ solves an evolution equation associated with a truncated resonant Hamiltonian $H \circ \Phi_\chi^1 \circ \Pi_{\Clo}$, up to a very small remainder term. However, we have added a number of technical refinements to ensure that the change of variable $\Phi_\chi^1$ does not destroy our measure estimates (of the type \eqref{eq:thm-meas}). The results of Section \ref{sec:low-freq} reduce the proof of Theorem \ref{thm:main} to the almost global stability of a truncated resonant Hamiltonian system, as stated in Theorem \ref{thm:main_low}. Since the system is truncated, it can be put on Birkhoff normal form
(almost) for free and we can fully exploit the fact that 
quartic resonant terms of the Hamiltonian are integrable.

\subsubsection{A normal form with a two parameters scale} \label{sub:fin_de_lintro}
Starting from section \ref{sec:small-div}, as in \cite{Bou00}, we only use the first correction term to modulate the frequencies. As a consequence, the small divisors estimates are of the form \eqref{eq:discussion_sd}. Due to the degeneracy of these small divisors, we believe that an approach in the spirit of rational normal forms, in which we would remove the non-integrable terms degree by degree (like in classical Birkhoff normal forms) would fail. Indeed, with such an approach, there does not seem to exists any reasonable way to absorb the losses due to the factor $(\mu_{\min}(h))^{-2s} $.
Instead, as in \cite{Bou00}, we introduce a two parameters scale to decompose and classify the polynomials appearing in our Taylor expansions. In addition to its presentation given below, we also refer to the Section 3 of the proceeding~\cite{C25} for further explanations.

\medskip

More precisely, following \cite{Bou00} we decompose a polynomial as a sum of monomials of the form
\begin{equation}
\label{eq:recentered_polynomials}
\prod_{n\in \mathbb{Z}^d} u_{n}^{k_{n}}\overline{u_{n}}^{\ell_{n}}(|u_{n}|^{2} - \xi_n )^{m_n}
\end{equation}
where $k,\ell,m\in \mathbb{N}^{\mathbb{Z}^d}$ are some finitely supported families of indices counting the multiplicities satisfying the condition $\ell_n k_n = 0$ for all $n\in \mathbb{Z}^d$ (to ensure the uniqueness of the decomposition). The parameters $\xi_n\in\R$ modulate the frequencies. They are implicitly the actions of the initial data in the final variables. To each non-integrable monomial of the form \eqref{eq:recentered_polynomials} we associate a frequency scale $N_{\alpha}$ such that
\begin{equation}
\label{eq:Nalpha_intro}
 N_{\alpha} \leq \nb_{-} < N_{\alpha+1} \quad \mathrm{where} \quad N_\alpha^{s} = \varepsilon^{-\frac{\alpha}{200}},
\end{equation}
 $\varepsilon$ is the size of the initial data in $H^s$ and $\nb_{-}$ is the size of the smallest index appearing in the associate small divisor, i.e.
$$
\nb_{-}=\min\{\ |n|\quad|\quad n\in\Z^{d}\,,\ k_{n}+\ell_{n}\geq1\}.
$$
The  parameter $\alpha$ classifies polynomials according to the cost of the associated small divisors. Monomials at scale $\alpha$ as in \eqref{eq:Nalpha_intro} satisfy two properties:
\begin{itemize}
\item They preserves the actions $|u_n|^2$ with $|n|<N_{\alpha}$. This type of property is also central in papers in low regularity like \cite{BGR23}.
\item Assuming moreover that they are resonant (or quasi-resonant in some sense to be specified), since they are not integrable and quartic resonant terms are integrable, we have $\sum_n k_n +\ell_n \geq 6$. Consequently, they generate vector fields from $H^s$ to $H^s$ of order at most $\varepsilon^{5}N_\alpha^{-4s}$ (see Proposition \ref{prop:vec-alpha}).
\end{itemize}
The cost of the small divisor is at most of order $\varepsilon^{-2}N_{\alpha+1}^{2s}$ (see \eqref{eq:discussion_sd}). Since, by construction, the ratio between two consecutive scales $(\frac{N_{\alpha+1}}{N_{\alpha}})^{2s}\lesssim \epsilon^{-\frac{1}{100}}$ is not too large, we see that there is room to remove these monomials (they could be associated to quite large coefficients).

\medskip 

The heart of the proof consists in removing, by induction on the frequency scales $N_{\alpha}$, the monomials satisfying $N_{\alpha} \leq \nb_{-} < N_{\alpha+1}$ (see Theorem \ref{thm:nf-alpha}). For this purpose,  we consider for each $\alpha$ a norm $\Ysup{\alpha}$ well suited to polynomials composed of monomials satisfying $N_{\alpha}\leq \nb_{-}$.
 Then imposing new Diophantine conditions on $\xi$, we construct by induction (in Proposition \ref{prop:it-j}) canonical changes of variables to make these norms smaller and smaller in terms of powers of $\varepsilon$ (this is the second scale of parameters) for polynomials composed of monomials satisfying $N_{\alpha} \leq \nb_{-} < N_{\alpha+1}$ . At the end of this second induction, the monomials satisfying $N_{\alpha} \leq \nb_{-} < N_{\alpha+1}$ are associated with very small coefficients, and do not contribute to the dynamics over time scales of order $\epsilon^{-r}$.

\medskip

However, in the second induction, the terms generated by the Poisson bracket with an \emph{integrable monomials with low-indices $|n|\ll N_{\alpha}$}, and only one index at frequency scale $\sim N_{\alpha}$, do not seem to be sufficiently small to compensate for the small divisor loss $\sim N_{\alpha}^{-2s}$.  To address these terms, Bourgain made the following observation: recentered actions are much smaller than the actions themselves (which we prove are expected to be slow variables). This observation provides an extra smallness factor for recentred actions $||u_{n}|^{2}-\xi_n|$ with a gain $N_{\alpha}^{-2s}$, even for low indices $|n|\ll N_{\alpha}$. The definition of the non-resonant neighborhood~\eqref{eq:cal-V} encodes this gain, which is a crucial ingredient to address the degenerate small-divisor losses. At this point, we also see that it is natural to introduce the frequency scale to compensate the small divisor losses thanks to this extra smallness.

\medskip

 When we reach a certain frequency scale $N_{\alpha}$ large enough (with the notations of the paper, when $\alpha=\beta$), we conclude that the actions are almost preserved over very long times using that the only remaining monomials are either integrable or associated with negligible coefficients: they generate vector fields of order at most $N_\beta^{-4s} \ll \varepsilon^{-2r}$ (see estimate \eqref{eq:jeveuxteciterdanslintro}).

\subsection{Notations and functional setting}
\label{sec:notations}
\subsubsection{Functional setting}
\label{sec:fun}
We equip the torus $\mathbb{T}^d = \mathbb{R}^d / 2\pi \mathbb{Z}^d$ with the normalized Lebesgue measure $(2\pi)^{-d}\mathrm{d}x$. Therefore,  the Lebesgue norms $\|\cdot\|_{L^p}$, $1\leq p<\infty$, are defined by density through the formula
$$
\forall u \in C^0(\mathbb{T}^d), \quad \| u\|_{L^p}^p:= (2\pi)^{-d}\int_{\mathbb{T}^d} |u(x)|^p \mathrm{dx}.
$$
We identity each distribution $u$ on $\mathbb{T}^d$ with the sequence of its Fourier coefficients defined by density by
$$
u_k = (2\pi)^{-d} \int_{\mathbb{T}^d} u(x) e^{-ikx} \mathrm{d}x.
$$
Given $1\leq p < \infty$ and $s\in \mathbb{R}$ we set
 $$
 \ell^{p}_s(\mathbb{Z}^d):= \{ u\in \mathbb{C}^{\mathbb{Z}^d} \ | \ \|u\|_{\ell^p_s}^p := \sum_{k\in \mathbb{Z}^d} \langle k \rangle^{ps} |u_k|^p <\infty \}.
 $$
 As usual,  we set
 $$
 H^s(\mathbb{T}^d) = h^s(\mathbb{Z}^d) = \ell^2_s(\mathbb{Z}^d) \quad \mathrm{and} \quad \ell^p(\mathbb{Z}^d) := \ell^p_0(\mathbb{Z}^d).
 $$
Note that with these conventions, the Fourier--Plancherel isometry writes
$$
\forall u \in L^2(\mathbb{T}^d), \quad \| u\|_{L^2}^2 = \|u\|_{\ell^2}^2
$$ 
In all this paper, we consider $L^2$ and $\ell^2$ as real Hilbert spaces equipped with the scalar products
$$
\forall u,v\in L^2(\mathbb{T}^d), \quad (u,v)_{L^2} :=(2\pi)^{-d} \re \int_{\mathbb{T}^d} u(x) \overline{v(x)} \mathrm{d}x = \re \sum_{k\in \mathbb{Z}^d} u_k \overline{v_k} =: (u,v)_{\ell^2}.
$$
Note that if $s \in \mathbb{N}$ is a nonnegative integer, we have
$$
\forall u \in H^s(\mathbb{T}), \quad \|u\|_{H^s}^2 = (2\pi)^{-d} \int_{\mathbb{T}^d} |\partial_x^s u(x)|^2 \mathrm{d}x. 
$$

\subsubsection{Differential calculus and Poisson brackets}

Given $1\leq p<\infty$, $s\in \mathbb{R}$, $\mathcal{U}$ an open subset of $\ell^p_s(\mathbb{Z}^d) $, a smooth function $P :\mathcal{U} \to \mathbb{R}$ and $u\in \ell^p_s(\mathbb{Z}^d)$, its gradient $\nabla P(u)$ is the unique element of $\ell^{p'}_{-s}(\mathbb{Z}^d)$ satisfying
$$
\forall v\in \ell^p_s(\mathbb{Z}^d), \ (\nabla P(u),v)_{L^2} = \mathrm{d}P(u)(v).
$$
It can be checked that
\begin{equation}
\label{eq:formula_grad}
\forall k \in \mathbb{Z}^d, \quad (\nabla P(u))_k = 2\partial_{\overline{u_k}} P(u).
\end{equation}
We equip $L^2(\mathbb{T}^d)$ with the usual symplectic form $(i\cdot,\cdot)_{L^2}$.  Therefore, provided that $\ell^p_s(\mathbb{Z}^d) \subset \ell^2(\mathbb{Z}^d)$, a smooth map $\tau : \mathcal{U} \to \ell^p_s(\mathbb{Z}^d)$ is \emph{symplectic} if
$$
\forall u \in \mathcal{U}, \forall v,w \in \ell^p_s(\mathbb{Z}^d), \quad (iv,w)_{L^2} = (i\mathrm{d}\tau(u)(v),\mathrm{d}\tau(u)(w))_{L^2}.
$$
Moreover, if  $H,K :  \mathcal{U}  \to \mathbb{R}$ are two smooth functions such that $\nabla H$ (or $\nabla K$) is  $ \ell^p_s(\mathbb{Z}^d)$ valued then the \emph{Poisson bracket} of $H$ and $K$ is defined by
$$
\{  H,K\}(u):= (i \nabla H(u),\nabla K(u))_{L^2}.
$$
Note that, as usual, we have
\begin{equation}
\label{eq:pois-brak}
\{  H,K\}
  =2i \sum_{k\in \mathbb{Z}^d} \partial_{\overline{u_k}}H \partial_{u_k} K - \partial_{u_k}H \partial_{\overline{u_k}} K.
\end{equation}
Note that this relation allows to extend the Poisson bracket to the case where $H$ and $K$ are not real valued.

\subsubsection{Subspaces and projections}
Given any subset $\mathcal{A} \subset \mathbb{Z}^d$, we always consider $\mathbb{C}^{\mathcal{A}}$ as the subspace of $\mathbb{C}^{\mathbb{Z}^d}$ of the sequences supported on $\mathcal{A}$. We denote by $\Pi_{\mathcal{A}} : \mathbb{C}^{\mathbb{Z}^d} \to \mathbb{C}^{\mathcal{A}}$ the projection defined by restriction. We extend implicitly any function $F$ on $\mathbb{C}^{\mathcal{A}}$ to a function on $\mathbb{C}^{\mathbb{Z}^d}$ by $F=F\circ \Pi_{\mathcal{A}}$. As a consequence, any function on $\ell^2(\mathcal{A};\mathbb{C}):=\ell^2(\mathbb{Z}^d) \cap \mathbb{C}^{\mathcal{A}}$ can be see as a smooth function in $\ell^2(\mathbb{Z}^d)$ and so all the previous definitions make sense. In particular, we note that if $F : \mathbb{C}^{\mathcal{A}} \to \mathbb{R} $ is $C^1$ then $\nabla F$ is $\mathbb{C}^{\mathcal{A}}$ valued. Given a positive number $N$, we set $\Pi_{\geq N} := \Pi_{ \{ n \in \mathbb{Z}^d \ | \ |n| \geq N \}  }$, $\Pi_{N} := \Pi_{ \{ n \in \mathbb{Z}^d \ | \ |n| \leq N \}  }$ and $\mathbb{Z}^d_N = \{n \in \mathbb{Z}^d \ | \ |n| \leq N \}$.

\medskip

\subsubsection*{Acknowledgements}
The authors would like to thank the anonymous referee for the careful reading of the manuscript and for the valuable comments and suggestions. J.B. would like to thank E. Faou and B. Gr\'ebert for enthusiastic discussions about this problem many years ago.
During the preparation of this work the authors benefited from the support of the Centre Henri Lebesgue ANR-11-LABX-0020-0, the region "Pays de la Loire" through the project "MasCan" and the ANR project KEN ANR-22-CE40-0016. 

\section{Low frequency reduction}
\label{sec:low-freq}
In this section, we reduce the proof of Theorem \ref{thm:main} to that of Theorem \ref{thm:main_low} on the stability of small solutions of a finite-dimensional dynamical system. First, we introduce some notations about polynomials on $\ell^1$ in order to formulate Theorem \ref{thm:main_low}  in a second subsection. Then we prove some a priori estimates for the time-variation of the high-frequency superactions, together with some mismatch lemmas to control the remainder terms generated by the truncations. Finally, we prove that Theorem \ref{thm:main_low} implies Theorem \ref{thm:main}. The dimension $d\geq 1$, the admissible torus $\mathbb{T}^d_{\mathscr{L}}$ and the nonlinearity $f$ of \eqref{eq:nls} are considered as fixed. With a few exceptions, we do not follow in details the dependencies with respect to these constants. 

\medskip

Many of the proof's ingredients are the same as in \cite{BFM22}. However, we use them differently. The main reason is that changes of variables mixing high modes and low modes are, a priori, not compatible with our measure estimates (and in particular that it suffices to draw the low modes of the initial datum randomly to get the almost global existence of the solution to \eqref{eq:nls}). Thus, contrary to \cite{BFM22}, we have to consider two kinds of Birkhoff-like normal forms for \eqref{eq:nls}.
\begin{itemize} \item On the one hand, we consider (in the proof of Proposition \ref{prop:ap-hi}) a normal form in which we have removed all the terms whose small divisors are larger or equal to $1$. It allows us to prove, under a bootstrap assumption, the almost preservation of the high super-actions in Proposition \ref{prop:ap-hi}. It does not require any non-resonance condition but just small and smooth solutions.

\item On the other hand, in Subsection \ref{sub:a-b}, we consider a normal form in which we essentially\footnote{It would be true if $\mathbb{T}^d_{\mathscr{L}}$ satisfied a stronger Diophantine condition. Here, since we did not assume non-resonance conditions for terms of degree larger than or equal to $6$, some quasi-resonant terms may remain. However, these term do not pose significant difficulties and can be ignored on first reading.} remove all non-resonant terms that do not involve high modes.
By construction, the associated change of variable is the identity on high mode which is good for the measure estimates. This normal form allows us to decouple (in the new variables), up to a bootstrap assumption, the low modes dynamics from the the high modes dynamics (thanks to the mismatch lemmas \ref{lem:misI} and \ref{lem:misII}). In other words, it reduces the analysis of the stability of the low modes of \eqref{eq:nls} to the one of a finite (but high) dimensional resonant Hamiltonian system (of which the rest of the paper is devoted).
\end{itemize}

\subsection{A first Hamiltonian formalism} We first introduce a set of multi-indices to describe the polynomials:
\begin{definition}
\label{def:multi}
For $q\geq1$, we define $\mathcal{N}_{2q}$ the set of multi-indices of degree $2q$ with zero momentum 
\[
\mathcal{N}_{2q} = \Big\{ \vec{n} = (n_{1},\cdots,n_{2q})\in (\Z^d)^{2q}\ |\ \sum_{i=1}^{2q}(-1)^{i}n_{i}=0\,\Big\}.
\]
Given $\vec{n}\in\mathcal{N}_{2q}$, we denote the decreasing rearrangement of $(|n_{1}|,\cdots,|n_{2q}|)$ by
\[
n_{1}^{\ast}\geq \cdots \geq n_{2q}^{\ast}.
\]
\end{definition}
Now, we introduce our main class of polynomials.
\begin{definition}[Real homogeneous polynomial]
\label{def:hamb} Let $q\geq 2$ and $\mathcal{A} \subset \mathbb{Z}^d$. The set $\mathcal{H}_{2q}(\mathcal{A})$ of real homogeneous polynomials of degree $2q$ supported on $ \ell^1(\mathcal{A})$ corresponds to the set of functions 
\[
Q(u) = \sum_{\vec{n}\in\mathcal{N}_{2q}} Q_{\vec{n}}u_{n_{1}}\overline{u_{n_{2}}}\cdots \overline{u_{n_{2q}}}=:\sum_{\vec{n}\in\mathcal{N}_{2q}}Q_{\vec{n}}u_{\vec{n}}\,,
\]
where  $(Q_{\vec{n}})\in\C^{\mathcal{N}_{2q}}$ satisfies

\noindent \ (1) (symmetry condition) $Q_{\vec{n}}$ is symmetric in $(n_{1},n_{3},\cdots n_{2q-1})$ and in $(n_{2},n_{4},\cdots n_{2q})$.
 
\noindent \ (2) (reality condition) For all $\vec{n}=(n_{1},n_{2},\cdots,n_{2q})\in\mathcal{N}_{2q}$, 
\[
Q_{(n_{1},n_{2},\cdots n_{2q})} = \overline{Q_{(n_{2q},n_{1},\cdots n_{2q-1})}}. 
\]

\noindent \ (3) (boundedness condition)
\[
\|Q\|_{\infty} := \underset{\vec{n}\in\mathcal{N}_{2q}}{\sup}\ |Q_{\vec{n}}| <+\infty\,.
\]

\noindent \ (4) (support)
$$
Q_{(n_{1},n_{2},\cdots n_{2q})} \neq 0 \quad  \implies\quad \vec{n} \in \mathcal{A}^{2q}.
$$

\end{definition}
\begin{remark}
Thanks to the boundedness condition, these polynomials are smooth functions on $\ell^1(\mathbb{Z}^d)$ and the support condition only means that $Q = Q\circ \Pi_{\mathcal{A}}$.
\end{remark}

\begin{remark} Note that this definition is well suited to the  Hamiltonian $H$ of \eqref{eq:nls} (defined by \eqref{eq:hamNLS}). More precisely, 
in Fourier variables, the Taylor expansion of $H$ writes
$$
H(u) \mathop{=}_{u \to 0} \frac12 \sum_{n\in \mathbb{Z}^d} \lambda_n^2 |u_n|^2 + \frac12 \sum_{q\geq 2} \frac{f^{(q-1)}(0)}{q!} \sum_{\vec{n}\in\mathcal{N}_{2q}}     u_{\vec{n}}
$$
or more quantitatively, for all $r\geq 2$, 
\begin{equation}
\label{eq:expansion_ham}
\begin{split}
H(u) &= \frac12 \sum_{n\in \mathbb{Z}^d} \lambda_n^2 |u_n|^2 + \frac12 \sum_{q=2}^r \frac{f^{(q-1)}(0)}{q!} \sum_{\vec{n}\in\mathcal{N}_{2q}}   u_{\vec{n}}+ R^{(2r+2)} (u) \\ &=:H^{(\leq 2r)}(u) + R^{(2r+2)} (u)
\end{split}
\end{equation}
where $R^{(2r+2)} $ is a remainder term of order $2r+2$ in the sens that for all $u\in B_s(1)$, $s>\frac{d}{2}$
\begin{equation}
\label{eq:rem_NLS}
\| \nabla  R^{(2r+2)} (u) \|_{h^s} \lesssim_{r,s} \| u \|_{h^s}^{2r+1}.
\end{equation}

\end{remark}

\begin{definition}[Resonant function] Given a multi-index $\vec{n}\in\mathcal{N}_{2q}$, the resonant function $\Omega_{\vec{n}}$ is 
\[
\Omega_{\vec{n}} = \sum_{n=1}^{2q}(-1)^{n+1}\lambda_{n}^{2}\,.
\]
\end{definition}

 \begin{definition}[Quasi-resonant Hamiltonian]\label{def:non-kappa} Let $\kappa>0$, $q\geq2$. A real homogeneous polynomial $Q \in \mathcal{H}_{2q}$ 
 is $\kappa$-\emph{resonant} if for all $\vec{n}\in\mathcal{N}_{2q}$, 
 \[
 |\Omega_{\vec{n}}| > \kappa\quad \implies\quad Q_{\vec{n}} = 0\,
 \]
 \end{definition}

\subsection{Dynamics of the low modes : main result}

We set $c_* := 2\tau_\ast +1$ depending only on the geometry of the torus ($\tau_\ast$ is the exponent associated to the admissibility of $\mathbb{T}^d_{\mathscr{L}}$, see Definition \ref{def:tori}).

\begin{theorem}
\label{thm:main_low}
For all $r\geq 9$, there exists $\nu \leq \min((2c_\ast)^{-1},(2d)^{-1})$ such that for all $s>d$, if $\varepsilon \lesssim_{r,s} 1$ is small enough, $M\geq 2$ is a truncation parameter satisfying
$$
M \leq \varepsilon^{-\nu},
$$  
and $H_{\lo}$ is a  real valued polynomial supported on $\ell^1(\mathbb{Z}^d_{\leq M})$ of the form
\begin{equation}
\label{eq:Hlo-0}
H_{\lo}(u) = \frac12 \sum_{|n|\leq M} ( \lambda_n^2 - \frac{f'(0)}2 |u_n|^2) |u_n|^2  + \sum_{j=3}^{2r} P^{(2j)}(u)
\end{equation}
where $P^{(2j)} \in \mathcal{H}_{2j}(\mathbb{Z}^d_{\leq M})$ is a real homogeneous polynomial of degree $2j$ which is 
$\varepsilon^{c_* \nu}$ resonant and satisfies $\| P^{(2j)} \|_{\infty} \leq \varepsilon^{- c_* \nu  j }$, then
there exists an open set $\Theta_{\varepsilon}^\flat \subset \Pi_{M} B_s(\varepsilon)$ such that 
\begin{equation}
\label{eq:density}
\mathrm{meas}(\Theta_{\varepsilon}^\flat ) \geq (1 - \varepsilon^{\frac{1}{39}} )\mathrm{meas}(\Pi_{M} B_s(\varepsilon))
\end{equation}
and if $u \in C^1([-T_{\epsilon},T_{\epsilon}];\Pi_M h^s)$ is such that $u(0) \in \Theta_{\varepsilon}^\flat$ and
$$
\sup_{0\leq t \leq T_{\epsilon}}  \|  i\partial_t u - \nabla H_{\lo} (u)  \|_{h^s} \leq  \varepsilon^{3r}\rho
$$
for some $T_{\epsilon}\leq \varepsilon^{- r}$ and some $\rho>0$ satisfying $\|u(0)\|_{h^s} \leq \rho \leq 2 \varepsilon$ then
\begin{equation}
\label{eq:boo-u}
\sup_{0\leq t \leq T} \sum_{|n|\leq M} \langle n\rangle^{2s} \big| |u_n(t)|^2 - |u_n(0)|^2 \big|\leq \frac{\varepsilon }2 \rho^2\,.
\end{equation}
\end{theorem}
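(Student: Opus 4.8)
The strategy is to conjugate $H_{\lo}$, for a set of internal parameters $\xi$ of asymptotically full relative measure, to a Hamiltonian which is integrable up to a vector field much smaller than $\varepsilon^{2r}$ on the relevant ball, and then to transfer the parameter conditions to a set $\Theta_\varepsilon^\flat$ of initial data. The first step is a preliminary reduction. Since $H_{\lo}$ is supported on the finitely many modes $|n|\le M$ and its nonlinear part is already $\varepsilon^{c_*\nu}$-resonant with $\infty$-norms under control, a finite Birkhoff-type normalization with respect to the quadratic part $\tfrac12\sum_{|n|\le M}\lambda_n^2|u_n|^2$ (in the interaction picture, which preserves all actions) costs essentially nothing: it removes the non-resonant monomials, and — this is the only place admissibility of $\mathbb{T}^d_{\mathscr{L}}$ enters, through the identity $\Omega_{\vec n}=2g(n_1-n_2,n_1-n_4)$, which forces $\{n_1,n_3\}=\{n_2,n_4\}$ when $\Omega_{\vec n}=0$ — the quartic part of the resonant normal form reduces to the integrable term $-\tfrac{f'(0)}{4}\sum_n|v_n|^4$. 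Hence after this step the frequencies $\omega_n(v)=-f'(0)|v_n|^2+\mathcal{O}(v^4)$ are modulated by the first correction term; all polynomials produced are to be carried in the $\mathcal{H}_{2q}$ formalism together with their $\infty$-norms and resonance widths.

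The core is a two-scale normal form. Expanding every polynomial into re-centred monomials as in \eqref{eq:recentered_polynomials}, with $\xi$ the (implicit) actions in the final variables, I attach to each non-integrable monomial the frequency scale $N_\alpha$ of \eqref{eq:Nalpha_intro} fixed by $N_\alpha\le\nb_{-}<N_{\alpha+1}$. Such a monomial preserves every action $|u_n|^2$ with $|n|<N_\alpha$ and, being resonant, non-integrable and (by the first step) free of four-wave interactions, has degree at least $6$, so it generates a vector field of size $\lesssim\rho^5 N_\alpha^{-4s}$ on $B_s(\rho)$. On the other side, for $\xi$ in a set of asymptotically full measure one has small-divisor estimates of the form $|\sum_n h_n\omega_n(v)|\gtrsim\|v\|_{h^s}^2\,\mu_{\max}(h)^{-2|h|_{\ell^1}}\mu_{\min}(h)^{-2s}$ as in \eqref{eq:discussion_sd}, the essential input being the non-degeneracy $\partial_{|v_n|^2}\omega_n=-f'(0)\neq0$, so the relevant divisors cost at most $\varepsilon^{-2}N_\alpha^{2s}$. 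Since the product of these quantities is $\lesssim\varepsilon^3 N_\alpha^{-2s}\ll1$, there is room to normalize. I then run a double induction: the outer one on the scale $\alpha$; the inner one, performed with norms $\Ysup\alpha$ tailored to polynomials whose monomials satisfy $\nb_{-}\ge N_\alpha$ (and their Lipschitz analogues $\Ylip\alpha$), shrinks the coefficients of the non-integrable monomials at scale $\alpha$ one power of $\varepsilon$ at a time, at the price of finitely many further Diophantine conditions on $\xi$ at each stage. Only $\mathcal{O}_r(1)$ scales need to be treated — one stops at a scale $\beta\gtrsim r$ for which $N_\beta^{-4s}\ll\varepsilon^{2r}$ — so, taking $\varepsilon$ small depending on $r$ and $s$, all these reductions are legitimate and the remaining monomials are either integrable or carry negligible coefficients.

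Composing the transformations of the two steps produces a symplectic map $\Psi_\xi$, of size $\mathcal{O}(\|u\|^5)$ away from the identity in $h^s$ and Lipschitz in $\xi$, in which $H_{\lo}\circ\Psi_\xi^{-1}$ is integrable up to a remainder whose vector field is $\lesssim\rho^5 N_\beta^{-4s}\ll\rho^5\varepsilon^{2r}$ on $B_s(\rho)$ for every $\rho\le3\varepsilon$, provided $\xi$ belongs to the open set $\Xi_\varepsilon$ where all the Diophantine conditions hold, which has relative measure $\ge1-\varepsilon^{1/39}$. Defining $\Theta_\varepsilon^\flat$ to be the set of $u_0\in\Pi_M B_s(\varepsilon)$ whose actions, read in the $\Psi$-variables adapted to those very actions, lie in $\Xi_\varepsilon$, and taking $u$ as in the statement, a continuity/bootstrap argument keeps $\|u(t)\|_{h^s}\le3\varepsilon$, and along the flow the $\langle n\rangle^{2s}$-weighted action drift is controlled by the integrable remainder ($\lesssim\rho^6 N_\beta^{-4s}T_\varepsilon$), by the hypothesised model error ($\lesssim\varepsilon^{3r}\rho\,\|u\|_{h^s}\,T_\varepsilon\lesssim\varepsilon^{2r}\rho^2$) and by the distortion of the actions under $\Psi_\xi$ ($\lesssim\rho^6$), each of which is $\ll\tfrac{\varepsilon}{2}\rho^2$ for $\varepsilon$ small; this is \eqref{eq:boo-u}. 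Finally, since $\Psi_\xi$ is Lipschitz in $\xi$ — this is exactly the purpose of $\Ylip\alpha$ — the set $\Theta_\varepsilon^\flat$ is open and inherits the relative measure bound \eqref{eq:density} from $\Xi_\varepsilon$.

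I expect the genuinely hard part to be the Lipschitz bookkeeping rather than the parameter-space normal form, which is close in spirit to \cite{Bou00}. The small divisors, hence every generating Hamiltonian in the iteration, hence $\Psi_\xi$ itself, depend on $\xi$ in a near-singular way through the degenerate factor $\mu_{\min}(h)^{-2s}$; controlling how the Diophantine exclusions introduced at scale $\alpha$ deform under the transformations built at the earlier scales, and showing that the resulting set of initial data (as opposed to the already-understood set of parameters) still has relative measure $\ge1-\varepsilon^{1/39}$, is what forces the entire $\Ysup\alpha/\Ylip\alpha$ apparatus together with its counting and measure estimates.
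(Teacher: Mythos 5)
Your plan follows essentially the same route as the paper: centring the actions around internal parameters $\xi$, classifying the re-centred monomials by the scale $N_\alpha\le \nb_-<N_{\alpha+1}$, running an outer induction on scales and an inner iteration that shrinks the coefficients in the $\Ysup{\alpha}$-norm by powers of $\varepsilon$ under Diophantine conditions on $\xi$, stopping at $\beta\sim r$ with $N_\beta^{-2s}=\varepsilon^{r}$, and transferring the parameter conditions to initial data through the Lipschitz ($\Ylip{\alpha}$) control of the transformations together with a Jacobian/change-of-variables argument for the measure. One organisational remark: the preliminary Birkhoff normalisation you begin with is superfluous at this stage, since the hypotheses of the theorem already provide the integrable quartic term and the $\varepsilon^{c_*\nu}$-resonance of the $P^{(2j)}$ (that reduction belongs to the derivation of the main theorem from this one); what does persist inside the present proof is only the fact that quasi-resonant quartic monomials generated along the iteration are integrable, which is where admissibility re-enters.

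The one concrete flaw is quantitative but it matters for the endgame: the composed map $\Psi_\xi$ is not $\mathcal{O}(\|u\|^5)$-close to the identity, because every generator carries the small-divisor amplification $\gamma(\alpha)^{-1}\varepsilon^{-2}N_{\alpha+1}^{2s}$ (of size up to $\varepsilon^{-r-2}$ at the top scales); what one actually obtains is a displacement of order $\varepsilon^{3/4}\|u\|_{h^s}N_\alpha^{-s}$ per step, cf.\ \eqref{eq:stab} and \eqref{eq:tau}. Consequently your claimed $\rho^6$ bound for the distortion of the $\langle n\rangle^{2s}$-weighted actions under $\Psi_\xi$ is unavailable, and the naive substitute deduced from the displacement, of order $\varepsilon^{3/4}\rho^2$, does \emph{not} beat the target $\tfrac{\varepsilon}{2}\rho^2$ in \eqref{eq:boo-u}. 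The fix is to estimate the action drift directly along each generating flow, $\sum_n\langle n\rangle^{2s}\big|\tfrac{d}{dt}|v_n|^2\big|\le \|\nabla\mathcal{L}(Q)(v)\|_{h^s}\|v\|_{h^s}$, which gains an extra factor $N_\alpha^{-2s}$ and, summed over all steps, yields a total distortion $\lesssim_r\varepsilon^{3/2}\rho^2$ as in \eqref{eq:stab-a} and the last step of the paper's proof; with this correction (and the small loss, from $\varepsilon^{1/38}$ to $\varepsilon^{1/39}$, incurred when the non-resonant parameter set is pushed forward by $\Psi$ and intersected with the ball $\Pi_M B_s(\varepsilon)$), your outline coincides with the paper's argument.
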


\subsection{A Birkhoff normal form theorem and some basic estimates}

In the energy estimates we will need the following estimate, which is a consequence of Cauchy--Schwarz and Young's convolution inequalities:
\begin{lemma}
\label{lem:young}
Let $q\geq1$, $\iota \in \{-1,1\}^{2q}$ and $a^{(1)},\dots, a^{(2q)}\in \ell^1(\Z^d ; \R_+)$. We have
\[
\sum_{\substack{\vec{n}=(n_1,\dots,n_{2q})\in(\Z^d)^{2q}\\ \iota_1n_1+\iota_2n_2+\dots +\iota_{2q}n_{2q}=0}}a_{n_1}^{(1)}\dots a_{n_{2q}}^{(2q)}\leq\underset{\sigma\in\mathfrak{S}_{2q}}{\min} \| a^{(\sigma(1))} \|_{\ell^2}\| a^{(\sigma(2))} \|_{\ell^2} \prod_{i=3}^{2q}\| a^{(\sigma(i))} \|_{\ell^1}\,.
\]
\end{lemma}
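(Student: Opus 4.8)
The plan is to reduce the claimed inequality to the standard Young convolution estimate after an application of Cauchy--Schwarz that isolates two of the $2q$ factors. First I would fix an arbitrary permutation $\sigma \in \mathfrak{S}_{2q}$ and relabel so that $\sigma = \mathrm{id}$; since the left-hand side is symmetric under permuting the pairs $(\iota_i, n_i)$ (reindexing the summation variables), proving the bound for the identity permutation with an arbitrary choice of which two indices play the role of the $\ell^2$ factors suffices, and then one takes the minimum over $\sigma$ at the end.

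Next, the core step: I would split the sum into the product of the first two factors and the remaining $2q-2$ factors. Applying Cauchy--Schwarz in the variables $(n_1, n_2)$ — or more precisely, viewing the sum as a pairing between $(a^{(1)}_{n_1} a^{(2)}_{n_2})$ and the quantity obtained by summing $a^{(3)}_{n_3}\cdots a^{(2q)}_{n_{2q}}$ over all $(n_3,\dots,n_{2q})$ compatible with the zero-momentum constraint $\iota_1 n_1 + \cdots + \iota_{2q} n_{2q} = 0$ — gives a bound of the form
\[
\Big( \sum_{n_1,n_2} (a^{(1)}_{n_1})^2 (a^{(2)}_{n_2})^2 \Big)^{1/2} \Big( \sum_{n_1, n_2} \big( \sum_{\iota_3 n_3 + \cdots = -\iota_1 n_1 - \iota_2 n_2} a^{(3)}_{n_3}\cdots a^{(2q)}_{n_{2q}} \big)^2 \Big)^{1/2}.
\]
The first factor is exactly $\|a^{(1)}\|_{\ell^2}\|a^{(2)}\|_{\ell^2}$. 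For the second factor, I would drop the square (using $\|v\|_{\ell^2} \le \|v\|_{\ell^1}$ for the sequence indexed by the value of $\iota_1 n_1 + \iota_2 n_2$, after noting the inner sum depends on $(n_1,n_2)$ only through $\iota_1 n_1 + \iota_2 n_2$ up to the number of representations — actually it is cleaner to bound $\sum_{n_1,n_2}(\cdots)^2 \le \big(\sum_{n_1,n_2} (\cdots)\big)^2$ termwise since all terms are nonnegative, but one must be careful that this overcounts), reducing to $\big(\sum_{n_1,\dots,n_{2q}, \ \iota\cdot n = 0} a^{(1)}\cdots a^{(2q)}\big)$ type expression which is circular. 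The honest route is: bound the inner sum over $(n_3,\dots,n_{2q})$ by iterated Young to get $\le \prod_{i=3}^{2q}\|a^{(i)}\|_{\ell^1}$ uniformly in $(n_1,n_2)$ — wait, that kills the $\ell^2$ summability.

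Let me restate the correct plan: I would instead write $S = \sum a^{(1)}_{n_1}\cdots a^{(2q)}_{n_{2q}}$ over $\{\iota_1 n_1 + \cdots + \iota_{2q}n_{2q} = 0\}$, introduce $b_m := \sum_{n_3,\dots,n_{2q}: \ \iota_3 n_3 + \cdots + \iota_{2q} n_{2q} = m} a^{(3)}_{n_3}\cdots a^{(2q)}_{n_{2q}}$, so that by iterated Young's convolution inequality $\|b\|_{\ell^1} \le \prod_{i=3}^{2q}\|a^{(i)}\|_{\ell^1}$ and also $b \in \ell^1 \subset \ell^2$ with $\|b\|_{\ell^2}\le \|b\|_{\ell^1}$. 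Then $S = \sum_{n_1,n_2} a^{(1)}_{n_1} a^{(2)}_{n_2} b_{-\iota_1 n_1 - \iota_2 n_2}$, which is $\le \|a^{(1)}\|_{\ell^2}\|a^{(2)}\|_{\ell^2}\|b\|_{\ell^2}$ by two applications of Cauchy--Schwarz (or directly by Young applied to the three sequences $a^{(1)}, a^{(2)}, b$, putting $b$ in $\ell^1$ and $a^{(1)},a^{(2)}$ in $\ell^2$). Combining, $S \le \|a^{(1)}\|_{\ell^2}\|a^{(2)}\|_{\ell^2}\prod_{i=3}^{2q}\|a^{(i)}\|_{\ell^1}$; taking the minimum over which two indices are singled out, i.e.\ over $\sigma \in \mathfrak{S}_{2q}$, yields the claim. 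The only mildly delicate point is keeping the iterated application of Young's inequality clean (it is routine: $\|a*b\|_{\ell^1}\le\|a\|_{\ell^1}\|b\|_{\ell^1}$ and $\|a*b\|_{\ell^\infty}$ or $\ell^2$ estimates), and making sure the momentum constraint is genuinely a convolution constraint, which it is since $\iota_i = \pm 1$ and $n \mapsto \iota_i n$ is a bijection of $\mathbb{Z}^d$. No serious obstacle is expected here; the lemma is essentially bookkeeping around Cauchy--Schwarz plus Young.
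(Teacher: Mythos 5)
Your final plan (after the exploratory false starts, which should be excised) is the standard argument and is exactly what the paper has in mind: it states the lemma as ``a consequence of Cauchy--Schwarz and Young's convolution inequalities'' without writing out a proof, and your route --- set $b_m:=\sum_{\iota_3n_3+\dots+\iota_{2q}n_{2q}=m}a^{(3)}_{n_3}\cdots a^{(2q)}_{n_{2q}}$, bound $\|b\|_{\ell^1}\le\prod_{i\ge3}\|a^{(i)}\|_{\ell^1}$ by iterated Young (the signs $\iota_i=\pm1$ being harmless reflections), then estimate $S=\sum_{n_1,n_2}a^{(1)}_{n_1}a^{(2)}_{n_2}b_{-\iota_1n_1-\iota_2n_2}$ and minimize over the choice of the two distinguished indices --- is precisely that.

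One correction, though: the justification ``$S\le\|a^{(1)}\|_{\ell^2}\|a^{(2)}\|_{\ell^2}\|b\|_{\ell^2}$ by two applications of Cauchy--Schwarz'' is false as stated. The constrained trilinear sum is \emph{not} controlled by three $\ell^2$ norms: taking $d=1$ and $a^{(1)}=a^{(2)}=b=\mathbf{1}_{[-N,N]}$, the left-hand side is of order $N^2$ while $\|a^{(1)}\|_{\ell^2}\|a^{(2)}\|_{\ell^2}\|b\|_{\ell^2}\sim N^{3/2}$; and indeed no sequence of Cauchy--Schwarz applications produces it (after the first Cauchy--Schwarz in $n_1$ one needs $\|a^{(2)}\ast b\|_{\ell^2}\le\|a^{(2)}\|_{\ell^2}\|b\|_{\ell^1}$, i.e.\ Young, not a second Cauchy--Schwarz). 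Your parenthetical alternative is the correct one and should be the only one kept: either Cauchy--Schwarz in $n_1$ followed by Young $\ell^2\ast\ell^1\subset\ell^2$, or directly the trilinear $(2,2,1)$ estimate, giving $S\le\|a^{(1)}\|_{\ell^2}\|a^{(2)}\|_{\ell^2}\|b\|_{\ell^1}$, after which $\|b\|_{\ell^1}\le\prod_{i\ge3}\|a^{(i)}\|_{\ell^1}$ concludes. With that repair the proof is complete and coincides with the paper's intended argument.
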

As usual, we deduce some useful estimate for the vector fields.
\begin{corollary}[Vector field estimates]\label{cor:vecfi} Given $q\geq 2$, $P\in \mathcal{H}_{2q}(\mathbb{Z}^d)$, $s\geq 0$, $P$ defines a smooth function on $h^s \cap \ell^1$, $\nabla P$ is smooth from $h^s \cap \ell^1$ to $h^s \cap \ell^1$ and for all  $u\in h^s \cap \ell^1$, we have
$$
\|\nabla P\|_{h^s}(u) \lesssim_{q,s} \| P \|_{\infty} \|u\|_{h^s} \|u\|_{\ell^1}^{2q-2}.
$$
\end{corollary}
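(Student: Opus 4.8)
The plan is to reduce everything to an estimate on the Fourier coefficients of $\nabla P$, using the expression $(\nabla P(u))_k = 2\partial_{\overline{u_k}}P(u)$ from \eqref{eq:formula_grad}. First I would compute this derivative explicitly. Writing $P(u) = \sum_{\vec{n}\in\mathcal{N}_{2q}} P_{\vec{n}} u_{n_1}\overline{u_{n_2}}\cdots\overline{u_{n_{2q}}}$ and differentiating with respect to $\overline{u_k}$, the symmetry condition in $(n_2,n_4,\dots,n_{2q})$ means each of the $q$ conjugated slots contributes equally, so
$$
(\nabla P(u))_k = 2q \sum_{\substack{\vec{n}\in\mathcal{N}_{2q}\\ n_2 = k}} P_{\vec{n}}\, u_{n_1}\overline{u_{n_3}}\cdots\overline{u_{n_{2q}}}.
$$
Thus $|(\nabla P(u))_k| \leq 2q\,\|P\|_\infty \sum^* |u_{n_1}||u_{n_3}|\cdots|u_{n_{2q}}|$ where the starred sum is over $n_1,n_3,\dots,n_{2q-1},n_4,\dots,n_{2q}$ with the momentum constraint $n_1 - k + n_3 - n_4 + \cdots + n_{2q-1} - n_{2q} = 0$, i.e. over $2q-1$ free indices constrained so that they sum (with signs) to $k$.

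Next I would put weights on to pass to $h^s$. The strategy is the standard one: bound $\langle k\rangle^s \lesssim_{q,s} \langle n_1\rangle^s + \langle n_3\rangle^s + \cdots$ using the momentum relation and the triangle inequality (each $\langle\cdot\rangle^s$ is subadditive up to a constant depending on $s$, since $\langle a+b\rangle \leq \sqrt{2}\langle a\rangle\langle b\rangle$ gives $\langle a_1+\cdots+a_{2q-1}\rangle^s \lesssim_{q,s} \prod\langle a_i\rangle^s$, and then one of the factors can be kept while the others are dominated by $1$ on the relevant index — more precisely one distributes the weight onto whichever index is largest). This reduces $\|\nabla P(u)\|_{h^s}$ to a finite sum of terms each of the form $\sum_k \langle k\rangle^{2s}(\cdots)$ which, after expanding, is controlled by convolution-type sums of the weighted sequence $(\langle n\rangle^s |u_n|)_n = |u|^{h^s}$ against the unweighted sequences $(|u_n|)_n$. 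Concretely one arrives at bounding, for each placement of the weight, a sum of the type appearing in Lemma~\ref{lem:young} with $2q$ sequences, two of them being chosen to carry the $\ell^2$ norms and the remaining $2q-2$ carrying $\ell^1$ norms.

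Then Lemma~\ref{lem:young} finishes it: choosing the permutation $\sigma$ so that the two $\ell^2$-factors are $(\langle n\rangle^s|u_n|)$ (contributing $\|u\|_{h^s}$) and $(|u_n|)$ (contributing $\|u\|_{\ell^2} \leq \|u\|_{\ell^1}$, or more sharply one keeps $\|u\|_{h^s}$ once and absorbs the rest), while the other $2q-2$ factors are $(|u_n|)$ in $\ell^1$, yields a bound $\lesssim_{q,s} \|P\|_\infty \|u\|_{h^s}\|u\|_{\ell^1}^{2q-2}$. Tracking the constant: one loses $2q$ from the differentiation, a constant $C(q,s)$ from the weight-distribution step (the number of terms in the subadditivity expansion is $O(q)$ to the power $O(s)$ or so), and Lemma~\ref{lem:young} itself is constant-free; all of this is absorbed into $\lesssim_{q,s}$. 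Smoothness of $P$ and of $\nabla P$ on $h^s\cap\ell^1$ follows from the same multilinear bounds applied to the (finitely many, by homogeneity) multilinear forms obtained by polarization, together with the fact that a multilinear map bounded on a product of Banach spaces is smooth. The only mild subtlety — hence the step I would be most careful about — is the weight-distribution estimate $\langle k\rangle^s \lesssim_{q,s} \sum_i \langle n_i\rangle^s$ on the constraint set, and making sure that after distributing the weight the remaining sum still has the shape required by Lemma~\ref{lem:young} (i.e. exactly two sequences free to be measured in $\ell^2$); this is routine but is where an over-hasty argument could misplace a factor.
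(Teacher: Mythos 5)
Your overall route is the intended one: the paper offers no written proof of Corollary \ref{cor:vecfi} (it is stated as the ``usual'' consequence of Lemma \ref{lem:young}), and differentiating via \eqref{eq:formula_grad}, bounding coefficients by $\|P\|_\infty$, distributing the weight $\langle k\rangle^s\lesssim_{q,s}\max_i\langle n_i\rangle^s$ on the zero-momentum set, and invoking Lemma \ref{lem:young} is exactly the standard argument; the smoothness claim via boundedness of the polarized multilinear maps is also fine.

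There is, however, one step where your bookkeeping as written does not parse, and it is precisely the step you flagged as delicate. After differentiating, only $2q-1$ factors of $u$ remain, together with the free output index $k$; the quantity to control is the $\ell^2$ norm in $k$ of the weighted sequence, not a scalar constrained sum. To apply Lemma \ref{lem:young} (which bounds a full constrained sum over $2q$ indices) you must supply a $2q$-th sequence, e.g.\ by duality: write
\[
\|\nabla P(u)\|_{h^s}=\sup_{\|w\|_{\ell^2}\le 1}\ \sum_{k}\langle k\rangle^{s}\,\big|(\nabla P(u))_k\big|\,|w_k|,
\]
and then in Lemma \ref{lem:young} the two $\ell^2$ slots must be $w$ and the single weighted copy $(\langle n\rangle^s|u_n|)$, the remaining $2q-2$ unweighted copies of $u$ going in $\ell^1$; this yields exactly $\|P\|_\infty\|u\|_{h^s}\|u\|_{\ell^1}^{2q-2}$. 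As stated, you assign both $\ell^2$ slots to copies of $u$ (and implicitly use $2q$ copies of $u$ where only $2q-1$ exist), which would force the dual sequence $w$ into an $\ell^1$ slot it cannot occupy. The fix is immediate (alternatively, use the convolution form of Young, $\ell^2*\ell^1*\cdots*\ell^1\to\ell^2$, on the weighted sequence directly), so this is a repairable slip rather than a failure of the method, but the final application of Lemma \ref{lem:young} needs to be rewritten in this corrected form.
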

Then, as usual, we deduce the local existence of the Hamiltonian flows of the polynomials of which we summarize the properties that will be useful for us in this paper.
\begin{corollary}[Hamiltonian flows]\label{cor:flow} Let $s > \frac{d}{2}$, $r\geq 2$, $C\geq 1$, $\mathcal{A} \subset \mathbb{Z}^d$. For all $\kappa \in (0,1)$, there exists $\varepsilon_* \gtrsim_{r,s,C} \sqrt{\kappa}$ such that for all real polynomial  $\chi$ of degree smaller than or equal $2r$ of the form
$$
\chi = \chi^{(4)} + \cdots + \chi^{(2r)} 
$$
where
$$
\forall j\in \{ 2,\cdots,r\}, \, \chi^{(2j)} \in \mathcal{H}_{2j}(\mathcal{A})\quad \mathrm{satisfies} \quad \| \chi^{(2j)} \|_{\infty} \leq C\kappa^{-j+1},
$$
there exists a smooth map  
$$
\Phi_{\chi} : \left\{ \begin{array}{cll} [-1,1] \times \Pi_{\mathcal{A}}^{-1}\Pi_{\mathcal{A}}B_s(\varepsilon_*)  & \to & h^s(\mathbb{Z}^d) \\
(t,u) & \mapsto & \Phi_\chi^t(u)
\end{array}\right.
$$
such that for all $u\in \Pi_{\mathcal{A}}^{-1}\Pi_{\mathcal{A}}B_s(\varepsilon_*) $
$$
i\partial_t \Phi_\chi^t(u) = (\nabla \chi)\circ \Phi_\chi^t (u) \quad \forall t \in [-1,1],
$$
and for all $t\in [-1,1]$, $\Phi_\chi^t$ is symplectic,
\begin{itemize}
\item close to the identity
$$
\| \Phi_\chi^t(u) - u \|_{h^s} \leq \Big( \frac{\| \Pi_{\mathcal{A}}u \|_{h^s}}{ \varepsilon_* } \Big)^2 \| \Pi_{\mathcal{A}} u\|_{h^s},
$$
\item its derivative is not too big, i.e. 
\begin{equation}
\label{eq:d-varphi}
\| \mathrm{d} \Phi_\chi^t(u) \|_{h^s \to h^s} \leq 2
\end{equation}
\item and provided that $\Phi_\chi^t(u) \in \Pi_{\mathcal{A}}^{-1}\Pi_{\mathcal{A}}B_s(\varepsilon_*)$, we have $\Phi_\chi^{-t}(\Phi_\chi^t(u) ) = u $.
\end{itemize}
\end{corollary}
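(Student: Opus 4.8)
The plan is to realize $\Phi_\chi$ as the flow of the Hamiltonian vector field $X_\chi := -i\nabla\chi$ via the Cauchy--Lipschitz theorem on the Banach space $h^s$, and then to read off all the listed properties from standard ODE arguments combined with the quantitative bounds of Corollary~\ref{cor:vecfi}. A preliminary reduction is useful: since $\chi = \chi\circ\Pi_{\mathcal A}$, its gradient $\nabla\chi$ is $\mathbb C^{\mathcal A}$-valued and depends only on $\Pi_{\mathcal A}u$ (as recorded in Subsection~\ref{sec:notations}), so the equation $i\partial_t u = \nabla\chi(u)$ decouples: the component $\Pi_{\mathbb Z^d\setminus\mathcal A}u$ is frozen, while $\Pi_{\mathcal A}u$ solves a closed equation on $h^s(\mathcal A):=h^s\cap\mathbb C^{\mathcal A}$. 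It therefore suffices to solve the latter for data in $\Pi_{\mathcal A}B_s(\varepsilon_*)$ and to define $\Phi_\chi^t$ on $\Pi_{\mathcal A}^{-1}\Pi_{\mathcal A}B_s(\varepsilon_*)$ by adding back the frozen component.

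Next comes the quantitative input. Since $s>d/2$ we have the continuous embedding $h^s\hookrightarrow\ell^1$, so Corollary~\ref{cor:vecfi} gives, for each $j\in\{2,\dots,r\}$, that $\nabla\chi^{(2j)}$ is smooth on $h^s$ with $\|\nabla\chi^{(2j)}(u)\|_{h^s}\lesssim_{j,s}\|\chi^{(2j)}\|_\infty\|u\|_{h^s}^{2j-1}\le C\kappa^{-j+1}\|u\|_{h^s}^{2j-1}$, and (by homogeneity, i.e. a Cauchy estimate applied to this polynomial map) $\|\mathrm{d}(\nabla\chi^{(2j)})(u)\|_{h^s\to h^s}\lesssim_{j,s}C\kappa^{-j+1}\|u\|_{h^s}^{2j-2}$. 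Writing $\kappa^{-j+1}\|u\|_{h^s}^{2j-1}=\|u\|_{h^s}\,(\|u\|_{h^s}^2/\kappa)^{j-1}$, I set $\varepsilon_*:=c\sqrt\kappa$ with $c=c(r,s,C)\in(0,1)$ to be fixed; then $\|u\|_{h^s}^2/\kappa\le 4c^2$ on $B_s(2\varepsilon_*)$, and summing over $j$ yields, for some $K=K(r,s)$,
\[
\|\nabla\chi(u)\|_{h^s}\le KCc^2\,\|u\|_{h^s}
\qquad\text{and}\qquad
\|\mathrm{d}(\nabla\chi)(u)\|_{h^s\to h^s}\le KCc^2
\quad\text{on }B_s(2\varepsilon_*).
\]
Choosing $c$ small enough that $KCc^2\le\frac14$, the vector field $X_\chi$ is, on $B_s(2\varepsilon_*)$, bounded by $\frac14\|u\|_{h^s}$ and Lipschitz with constant $\le\frac14$.

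Cauchy--Lipschitz on $h^s(\mathcal A)$ then provides, for each $u$ with $\rho_0:=\|u\|_{h^s}<\varepsilon_*$, a unique maximal solution $t\mapsto\Phi_\chi^t(u)$; a continuity/bootstrap argument based on $\|\partial_t\Phi_\chi^t(u)\|_{h^s}\le\frac14\|\Phi_\chi^t(u)\|_{h^s}$ keeps it inside $B_s(2\rho_0)\subset B_s(2\varepsilon_*)$ as long as it exists, hence it is defined on all of $[-1,1]$; joint smoothness in $(t,u)$ is the standard smooth dependence on initial data for smooth Banach-space ODEs. It remains to verify the properties. \emph{Close to the identity}: integrating $\|\nabla\chi(\Phi_\chi^\sigma(u))\|_{h^s}$ over $[0,t]$ and retaining the leading $j=2$ term gives $\|\Phi_\chi^t(u)-u\|_{h^s}\lesssim_{r,s}C\kappa^{-1}\|\Pi_{\mathcal A}u\|_{h^s}^3=Cc^2(\|\Pi_{\mathcal A}u\|_{h^s}/\varepsilon_*)^2\|\Pi_{\mathcal A}u\|_{h^s}\le(\|\Pi_{\mathcal A}u\|_{h^s}/\varepsilon_*)^2\|\Pi_{\mathcal A}u\|_{h^s}$ after one further shrinking of $c$, the terms $j\ge3$ contributing only extra harmless factors $\le c^2$. \emph{Differential bound}: $\mathrm{d}\Phi_\chi^t(u)$ solves the variational equation $i\partial_t(\mathrm{d}\Phi_\chi^t)=\mathrm{d}(\nabla\chi)(\Phi_\chi^t(u))\,\mathrm{d}\Phi_\chi^t$ with $\mathrm{d}\Phi_\chi^0=\Id$, so Grönwall gives $\|\mathrm{d}\Phi_\chi^t(u)\|_{h^s\to h^s}\le\exp\!\big(\int_0^{|t|}\|\mathrm{d}(\nabla\chi)(\Phi_\chi^\sigma(u))\|_{h^s\to h^s}\,\mathrm{d}\sigma\big)\le e^{1/4}\le 2$. \emph{Symplecticity} follows from the classical computation that $\partial_\sigma(i\,\mathrm{d}\Phi_\chi^\sigma(u)v,\mathrm{d}\Phi_\chi^\sigma(u)w)_{L^2}=0$ because the Hessian of $\chi$ is $L^2$-self-adjoint. \emph{Local inverse}: if $v:=\Phi_\chi^t(u)$ lies in $\Pi_{\mathcal A}^{-1}\Pi_{\mathcal A}B_s(\varepsilon_*)$, then $\sigma\mapsto\Phi_\chi^\sigma(v)$ and $\sigma\mapsto\Phi_\chi^{t+\sigma}(u)$ both solve the ODE on the interval joining $0$ to $-t$, both stay in $B_s(2\varepsilon_*)$, and agree at $\sigma=0$; by uniqueness they agree at $\sigma=-t$, i.e. $\Phi_\chi^{-t}(v)=u$.

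The only delicate point is the calibration $\varepsilon_*\sim\sqrt\kappa$: one must use the hypothesis $\|\chi^{(2j)}\|_\infty\le C\kappa^{-j+1}$ precisely so that the $\kappa$-dependent coefficient growth is exactly absorbed by the factor $\varepsilon_*^{2(j-1)}$ gained from working on a ball of radius $\sim\sqrt\kappa$, and one must isolate the $j=2$ term to produce the cubic close-to-identity estimate. Everything else is routine Banach-space ODE theory, the one structural subtlety being the $\Pi_{\mathcal A}$-decoupling that freezes the complementary modes.
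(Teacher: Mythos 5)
Your proof is correct and follows exactly the route the paper intends: the paper states this corollary without proof as a standard deduction from the vector field estimate of Corollary \ref{cor:vecfi} (Cauchy--Lipschitz on $h^s$ with the block structure $\Phi_\chi^t = \Pi_{\mathcal{A}} (\Phi_\chi^t)_{| h^s(\mathcal{A})} \oplus \mathrm{id}_{h^s(\mathcal{A}^c)}$ noted in the remark after the statement), and your calibration $\varepsilon_*\sim\sqrt\kappa$, bootstrap, Grönwall, symplecticity and uniqueness arguments are the standard ones.
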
 

\begin{remark}
These projections may seem unusual but they are just a consequence of the block diagonal structure of $\Phi_\chi^t$ :
$$
\Phi_\chi^t = \Pi_{\mathcal{A}} (\Phi_\chi^t)_{| h^s(\mathcal{A})} \oplus \mathrm{id}_{h^s(\mathcal{A}^c)}.
$$
\end{remark}

We also recall the following continuity estimates of the Poisson brackets when applied to polynomials.
\begin{lemma} Let $\mathcal{A} \subset \mathbb{Z}^d$ be a subset of $\mathbb{Z}^d$, $P \in \mathcal{H}_{2p}(\mathcal{A})$ a homogeneous real polynomial of degree $2p\geq 4$ supported on $\ell^1(\mathcal{A})$ and $Q \in \mathcal{H}_{2q}(\mathcal{A})$  a real homogeneous polynomial of degree $2q\geq 4$ supported on $\ell^1(\mathcal{A})$ then their Poisson bracket $\{P,Q\} \in \mathcal{H}_{2(p+q-1)}(\mathcal{A})$ is a real homogeneous polynomial of degree $2(p+q-1)$ supported on $\ell^1(\mathcal{A})$ satisfying the bound
$$
\| \{ P,Q\} \|_{\infty} \lesssim pq \|P\|_{\infty} \| Q\|_{\infty}.
$$
\end{lemma}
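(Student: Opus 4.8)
The plan is to compute $\{P,Q\}$ directly from the formula~\eqref{eq:pois-brak} and read off its coefficients. First I would compute the relevant partial derivatives: writing $P(u)=\sum_{\vec n\in\mathcal N_{2p}}P_{\vec n}u_{\vec n}$ and using the symmetry condition~(1) of Definition~\ref{def:hamb}, differentiation with respect to $\overline{u_k}$ produces a factor $p$ and pins one index,
\[
\partial_{\overline{u_k}}P(u)=p\!\!\sum_{\substack{\vec n\in\mathcal N_{2p}\\ n_2=k}}\!\! P_{\vec n}\, u_{n_1}u_{n_3}\cdots u_{n_{2p-1}}\,\overline{u_{n_4}}\cdots\overline{u_{n_{2p}}},
\]
and symmetrically $\partial_{u_k}Q$ carries a factor $q$ together with the constraint $m_1=k$. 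Substituting into~\eqref{eq:pois-brak} yields $\{P,Q\}(u)=2ipq\sum_{k}\big(\text{a monomial of }P\big)\big(\text{a monomial of }Q\big)$, the two contributions of the bracket corresponding to exchanging the roles of $P$ and $Q$ (equivalently, of the two contractions).

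Next I would regroup this expression by monomials $u_{\vec N}$, $\vec N\in\mathcal N_{2(p+q-1)}$. Two structural observations make the estimate clean. First, once one fixes which slots of a given output monomial are fed by $P$ and which by $Q$, the contracted index $k$ is \emph{uniquely determined} by the zero-momentum constraint on $\vec n$; the zero-momentum constraint on $\vec m$ then imposes the same value automatically, since the output monomial has zero momentum. Hence the apparent sum over $k\in\mathbb{Z}^d$ has at most one term and there is no convolution loss. Second, defining $c_{\vec N}$ to be the coefficient of $u_{\vec N}$ divided by its multiplicity (which makes $c_{\vec N}$ automatically symmetric, and gives back $\{P,Q\}=\sum_{\vec N}c_{\vec N}u_{\vec N}$), the sum over the ways of distributing the holomorphic, resp.\ anti-holomorphic, indices of $\vec N$ between $P$ and $Q$ telescopes: if $\mathrm{ord}(\cdot)$ denotes the number of orderings of a multiset, then $\sum_{S\subseteq H,\,|S|=p}\mathrm{ord}(S)\,\mathrm{ord}(H\setminus S)=\mathrm{ord}(H)$, simply because ordering $H$ amounts to choosing a first block, ordering it, and ordering the rest. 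Applying this separately to the holomorphic and to the anti-holomorphic index sets, the combinatorial factor cancels exactly the multiplicity, and one is left with $|c_{\vec N}|\le 4\,pq\,\|P\|_{\infty}\|Q\|_{\infty}$, which is the announced bound — only the $pq$ coming from the derivatives survives.

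Finally I would check the four defining conditions of $\mathcal H_{2(p+q-1)}(\mathcal A)$. The degree is $2(p+q-1)\ge 4$ since $p,q\ge 2$. The symmetry and zero-momentum conditions hold by construction: $c_{\vec N}$ depends only on the underlying monomial, and each output monomial is produced with total momentum $k+(-k)=0$, the $P$- and $Q$-pieces carrying momenta $k$ and $-k$. The support condition holds because $P_{\vec n}=0$ unless all entries of $\vec n$ lie in $\mathcal A$, and likewise for $Q$; in particular the contracted index and every index of the output monomial lie in $\mathcal A$. The reality condition follows from $\{P,Q\}$ being real-valued: using~\eqref{eq:pois-brak} and $\overline{\partial_{\overline{u_k}}P}=\partial_{u_k}P$ (valid since $P$ is real-valued) gives $\overline{\{P,Q\}}=\{P,Q\}$, and a real-valued homogeneous polynomial with symmetric coefficients satisfies condition~(2). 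The only step demanding genuine care is the combinatorial bookkeeping behind the telescoping identity of the second observation; the rest is routine.
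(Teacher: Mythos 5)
Your proof is correct. The paper states this lemma without proof, treating it as a standard estimate, and your argument is exactly the standard computation that fills it in: the factors $p$ and $q$ come from differentiating the symmetric coefficient families, the contracted index $k$ is uniquely fixed by the zero-momentum condition (so no summation loss over $k$), and your symmetrization identity $\sum_{S}\mathrm{ord}(S)\,\mathrm{ord}(H\setminus S)=\mathrm{ord}(H)$, applied separately to the holomorphic and anti-holomorphic index multisets, correctly cancels the multiplicity in the definition of the symmetric coefficients, yielding $|c_{\vec N}|\leq 4pq\,\|P\|_{\infty}\|Q\|_{\infty}$. The verification of the symmetry, zero-momentum, support and reality conditions (the last via realness of $\{P,Q\}$ and uniqueness of the symmetric representation) is also sound.
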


As usual, we deduce of these estimates the following Birkhoff normal form theorem.
\begin{theorem}[Quasi-resonant normal form]
\label{thm:res-nf} Let $r\geq2$, $s> \max(\frac{d}{2},1)$, $\mathcal{A} \subset \mathbb{Z}^d$, $\kappa \in (0,1)$ and  $P = P^{(4)} + \cdots + P^{(2r)}$ be a real Hamiltonian of degree $2r$ supported on $\ell^1(\mathcal{A})$ with $P^{(2j)} \in \mathcal{H}_{2j}(\mathcal{A})$ for all $j\in \{ 2, \cdots,r \}$ and set 
\[
H = Z_2^{\mathcal{A}} + P \quad \mathrm{where } \quad Z_2^{\mathcal{A}}(u) = \frac12\sum_{n\in \mathcal{A}} \lambda_n^2 |u_n|^2 \,.
\]
There exist some polynomials $\chi^{(2j)} \in \mathcal{H}_{2j}(\mathcal{A})$, $2\leq j \leq r$, satisfying
$$
\| \chi^{(2j)} \|_{\infty} \lesssim_{ C,r} \kappa^{-j+1} 
$$
where $C = \max_{j\geq 2} \kappa^{2-j}\| P^{(2j)} \|_{\infty}$ such that on $\Pi_{\mathcal{A}}^{-1}\Pi_{\mathcal{A}}B_s(\varepsilon_*) $ (where $\varepsilon_*$ is given by Corollary \ref{cor:flow} )
$$
H\circ \Phi_{\chi}^1 = Z_2^{\mathcal{A}}  + Q^{(4)} + \cdots+ Q^{(2r)} + \Upsilon
$$
where $\chi = \chi^{(4)} + \cdots + \chi^{(2r)}$, the polynomials $Q^{(2j)} \in \mathcal{H}_{2j}(\mathcal{A})$ are some $\kappa$ resonant real homogeneous polynomials satisfying  for all $j\in \{2,\cdots,r\}$ and all $\vec{n} \in (\mathbb{Z}^d)^4$
$$
\| Q^{(2j)} \|_{\infty} \lesssim_{C,r} \kappa^{-j+2} \quad \mathrm{and} \quad Q^{(4)}_{\vec{n}} = \mathbf{1}_{|\Omega_{\vec{n}}| \leq \kappa} P^{(4)}_{\vec{n}}
$$
and $\Upsilon \in C^1(\Pi_{\mathcal{A}}^{-1}\Pi_{\mathcal{A}}B_s(\varepsilon_*);\mathbb{R})$ is a remainder of order $2(r+1)$ in the sense that for all $u\in \Pi_{\mathcal{A}}^{-1}\Pi_{\mathcal{A}}B_s(\varepsilon_*)$\,,
\begin{equation}
\label{eq:R-res}
\|\nabla \Upsilon(u)\|_{h^{s}}\lesssim_{C,r,s} \big(\frac{\| \Pi_{\mathcal{A}} u\|_{h^{s}}}{\sqrt{\kappa}} \big)^{2r}\|\Pi_{\mathcal{A}} u\|_{h^{s}}\,.
\end{equation}

\end{theorem}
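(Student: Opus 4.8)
The statement is a Birkhoff normal form in which only the large small divisors, $|\Omega_{\vec n}|>\kappa$, are inverted, so the plan is to perform a single Lie transform. I would look for one generating Hamiltonian $\chi=\chi^{(4)}+\cdots+\chi^{(2r)}$ with $\chi^{(2j)}\in\mathcal H_{2j}(\mathcal A)$ and conjugate $H$ by the time-one flow $\Phi_\chi^1$ provided by Corollary~\ref{cor:flow}; that corollary applies as soon as we have established $\|\chi^{(2j)}\|_\infty\lesssim_{C,r}\kappa^{-j+1}$. Writing $\mathrm{ad}_\chi(\cdot)=\{\cdot,\chi\}$, one has formally $H\circ\Phi_\chi^1=\sum_{k\geq 0}\frac{1}{k!}\mathrm{ad}_\chi^kH$; since $Z_2^{\mathcal A}$, $P$ and each $\chi^{(2j)}$ have degrees $2$, between $4$ and $2r$, and at least $4$ respectively, and Poisson brackets combine degrees $2p,2q$ into $2(p+q-1)$, collecting the homogeneous component of degree $2j$ of this series shows it equals $\{Z_2^{\mathcal A},\chi^{(2j)}\}+G^{(2j)}$, where $G^{(2j)}$ is a fixed finite combination of iterated Poisson brackets built only from $Z_2^{\mathcal A}$, $P^{(4)},\dots,P^{(2r)}$ and the $\chi^{(2j')}$ with $j'<j$.

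I would then define $\chi^{(2j)}$ and $Q^{(2j)}$ recursively in $j=2,\dots,r$ through the homological equation
\[
\{Z_2^{\mathcal A},\chi^{(2j)}\}+G^{(2j)}=Q^{(2j)},\qquad Q^{(2j)}_{\vec n}:=\mathbf 1_{|\Omega_{\vec n}|\leq\kappa}\,G^{(2j)}_{\vec n}.
\]
Since $\{Z_2^{\mathcal A},\chi^{(2j)}\}$ multiplies the monomial $u_{\vec n}$ by $\pm i\Omega_{\vec n}$, this is solved by $\chi^{(2j)}_{\vec n}=\mp\,\mathbf 1_{|\Omega_{\vec n}|>\kappa}\,G^{(2j)}_{\vec n}/(i\Omega_{\vec n})$, which is licit because $|\Omega_{\vec n}|>\kappa>0$ on the support; as $|\Omega_{\vec n}|$ is real, symmetric in the odd and in the even entries of $\vec n$, and invariant under the index permutation of the reality condition, both $\chi^{(2j)}$ and $Q^{(2j)}$ inherit the conditions of Definition~\ref{def:hamb} from $G^{(2j)}$, and we get $\|\chi^{(2j)}\|_\infty\leq\kappa^{-1}\|G^{(2j)}\|_\infty$ and $\|Q^{(2j)}\|_\infty\leq\|G^{(2j)}\|_\infty$. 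At $j=2$ one has $G^{(4)}=P^{(4)}$, which yields the stated formula $Q^{(4)}_{\vec n}=\mathbf 1_{|\Omega_{\vec n}|\leq\kappa}P^{(4)}_{\vec n}$.

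The core of the proof is then to establish, by strong induction on $j$, that $\|G^{(2j)}\|_\infty\lesssim_{C,r}\kappa^{-j+2}$, whence $\|\chi^{(2j)}\|_\infty\lesssim_{C,r}\kappa^{-j+1}$ and $\|Q^{(2j)}\|_\infty\lesssim_{C,r}\kappa^{-j+2}$. Up to combinatorial factors depending only on $r$, each summand of $G^{(2j)}$ is an iterated bracket $\mathrm{ad}_{\chi^{(2j_k)}}\cdots\mathrm{ad}_{\chi^{(2j_1)}}B$ with $B\in\{Z_2^{\mathcal A},P^{(2j_0)}\}$, $2\leq j_i\leq j-1$, and $j_0+\sum_{i=1}^{k}(j_i-1)=j$. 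Applying the bracket estimate $\|\{P,Q\}\|_\infty\lesssim pq\,\|P\|_\infty\|Q\|_\infty$ repeatedly, together with the hypothesis $\|P^{(2j_0)}\|_\infty\leq C\kappa^{j_0-2}$, the inductive bounds on the $\chi^{(2j_i)}$, and the crucial observation that $\{Z_2^{\mathcal A},\chi^{(2j_1)}\}$ is nothing but the non-resonant part of $\mp G^{(2j_1)}$, so that $\|\{Z_2^{\mathcal A},\chi^{(2j_1)}\}\|_\infty\leq\|G^{(2j_1)}\|_\infty\lesssim_{C,r}\kappa^{-j_1+2}$, a direct count of the $\kappa$-exponents shows that every summand carries an exponent $\geq-j+2$; since $\kappa<1$, this closes the induction. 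With these bounds in hand, Corollary~\ref{cor:flow} (used with its constant $\lesssim_{C,r}1$) produces the symplectic near-identity flow $\Phi_\chi^t$ on $\Pi_{\mathcal A}^{-1}\Pi_{\mathcal A}B_s(\varepsilon_*)$ with $\varepsilon_*\gtrsim_{r,s,C}\sqrt\kappa$ and with \eqref{eq:d-varphi}; shrinking $\varepsilon_*$ we also assume $\varepsilon_*\leq\sqrt\kappa$. By construction, the Taylor components of $H\circ\Phi_\chi^1$ of degree $\leq 2r$ are exactly $Z_2^{\mathcal A}+Q^{(4)}+\cdots+Q^{(2r)}$, so $\Upsilon:=H\circ\Phi_\chi^1-(Z_2^{\mathcal A}+Q^{(4)}+\cdots+Q^{(2r)})$ has only components of degree $\geq2r+2$.

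Finally, to bound $\Upsilon$ I would use the Taylor formula for the Lie series,
\[
H\circ\Phi_\chi^1=\sum_{k=0}^{r-1}\frac{1}{k!}\mathrm{ad}_\chi^kH+\frac{1}{(r-1)!}\int_0^1(1-t)^{r-1}\,(\mathrm{ad}_\chi^rH)\circ\Phi_\chi^t\,\mathrm{d}t,
\]
observing that, since $\chi$ has degree $\geq4$, every homogeneous component of $\mathrm{ad}_\chi^rH$ has degree $\geq2(r+1)$, and likewise for the polynomial $\sum_{k=0}^{r-1}\frac{1}{k!}\mathrm{ad}_\chi^kH-(Z_2^{\mathcal A}+Q^{(4)}+\cdots+Q^{(2r)})$ (whose low-degree part coincides with that of $H\circ\Phi_\chi^1$). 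For any homogeneous component $R^{(2q)}$ appearing here, $q\geq r+1$, the same $\kappa$-bookkeeping gives $\|R^{(2q)}\|_\infty\lesssim_{C,r}\kappa^{2-q}$, so by Corollary~\ref{cor:vecfi} and $\|\cdot\|_{\ell^1}\lesssim\|\cdot\|_{h^s}$ (for $s>\frac{d}{2}$) we obtain, on $\Pi_{\mathcal A}^{-1}\Pi_{\mathcal A}B_s(\varepsilon_*)$,
\[
\|\nabla R^{(2q)}(u)\|_{h^s}\lesssim_{C,r,s}\kappa^{2-q}\|\Pi_{\mathcal A}u\|_{h^s}^{2q-1}=\kappa\Big(\frac{\|\Pi_{\mathcal A}u\|_{h^s}^2}{\kappa}\Big)^{q-1}\|\Pi_{\mathcal A}u\|_{h^s}\leq\Big(\frac{\|\Pi_{\mathcal A}u\|_{h^s}}{\sqrt\kappa}\Big)^{2r}\|\Pi_{\mathcal A}u\|_{h^s},
\]
using $q-1\geq r$ and $\|\Pi_{\mathcal A}u\|_{h^s}\leq\varepsilon_*\leq\sqrt\kappa$; composing with $\Phi_\chi^t$ only costs the factors $\|\mathrm{d}\Phi_\chi^t\|_{h^s\to h^s}\leq2$ and $\|\Phi_\chi^t(u)\|_{h^s}\leq2\|u\|_{h^s}$, and summing over the finitely many components yields \eqref{eq:R-res}. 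The main obstacle is precisely this $\kappa$-exponent bookkeeping in the last two steps — extracting exactly $\kappa^{-j+1}$, $\kappa^{-j+2}$, $\kappa^{2-q}$ rather than worse powers — and it hinges entirely on never bounding $\{Z_2^{\mathcal A},\chi^{(2j)}\}$ through $|\Omega_{\vec n}|\,\|\chi^{(2j)}\|_\infty$ (which would cost an uncontrolled $\sup_{\vec n}|\Omega_{\vec n}|$), but always identifying it with the non-resonant part of the already-estimated $G^{(2j)}$.
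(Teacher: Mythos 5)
Your proposal is correct and is essentially the standard Birkhoff normal form argument that the paper itself does not reproduce but delegates to the cited references (Theorem 2.15 in \cite{BG22}, Theorem 2.12 in \cite{BGR23}): iterative homological equations with the frequency cutoff $|\Omega_{\vec n}|>\kappa$, the $\kappa$-exponent bookkeeping via $\|\chi^{(2j)}\|_\infty\lesssim\kappa^{-1}\|G^{(2j)}\|_\infty$ and the identification of $\{Z_2^{\mathcal A},\chi^{(2j)}\}$ with the non-resonant part of $-G^{(2j)}$, and the Lie--Taylor remainder controlled through Corollaries \ref{cor:vecfi} and \ref{cor:flow}. The only point to make explicit is the harmless normalization $\varepsilon_*\leq\sqrt{\kappa}$ (compatible with $\varepsilon_*\gtrsim_{r,s,C}\sqrt{\kappa}$), which you already address.
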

This is a formulation of Birkhoff normal form theorem, which is by now standard. For the proof, see Theorem 2.15 in \cite{BG22} or Theorem 2.12 in \cite{BGR23}.

\subsection{Almost preservation of the high-super actions}
In this subsection, we prove Proposition \ref{prop:high_modes}, in which we show that up to a bootstrap assumption we control the $H^s$ norm of the high modes of the solution of \eqref{eq:nls} for very long times. Note that we do not need to use any internal or external parameters. 

\medskip

\begin{proposition}
\label{prop:high_modes} There exist $\delta >0$ and $N_0\geq 2$ depending only on $\mathscr{L}$ such that given $T>0$, $c>1$, $s>d$, $\rho>0$ and $u$ a solution to \eqref{eq:nls} in $C([0,T],h^{s})$ with $\|u(0)\|_{h^{s}} \leq \rho$ and
\begin{equation}
\label{eq:as-ap}
\underset{t\in[0,T]}{\sup}\ \|u(t)\|_{h^{s}}\leq c\rho\,,
\end{equation}
then for all $N\geq N_0$, for all $r\geq2$, provided that $\varepsilon \lesssim_{c,r,s} 1$ is small enough and $t\leq T$,
$$
\| \Pi_{\geq N} u(t) \|_{h^s}^2 \leq  2^{2s} \| u(0) \|_{h^s}^2 +  \rho^3 + TN^{-\frac{\delta}{2}(s-d)} \rho^{3}+T \rho^{2r+1}.
$$
\end{proposition}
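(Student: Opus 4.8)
\medskip

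The plan is to run an energy estimate on the high-frequency super-actions rather than on individual actions, so that the cluster separation of \cite{Bou98,Berti-Maspero-19} can absorb the genuinely resonant quartic interactions. First I would fix the cluster decomposition of $\mathbb{Z}^d$ given by Lemma \ref{lem:cluster} (invoked later in the paper): at a separation scale governed by the admissibility exponent $\tau_\ast$, the Fourier lattice splits into blocks such that any quartic resonance $\Omega_{\vec n}=0$ with $n_1-n_2+n_3-n_4=0$ either has $\{n_1,n_3\}=\{n_2,n_4\}$ (using the computation $\Omega_{\vec n}=2g(n_1-n_2,n_1-n_4)$ recalled in the introduction, together with \eqref{eq:adm}), or forces the four indices to lie in a controlled configuration of clusters; the point is that the super-action $J_{\geq N}(u):=\sum_{|n|\geq N}\langle n\rangle^{2s}|u_n|^2$, or rather a dyadic variant adapted to the clusters, commutes with the truncated resonant part of the dynamics. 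The exponent $\delta>0$ and the threshold $N_0$ are then read off from Lemma \ref{lem:cluster}: $\delta$ is the power relating cluster diameters to the separation scale, and $N_0$ is the scale beyond which the separation property kicks in. These depend only on $\mathscr L$ (through $\tau_\ast$ and $d$), as claimed.

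\medskip

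Next I would differentiate $t\mapsto \|\Pi_{\geq N}u(t)\|_{h^s}^2$ along the flow of \eqref{eq:nls}. Writing $H=Z_2+P^{(4)}+\cdots+P^{(2r)}+R^{(2r+2)}$ via the expansion \eqref{eq:expansion_ham}, the quadratic part $Z_2$ contributes nothing to any action, so
\[
\frac{d}{dt}\|\Pi_{\geq N}u\|_{h^s}^2 = \{\,\|\Pi_{\geq N}\cdot\|_{h^s}^2,\ P^{(4)}+\cdots+P^{(2r)}+R^{(2r+2)}\,\}(u).
\]
The remainder $R^{(2r+2)}$ is handled directly by the vector-field bound \eqref{eq:rem_NLS}: its Poisson bracket against the $h^s$-action is $\lesssim \|u\|_{h^s}^{2r+2}\lesssim (c\rho)^{2r+2}$ by the bootstrap \eqref{eq:as-ap}, and integrating over $[0,t]$ with $t\le T$ gives the $T\rho^{2r+1}$ term (after absorbing the $\rho$-loss into the smallness of $\varepsilon$). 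For each polynomial piece $P^{(2j)}$, $j\geq 2$, I would split the bracket into (i) \emph{action-preserving} monomials — those for which $\{|u_n|^2$-weighted sum, monomial$\}=0$ because the monomial only shuffles Fourier modes inside the region $|n|\geq N$ or inside $|n|<N$ — which contribute zero; (ii) \emph{non-resonant} monomials, i.e. those with $|\Omega_{\vec n}|$ bounded below by a positive power of the cluster separation scale, which I would remove by one step of the Birkhoff-type normal form of Theorem \ref{thm:res-nf} (or rather, absorb into the $\rho^3$ budget via a time-averaging / integration-by-parts argument, since we only need a crude loss here); and (iii) the genuinely \emph{quasi-resonant} monomials that mix low and high modes. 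This last class is where the cluster separation is used: a resonant quartic (or higher) interaction that changes $J_{\geq N}$ must move at least one index across the threshold $|n|=N$, and the separation property of Lemma \ref{lem:cluster} forces the third-largest index in $\vec n$ to be comparable to $N$; combining this with the zero-momentum constraint and Lemma \ref{lem:young} (to sum over the remaining indices using $\ell^1$ norms, which are bounded by $\|u\|_{h^s}$ since $s>d$) produces a gain of $N^{-\delta(s-d)}$ in the surviving terms, yielding the $TN^{-\frac\delta2(s-d)}\rho^3$ contribution. The $2^{2s}\|u(0)\|_{h^s}^2$ term and the extra $\rho^3$ come from the change of variable: passing to the normal-form coordinates costs a factor close to $1$ on each dyadic super-action but, because the super-action is built dyadically (monotone blocks), the accumulated multiplicative constant is at most $2^{2s}$, and the quadratic correction from $\Phi_\chi^1$ being close to the identity is $O(\rho^3)$.

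\medskip

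The main obstacle I anticipate is step (iii): controlling the quasi-resonant low-high interactions without any access to internal parameters. One must be careful that a resonant monomial of degree $2j$ with $j$ large can have all but three of its indices large and essentially free, so the naive bound loses all the decay; the resolution is precisely that the cluster-separation property (Lemma \ref{lem:cluster}) constrains which clusters the large indices can belong to once the monomial is resonant and changes $J_{\geq N}$, and that the relevant "defect" index — the one near the threshold — carries the $\langle\cdot\rangle^{2s}$ weight that is \emph{not} compensated, forcing it to be of size $\gtrsim N$ and hence giving the power saving. Getting the bookkeeping right — namely that every resonant term surviving in the bracket either preserves $J_{\geq N}$ exactly or comes with a factor $N^{-\delta(s-d)}$ — is the crux, and it is exactly the mechanism by which \cite{BFM22} controlled Sobolev norms for \eqref{eq:nlsVast}; here the admissibility condition \eqref{eq:adm} is what makes Lemma \ref{lem:cluster} available in the non-rectangular setting. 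Everything else (the Poisson-bracket algebra, the vector-field estimates, the Gr\"onwall-free direct integration over $[0,T]$) is routine given Corollary \ref{cor:vecfi}, Lemma \ref{lem:young} and Theorem \ref{thm:res-nf}.
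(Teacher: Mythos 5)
Your strategy is essentially the paper's: a $1$-resonant Birkhoff normal form (Theorem \ref{thm:res-nf} with $\kappa=1$, $\mathcal{A}=\mathbb{Z}^d$), an energy estimate on the high super-actions using the cluster separation of Lemma \ref{lem:cluster} to gain a negative power of $N$ on the resonant low--high interactions, the remainder of order $2r+2$ producing $T\rho^{2r+1}$, and the near-identity change of variables together with the dyadic structure of the clusters producing the $\rho^3$ and $2^{2s}$ terms. Two points, however, should be corrected. First, the admissibility condition \eqref{eq:adm} plays no role here: the Berti--Maspero clustering holds for \emph{any} flat torus, with $\delta$ depending only on $d$ and $N_0$ on $\mathscr{L}$, and the proposition uses neither internal nor external parameters nor the triviality of the four-wave interactions; tying $\delta$, $N_0$, or the availability of Lemma \ref{lem:cluster} to $\tau_\ast$ is simply wrong (it is also why the paper can state this a priori bound before any non-degeneracy enters). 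Second, the separation property only forces the \emph{third}-largest frequency of a resonant monomial hitting a cluster $\mathscr{C}_\upsilon$ to satisfy $n_3^\ast\gtrsim m_\upsilon^{\delta/2}$ (not to be comparable to $N$), which is exactly what yields the exponent $\frac{\delta}{2}(s-d)$ after trading $\langle n_3^\ast\rangle^{s-d}$ against the $\ell^1$ norm; and for this mechanism to apply you must run the estimate on the cluster super-actions $\sum_{m_\upsilon\geq M} m_\upsilon^{2s}S_\upsilon$ rather than on the sharp quantity $\|\Pi_{\geq N}u\|_{h^s}^2$, since an interaction confined to a single cluster straddling the threshold $|n|=N$ changes the latter with no separation gain; the comparison with the sharp cutoff is made only at the very end via dyadicity, which is precisely where the factor $2^{2s}$ comes from. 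With these adjustments your outline coincides with the proof of Proposition \ref{prop:ap-hi} and its reduction to Proposition \ref{prop:high_modes}.
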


The proof is based on Birkhoff normal forms, and on the extension of Bourgain's cluster decomposition lemma \cite{Bou98} to any flat tori, which was proved by Berti and Maspero in \cite[Theorem 2.1]{Berti-Maspero-19}. The use of this decomposition in the context of Birkhoff normal forms was recently initiated by Bambusi, Feola and Montalto in \cite{BFM22}. It is a way to overcome the small divisor degeneracy discussed in subsection \ref{sub:resonant}. In the following, we will mainly focus on the proof of Proposition \ref{prop:ap-hi}, from which Proposition \ref{prop:high_modes} easily follows.

\begin{lemma}[Clustering of eigenvalues of $-\Delta_\mathscr{L}$, Theorem 2.1 in \cite{Berti-Maspero-19}]
\label{lem:cluster}
There exist constants $\delta \equiv \delta(d) \in (0,1)$ and $C(\mathscr{L},d)\geq 2$ and a partition of $\Z^d$  
\[
\Z^d = \bigcup_{\upsilon }\mathscr{C}_\upsilon\,,
\]
satisfying the following properties: 
\begin{enumerate}
\item The sets $\mathscr{C}_\upsilon$ are finite and, up to a bounded set $\mathscr{C}_{\upsilon_0}$ such that
\[
\underset{n\in \mathscr{C}_{\upsilon_0}}{\max} |n| \leq C(\mathscr{L},d) \quad \mathrm{and} \quad  |n| < 2 \implies n\in  \mathscr{C}_{\upsilon_0} \,,
\]
\item They are dyadic in the sense that
\begin{equation}
\label{eq:dydy}
 \underset{n\in\mathscr{C}_{{\upsilon}}}{\max} |n| \leq 2\underset{n\in\mathscr{C}_{{\upsilon}}}{\min} |n| =: 2m_\upsilon\,.
\end{equation}
\item If one denotes $[n_1]$ the class of equivalence of $n_1\in\Z^d$, that is
\[
[n_1] = \mathscr{C}_\upsilon\,,\quad \text{where}\ \upsilon\ \text{is such that}\quad n_1\in\mathscr{C}_\upsilon\,,
\]
then we have the separation property: for all $n_1,n_2\in\Z^d$,
\[
[n_1]\neq[n_2]\quad \implies\quad |n_1-n_2| + |\lambda_{n_{1}}^{2}-\lambda_{n_{2}}^{2}| > (|n_1|+|n_2|)^{\delta(d)}\,.
\]
\end{enumerate}
\end{lemma}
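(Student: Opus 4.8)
The statement is \cite[Theorem 2.1]{Berti-Maspero-19} (which extends Bourgain's clustering lemma \cite{Bou98} from $-\Delta$ on the square torus to $-\mathrm{div}\,G\nabla$), so the plan is to recall its proof scheme rather than to reprove it from scratch. The construction is to fix $\delta=\delta(d)>0$ small, form the graph $\Gamma$ on $\mathbb{Z}^d$ obtained by joining $n_1\leftrightarrow n_2$ whenever $|n_1-n_2|+|\lambda_{n_1}^2-\lambda_{n_2}^2|\leq(|n_1|+|n_2|)^{\delta}$, and then declare the clusters $\mathscr{C}_\upsilon$ to be the connected components of $\Gamma$ (lumping all $n$ with $|n|$ below a threshold $C(\mathscr{L},d)$ into one component $\mathscr{C}_{\upsilon_0}$). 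With this definition property (3) is immediate: if $[n_1]\neq[n_2]$ then $n_1$ and $n_2$ are non-adjacent in $\Gamma$, which is exactly the claimed separation. All the content of the lemma is thus concentrated in (1)--(2), namely that each component is finite and, once $\min_{\mathscr{C}_\upsilon}|n|$ is large, that $\max_{\mathscr{C}_\upsilon}|n|\leq 2\min_{\mathscr{C}_\upsilon}|n|$.

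The first step toward (1)--(2) is a geometric observation: along any edge $n\leftrightarrow m$ of $\Gamma$ with $|n|\sim N$ large, the increment $v:=m-n$ is forced to be almost $g$-orthogonal to $n$, since $\lambda_m^2-\lambda_n^2=g(v,n+m)=2g(v,n)+g(v,v)$ gives $|g(v,n)|\lesssim N^{\delta}$, whereas $v\neq 0$ and $|n|\sim N$ would produce $|g(v,n)|\gtrsim N$ as soon as $v$ had a non-negligible radial component. Hence the $g$-norm $\|p\|_g^2=g(p,p)$ — and therefore, up to the condition number of $G$, also $|p|$ — varies by only $O(N^{2\delta})$ across each edge. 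The second step is to show that this per-edge drift cannot accumulate into a macroscopic change of $|p|$ along a path, which I would do by bounding the number of lattice points reachable from a fixed $n_0$: they all lie in a slab $\{\,n:\ |n|\lesssim N,\ |g(n,n)-g(n_0,n_0)|\lesssim N^{1-\eta}\,\}$ swept out by the quasi-orthogonal motion, and one counts the lattice points of $\mathbb{Z}^d$ near such an anisotropic quadric. Choosing $\delta(d)$ small enough relative to the exponents in that counting bound makes the reachable set both finite (property (1)) and confined to a dyadic annulus (property (2)); the exceptional cluster $\mathscr{C}_{\upsilon_0}$ absorbs the finitely many low frequencies below the scale at which these estimates become effective.

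The main obstacle is exactly this accumulation control: estimating the excursion of $|p|$ over paths of a priori unbounded length while using nothing about $G$ beyond positive-definiteness (no rationality, no Diophantine hypothesis). This is where the dependence of $\delta$ on $d$ enters, via the quality of the lattice-point counts for thin slabs of $g(\cdot,\cdot)$: in dimension $d=1$ the statement essentially degenerates to clusters $\{\pm n\}$, whereas for $d\geq 2$ the argument genuinely relies on the geometry of the quadric. Since all of this is carried out in \cite{Berti-Maspero-19} (building on \cite{Bou98}), for the present paper it suffices to invoke the result in the stated form.
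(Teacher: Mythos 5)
Your proposal matches the paper's treatment: Lemma \ref{lem:cluster} is quoted verbatim from Theorem 2.1 of \cite{Berti-Maspero-19} (extending Bourgain's clustering result \cite{Bou98}), and the paper gives no independent proof, exactly as you conclude. Your sketch of the cited argument (clusters as connected components of the adjacency graph, separation being automatic, quasi-orthogonality of increments and accumulation control giving finiteness and dyadicity) is a fair summary of the reference, but it is not required here since the result is simply invoked.
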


\begin{remark}
As usual, we identify the set of the indices $\upsilon$ with the set of the equivalence classes.
\end{remark}

We stress out that this lemma holds for any flat torus, without any Diophantine assumption on the metric. As in \cite{BFM22}, this decomposition allows us to design some almost conserved quantities of \eqref{eq:nls} called \emph{super-action}.

\begin{definition}[Super actions] \label{def:super_actions} Given a mode $n_0\in \mathbb{Z}^d$ the corresponding super-action is defined by
\[
S_{n_0}(u) := \sum_{n\in[n_{0}]} |u_n|^2\,.
\]
\end{definition}

The cluster being dyadic (see \eqref{eq:dydy}), Proposition \ref{prop:high_modes} can easily be deduced  from the following:
\begin{proposition}[A priori estimates for the high superactions]\label{prop:ap-hi} Let $T,u,c,s,\rho$ be as in Proposition \ref{prop:high_modes}. For all $M \geq 2$,
for all $r\geq2$, provided that $\rho\lesssim_r 1$ is small enough and $t\leq T$, we have
\begin{equation}
\label{eq:ap-hi}
\sum_{m_\upsilon \geq M} m_\upsilon^{2s} \big|S_{\upsilon}(u(t))-S_{\upsilon}(u(0))\big| 
	\lesssim_{r,s,c} \Big(\rho^4 + TM^{-\frac{\delta}{2}(s-d)} \rho^{4}+T \rho^{2(r+1)}\Big)\,,
\end{equation}
where $m_{\upsilon}$ was defined in \eqref{eq:dydy}
\end{proposition}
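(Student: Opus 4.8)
\textbf{Proof strategy for Proposition \ref{prop:ap-hi}.}
The plan is to use the resonant Birkhoff normal form (Theorem \ref{thm:res-nf}) applied on the full frequency space $\mathcal{A} = \mathbb{Z}^d$, with a small divisor threshold $\kappa$ chosen as a suitable negative power of $\rho$ (say $\kappa = \rho^{\theta}$ for a small $\theta$ to be optimized), so that the error in $H^s$ coming from the transformation and the remainder $\Upsilon$ is controlled by $T\rho^{2(r+1)}$ after choosing the normal form order large enough. First I would fix such a $\kappa$, write $H^{(\leq 2r)}$ from \eqref{eq:expansion_ham}, apply Theorem \ref{thm:res-nf} to obtain a symplectic change of variables $\Phi = \Phi_\chi^1$ (close to the identity on $B_s(\varepsilon_*)$ with derivative bounded by $2$) such that $H\circ \Phi = Z_2 + Q^{(4)} + \cdots + Q^{(2r)} + \Upsilon$ with each $Q^{(2j)}$ being $\kappa$-resonant, and the $R^{(2r+2)}$ remainder of \eqref{eq:rem_NLS} absorbed into a remainder of order $2(r+1)$. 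Because $\Phi$ is close to the identity with $\|\mathrm{d}\Phi^t\|\leq 2$, it suffices to prove the almost-conservation of $\sum_{m_\upsilon\geq M} m_\upsilon^{2s} S_\upsilon$ along the flow of $H\circ\Phi$ (in the new variable $v$), then transfer back: the difference $S_\upsilon(u)-S_\upsilon(\Phi(u))$ is of size $\|u\|_{h^s}^2 \cdot (\|u\|_{h^s}/\varepsilon_*)^2 \lesssim \rho^4$ per the closeness estimate, which is why the $\rho^4$ term appears in \eqref{eq:ap-hi}.

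Next I would compute $\frac{d}{dt} S_\upsilon(v(t)) = \{ S_\upsilon, H\circ\Phi\}(v(t))$. Since $\{S_\upsilon, Z_2\}=0$ (the super-actions Poisson-commute with the quadratic part, as each $\lambda_n^2$ is constant on the cluster... actually more simply $\{|u_n|^2, Z_2\}=0$), the main contribution is $\sum_{j=2}^r \{S_\upsilon, Q^{(2j)}\}$ plus the harmless remainder $\{S_\upsilon,\Upsilon\}$, the latter bounded by $\|\nabla S_\upsilon\|_{h^s}\|\nabla\Upsilon\|_{h^{-s}}$-type estimates yielding the $T\rho^{2(r+1)}$ term. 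For each $\kappa$-resonant monomial $v_{\vec n}$ appearing in $Q^{(2j)}$, the Poisson bracket with $S_\upsilon$ is non-zero only if some index $n_i$ with $[n_i]=\mathscr{C}_\upsilon$ is \emph{not} matched within the cluster, i.e. the multi-index $\vec n$ involves at least two distinct clusters among those touching $\mathscr{C}_\upsilon$. Here is where the separation property (item (3) of Lemma \ref{lem:cluster}) enters: if $\vec n \in \mathcal{N}_{2j}$ touches two different clusters, then by a telescoping/pigeonhole argument on $\Omega_{\vec n} = \sum (-1)^{i+1}\lambda_{n_i}^2$ combined with $\sum (-1)^{i+1} n_i = 0$, one shows $|\Omega_{\vec n}| \gtrsim (n_1^*)^{\delta}$, or more precisely a lower bound growing with the largest frequency involved. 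Comparing with the $\kappa$-resonance condition $|\Omega_{\vec n}|\leq\kappa = \rho^\theta$, this forces $n_1^* \lesssim \rho^{-\theta/\delta'}$ for the relevant exponent; in particular the clusters $\mathscr{C}_\upsilon$ that can possibly see a non-trivial contribution satisfy $m_\upsilon \lesssim \rho^{-\theta/\delta'}$, OR the contribution is entirely within one cluster. The upshot is that for $m_\upsilon \geq M$ with $M$ above this threshold there is essentially no contribution, and for $M$ below it one estimates the sum $\sum_{M\leq m_\upsilon} m_\upsilon^{2s}|\{S_\upsilon,Q^{(2j)}\}|$ using Lemma \ref{lem:young} / Corollary \ref{cor:vecfi}, extracting the factor $M^{-\frac{\delta}{2}(s-d)}$ by trading one power of $\langle n_1^*\rangle^{-1}$ against $m_\upsilon^{-1}$ repeatedly (roughly $(s-d)$ times, with the $\delta$ coming from the separation exponent and the factor $\frac12$ being slack), while the remaining powers of $\langle\cdot\rangle^s$ are absorbed into $\|v\|_{h^s}^2 \lesssim \rho^2$ and the extra $v$'s into $\|v\|_{\ell^1}$ or $\|v\|_{h^s}$ factors, using $2j \geq 4$ so there are at least two ``spare'' low factors giving $\rho^2$, hence $\rho^4$ overall before time integration.

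The main obstacle, and the step requiring the most care, is the \textbf{interplay between the separation property and the weighted sum over clusters}: one must show that the loss of high-frequency derivatives in $\{S_\upsilon, Q^{(2j)}\}$ (the weight $m_\upsilon^{2s}$, which is comparable to $\langle n_i\rangle^{2s}$ for the index $n_i$ in the cluster) can be compensated by the gain coming from quasi-resonance, namely that a monomial touching two clusters with one of them of size $\sim m_\upsilon$ either has $|\Omega_{\vec n}|$ too large to be $\kappa$-resonant, or has a second large frequency of comparable size so that the extra $\langle\cdot\rangle^s$ weight is already present in $\|v\|_{h^s}$. This is exactly the mechanism by which $M^{-\frac{\delta}{2}(s-d)}$ appears: each unit of separation exponent $\delta$ buys back (after summation) one power of $M^{-1}$ against the $2s$-weight, and one needs $s$ large relative to $d$ for the series over clusters to converge — this is the quantitative heart of the argument and where the clustering lemma of Berti--Maspero is genuinely used. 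The bootstrap hypothesis \eqref{eq:as-ap} is used throughout to keep $\|v(t)\|_{h^s}\lesssim c\rho$ so that $v(t)$ stays in the domain $B_s(\varepsilon_*)$ of the normal form transformation, closing the estimate by integrating in time over $[0,t]\subset[0,T]$.
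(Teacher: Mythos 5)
Your overall architecture (normal form with $\mathcal{A}=\mathbb{Z}^d$, conjugation by $\Phi_\chi^1$, the $\rho^4$ term coming from $\|u-v\|_{h^s}\lesssim\rho^3$, the $T\rho^{2(r+1)}$ term from $\Upsilon$ and $R^{(2r+2)}$) matches the paper, but the core mechanism you invoke to produce the factor $M^{-\frac{\delta}{2}(s-d)}$ contains a genuine error. You claim that if a $\kappa$-resonant monomial $u_{\vec n}$ touches two different clusters, then the separation property yields $|\Omega_{\vec n}|\gtrsim (n_1^*)^{\delta}$, so that clusters with $m_\upsilon$ above a threshold ``see essentially no contribution''. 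This is false: item (3) of Lemma \ref{lem:cluster} bounds from below the \emph{sum} $|n_1-n_2|+|\lambda_{n_1}^2-\lambda_{n_2}^2|$ for $[n_1]\neq[n_2]$, not the resonance function. The term $|n_1-n_2|$ has no bearing on $\Omega_{\vec n}$, and the term $|\lambda_{n_1}^2-\lambda_{n_2}^2|$ can be cancelled by the remaining frequencies $\lambda_{n_3}^2,\dots,\lambda_{n_{2q}}^2$; indeed the paper stresses (subsection \ref{sub:resonant}) that on flat tori small divisors of degree $2q\geq 6$ can only be bounded below by negative powers of the \emph{largest} frequency, so no lower bound $|\Omega_{\vec n}|\gtrsim (n_1^*)^{\delta}$ can hold. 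Consequently your dichotomy (``no contribution for large clusters, or contribution within one cluster'') collapses, and with it the derivation of the decay in $M$; your fallback explanation in the last paragraph (a ``second large frequency'' absorbing the weight) also misses the point, since the second large frequency $n_2^*\gtrsim n_1^*/(2q)$ comes for free from zero momentum and is already spent on the second $\|u\|_{h^s}$ factor in the Young-type estimate.

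The correct mechanism, which is what the paper's Lemma \ref{lem:mu3} proves, is that a nonzero bracket $\{S_\upsilon,u_{\vec n}\}\neq 0$ together with quasi-resonance $|\Omega_{\vec n}|\leq 1$ forces a \emph{third} large frequency: either $|n_1-n_2|\geq \tfrac12 m_\upsilon^{\delta}$, which by zero momentum gives $\max_{j\geq3}|n_j|\gtrsim m_\upsilon^{\delta}$, or $|\lambda_{n_1}^2-\lambda_{n_2}^2|\geq\tfrac12 m_\upsilon^{\delta}$, which combined with $|\Omega_{\vec n}|\leq1$ gives $\max_{j\geq3}|n_j|^2\gtrsim m_\upsilon^{\delta}$; in both cases $n_3^*\gtrsim_q m_\upsilon^{\delta/2}$. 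The gain $M^{-\frac{\delta}{2}(s-d)}$ then comes from weighting this third index by $\langle n_3^*\rangle^{s-d}$, controlled through $\|u\|_{\ell^1_{s-d}}\lesssim_d\|u\|_{h^s}$, while the two largest indices carry the two $\ell^2$-factors in Lemma \ref{lem:young} (this is Corollary \ref{cor:super}); so quasi-resonant monomials attached to large clusters \emph{do} contribute, but with a small prefactor. Finally, your choice $\kappa=\rho^{\theta}$ is unnecessary (the paper takes $\kappa=1$) and only degrades the constants ($\varepsilon_*\gtrsim\sqrt\kappa$, $\|\chi^{(2j)}\|_\infty\lesssim\kappa^{-j+1}$, remainder $\lesssim(\|u\|/\sqrt\kappa)^{2r}\|u\|$); it is not where the argument fails, but it should be dropped once the third-frequency mechanism replaces your threshold argument.
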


\begin{proof}[Proof of Proposition \ref{prop:high_modes} from Proposition \ref{prop:ap-hi}] Indeed, to deduce Proposition \ref{prop:high_modes} from Proposition \ref{prop:ap-hi},  thanks to the dyadicity of the clusters (see \eqref{eq:dydy}) to it enough to set $N_0 =  3C(\mathscr{L},d)$ and $M=N/2$ to get that
$$
\| \Pi_{\geq N} u \|_{h^s}^2  \leq 2^{2s} \sum_{|n|\geq N} m_{[n]}^{2s} |u_n|^2 \leq  2^{2s} \sum_{ m_\upsilon \geq M} m_\upsilon^{2n} S_\upsilon(u). 
$$
and so thanks to Proposition \ref{prop:ap-hi} that
$$
\| \Pi_{\geq N} u(t) \|_{h^s}^2  \leq 2^{2s} \sum_{ m_\upsilon \geq M} m_\upsilon^{2n} S_\upsilon(u(0)) + C_{r,s,c} \, \rho \, \Big(\rho^3 + TN^{-\frac{\delta}{2}(s-d)} \rho^{3}+T \rho^{2r+1}\Big)
$$
where $C_{r,s,c}$ is a constant depending only on $(r,s,c)$. We conclude then the proof of Proposition \ref{prop:high_modes}, by using the estimate $\sum_{ m_\upsilon \geq M} m_\upsilon^{2s} S_\upsilon(u(0)) \leq \|u(0) \|_{h^s}^2$ and by absorbing the constant $C_{r,s,c} $ thanks to the smallness of $\rho$.
\end{proof}

Before we  prove Proposition \ref{prop:ap-hi}, let us consider the following technical lemmas.

\begin{lemma}\label{lem:mu3} Let $\vec{n}\in\mathcal{N}_{2q}$ for some $q\geq2$ be such that $|\Omega_{\vec{n}}|\leq 1$. If there exists a frequency cluster $\mathscr{C}_{\upsilon}$ such that
\[
\{ S_{\upsilon}, u_{\vec{n}}\} \neq 0\,,
\]  
then
\[
 n_{1}^{\ast}\geq m_\upsilon \quad \mathrm{and} \quad n_{3}^{\ast}\gtrsim_q (m_\upsilon)^{\delta/2}\,.
\] 
\end{lemma}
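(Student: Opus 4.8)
The statement to prove is Lemma~\ref{lem:mu3}: for $\vec{n}\in\mathcal{N}_{2q}$ with $|\Omega_{\vec{n}}|\le 1$, if $\{S_\upsilon,u_{\vec n}\}\ne 0$ for some cluster $\mathscr{C}_\upsilon$, then $n_1^\ast\ge m_\upsilon$ and $n_3^\ast\gtrsim_q (m_\upsilon)^{\delta/2}$. The first bound is essentially a bookkeeping fact about the support of the monomial, and the second is where the cluster separation property of Lemma~\ref{lem:cluster} together with the quasi-resonance hypothesis $|\Omega_{\vec n}|\le 1$ does the real work. I would begin by unwinding what $\{S_\upsilon,u_{\vec n}\}\ne 0$ means: since $S_\upsilon(u)=\sum_{n\in\mathscr{C}_\upsilon}|u_n|^2$ is a superposition of actions, its Poisson bracket with the monomial $u_{\vec n}=u_{n_1}\overline{u_{n_2}}\cdots\overline{u_{n_{2q}}}$ is, up to a nonzero scalar, proportional to $\big(\#\{i\ \mathrm{odd}: n_i\in\mathscr{C}_\upsilon\}-\#\{i\ \mathrm{even}: n_i\in\mathscr{C}_\upsilon\}\big)u_{\vec n}$ (use formula \eqref{eq:pois-brak} with $H=S_\upsilon$, $K=u_{\vec n}$). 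Hence the bracket is nonzero precisely when the number of odd-indexed entries of $\vec n$ lying in $\mathscr{C}_\upsilon$ differs from the number of even-indexed ones; in particular at least one $n_i$ belongs to $\mathscr{C}_\upsilon$. For that $n_i$ we have $|n_i|\ge m_\upsilon$ by the dyadic definition \eqref{eq:dydy} of $m_\upsilon$, which already gives $n_1^\ast\ge |n_i|\ge m_\upsilon$.

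For the second (and harder) bound, the point is that the entries of $\vec n$ inside $\mathscr{C}_\upsilon$ cannot be paired off by the zero-momentum relation alone — there is an imbalance of parity — and this imbalance must be compensated by entries outside $\mathscr{C}_\upsilon$, which the separation property forces to be large. Concretely, since odd and even occurrences of indices in $\mathscr{C}_\upsilon$ do not match, one can extract from $\vec n$ at least \emph{two} indices, say $n_a$ with $a$ odd and $n_b$ with $b$ even (or the reverse), such that exactly one of $n_a, n_b$ lies in $\mathscr{C}_\upsilon$; more carefully, because the signed count over $\mathscr{C}_\upsilon$ is nonzero while the total signed sum $\sum_i(-1)^{i+1}n_i=0$ involves all indices, there must be at least two indices $n_i$ with $[n_i]\neq[n_j]$ for appropriate $j$, one of which is in $\mathscr{C}_\upsilon$. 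The clean way to organize this: let $I_\upsilon=\{i: n_i\in\mathscr{C}_\upsilon\}$; then $\sum_{i\in I_\upsilon}(-1)^{i+1}n_i = -\sum_{i\notin I_\upsilon}(-1)^{i+1}n_i$, and the left side has its "number of terms counted with sign" nonzero, so $I_\upsilon\ne\emptyset$ and also $I_\upsilon^c$ must be nonempty and in fact contain at least one index that does not cancel — otherwise the monomial would have to contain a matched pair structure contradicting the parity imbalance. Thus there exist $i\in I_\upsilon$ and $j\notin I_\upsilon$ with $[n_i]\ne[n_j]$, whence by Lemma~\ref{lem:cluster}(3),
\[
|n_i-n_j| + |\lambda_{n_i}^2-\lambda_{n_j}^2| > (|n_i|+|n_j|)^{\delta} \ge |n_i|^{\delta} \ge m_\upsilon^{\delta}.
\]
Now I would use $|\Omega_{\vec n}|\le 1$: write $\Omega_{\vec n}=\lambda_{n_i}^2-\lambda_{n_j}^2+(\text{sum of }2q-2\text{ remaining }\pm\lambda_{n_k}^2)$, so $|\lambda_{n_i}^2-\lambda_{n_j}^2|\le 1 + \sum_{k\neq i,j}\lambda_{n_k}^2\lesssim_q (n_3^\ast)^2$ (the two largest $\lambda$'s are $\lambda_{n_i}^2,\lambda_{n_j}^2$, or at any rate the remaining $2q-2$ entries are all $\le n_3^\ast$ after reindexing), using $\lambda_n^2=g(n,n)\lesssim_{\mathscr{L}}|n|^2$. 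Similarly $|n_i-n_j|\le 2 n_1^\ast$ but that is not small; the correct move is to also bound $|n_i - n_j|$ using momentum: $n_i-n_j = -\sum_{k\neq i,j}(-1)^{\epsilon_k}n_k$ up to signs coming from parities, hence $|n_i-n_j|\lesssim_q n_3^\ast$ as well, \emph{provided} $i$ and $j$ are chosen to be the two indices realizing $n_1^\ast$ and $n_2^\ast$ — this requires an extra argument, namely that we may assume the two largest entries have different classes. Combining, $m_\upsilon^\delta < |n_i-n_j|+|\lambda_{n_i}^2-\lambda_{n_j}^2|\lesssim_q (n_3^\ast)^2$, which yields $n_3^\ast\gtrsim_q m_\upsilon^{\delta/2}$.

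\textbf{The main obstacle.} The delicate point — and the step I would spend the most care on — is justifying that one can select the pair $(n_i,n_j)$ with $[n_i]\ne[n_j]$ to be (essentially) the two indices of largest modulus, so that the zero-momentum relation bounds $|n_i-n_j|$ by the \emph{third} largest modulus $n_3^\ast$ rather than merely by $n_1^\ast$. If the two largest entries happened to lie in the same cluster, one needs to argue that the parity imbalance on $\mathscr{C}_\upsilon$ still forces \emph{some} large entry (of size $\gtrsim m_\upsilon$, which is automatic once $n_1^\ast\ge m_\upsilon$ and using dyadicity of the cluster containing the top entries) to sit in a different class, and then carefully track which remaining indices control $n_i-n_j$. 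The cleanest route is probably: let $p,p'$ be the indices with $|n_p|=n_1^\ast\ge|n_{p'}|=n_2^\ast$; if $[n_p]\ne[n_{p'}]$ apply separation to this pair directly and bound $|n_p-n_{p'}|\le |n_p|+\cdots$ — no, use momentum $n_p-n_{p'}$ (with appropriate signs) equals a signed sum of the other $2q-2$ entries, each $\le n_3^\ast$, so $|n_p - n_{p'}|\lesssim_q n_3^\ast$; if instead $[n_p]=[n_{p'}]=:\mathscr{C}_{\upsilon'}$, then since $|n_p|,|n_{p'}|\ge n_2^\ast$ and the cluster $\mathscr{C}_{\upsilon'}$ is dyadic, $m_{\upsilon'}\ge n_2^\ast/2$, and one runs the parity argument for $\mathscr{C}_{\upsilon'}$ (which must also have been "hit" or else its two members among the top two cancel in a way that, combined with $\{S_\upsilon,u_{\vec n}\}\ne0$ for the possibly different $\upsilon$, still produces a cross-class pair of large indices). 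Handling this branching cleanly, while keeping the implied constants depending only on $q$, is the crux; everything else is routine use of \eqref{eq:pois-brak}, Lemma~\ref{lem:cluster}, and $\lambda_n^2\lesssim_\mathscr{L}|n|^2$.
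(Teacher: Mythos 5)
Your first step is correct and coincides with the paper's: computing $\{S_\upsilon,u_{\vec{n}}\}$ from \eqref{eq:pois-brak} shows the bracket is a nonzero multiple of the signed count of positions of $\vec{n}$ lying in $\mathscr{C}_\upsilon$, so some entry lies in $\mathscr{C}_\upsilon$ and $n_1^\ast\ge m_\upsilon$. The gap is in the second bound, and you concede it yourself: your proof hinges on being able to take the cross-cluster, opposite-parity pair to be the two indices realizing $n_1^\ast,n_2^\ast$, and you leave exactly that selection (your ``crux'') unproved. This is not how the paper proceeds: it keeps the pair furnished by the bracket itself --- an index $n_{j_0}\in\mathscr{C}_\upsilon$ and an opposite-parity index $n_{j_0+o}\notin\mathscr{C}_\upsilon$, relabelled $(n_1,n_2)$ --- applies the separation property of Lemma \ref{lem:cluster} to that pair, and then bounds the two terms separately: zero momentum gives $|n_1-n_2|\le(2q-2)\max_{j\ge3}|n_j|$, and $|\Omega_{\vec{n}}|\le1$ gives $|\lambda_{n_1}^2-\lambda_{n_2}^2|\lesssim_{\mathscr{L},q}1+\max_{j\ge3}|n_j|^2$, so the dichotomy yields $\max_{j\ge3}|n_j|\gtrsim_q m_\upsilon^{\delta/2}$, which the paper then reads off as the stated bound on $n_3^\ast$. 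Your alternative selection, besides being unjustified, does not branch correctly as sketched: the two largest entries need not have opposite parities (in the same-parity case there is no momentum identity for their difference; one must instead use that $\lambda_{n_p}^2+\lambda_{n_{p'}}^2$ enter $\Omega_{\vec{n}}$ with the same sign, so $|\Omega_{\vec{n}}|\le1$ itself does the work --- an ingredient missing from your branching), and in the case $[n_p]=[n_{p'}]=\mathscr{C}_{\upsilon'}$ you appeal to ``the parity argument for $\mathscr{C}_{\upsilon'}$'', but nothing in the hypothesis gives $\{S_{\upsilon'},u_{\vec{n}}\}\ne0$, so that cluster need not be hit at all. As written, the proposal proves only the first assertion of the lemma.

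Your underlying concern --- that a lower bound on $\max_{j\ge3}|n_j|$ controls $n_3^\ast$ only if two further entries are known to be at least as large --- is a fair reading of the final line of the argument, since a priori the pair $(n_1,n_2)$ only supplies the one large entry $n_1$ besides the maximizing $n_{j_1}$ with $j_1\ge3$. But the economical completion is not to re-choose the pair: keep $(n_1,n_2)$ and split on $|n_2|$. If $|n_2|\gtrsim_q m_\upsilon^{\delta/2}$, the three positions $1,2,j_1$ already give $n_3^\ast\gtrsim_q m_\upsilon^{\delta/2}$. If $|n_2|\ll m_\upsilon^{\delta/2}$, then $|\lambda_{n_1}^2-\lambda_{n_2}^2|\gtrsim_{\mathscr{L}}m_\upsilon^2$, so $|\Omega_{\vec{n}}|\le1$ forces some $j_1\ge3$ with $|n_{j_1}|\gtrsim_q m_\upsilon$, and one reruns the dichotomy on the pair $(n_1,n_{j_1})$: separation if their clusters differ, the same-sign $\lambda^2$ contribution to $\Omega_{\vec{n}}$ if their positions have equal parity, and, if they form a cancelling pair inside $\mathscr{C}_\upsilon$, the fact that the parity imbalance survives the removal of a cancelling pair while all remaining entries would be too small to lie in $\mathscr{C}_\upsilon$. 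Some completion of this kind (or a genuine proof of your selection claim) is required before your argument establishes $n_3^\ast\gtrsim_q m_\upsilon^{\delta/2}$.
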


\begin{proof}
First, we note that
$$
\{ S_{\upsilon}, u_{\vec{n}}\} = 2i  \Big( \sum_{j=1}^q \mathbf{1}_{n_{2j} \in \mathscr{C}_\upsilon } - \mathbf{1}_{n_{2j-1}  \in \mathscr{C}_\upsilon } \Big) u_{\vec{n}}.
$$
As a consequence, if $\{ S_{\upsilon}, u_{\vec{n}}\} \neq 0$, there exists $j_0 \in \{1,\cdots,2q\}$ and an odd number $\theta\in \mathbb{Z}$ such that $n_{j_0} \in \mathscr{C}_\upsilon$ and $n_{j_0 + \theta} \notin \mathscr{C}_\upsilon$. By symmetry, without loss of generality, we assume that $j_0=1$ and $j_0 + \theta= 2$. Since $n_1 \in \mathscr{C}_\upsilon$, we have $n_{1}^{\ast}\geq m_\upsilon$. Moreover, thanks to the separation property of the clusters, we have
$$
|n_1-n_2| + |\lambda_{n_{1}}^{2}-\lambda_{n_{2}}^{2}| > (|n_1|+|n_2|)^{\delta} \geq m_\upsilon^{\delta}.
$$
As a consequence either (by the zero momentum condition)
$$
 (2q-2) \max_{j\geq 3} |n_j| \geq |n_1-n_2| \geq  \frac{m_\upsilon^{\delta}}2
$$
or (since $|\Omega_{\vec{n}}|\leq 1$)
$$
 (2q-2)   \max_{j\geq 3} |n_j|^2 + 1 \gtrsim_{\mathscr{L}}  |\lambda_{n_{1}}^{2}-\lambda_{n_{2}}^{2}| \geq \frac{m_\upsilon^{\delta}}2.
$$
In any case, we have
$$
n_3^* \geq \max_{j\geq 3} | n_j | \gtrsim_{\mathscr{L},q}  m_\upsilon^{\delta/2}.
$$
\end{proof}

\begin{lemma} Let $s\geq \sigma \geq 0$, $q\geq 2$ and $Q\in\mathcal{H}_{2q}$.  If $Q$ is $1$-resonant then for all $u\in h^{s} \cap \ell^1_\sigma$, we have
$$
\sum_{\upsilon} m_\upsilon^{2s+\frac{\sigma \delta}2 } |\{S_{\upsilon},Q\}(u)| \lesssim_{q,s} \|Q\|_{\infty} \|u\|_{h^{s}}^{2} \| u \|_{\ell^1_\sigma} \|u\|_{\ell^{1}}^{2q-3}.
$$
\end{lemma}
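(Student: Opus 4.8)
The statement is a weighted $\ell^1$-type estimate on the super-action defects $\{S_\upsilon, Q\}(u)$, and the natural route is to expand everything in Fourier monomials, invoke Lemma~\ref{lem:mu3} to locate the relevant frequency scales, and then apply the Young-type convolution estimate of Lemma~\ref{lem:young}. First I would write $Q(u) = \sum_{\vec n \in \mathcal N_{2q}} Q_{\vec n} u_{\vec n}$ and compute, as in the proof of Lemma~\ref{lem:mu3},
\[
\{S_\upsilon, Q\}(u) = 2i \sum_{\vec n \in \mathcal N_{2q}} Q_{\vec n}\Big( \sum_{j=1}^q \mathbf 1_{n_{2j}\in \mathscr C_\upsilon} - \mathbf 1_{n_{2j-1}\in \mathscr C_\upsilon}\Big) u_{\vec n},
\]
so that $|\{S_\upsilon, Q\}(u)| \leq 2q\,\|Q\|_\infty \sum_{\vec n} \mathbf 1_{\{\vec n \text{ meets } \mathscr C_\upsilon\}}\,|u_{\vec n}|$, where "$\vec n$ meets $\mathscr C_\upsilon$" means some $n_i \in \mathscr C_\upsilon$ and the bracket is nonzero. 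Summing over $\upsilon$ with the weight $m_\upsilon^{2s + \sigma\delta/2}$ and exchanging the order of summation, each $\vec n$ contributes to at most $2q$ clusters $\upsilon$ (the ones containing its entries), and for those $m_\upsilon \leq \langle n_i\rangle$ for the corresponding entry; using that $Q$ is $1$-resonant we may restrict to $|\Omega_{\vec n}|\leq 1$ and apply Lemma~\ref{lem:mu3}.

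\textbf{Key steps.} After the reduction above the weight $m_\upsilon^{2s+\sigma\delta/2}$ attached to a cluster met by $\vec n$ must be bounded by quantities depending on the $n_i^\ast$. By Lemma~\ref{lem:mu3}, whenever $\{S_\upsilon, u_{\vec n}\}\neq 0$ and $|\Omega_{\vec n}|\leq 1$ we have $m_\upsilon \leq n_1^\ast$ and $m_\upsilon \lesssim_q (n_3^\ast)^{2/\delta}$. Hence
\[
m_\upsilon^{2s+\frac{\sigma\delta}{2}} \lesssim_{q} (n_1^\ast)^{2s}\,(n_3^\ast)^{\sigma},
\]
which is exactly the split needed to feed Lemma~\ref{lem:young}. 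Then I would bound
\[
\sum_\upsilon m_\upsilon^{2s+\frac{\sigma\delta}{2}}|\{S_\upsilon, Q\}(u)| \lesssim_{q}\|Q\|_\infty \sum_{\substack{\vec n\in \mathcal N_{2q}\\ |\Omega_{\vec n}|\leq 1}} (n_1^\ast)^{2s}(n_3^\ast)^{\sigma}\, |u_{n_1}|\cdots|u_{n_{2q}}|.
\]
Now apply Lemma~\ref{lem:young} with $a^{(i)}_n = \langle n\rangle^{s}\,\mathbf 1_{\langle n\rangle = n_1^\ast\text{ slot}} \cdots$ — more precisely, distribute the weight $(n_1^\ast)^{2s}$ as $\langle n\rangle^{s}$ onto the two largest indices (placing $\langle n\rangle^{s}|u_n|$ in the two $\ell^2$ factors of Lemma~\ref{lem:young}) and the weight $(n_3^\ast)^{\sigma}$ as $\langle n\rangle^{\sigma}$ onto the third-largest index (into one $\ell^1_\sigma$ factor), leaving plain $|u_n|$ for the remaining $2q-3$ indices (into $\ell^1$ factors). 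Summing over the finitely many ($O_q(1)$) ways of assigning which slots are the largest, largest-two and third-largest, Lemma~\ref{lem:young} yields
\[
\sum_\upsilon m_\upsilon^{2s+\frac{\sigma\delta}{2}}|\{S_\upsilon, Q\}(u)| \lesssim_{q,s}\|Q\|_\infty\, \|u\|_{h^s}^2\,\|u\|_{\ell^1_\sigma}\,\|u\|_{\ell^1}^{2q-3},
\]
which is the claim. (I am dropping the factor $\mathbf 1_{|\Omega_{\vec n}|\leq 1}$ at the last step, since we only need an upper bound.)

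\textbf{Main obstacle.} The routine parts are the Fourier expansion of $\{S_\upsilon, Q\}$ and the final application of Lemma~\ref{lem:young}; the only real point requiring care is the bookkeeping in the intermediate inequality $m_\upsilon^{2s+\sigma\delta/2}\lesssim_q (n_1^\ast)^{2s}(n_3^\ast)^\sigma$. One must check that the cluster $\upsilon$ in question is indeed one that is "met" by $\vec n$ with nonzero bracket (so that Lemma~\ref{lem:mu3} applies with that very $\upsilon$), and that when several entries of $\vec n$ lie in distinct clusters the weights combine correctly after the exchange of summation — i.e.\ that each $\vec n$ is counted with multiplicity at most $2q$ and each such term is controlled by the same monomial bound. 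Once the exponents in Lemma~\ref{lem:mu3} are matched against $2s + \sigma\delta/2$ (using $2s$ from $n_1^\ast \geq m_\upsilon$ and $\sigma\delta/2$ absorbed into $n_3^\ast \gtrsim m_\upsilon^{\delta/2}$), the rest is the standard multilinear convolution estimate, so I expect no genuine difficulty beyond careful indexing.
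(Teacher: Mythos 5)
Your proposal is correct and follows essentially the same route as the paper's own proof: expand $\{S_\upsilon,Q\}$ over monomials, use Lemma \ref{lem:mu3} to get $m_\upsilon\leq n_1^{\ast}$ and $n_3^{\ast}\gtrsim_q m_\upsilon^{\delta/2}$, control the multiplicity in the sum over clusters, and conclude with the Young-type estimate of Lemma \ref{lem:young}. The only point you leave tacit is that redistributing $(n_1^{\ast})^{2s}$ as $\langle n\rangle^{s}$ on the two largest entries requires the zero-momentum bound $n_2^{\ast}\geq (2q)^{-1}n_1^{\ast}$ (stated explicitly in the paper); without it $(n_1^{\ast})^{2s}\leq (n_1^{\ast})^{s}(n_2^{\ast})^{s}$ is false, and this comparability is exactly what lets both $h^{s}$ factors be extracted before applying Lemma \ref{lem:young}.
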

\begin{proof} First, we note that if $\upsilon = \upsilon_0$ then $m_\upsilon = 0$ so we do not have to consider this class in the sum. Therefore, we consider a class $\upsilon \neq \upsilon_0$. Note that it implies that $m_{\upsilon} \geq 2.$ Then, we note that
$$
\{S_{\upsilon},Q\} = \sum_{\vec{n}\in\mathcal{N}_{2q}} Q_{\vec{n}} \{S_\upsilon, u_{\vec{n}} \}.
$$
Using Lemma \ref{lem:mu3}, we deduce that we can reduce the sum to the set of the indices satisfying $n_{1}^{\ast}\geq m_\upsilon \geq 2$ and
\[
n_{3}^{\ast}\gtrsim_{q}m_\upsilon^{\delta/2}.
\]
Moreover, using the zero momentum condition, we note that $n_{2}^{\ast}\geq (2q)^{-1} n_1^\ast \geq (2q)^{-1} m_\upsilon$. It follows that
\begin{align*}
\sum_{\upsilon } m_\upsilon^{2s+\frac{\sigma \delta}2}|\{S_{\upsilon},Q\}(u)|
	&\lesssim_q \sum_{\upsilon} m_\upsilon^{2s+\frac{\sigma \delta}2}   \sum_{\substack{n_{2}^{\ast} \gtrsim_q m_\upsilon \\ n_{3}^{\ast} \gtrsim_{q} m_\upsilon^{\delta/2}   }}|Q_{\vec{n}}||u_{n_1}| \cdots |u_{n_{2q}}|   \\	
	&\lesssim_{q,s} \| Q\|_{\infty} \sum_{\vec{n}\in\mathcal{N}_{2q}}  (n_{1}^{\ast})^{s}  (n_{2}^{\ast})^{s}  (n_{3}^{\ast})^{\delta \sigma /2} |u_{n_{1}}|\cdots |u_{n_{2q}}| \\
	&\lesssim_{q,s} \| Q\|_{\infty} \sum_{\varphi \in \mathfrak{S}_{2q}} \sum_{\vec{n}\in\mathcal{N}_{2q}}  |n_{\varphi_1}|^{s}  |n_{\varphi_2}|^{s}  |n_{\varphi_3}|^{\delta \sigma /2} |u_{n_{1}}|\cdots |u_{n_{2q}}|.
\end{align*}
And so we get the expected estimate by applying the Young estimate of Lemma \ref{lem:young}.
\end{proof}
By applying this lemma with $\sigma  = s-d$ and by considering only the classes such that $m_\upsilon$ is large enough we deduce the following estimate.
\begin{corollary}
\label{cor:super}Let $s\geq d$, $q\geq 2$ and $Q\in\mathcal{H}_{2q}$. If $Q$ is $1$-resonant then for all $u\in h^{s}$ and $M\geq 2$ we have 
\begin{equation}
\label{eq:super}
\sum_{m_\upsilon \geq M} m_\upsilon^{2s} |\{S_{\upsilon},Q\}(u)| \lesssim_{q,s} \|Q\|_{\infty} M^{-\frac{\delta}{2}(s-d)}\|u\|_{h^{s}}^{3} \|u\|_{\ell^{1}}^{2q-3}\,.
\end{equation}
\end{corollary}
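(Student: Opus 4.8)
The plan is to obtain Corollary~\ref{cor:super} as a soft consequence of the preceding lemma, by specializing its weight parameter to $\sigma = s-d$ and then trading a power of the cluster size for a power of the truncation parameter $M$. First I would record that the hypothesis $s\geq d$ of the corollary is exactly what makes $\sigma := s-d$ admissible in the lemma, since it gives $s\geq s-d\geq 0$. Applying the lemma with this choice yields, for every $u\in h^s\cap\ell^1_{s-d}$,
\[
\sum_{\upsilon} m_\upsilon^{2s+\frac{\delta}{2}(s-d)}\,\big|\{S_{\upsilon},Q\}(u)\big| \;\lesssim_{q,s}\; \|Q\|_{\infty}\,\|u\|_{h^s}^{2}\,\|u\|_{\ell^1_{s-d}}\,\|u\|_{\ell^1}^{2q-3}.
\]

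Next I would eliminate the auxiliary norm $\|u\|_{\ell^1_{s-d}}$ in favor of $\|u\|_{h^s}$ via the elementary embedding $h^s\hookrightarrow \ell^1_{s-d}$. By Cauchy--Schwarz,
\[
\|u\|_{\ell^1_{s-d}} \;=\; \sum_{k\in\mathbb{Z}^d}\langle k\rangle^{-d}\,\langle k\rangle^{s}|u_k| \;\leq\; \Big(\sum_{k\in\mathbb{Z}^d}\langle k\rangle^{-2d}\Big)^{1/2}\|u\|_{h^s} \;=\; C_d\,\|u\|_{h^s},
\]
the series being finite because $2d>d$. In particular any $u\in h^s$ lies in $\ell^1_{s-d}\subset\ell^1$, so the lemma does apply to an arbitrary $u\in h^s$, and the displayed bound becomes $\lesssim_{q,s}\|Q\|_{\infty}\|u\|_{h^s}^{3}\|u\|_{\ell^1}^{2q-3}$ (absorbing $C_d$ into the implicit constant, which is allowed since it depends only on $s$ and $d$).

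Finally I would restrict the sum to clusters with $m_\upsilon\geq M$ and extract the gain: for such $\upsilon$ one has $m_\upsilon^{2s+\frac{\delta}{2}(s-d)}\geq M^{\frac{\delta}{2}(s-d)}\,m_\upsilon^{2s}$, hence
\[
M^{\frac{\delta}{2}(s-d)}\sum_{m_\upsilon\geq M} m_\upsilon^{2s}\,\big|\{S_{\upsilon},Q\}(u)\big| \;\leq\; \sum_{\upsilon} m_\upsilon^{2s+\frac{\delta}{2}(s-d)}\,\big|\{S_{\upsilon},Q\}(u)\big| \;\lesssim_{q,s}\; \|Q\|_{\infty}\,\|u\|_{h^s}^{3}\,\|u\|_{\ell^1}^{2q-3},
\]
and dividing by $M^{\frac{\delta}{2}(s-d)}$ produces exactly~\eqref{eq:super}. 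I do not expect any genuine obstacle here: the corollary is a routine quantitative refinement of the lemma, and the only step meriting a line of justification is the embedding $h^s\hookrightarrow \ell^1_{s-d}$, which is precisely where the assumption $s\geq d$ is used (it ensures both that $\sigma=s-d\geq 0$ is a legitimate choice in the lemma and that the weight loss $\langle k\rangle^{-d}$ is square-summable on $\mathbb{Z}^d$).
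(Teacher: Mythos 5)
Your proposal is correct and follows exactly the paper's route: apply the preceding lemma with $\sigma=s-d$, note that $\|u\|_{\ell^1_{s-d}}\lesssim_d\|u\|_{h^s}$ so any $u\in h^s$ is admissible, and restrict to classes with $m_\upsilon\geq M$ to extract the factor $M^{-\frac{\delta}{2}(s-d)}$. No gaps.
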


Now we start proving Proposition \ref{prop:ap-hi} about the preservation of the super-actions of \eqref{eq:nls}.

\begin{proof}[Proof of Proposition \ref{prop:ap-hi}]
Recall the Hamiltonian formulation \eqref{eq:hamNLS} of \eqref{eq:nls} and the Taylor expansion $H = H^{(\leq 2r)} + R^{(2r+2)}$ of its Hamiltonian \eqref{eq:expansion_ham}. Apply the Birkhoff normal form theorem~\ref{thm:res-nf} with $\kappa=1$ and $\mathcal{A}= \mathbb{Z}^d$ to the Hamiltonian $H^{(\leq 2r)}$ of degree $2r$ to put it in $1$-resonant normal form up to order $2r$. This gives an auxiliary Hamiltonian $\chi = \chi^{(4)} + \cdots + \chi^{(2r)}$ with $\| \chi^{(2j)}\|_\infty \lesssim_r 1$ and, through Corollary \ref{cor:flow}, a radius $\varepsilon_\ast \gtrsim_{r,s} 1$ such that on $B_s(\varepsilon_*)$
\[
\widetilde{H}:=H^{(\leq 2r)}\circ\Phi_\chi^1 =Z_2+ \sum_{q=2}^{r}\widetilde{Q}^{(2q)}+\Upsilon =: \widetilde{H}^{(\leq 2r)} + \Upsilon
\]
where $\widetilde{Q}^{(2q)} \in \mathcal{H}_{2q}$ are 1-resonant  homogeneous polynomials satisfying $\| Q^{(2q)}\|_{\infty} \lesssim_q 1 $ and 
the remainder $\Upsilon$ is of order $2(r+1)$ in the sense that for all $w\in B_s(\varepsilon_*)$
\begin{equation}
\label{eq:bientotnoel}
\|\nabla \Upsilon(w)\|_{h^{s}}\lesssim_{r,s} \| w\|_{h^s}^{2r+1}\,.
\end{equation}
From now assume that the solution $u$ of \eqref{eq:nls} satisfies $\| u(0) \|_{h^s} \leq \rho \leq 10^{-2} c^{-1}\varepsilon_\ast$. It follows that for all $t\in [0,T]$, we have $\|u(t) \|_{h^s} \leq c \rho\leq 10^{-2} \varepsilon_\ast$ and so that $v = \Phi_\chi^{-1}(u) \in C^0([0,T];h^s)$ is well defined for $t\in [0,T]$. Moreover, $ \Phi_\chi^{-1}$ being close to the identity, we have
\begin{equation}
\label{eq:pr-var}
\|u(t)-v(t)\|_{h^{s}}\leq \varepsilon_{\ast}^{-2}\|u(t)\|_{h^{s}}^{3} \lesssim_{r,c} \rho^3  \,,\quad \text{and \ so}\quad \|v(t)\|_{h^{s}}\lesssim_{r,c} \rho \,.
\end{equation}
Without loss of generality, we can assume that\footnote{It can be done by approximating any $u(0)$ by smoother initial data (e.g. trigonometric polynomials) and using the preservation of the regularity for \eqref{eq:nls} (i.e. that while the $\ell^1$ norm of the solution is bounded, its $h^{s+2}$ norm does not blow up). Such an approximation is classical and is done in details, for example, in \cite{BG22}.} $u(0) \in h^{s+2}$ to get $u\in C^1([0,T];h^s)$ and so that $v \in  C^1([0,T];h^s)$. Since  $\Phi_\chi^{-1}$ is symplectic, using the chain rule, we get that
\[
i\partial_{t}v = \nabla \widetilde{H}^{(\leq 2q)}(v) + \nabla \Upsilon (v) + \mathrm{d}\Phi_\chi^{-1}(u)( \nabla R^{(2r+2)}(u))\,.
\] 
As a consequence, one can bound the variation of the superactions as follows:
\begin{multline}
\label{eq:pr-v}
\sum_{m_\upsilon \geq M}m_\upsilon^{2s}\big|\frac{d}{dt}S_{\upsilon}(v(t))\big| 
	\leq \sum_{m_\upsilon \geq M}m_\upsilon^{2s}|\{S_{\upsilon},\widetilde{H}^{(\leq 2r)}\}(v)| 
	+ \|\nabla \Upsilon (v)\|_{h^{s}}\|v\|_{h^{s}}\\+\|\mathrm{d}\Phi_\chi^{-1}(u)\|_{h^{s}\to h^{s}}\|\nabla R^{(2r+2)}(u)\|_{h^{s}}\|v\|_{h^{s}}\,.
\end{multline}
According to Corollary \ref{cor:super} the first term on the right-hand side is bounded (up to a constant depending only on $r$ and $s$) by
\[
\underset{2\leq q \leq r}{\max} \|\widetilde{Q}^{(2q)}\|_{\infty} M^{-\frac{\delta}{2}(s-d)}\|v(t)\|_{h^{s}}^{3} \|v(t)\|_{\ell^{1}}^{2q-3} \lesssim_{r,s,c}  M^{-\frac{\delta}{2}(s-d)} \rho^4 \,.
\]
Moreover, the estimate \eqref{eq:d-varphi} on $\mathrm{d}\Phi_\chi^{t}$  together with the estimate \eqref{eq:rem_NLS} on $\nabla R^{(2r+2)}$ yield 
\[
\|\mathrm{d}\Phi_\chi^{-1}(u)\|_{h^{s}\to h^{s}}\| \nabla R^{(2r+2)} (u)\|_{h^{s}} \lesssim_{r,s,c} \rho^{2r+1}\,.
\]
Using also the estimate \eqref{eq:bientotnoel} on $\nabla \Upsilon$, 
we conclude that 
$$
\sum_{m_\upsilon \geq M}m_\upsilon^{2s}\big|\frac{d}{dt}S_{\upsilon}(v(t))\big| \lesssim_{c,r,s} M^{-\frac{\delta}{2}(s-d)} \rho^4+ \rho^{2r+1}.
$$
Finally, using the fundamental theorem of calculus and that $v$ is close to $u$ (see \eqref{eq:pr-var}; it provides the term $\rho^4$ without factor $T$ in \eqref{eq:ap-hi}),  we get the expected estimate.
\end{proof}

\subsection{Mismatch lemmata} In order to deduce Theorem \ref{thm:main} from Theorem \ref{thm:main_low}, we will have to consider truncated subsystems. In this subsection, we establish two technical lemmata proving that the associated truncation remainder terms are very small.

\begin{lemma}[Mismatch vector field estimate: I]\label{lem:misI}
For all $q\geq 2$, $s>d$, $u\in h^s$, all $M,N\geq 2$ with 
$N<\frac{1}{2q}M\,, $
and for all $Q\in\mathcal{H}_{2q}$ such that $Q\circ \Pi_M = 0$, we have 
\begin{equation*}
\|\Pi_{N} \nabla Q(u)\|_{h^{s}} \lesssim_{q} M^{-(s-d)}
 	\|Q\|_{\infty}
	\|u\|_{h^{s}}^{3}\|u\|_{\ell^{1}}^{2q-4}\,.
\end{equation*}
\end{lemma}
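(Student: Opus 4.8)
The plan is to expand $\nabla Q(u)$ in Fourier and exploit the hypothesis $Q\circ\Pi_M = 0$ to force at least one large frequency into every surviving monomial, then trade that large frequency for the decay factor $M^{-(s-d)}$. Recall from \eqref{eq:formula_grad} that $(\nabla Q(u))_k = 2\partial_{\overline{u_k}}Q(u)$, so that
$$
(\nabla Q(u))_{n_2} = 2q\sum_{\substack{\vec n\in\mathcal N_{2q}\\ n_2 \text{ fixed}}} Q_{\vec n}\, u_{n_1}\overline{u_{n_3}}\cdots\overline{u_{n_{2q}}},
$$
using the symmetry condition on $Q_{\vec n}$ to collect the $q$ choices of which conjugated slot is differentiated. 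The condition $Q\circ\Pi_M = 0$ means $Q_{\vec n}\neq 0$ implies $n_1^\ast > M$, i.e. the largest of $|n_1|,\dots,|n_{2q}|$ exceeds $M$. First I would record that, for the $n_2$-th component of $\nabla Q(u)$ to be nonzero with $|n_2|\le N < M/(2q)$, the large index $n_1^\ast$ cannot be $n_2$; moreover by the zero-momentum condition $|n_1| + \cdots$ cannot concentrate entirely on $n_2$, so in fact there is an index among $\{n_1,n_3,\dots,n_{2q}\}$ of size $> M$, and a second index among them of size $\gtrsim_q M$ (the two largest of the remaining $2q-1$ indices, since their sum bounds $n_1^\ast$ by momentum).

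Next I would insert the elementary inequality $\langle n_2\rangle^s \lesssim \langle n_2\rangle^s$ (trivial on the output side, since $|n_2|\le N\le M$) and absorb the smallness: writing $a \gtrsim_q M$ and $b > M$ for the two large indices among $n_1,n_3,\dots,n_{2q}$, we have $\langle a\rangle^s\langle b\rangle^s \gtrsim_q M^{2s}$, hence $\langle a\rangle^{d}\langle b\rangle^{d} \cdot M^{-2(s-d)} \gtrsim_q \langle a\rangle^s\langle b\rangle^s \cdot M^{-2s}\cdot M^{2s} \cdot M^{-2(s-d)}$... more cleanly: $1 \lesssim_q M^{-2(s-d)} \langle a\rangle^{s-d}\langle b\rangle^{s-d}$. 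So
$$
\|\Pi_N\nabla Q(u)\|_{h^s}^2 = \sum_{|n_2|\le N}\langle n_2\rangle^{2s}\big|(\nabla Q(u))_{n_2}\big|^2 \lesssim_q M^{-2(s-d)}\|Q\|_\infty^2 \sum_{|n_2|\le N}\langle n_2\rangle^{2s}\Big(\sum_{\vec n} \langle a\rangle^{s-d}\langle b\rangle^{s-d}\prod_{i\ne a,b,2}|u_{n_i}|\cdot|u_a||u_b|\Big)^2,
$$
where the $\langle n_2\rangle^{2s}$ factor is harmless since $|n_2|\le N$ implies $\langle n_2\rangle^{s}\lesssim |u_{n_2}|$-nothing, rather one keeps it attached to the output slot and controls it by Young as one of the two $\ell^2$-weights; the cleanest bookkeeping is to symmetrize and sum over all placements $\varphi\in\mathfrak S_{2q}$ of which three slots carry the weights $\langle\cdot\rangle^{s}$ (the output slot) and $\langle\cdot\rangle^{s-d}$ twice.

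Finally I would apply Lemma~\ref{lem:young} (with $2q-1$ factors $a^{(i)}\in\ell^1_{s\text{ or }0}$, two of them in $\ell^2$): the output weight $\langle n_2\rangle^s$ and one of the large-index weights $\langle\cdot\rangle^{s-d}$ go on the two $\ell^2$ slots, contributing $\|u\|_{h^s}\cdot\|u\|_{h^{s-d}}\lesssim\|u\|_{h^s}^2$, while the remaining $\langle\cdot\rangle^{s-d}$ weight and the $2q-4$ unweighted slots go on $\ell^1$: $\|u\|_{\ell^1_{s-d}}\lesssim\|u\|_{h^s}$ (valid since $s-d> 0$ and $s>d$ gives $\ell^2_{s-d}\hookrightarrow$ — more simply $\|u\|_{\ell^1_{s-d}}\lesssim \|u\|_{h^s}$ because $s>d$) times $\|u\|_{\ell^1}^{2q-4}$. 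Taking square roots yields
$$
\|\Pi_N\nabla Q(u)\|_{h^s}\lesssim_q M^{-(s-d)}\|Q\|_\infty\|u\|_{h^s}^{3}\|u\|_{\ell^1}^{2q-4}
$$
after noting $\|u\|_{h^{s-d}}\le\|u\|_{h^s}$ and $\|u\|_{\ell^1_{s-d}}\lesssim\|u\|_{h^s}$, as claimed. The one slightly delicate point — and I expect it to be the main obstacle — is the combinatorial/momentum argument guaranteeing \emph{two} large indices (not just one) among the differentiated slots after removing the low output index $n_2$; this is where $N< M/(2q)$ is used, ensuring $n_2$ cannot be among the large frequencies and that the momentum defect $n_1 = n_2 + (n_3 - \cdots)$ forces a second index of size $\gtrsim_q M$ once the largest exceeds $M$. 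Everything else is a routine Young-inequality bookkeeping exercise.
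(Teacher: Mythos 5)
Your overall route is the same as the paper's: use $Q\circ\Pi_M=0$ to force one index of size at least $M$, use the zero momentum condition to produce a second index of size at least $M/(2q)>N$, conclude that the output index $|n_2|\le N$ is dominated by the two largest input indices, and finish with Lemma~\ref{lem:young}. That combinatorial core is exactly right, and it is indeed where $N<\frac{1}{2q}M$ enters. The problem is the final weight bookkeeping, which as written does not close. You cannot ``keep $\langle n_2\rangle^s$ attached to the output slot and control it by Young'': in the $h^s$-estimate the output slot only supplies a plain $\ell^2$ factor (the dual test sequence, or the $\ell^2_n$ summation itself), so leaving the weight there costs a factor $N^s$, and even your doubled gain $M^{-2(s-d)}$ then leaves an excess of order $M^{d}$ over the claimed $M^{-(s-d)}$. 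The only other reading of your budget $\|u\|_{h^s}\,\|u\|_{h^{s-d}}\,\|u\|_{\ell^1_{s-d}}\,\|u\|_{\ell^1}^{2q-4}$ is that the output weight has been transferred to an input slot to produce the $\|u\|_{h^s}$ factor; but the only input slots guaranteed to dominate $\langle n_2\rangle$ are the two large ones, and in your scheme both of them are already spent carrying $\langle\cdot\rangle^{s-d}$ to produce the $M^{-2(s-d)}$ gain. Putting $\langle n_2\rangle^s$ on top of one of them would create a slot of weight $2s-d$, giving $\|u\|_{h^{2s-d}}$ or $\|u\|_{\ell^1_{2s-d}}$, which the right-hand side of the lemma does not control. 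So the three weighted norms in your budget require three suitably large input slots, while only two are available: the step would fail for, e.g., a monomial whose remaining $2q-3$ indices are all $O(1)$.

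The fix is small and is precisely what the paper does: spend only the largest index on the $M$-gain, writing $\mathbf{1}_{n_1^*\ge M}\le M^{-(s-d)}\langle n_1^*\rangle^{s-d}$, and transfer the output weight to the second-largest input index, $\langle n_2\rangle^s\le\langle n_2^*\rangle^s$ (legitimate since $n_2^*\ge M/(2q)>N\ge|n_2|$). Then Young gives $\|u\|_{h^s}$ from the $s$-weighted slot, $\|u\|_{\ell^1_{s-d}}\lesssim_d\|u\|_{h^s}$ from the $(s-d)$-weighted slot (using $s>d$ so that $\langle\cdot\rangle^{-d}\in\ell^2$), one more plain $\ell^2$ slot, and $\ell^1$ on the rest, yielding $M^{-(s-d)}\|Q\|_\infty\|u\|_{h^s}^3\|u\|_{\ell^1}^{2q-4}$ as claimed. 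With that correction your argument coincides with the paper's proof.
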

Note that we do not ask for any constraint on the size of the resonance function. The gap between $N$ and $M$ is sufficiently large to directly have the mismatch estimate. 
\begin{proof}
For $n\in \mathbb{Z}^d$ with $|n|\leq N$ and $u\in h^{s}$,
\begin{equation*}
\begin{split}
\langle n\rangle^{s}| \partial_{\overline{u_n}} Q(u)|& \leq 2q \langle n\rangle^{s}
	\sum_{\substack{\vec{n}\in\mathcal{N}_{2q}\\ n_{2q}=n}}	|Q_{\vec{n}}||u_{n_{1}}|\cdots|u_{n_{2q-1}}| \\
	&\leq 2q\|Q\|_{\infty} \langle n\rangle^{s}\sum_{\substack{\vec{n}\in\mathcal{N}_{2q}\\n_{2q}=n}}		\mathbf{1}_{n_{1}^{\ast}\geq M}|u_{n_{1}}|\cdots|u_{n_{2q-1}}|\,.
	\end{split}
\end{equation*}
From the zero momentum condition, we have that $\vec{n}\in\mathcal{N}_{2q}$ contributes only if $n_{1}^{\ast}\geq M$ and
\[
n_{2}^{\ast}\geq \frac{n_{1}^{\ast}}{2q} \geq \frac{M}{2q} > N \geq n\,.
\]
As a consequence, we have $|n|\leq n_{3}^{\ast}$ and so
\begin{equation*}
\begin{split}
\langle n\rangle^{s}|\partial_{\overline{u_n}} Q(u)| &\leq 2q M^{-(s-d)} \|Q\|_{\infty}  
	\sum_{\substack{\vec{n}\in\mathcal{N}_{2q}\\n_{2q}=n}} 
	\langle n_{1}^{\ast}\rangle^{s-d} \langle n_{3}^{\ast}\rangle^{s} 
	|u_{n_{1}}|\cdots|u_{n_{2q-1}}|\\
	&\leq 2q M^{-(s-d)} \|Q\|_{\infty}  
	\sum_{\substack{\vec{n}\in\mathcal{N}_{2q} \\ n=(\ell,n_{2q})}} 
	\langle \ell_{1}^{\ast}\rangle^{s-d} \langle \ell_{2}^{\ast}\rangle^{s} 
	|u_{n_{1}}|\cdots|u_{n_{2q-1}}|\\
	&\lesssim_q M^{-(s-d)} \|Q\|_{\infty}  
	\sum_{\varphi \in \mathfrak{S}_{2q-1}}\sum_{\substack{\vec{n}\in\mathcal{N}_{2q}\\n_{1}=n}} 
	\langle n_{\varphi_1}\rangle^{s-d} \langle n_{\varphi_2}\rangle^{s} 
	|u_{n_{2}}|\cdots|u_{n_{2q}}|.
\end{split}
\end{equation*}
Finally by applying the Young convolution inequality (see Lemma \ref{lem:young}) we get that
\begin{equation*}
\begin{split}
\|\Pi_N\nabla Q(u) \|_{h^s} &\lesssim_q M^{-(s-d)}  \|Q\|_{\infty}   \| u\|_{\ell^1}^{2q-4} \| u \|_{h^s}^2 \| u\|_{\ell^1_{s-d}} \\ &\lesssim_q M^{-(s-d)} \|Q\|_{\infty}    \| u\|_{\ell^1}^{2q-4} \| u \|_{h^s}^3.
\end{split}
\end{equation*}
\end{proof}

\begin{lemma}[Mismatch vector field estimate: II]
\label{lem:misII} 
Let 
$\Clo\cup \Chigh \subset \mathbb{Z}^d$ be a partition of $\mathbb{Z}^d$ compatible with the Bourgain's partition given by Lemma \ref{lem:cluster}, i.e.
$$
n\in \Clo \quad \implies \quad [n] \subset \Clo
$$
such that $\mathscr{C}_{\upsilon_0} \subset \Clo$ and set 
$$
N = \min_{n \in \Chigh} |n|.
$$
For all $q\geq 2$, all $1$-resonant homogeneous polynomial $Q\in\mathcal{H}_{2q}$  of degree $2q$ satisfying $Q\circ \Pi_{\Clo}=0$, and all $u\in h^{s}$ with $s>d$, we have
\begin{equation*}
\label{eq:misII}
\|\Pi_{\Clo} \nabla Q(u)\|_{h^{s}} \lesssim_{q,s} \| Q\|_{\infty} N^{-\frac{\delta}{2}(s-d)}\|u\|_{h^{s}}^{3}\|u\|_{\ell^{1}}^{2q-4}.
\end{equation*} 
\end{lemma}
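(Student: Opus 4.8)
The strategy mirrors the proof of Lemma~\ref{lem:misI}, but now the gap between $\Clo$ and $\Chigh$ is only the mild one coming from the dyadic partition of Lemma~\ref{lem:cluster}, so we must exploit the $1$-resonance of $Q$ via Lemma~\ref{lem:mu3} rather than the elementary zero-momentum argument. Fix $n\in \Clo$ and write, as in Lemma~\ref{lem:misI},
$$
\langle n\rangle^{s}\,|\partial_{\overline{u_n}}Q(u)| \leq 2q\,\|Q\|_{\infty}\,\langle n\rangle^{s}\sum_{\substack{\vec{n}\in\mathcal{N}_{2q}\\ n_{2q}=n}} \mathbf{1}_{Q_{\vec{n}}\neq 0}\,|u_{n_{1}}|\cdots|u_{n_{2q-1}}|\,.
$$
The point is that for any $\vec{n}$ with $Q_{\vec{n}}\neq 0$ we have $|\Omega_{\vec{n}}|\leq 1$ (since $Q$ is $1$-resonant) and, because $n=n_{2q}\in\Clo$ while by hypothesis $Q\circ\Pi_{\Clo}=0$ forces some entry of $\vec{n}$ to lie in $\Chigh$, there is an index in $\vec{n}$ outside the cluster $[n]$; hence the cluster super-action $S_{[n]}$ does not Poisson-commute with $u_{\vec n}$, i.e. $\{S_{[n]},u_{\vec n}\}\neq 0$.

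\textbf{Key steps.} First I would invoke Lemma~\ref{lem:mu3} with the cluster $\mathscr{C}_\upsilon = [n]$ (note $m_{[n]}\geq N/2$ since $n\in\Clo$ and all of $\Chigh$ has $|\cdot|\geq N$; more carefully, since some entry is in $\Chigh$, the relevant cluster has $m_\upsilon\gtrsim N$, and one should track which cluster to feed into the lemma — the safe choice is the cluster of the entry in $\Chigh$, which gives $n_1^\ast\gtrsim N$ and $n_3^\ast\gtrsim_q N^{\delta/2}$). So every contributing $\vec{n}$ satisfies $n_{1}^{\ast}\gtrsim_q N$ and $n_{3}^{\ast}\gtrsim_q N^{\delta/2}$, and by zero momentum also $n_2^\ast \geq n_1^\ast/(2q)\gtrsim_q N$. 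Second, I split the weight $\langle n_1^\ast\rangle^s = \langle n_1^\ast\rangle^{s-d}\langle n_1^\ast\rangle^d$ and absorb $\langle n_1^\ast\rangle^{-(s-d)}\leq N^{-(s-d)}$... but this is not quite enough: we only get $N^{-(s-d)}$, whereas we want $N^{-\frac{\delta}{2}(s-d)}$, which is \emph{weaker}, so in fact this crude bound already suffices. Wait — more honestly, the $\delta/2$ loss is intrinsic because in this lemma the good large-frequency information sits on $n_3^\ast\gtrsim N^{\delta/2}$, not on $n_1^\ast$: the clean way is to use $n_1^\ast$ (or $n_2^\ast$) to feed the two $\ell^2$-type weights $\langle\cdot\rangle^s$ in Young's inequality and use $n_3^\ast\gtrsim_q N^{\delta/2}$ to extract the decay $N^{-\frac{\delta}{2}(s-d)}\langle n_3^\ast\rangle^{s-d}$. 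Concretely: since $|n|\leq m_{[n]}$ and $n$ is the smallest available index here (it lies in $\Clo$), and using $\langle n_3^\ast\rangle^{-(s-d)} = \langle n_3^\ast\rangle^{-(s-d)}$ with $n_3^\ast\gtrsim_q N^{\delta/2}$, we bound
$$
\langle n\rangle^{s}\,|\partial_{\overline{u_n}}Q(u)| \lesssim_q N^{-\frac{\delta}{2}(s-d)}\,\|Q\|_{\infty}\sum_{\substack{\vec{n}\in\mathcal{N}_{2q}\\ n_{2q}=n}} \langle n_1^\ast\rangle^{s}\langle n_2^\ast\rangle^{s}\langle n_3^\ast\rangle^{s-d}\,|u_{n_{1}}|\cdots|u_{n_{2q-1}}|\,,
$$
where I used $\langle n\rangle^s\leq \langle n_1^\ast\rangle^s$ trivially (since $n$ is one of the entries) to compensate the extra weight, distributing weights so that two entries carry $\langle\cdot\rangle^s$ and one carries $\langle\cdot\rangle^{s-d}$ while the rest are unweighted. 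Third, symmetrize over $\mathfrak{S}_{2q-1}$ to turn the ordered-rearrangement weights into weights on fixed slots, and apply the Young/Cauchy--Schwarz estimate of Lemma~\ref{lem:young} to get $\|u\|_{h^s}^2\|u\|_{\ell^1_{s-d}}\|u\|_{\ell^1}^{2q-4}$; finally bound $\|u\|_{\ell^1_{s-d}}\lesssim \|u\|_{h^s}$ (valid since $s>d$, hence $s-d+ \text{(extra $d$ from Cauchy--Schwarz)} $ works) to reach the stated $\|u\|_{h^s}^3\|u\|_{\ell^1}^{2q-4}$.

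\textbf{Main obstacle.} The delicate bookkeeping is the correct application of Lemma~\ref{lem:mu3}: one must feed it the cluster $\mathscr{C}_\upsilon$ containing an entry of $\vec n$ that lies in $\Chigh$ (so that $\{S_\upsilon,u_{\vec n}\}\neq 0$ is genuinely forced by $Q\circ\Pi_{\Clo}=0$ and by compatibility of the partition with the Bourgain clusters), verify that for this cluster $m_\upsilon\gtrsim N$, and then correctly redistribute the three special weights ($n_1^\ast, n_2^\ast$ large $\gtrsim N$; $n_3^\ast\gtrsim N^{\delta/2}$) so that the final Young estimate closes with the claimed power of $N$ and the claimed homogeneity in $u$. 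Everything else is the same routine convolution-inequality manipulation as in Lemma~\ref{lem:misI}.
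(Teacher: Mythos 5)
Your final Young-inequality bookkeeping is in the right spirit, but the mechanism you use to produce the frequency gain $n_3^*\gtrsim_q N^{\delta/2}$ does not work, and this is the heart of the lemma. Lemma \ref{lem:mu3} requires $\{S_\upsilon,u_{\vec n}\}\neq 0$ for the cluster you feed it, i.e.\ an \emph{imbalance} between even- and odd-indexed entries of $\vec n$ inside $\mathscr{C}_\upsilon$. The hypothesis $Q\circ\Pi_{\Clo}=0$ only says that some entry of $\vec n$ lies in $\Chigh$; it does not force such an imbalance, neither for $[n]$ nor for the cluster of the $\Chigh$ entry. Concretely, take $q=2$ and $\vec n=(n_1,n_2,n_3,n_4)$ with $n_1=n\in\Clo$, $n_4\in[n]$, and $n_2,n_3\in\Chigh$ lying in a common cluster: then $\{S_\upsilon,u_{\vec n}\}=0$ for \emph{every} cluster $\upsilon$, yet the monomial straddles $\Clo$ and $\Chigh$, can satisfy zero momentum and $|\Omega_{\vec n}|\leq1$, and contributes to $\Pi_{\Clo}\nabla Q$. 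So your route via Lemma \ref{lem:mu3} simply does not cover such terms. The paper's proof avoids super-actions altogether: it fixes $n_1=n\in\Clo$, uses the symmetry of $Q_{\vec n}$ to reduce to the two cases $n_2\in\Chigh$ or $n_3\in\Chigh$, and in the first case applies the separation property of Lemma \ref{lem:cluster} \emph{directly to the pair} $(n_1,n_2)$ — these lie in different clusters solely because $\Clo$ is a union of clusters — which, combined with zero momentum and $|\Omega_{\vec n}|\leq1$, yields $\max_{j\geq3}\langle n_j\rangle\gtrsim_{q}N^{\delta/2}$; in the second case $|n_3|\geq N$ already gives the stronger factor $N^{-(s-d)}$.

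A secondary but real gap is your transfer of the output weight: bounding $\langle n\rangle^{s}\leq\langle n_1^{*}\rangle^{s}$ is not enough to close the Young estimate, because the largest entry may be the output index $n$ itself, or may coincide with the entry you need for the $N^{\delta/2}$ decay. The paper instead uses $1\geq|\Omega_{\vec n}|\gtrsim|n|^{2}-q\max_{j}\lambda_{n_{2j}}^{2}$ to guarantee a large \emph{even-indexed} entry with $\langle n_{2j}\rangle\gtrsim\langle n\rangle$; this entry is automatically distinct from the output $n=n_1$ and from the odd-indexed entry carrying the $N^{\delta/2}$ (or $N$) gain, so the two $\ell^{2}$ slots and the weighted $\ell^{1}$ slot in Lemma \ref{lem:young} can be assigned consistently. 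You should rework both points along these lines before the argument is complete.
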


\begin{proof} For clarity, in this proof, we highlight the dependencies with respect to $\mathscr{L}$. For $n\in\Clo$, by symmetry, we have

\begin{equation*}
\begin{split}
\langle n\rangle^{s}|\partial_{\overline{u_n}} Q(u)| &\leq 2q\langle n\rangle^{s}\sum_{\substack{\vec{n}\in\mathcal{N}_{2q} \\n_{1}=n}}|Q_{\vec{n}}||u_{n_{2}}|\cdots|u_{n_{2q}}| \\
	&\lesssim_q \|Q\|_{\infty}\langle n\rangle^{s} \big(\sum_{\substack{\vec{n}\in\mathcal{N}_{2q}\\n_{1}=n \\ n_2 \in \Chigh \\ |\Omega_{\vec{n}}|\leq 1 }}|u_{n_{2}}|\cdots|u_{n_{2q}}| + \sum_{\substack{\vec{n}\in\mathcal{N}_{2q}\\n_{1}=n \\  n_3 \in \Chigh \\ |\Omega_{\vec{n}}|\leq 1 }}|u_{n_{2}}|\cdots|u_{n_{2q}}| \big) \\ & =: \|Q\|_{\infty} (\mathcal{E}^{(2)}_n + \mathcal{E}^{(3)}_n)\,
\end{split}
\end{equation*}

and so
$$
\|\Pi_{\Clo} \nabla Q(u)\|_{h^{s}}  \lesssim_q \| \Pi_{\Clo} \mathcal{E}^{(2)} \|_{\ell^2} + \| \Pi_{\Clo} \mathcal{E}^{(3)} \|_{\ell^2}.
$$

\noindent{\bf Case 1:} Suppose that $n_3 \in \Chigh$. First, we have
$$
1\geq |\Omega_{\vec{n}}| \geq \sum_{j=1}^q  \lambda_{n_{2j-1}}^2 - \sum_{j=1}^q \lambda_{n_{2j}}^2 \gtrsim_{\mathscr{L}} |n|^2 - q\max_{1\leq j \leq q}  \lambda_{n_{2j}}^2.
$$
As a consequence, we have
$$
\max_{1\leq j \leq q}  \langle n_{2j} \rangle^2 \gtrsim_{\mathscr{L}} \langle n\rangle^2
$$
and so, we have
$$
\mathcal{E}^{(3)}_n \lesssim_{q,\mathscr{L},s} 	\sum_{\substack{\vec{n}\in\mathcal{N}_{2q}\\ n_1=n \\ |n_3| \geq N  }} \langle n_2 \rangle^{s} |u_{n_{2}}|\cdots|u_{n_{2q}}| \lesssim_{q,\mathscr{L}} N^{-(s-d)}		\sum_{\substack{\vec{n}\in\mathcal{N}_{2q}\\ n_1=n}}  \langle n_2 \rangle^{s} \langle n_3 \rangle^{s-d} |u_{n_{2}}|\cdots|u_{n_{2q}}| .
$$
Thus applying the Young convolutional inequality, we get  
$$\| \Pi_{\Clo} \mathcal{E}^{(3)} \|_{\ell^2} \lesssim_{q,\mathscr{L},s}  N^{-(s-d)}\|u\|_{h^{s}}^{3}\|u\|_{\ell^{1}}^{2q-4}.$$

\medskip

\noindent{\bf Case 2:} Suppose that $n_2 \in \Chigh$.  Since $[n_1] \neq [n_2]$, thanks to the separation property of the clusters, we have
$$
 |n_1-n_2| + |\lambda_{n_{1}}^{2}-\lambda_{n_{2}}^{2}| > (|n_1|+|n_2|)^{\delta} \geq N^{\delta}.
$$
As a consequence either
$$
 (2q-2) \max_{j\geq 3} |n_j| \geq |n_1-n_2| \geq \frac{N^{\delta}}2
$$
or
$$
 (2q-2)   \max_{j\geq 3} |n_j|^2 + 1 \gtrsim_{\mathscr{L}}  |\lambda_{n_{1}}^{2}-\lambda_{n_{2}}^{2}| \geq \frac{N^{\delta}}2.
$$
In any case, we have
$$
\max_{j\geq 3} \langle n_j \rangle \gtrsim_{\mathscr{L},q} N^{\delta/2}.
$$
Moreover, as previously, we have $\max_{1\leq j \leq q}  \langle n_{2j} \rangle^2 \gtrsim_{\mathscr{L}} \langle n\rangle^2$. Thus,
since $\delta \leq 1$, it follows that\footnote{for details it is convenient to distinguish the case $|n_2|^2=\max_{1\leq j \leq q}  |n_{2j}|^2$.}
$$
\mathcal{E}^{(2)}_n \lesssim_{q,\mathscr{L},s} \sum_{\substack{\vec{n}=(n,\ell)\in\mathcal{N}_{2q} \\  \langle \ell_2^* \rangle \gtrsim_{\mathscr{L},q} N^{\delta/2}  }} \langle \ell_1^\ast \rangle^{s} |u_{n_{2}}|\cdots|u_{n_{2q}}| 
$$
and reasoning as previously, we get that
$\| \Pi_{\Clo} \mathcal{E}^{(2)} \|_{\ell^2} \lesssim_{q,\mathscr{L}}  N^{-\frac{\delta}2(s-d)}\|u\|_{h^{s}}^{3}\|u\|_{\ell^{1}}^{2q-4}$.
\end{proof}

\subsection{Proof that Theorem \ref{thm:main_low} implies Theorem \ref{thm:main}}
\label{sub:a-b}

We recall that the flat torus $\mathbb{T}^d_{\mathscr{L}}$ is fixed and admissible in the sense of the Definition \ref{def:tori}. It provides an exponent $\tau_\ast$ (which can typically be chosen equal to $d(d+1)/2+1$). We recall the associated constant $c_\ast = 2 \tau_\ast+1$. We will also need the Bourgain decomposition of Lemma \ref{lem:cluster} and the corresponding exponent $\delta \equiv \delta(d)$. We also recall that the nonlinearity $f$ of \eqref{eq:nls} is fixed and satisfies the condition $f'(0) \neq 0$. Then we divide the proof into three main steps : \emph{preparation}, \emph{dynamics} and \emph{measure estimates}.

\medskip

\noindent $\bullet$ \emph{ Step 1 : Preparation.} Let $r\geq 9$, $\nu \leq \min((2c_\ast)^{-1},1/(2d))$ be the associated constant given by Theorem \ref{thm:main_low}, and assume that  
\begin{equation} 
\label{eq:s}
 s_0 := d+20 \delta^{-1} \nu^{-1} r\,.
\end{equation}
 We also set $\varepsilon \leq \varepsilon_0$ where $\varepsilon_0$ is a constant depending only on $r,s$ (and $\mathscr{L}$ and $f$) that will be chosen small enough. Let $M>0$ be a real number such that
\begin{equation}
\label{eq:boundonM}
\varepsilon^{-\frac{8}{\delta}\frac{r}{s-d}}\leq M \leq \varepsilon^{-{\nu}}
\end{equation}
 and define
$$
N = (8r+1)^{-1} M.
$$
Note that the existence of such a $M$ is ensured by construction of $s_0$.
Then we set a partition $\Clo\cup \Chigh \subset \mathbb{Z}^d$ of $\mathbb{Z}^d$ by 
\begin{equation}
\label{eq:Clo}
\Clo := \bigcup_{|n|\leq N}[n] \quad \mathrm{and} \quad \Chigh := \mathbb{Z}^d \setminus \Clo.
\end{equation}
Note that since the clusters are dyadic (see \eqref{eq:dydy}), we have 
\begin{equation}
\label{eq:m-high}
\frac{1}{2}N\leq \underset{n\in\Chigh}{\min}\ |n|\quad \mathrm{and} \quad \underset{n\in\Clo}{\max}\ |n|\leq 2N \leq M \leq  \varepsilon^{-{\nu}}.
\end{equation}
We recall that the Hamiltonian $H$ of \eqref{eq:nls} (defined by  \eqref{eq:hamNLS}) admits the Taylor expansion $H = H^{(\leq 4r)} + R^{(4r+2)} $ (see \eqref{eq:expansion_ham}). Then, setting 
$$
\kappa := \varepsilon^{c_\ast \nu},
$$
 we apply the Birkhoff normal form theorem \ref{thm:res-nf} to $H^{(\leq 4r)}\circ \Pi_M$. We get a polynomial $\chi = \chi^{(4)} + \cdots + \chi^{(4r)}$ supported on $\ell^1(\mathbb{Z}^d_{\leq M})$ (with  $\| \chi^{(2j)}\| \lesssim_r \kappa^{-j+1} $) and $\varepsilon_\ast \gtrsim_{r,s} \sqrt{\kappa}$ (through Corollary \ref{cor:flow})
 such that on $\Pi_{ M}^{-1}\Pi_{ M}B_s(\varepsilon_*) $ 
\begin{equation}
\label{eq:defHloflat}
\begin{split}
(H^{(\leq 4r)}\circ \Pi_M) \circ \Phi_{\chi}^1 &= Z_2\circ \Pi_M  + Q^{(4)} + \cdots+ Q^{(4r)} + \Upsilon \\
& =: Z_2\circ \Pi_M  + Q^{(4)} + Q^{(\geq 6)} +  \Upsilon
\end{split}
\end{equation}
where $Q^{(2j)} \in \mathcal{H}_{2j}(\mathbb{Z}^d_{\leq M})$ is $\kappa$ resonant and satisfies $\| Q^{(2j)} \|_{\infty} \lesssim_r \kappa^{-j+2}$ and $ \Upsilon$ is of order $4r +2$.

Then we set 
\begin{equation}
\label{eq:defHlo}
H_{\lo} := Z_2\circ \Pi_M  + Q^{(4)} + Q^{(\geq 6)} \circ \Pi_{\Clo} -\frac{f'(0)}2 \| \Pi_{M} \cdot\|_{\ell^2}^4.
\end{equation}

As a consequence, if $H_{\lo} $ satisfies the assumptions of Theorem \ref{thm:main_low}, then, provided that $\varepsilon_{0}$ is small enough, we get an open set
$
\Theta_{\varepsilon}^\flat \subset \Pi_{M} B_s(\varepsilon)
$
and we just have to define our set of good initial data by
\begin{equation}
\label{eq:def_lens_des_condinit}
\Theta_{\varepsilon} := \big(\Pi_{ M } B_s(\varepsilon) \big)\cap  (\Phi_{\chi}^{-1})^{-1}   \Theta_{\varepsilon}^\flat.
\end{equation}
Now, we just have to check that  $H_{\lo} $ satisfies the assumptions of Theorem \ref{thm:main_low}. First, it is clear that the homogeneous terms $P^{(2j)} := Q^{(2j)} \circ \Pi_{\Clo}$, $j\geq 3$, of $H_{\lo} $ are $\kappa =  \varepsilon^{c_\ast \nu}$ resonant and satisfy  $\| P^{(2j)} \|_{\infty} \lesssim_r \kappa^{-j+2}$ and so $\| P^{(2j)} \|_{\infty} \leq \varepsilon^{-jc_\ast \nu}$ provided that $\varepsilon_0$ is small enough. So the only thing we have to check is the form of the quartic terms of $H_{\lo}$. To do this, it is enough to compute $Q^{(4)}$. We recall that by the Birkhoff normal form theorem \ref{thm:res-nf}, we have, for all $\vec{n} \in \mathcal{N}_4$
$$
Q^{(4)}_{\vec{n}} = \frac{f'(0)}4 \mathbf{1}_{|\Omega_{\vec{n}}|\leq \kappa} \mathbf{1}_{\vec{n} \in ( \mathbb{Z}^d_{\leq M})^4}.
$$
We are going to see that if $\vec{n} \in (\mathbb{Z}^d_{\leq M})^4$ satisfies $|\Omega_{\vec{n}}|\leq \kappa$ then $\{ n_1,n_3\} = \{n_2,n_4\}$. Note that it directly implies, by the Poincar\'e formula, that,  as expected, for all $u\in \mathbb{C}^{\mathbb{Z}^d_{\leq M}}$
 $$
Q^{(4)}(u) = \frac{f'(0)}2 \big(\sum_{|n|\leq M} |u_n|^2 \big)^2 - \frac{f'(0)}4 \sum_{|n|\leq M} |u_n|^4.
$$
So, let $\vec{n} \in (\mathbb{Z}^d_{\leq M})^4$ be such that $|\Omega_{\vec{n}}|\leq \kappa$. It means that
\begin{equation*}
\begin{split}
\kappa &\geq |\lambda_{n_1}^2 - \lambda_{n_2}^2 + \lambda_{n_3}^2 - \lambda_{n_4}^2| \\
&=  |g(n_1,n_1) - g(n_2,n_2)  + g(n_3,n_3)  - g(n_4,n_4) | \\
&= |g(n_1,n_1) - g(n_2,n_2)  + g(n_3,n_3)  - g(n_1+n_2-n_3,n_1+n_2-n_3) | \\
&= 2|g(n_1-n_2,n_3-n_2)|.
\end{split}
\end{equation*}
Now, assuming by contradiction that $n_1\neq n_2$ and $n_3\neq n_2$, since the torus is admissible we get that
$$
\varepsilon^{ c_\ast \nu} = \kappa \gtrsim |n_1-n_2|^{-\tau_\ast} |n_3-n_2|^{-\tau_\ast} \gtrsim M^{-2\tau_\ast} \gtrsim \varepsilon^{2\tau_\ast \nu}
$$
which is impossible, provided that $\varepsilon_0$ is small enough, by definition of $c_\ast = 2\tau_\ast+1$.
As a consequence, we have proven that $n_1 = n_2$ or $n_3= n_2$ , and so, using the zero momentum condition that $\{ n_1,n_3\} = \{n_2,n_4\}$.

\medskip

\noindent $\bullet$ \emph{ Step 2 : Dynamics.} We now prove stability of the low frequency actions and high-frequency superactions over very long times $T\leq \epsilon^{-r}$. 

\medskip

\noindent $\triangleright$ \emph{ Substep 2.1 : Setting of the bootstrap and low modes reduction.}   We consider  a maximal solution $u\in C^0([-T_-,T_+];h^s)$ of \eqref{eq:nls} such that $u(0) \in B_s(2\varepsilon) \cap \Pi_M^{-1}\Theta_\varepsilon$. We denote 
$$
\rho := \|u(0) \|_{h^s} \leq 2\varepsilon.
$$
Without loss of generality, we focus only on positive times.  As usual, we proceed by bootstrap: we consider $0<T<T_+$ such that
$$
T\leq  \varepsilon^{-r} \quad \mathrm{and} \quad \sup_{0\leq t \leq T} \| u(t) \|_{h^s} \leq 2^{s+2}  \rho
$$ 
and we aim at proving that
\begin{equation}
\label{eq:petit_bootstrap}
 \sup_{0\leq t \leq T} \| u(t) \|_{h^s} \leq 2^{s+1} \rho.
\end{equation}
It will prove that \eqref{eq:petit_bootstrap} holds for $T=\varepsilon^{-r}$. First, since $\Chigh$ contains only high modes, by applying Proposition \ref{prop:high_modes}, we know that, provided that $\varepsilon_0$ is small enough, we have, for $t\in [0,T]$
 $$
\| \Pi_{\Chigh} u(t) \|_{h^s}^2 \leq  \| \Pi_{\geq N/2} u(t) \|_{h^s}^2 \leq  2^{2s} \| u(0) \|_{h^s}^2 +  \rho^3 + T\Big( \frac{N}2 \Big)^{-\frac{\delta}{2}(s-d)} \rho^{3}+T \rho^{2r+1}.
 $$
 Then, using the lower bound on $M$ and the upper bound on $T$, we have
  $$
\| \Pi_{\Chigh} u(t) \|_{h^s}^2 \leq \rho^2 ( 2^{2s} +  \rho + C_{r,s}\rho \varepsilon^{-r} \varepsilon^{2r}+ \varepsilon^{-r} \rho^{2r-1})
 $$
 where $C_{r,s}$ is a constant depending only on $r$ and $s$ (and $d$). It follows that provided that $\varepsilon_0$ is small enough, we have
   \begin{equation}
   \label{eq:high_is_ok}
    \sup_{0\leq t \leq T} \| \Pi_{\Chigh} u(t) \|_{h^s}^2 \leq 2^{2s+1}  \rho^2.
   \end{equation}
To close the bootstrap, it remains to control $ \| \Pi_{\Clo} u \|_{h^s}^2$. To do it, we are going to prove that
\begin{equation}
\label{eq:preservation_des_basses_actions}
\sup_{0\leq t \leq T} \sum_{n\in \Clo} \langle n\rangle^{2s} \big| |u_n(t)|^2 - |u_n(0)|^2 \big|\leq  \varepsilon \rho^2.
\end{equation}
The conclusion \eqref{eq:petit_bootstrap} of the bootstrap, is just then a consequence of the triangular inequality, the Pythagorean identity and \eqref{eq:high_is_ok}. So, from now, we focus on proving \eqref{eq:preservation_des_basses_actions}. 

\medskip

\noindent $\triangleright$ \emph{ Substep 2.2 : Normal form and remainders.} First, we note that since $\nu\leq(2c_\ast)^{-1}$, we have
\begin{equation}
\label{eq:kappa_vs_eps1}
\varepsilon_\ast \gtrsim_{r,s} \kappa = \varepsilon^{c_\ast \nu} \geq \sqrt{\varepsilon},
\end{equation}
where $\epsilon_{\ast}$ was introduced in Corollary ~\ref{cor:flow}. It follows that, provided that $\varepsilon_0$ is small enough, we have $\varepsilon_\ast \gg \varepsilon$ and so (thanks to the bootstrap assumption) it makes sense to define
$$
z := \Phi_\chi^{-1}( u).
$$
As previously, without loss of generality (i.e. up to a standard approximation process), we assume that $u(0) \in h^{s+2}$ to get $u\in C^1([0,T];h^s)$ and so that $z \in  C^1([0,T];h^s)$. Since  $\Phi_\chi^{-1}$ is symplectic, using the chain rule, we get that
$$
i\partial_t  z = \nabla (H^{(\leq 4r)} \circ \Phi_\chi^1)(z)  + \mathcal{E}^{(1)}
$$
where
$$
\mathcal{E}^{(1)} := i\mathrm{d} \Phi_\chi^{-1}(u)(-i \nabla R^{(4r+2)}(u)).
$$
Then using that $\chi$ is supported on $\ell^1(\mathbb{Z}^d_{\leq M})$, we deduce that
$$
i\partial_t \Pi_M z = \nabla (H^{(\leq 4r)}\circ \Pi_M \circ \Phi_\chi^1)(\Pi_M z) + \Pi_M \mathcal{E}^{(1)} + \Pi_M \mathcal{E}^{(2)}
$$
where, setting $H^{>M} := H^{(\leq 4r)} - H^{(\leq 4r)} \circ \Pi_M$
$$
\mathcal{E}^{(2)} := \Pi_M \nabla(H^{>M} \circ \Phi_\chi^1)(z).
$$
By definition of $Q^{(\geq 6)}$ and $\Upsilon$ (see \eqref{eq:defHloflat}), this evolution equation rewrites
$$
i\partial_t \Pi_M z = \nabla (Z_2 + Q^{(4)} + Q^{(\geq 6)})(\Pi_M z) + \nabla \Upsilon(\Pi_M z)+\Pi_M \mathcal{E}^{(1)} + \Pi_M \mathcal{E}^{(2)}.
$$
Then projecting on $\mathbb{C}^{\Clo}$, by definition of $ H_{\lo}$ (see \eqref{eq:defHlo}), we get
$$
i\partial_t \Pi_{\Clo} z = \nabla H_{\lo}(\Pi_{\Clo} z) + 2f'(0) \| \Pi_{M} z \|_{\ell^2}^2 \Pi_{\Clo} z+ \mathcal{E}^{\mathrm{(tot)}}
$$
where
\begin{equation}
\label{eq:def_Etot}
\mathcal{E}^{\mathrm{(tot)}}  :=  \Pi_{\Clo}\nabla \Upsilon(\Pi_M z)+\Pi_{\Clo} \mathcal{E}^{(1)} + \Pi_{\Clo}\mathcal{E}^{(2)} + \Pi_{\Clo}\mathcal{E}^{(3)} 
\end{equation}
and, setting $H^{\mathrm{high}} = Q^{(\geq 6)} - Q^{(\geq 6)}\circ \Pi_{\Clo}$,
$$
 \mathcal{E}^{(3)} := \nabla H^{\mathrm{high}}( \Pi_M z).
$$
Observing that $H_{\lo}$ is invariant by gauge transform (because it commutes with the $\ell^2$ norm) and setting 
$$
w = e^{i 2f'(0) \int_0^t \| \Pi_{M} z(\tau) \|_{\ell^2}^2 \mathrm{d}\tau}  \Pi_{\Clo} z
$$
we have
\begin{equation*}
\label{eq:onw}
i\partial_t w =  \nabla H_{\lo}(w) + e^{i 2f'(0) \int_0^t \| \Pi_{M} z(\tau) \|_{\ell^2}^2 \mathrm{d}\tau}  \mathcal{E}^{\mathrm{(tot)}}.
\end{equation*}
Finally, setting
\begin{equation}
\label{eq:tensorise}
v_n(t) =  \left\{\begin{array}{lll} w_n(t) & \quad & \mathrm{if} \quad n\in \Clo, \\
0 & \quad & \mathrm{if} \quad |n|>M,\\
e^{-it \lambda_n^2 + i tf'(0)  |z_n(0)|^2} z_n(0) & \quad & \mathrm{else},
\end{array}\right.
\end{equation}
we have $v(0) = \Pi_M z(0)$ and as previously
\begin{equation}
\label{eq:onv}
i\partial_t v =  \nabla H_{\lo}(v) + e^{i 2f'(0) \int_0^t \| \Pi_{M} z(\tau) \|_{\ell^2}^2 \mathrm{d}\tau}  \mathcal{E}^{\mathrm{(tot)}}.
\end{equation}

\noindent $\triangleright$ \emph{ Substep 2.3 : Control of the remainder terms.}  To apply Theorem  \ref{thm:main_low} to $v$,  we have to prove that $\| \mathcal{E}^{\mathrm{(tot)}}(t) \|_{h^s} \leq \varepsilon^{3r} \rho $ for all $t\in [0,T]$.
First, using the bound \eqref{eq:d-varphi} on $\mathrm{d}\Phi_\chi^1$ and the bound \eqref{eq:rem_NLS} on $\nabla R^{(4r+2)}$ (and the bootstrap assumption), we have
\begin{equation}
\label{eq:estE1}
\| \mathcal{E}^{(1)}(t) \|_{h^s} \lesssim_{r,s}  \rho^{4r+1}.
\end{equation}
To estimate the other remainder terms, we have to estimate $\| z(t) \|_{h^s}$. To do it, we use that $\Phi_\chi^{-1}$ is close to the identity, to get
\begin{equation}
\label{eq:z_close_to_u}
\| z(t) - u(t) \|_{h^s} \lesssim_{r,s}  \rho^3 \kappa^{-1}
\end{equation}
and so, recalling that $\kappa \geq \sqrt{\varepsilon}$ (provided that $\varepsilon_0$ is small enough)
$$
\| z(t) \|_{h^s} \leq 2  \| u(t) \|_{h^s} \leq 2^{s+2} \rho.
$$
Then  using the estimate
\eqref{eq:R-res}
on $\nabla \Upsilon$, we have
\begin{equation}
\label{eq:estups}
\| \nabla \Upsilon(\Pi_M z(t))\|_{h^s} \lesssim_{r,s} \kappa^{-2r}  \rho \varepsilon^{4r} \lesssim_{r,s} \rho \varepsilon^{3r}.
\end{equation}
To control $\mathcal{E}^{(2)}$ and $\mathcal{E}^{(3)}$, we are going to apply the mismatch lemmata. First, for $\mathcal{E}^{(3)}$, using the definition \eqref{eq:defHloflat} of $Q^{(\geq 6)}$, we have
$$
\Pi_{\Clo} \mathcal{E}^{(3)} = \sum_{j = 3}^{2r} \Pi_{\Clo} \nabla (Q^{(2j)} - Q^{(2j)} \circ \Pi_{\Clo})(\Pi_M(z))
$$ 
and so, since $\Chigh$ contains only high modes (see \eqref{eq:m-high}), by Lemma \ref{lem:misII}, we have
\begin{equation}
\label{eq:estE3}
\begin{split}
\|\Pi_{\Clo} \mathcal{E}^{(3)} (t)\|_{h^{s}} &\lesssim_{r,s}   \sum_{j = 3}^{2r} \| Q^{(2j)}\|_{\infty} \big( \frac{N}2 \big)^{-\frac{\delta}{2}(s-d)}  \rho^{2j-1}\\
&\lesssim_{r,s}  \sum_{j = 3}^{2r}  \kappa^{-j+2} \varepsilon^{4r}  \rho^{2j-1} \lesssim_{r,s} \varepsilon^{3r}  \rho^{5}.
\end{split}
\end{equation}
The estimate on $\mathcal{E}^{(2)}$ is more technical. First, we set
$$
K^{(2j)} := \frac{f^{(j-1)}}{2j!} \| \cdot \|_{L^{2j}}^{2j} \in \mathcal{H}_{2j}(\mathbb{Z}^d)
$$
 and we note that the linear terms vanish in the definition of  $\mathcal{E}^{(2)}$, i.e.  
$$
\mathcal{E}^{(2)}  =  \Pi_M \nabla (K^{>M} \circ  \Phi_\chi^1)  (z)
$$
where $K^{>M} = K^{(4),>M}+ \cdots +  K^{(4r),>M}$ and $K^{(2j),>M} =  K^{(2j)} - K^{(2j)}\circ \Pi_M$. Then, doing the Taylor expansion of $\Phi_\chi^t$ in $t=0$, we get, as usual
$$
K^{>M} \circ  \Phi_\chi^1 = \sum_{q=0}^{2r} \frac{\mathrm{ad}_{\chi}^q K^{>M}}{q!} + \int_0^1 \frac{(1-\tau)^{2r}}{(2r)!}  \mathrm{ad}_{\chi}^{2r+1} K^{>M}\circ \Phi_\chi^\tau \mathrm{d}\tau =:   \sum_{q=0}^{2r} \frac{\mathrm{ad}_{\chi}^q K^{>M}}{q!} + \Gamma.
$$
From the expansions of $\chi$ and $K^{>M}$, we have
$$
\mathrm{ad}_{\chi}^q K^{>M} = \sum_{j=q+2}^{(q+1)(2r-1)+1} L^{q,j}
$$
where, by Lemma \ref{lem:bra},
$$
L^{q,j} := \sum_{ a_1 + \cdots + a_{q+1} = j+q } \mathrm{ad}_{\chi^{(2a_1)}} \cdots \mathrm{ad}_{\chi^{(2a_q)}} K^{(2a_{q+1}),>M}   \in \mathcal{H}_{2j}(\mathbb{Z}^d).
$$
Since $\| \chi^{(2j)} \|_{\infty} \lesssim_r \kappa^{-j+1}$ and $\| K^{(2j),>M}  \|_{\infty} \lesssim_j 1 \lesssim_j \kappa^{-j+2}$, we get by Lemma \ref{lem:bra} that
$$
\| L^{q,j} \|_{\infty} \lesssim_{r}  \kappa^{-j+2}
$$
and so by Corollary \ref{cor:vecfi} 
$$
\| \nabla \mathrm{ad}_{\chi}^q L^{q,j}  \|_{h^s} \lesssim_{r,s} \kappa^{-j+2}  \| \cdot\|_{h^s}^{2j-1}.
$$
As a consequence, if $j\geq 2r+1$, we have
$$
\| \nabla \mathrm{ad}_{\chi}^q L^{q,j} (z(t)) \|_{h^s} \lesssim_{r,s}  \rho \varepsilon^{3r}.
$$
Similarly, since $j\geq q+2$, using that $\Phi_\chi^\tau $ is close to the identity, we have that 
$$
\| \nabla  \Gamma(z(t)) \|_{h^s} \lesssim_{r,s}  \rho \varepsilon^{3r}.
$$
Recalling that $\Clo$ contains only low modes (see \eqref{eq:m-high}), it follows that
$$
\| \Pi_{\Clo} \mathcal{E}^{(2)}(t)\|_{h^s} \lesssim_{r,s}  \rho \varepsilon^{3r} + \sum_{q=0}^{2r-2} \sum_{j=q+2}^{2r} \| \Pi_{2N} \nabla L^{q,j}(z(t)) \|_{h^s}
$$
Now, we note that since the polynomials $\chi^{(2a)}$ are supported on $\ell^1(\mathbb{Z}^d_{\leq M})$ (i.e. they do not depend on $(u_a)_{|a|>M}$) and the polynomials $K^{(2a),>M}$ vanish on $\ell^1(\mathbb{Z}^d_{\leq M})$ (i.e. they do depend on $(u_a)_{|a|>M}$) then the polynomials $L^{q,j}$ vanish on $\ell^1(\mathbb{Z}^d_{\leq M})$, i.e. $L^{q,j}\circ \Pi_M =0$. As a consequence, since $M=(8r+1)N>4rN$, applying the mismatch Lemma \ref{lem:misI} and using the lower bound on $M$ (see \eqref{eq:boundonM}), we have
\begin{equation}
\label{eq:estE2}
\| \Pi_{\Clo} \mathcal{E}^{(2)}(t)\|_{h^s} \lesssim_{r,s}  \rho \varepsilon^{3r}   + \sum_{j=2}^r M^{-(s-d)}
 	\kappa^{-j+2}  \rho^{2j-1} 
 	 \lesssim_{r,s} \rho \varepsilon^{3r} .
\end{equation}
Putting together the estimates \eqref{eq:estE1}, \eqref{eq:estE2}, \eqref{eq:estE3}  and \eqref{eq:estups} on the remainder terms forming $ \mathcal{E}^{\mathrm{(tot)}} $ (defined by \eqref{eq:def_Etot}) and recalling that $r\geq 9$, we have proven that 
$$
\forall t\in [0,T], \quad \| \mathcal{E}^{\mathrm{(tot)}}(t) \|_{h^s} \leq \varepsilon^{3r}  \rho.
$$

\medskip

\noindent $\triangleright$ \emph{ Substep 2.4 : Application of Theorem \ref{thm:main_low} and conclusion.} Since $w$ solves the equation \eqref{eq:onv}, we have proven that
$$
\sup_{0\leq t \leq T}  \|  i\partial_t v - \nabla H_{\lo} (v)  \|_{h^s} \leq  \varepsilon^{3r} \rho.
$$
Now, we aim at checking that $v(0) \in \Theta_{\varepsilon}^\flat$. So we recall that by assumption 
$$
\Pi_M u(0) \in \Theta_\varepsilon = \big(\Pi_{ M } B_s(\varepsilon) \big)\cap  (\Phi_{\chi}^{-1})^{-1}  \Theta_{\varepsilon}^\flat
$$
and so, since $\chi$ is supported on $\ell^1(\mathbb{Z}^d_{\leq M})$,
$$
v(0) = \Pi_{M} z(0) = \Pi_M \Phi_\chi^{-1}  u(0) =  \Phi_\chi^{-1}\Pi_M  u(0) \in  \Theta_{\varepsilon}^\flat .
$$
 Applying Theorem \ref{thm:main_low} and by definition of $v$ (see \eqref{eq:tensorise}), we have that
$$
\sup_{0\leq t \leq T} \sum_{n\in \Clo} \langle n\rangle^{2s} \big| |z_{n}(t)|^2 - |z_{n}(0)|^2 \big| = \sup_{0\leq t \leq T} \sum_{ |n| \leq M} \langle n\rangle^{2s} \big| |v_{n}(t)|^2 - |v_{n}(0)|^2 \big|\leq \frac{\varepsilon}2 \rho^2.
$$
Finally by using that $\| u(t) - z(t) \|_{h^s} \lesssim_{r,s} \rho^{5/2}$ (see \eqref{eq:z_close_to_u}), we get, as expected, (provided that $\varepsilon_0$ is small enough)
\begin{equation}
\label{eq:a_citer_dans_lintro}
\sup_{0\leq t \leq T} \sum_{n\in \Clo} \langle n\rangle^{2s} \big| |u_n(t)|^2 - |u_n(0)|^2 \big|\leq  \varepsilon \rho^2.
\end{equation}

\medskip

\noindent $\bullet$ \emph{ Step 3 : Measure estimates.} First, we note that, by time reversibility of $\Phi_\chi^{t}$ (see the last property of Corollary \ref{cor:flow}),  provided that $\varepsilon$ is small enough, we have
$$
\Phi_\chi^{1} \Theta_\varepsilon^\flat \subset (\Phi_\chi^{-1})^{-1} \Theta_\varepsilon^\flat ,
$$
and therefore
\[
\Phi_{\chi}^{1}\Theta_{\epsilon}^{\flat}\cap \Pi_{M}B_{s}(\epsilon)\subset \Theta_{\epsilon}\,.
\]
Then, since $\Phi_\chi^{1} $ is symplectic, it is volume preserving and so
$$
\mathrm{meas} (\Phi_\chi^{1} \Theta_\varepsilon^\flat) = \mathrm{meas} ( \Theta_\varepsilon^\flat).
$$
Now, since $\Phi_\chi^{1}$ is close to the identity and $\kappa \geq \varepsilon$, we have (provided that $\varepsilon$ is small enough)
$$
\Phi_\chi^{1} \Theta_\varepsilon^\flat  \subset \Pi_M B_s(\varepsilon + \varepsilon^2).
$$
As a consequence, we have
\begin{equation}
\label{eq:measu1}
\mathrm{meas} (\Theta_\varepsilon) \geq \mathrm{meas} (\Theta_\varepsilon^{\flat}) -\meas \mathcal{A}_\varepsilon
\end{equation}
where
$$
\mathcal{A}_\varepsilon = \{ u\in \Pi_M h^s \ | \ \varepsilon \leq \| u\|_{h^s} \leq \varepsilon+ \varepsilon^2  \}.
$$
On the one hand, the measure estimate of Theorem \ref{thm:main_low}, ensures that
\begin{equation}
\label{eq:measu2}
\mathrm{meas}(\Theta_{\varepsilon}^\flat ) \geq (1 - \varepsilon^{\frac{1}{39}} )\mathrm{meas}(\Pi_{M} B_s(\varepsilon)).
\end{equation}
On the other hand, by homogeneity, we have 
\begin{equation}
\label{eq:measu3}
\frac{\mathrm{meas} \mathcal{A}_\varepsilon}{\meas \Pi_M B_s(\varepsilon)} = (1+ \varepsilon)^{2\sharp \mathbb{Z}^d_\leq M} - 1 \leq  (1+ \varepsilon)^{(9M)^d} -1 \leq e^{\varepsilon (9M)^d} - 1  \lesssim \sqrt{\varepsilon}
\end{equation}
the last estimate coming from the upper bound $M\leq \varepsilon^{-\nu}$ and assumption $\nu \leq 1/(2d)$. Putting together the three last estimates \eqref{eq:measu1}, \eqref{eq:measu2} and \eqref{eq:measu3}, we conclude that as expected
$$
\mathrm{meas}(\Theta_{\varepsilon} ) \geq (1 - \varepsilon^{\frac{1}{40}} )\mathrm{meas}(\Pi_{M} B_s(\varepsilon)).
$$

This completes the proof that Theorem \ref{thm:main_low} implies Theorem \ref{thm:main}.

\section{Re-centered polynomials}
\label{sec:small-div}

 The rest of the paper is devoted to the proof of Theorem \ref{thm:main_low}.

\subsection{Set-up}
\label{sec:su}
We start by defining some parameters within the framework of Theorem \ref{thm:main_low}. 

\begin{itemize}
\item In the rest of the paper we only need $s>0$ (the reason for this is that the reduced system is finite dimensional). 
\item We can choose (any) $\nu>0$ such that 
\begin{equation}
\label{eq:upsilon}
10^{8}c_{\ast} d r \nu \leq 10^{-4}\,.
\end{equation}

\item $\epsilon_{\ast}>0$ is a small parameter given by Corollary~\ref{cor:flow} which depends only on $r, s$ and on the torus $\T^{d}_{\mathscr{L}}$.
\item We fix $\epsilon\in(0,\epsilon_{\ast})$. It measures the amplitude of the initial data.
\item The frequency-truncation parameter $M$ depends on $\epsilon$:
\[
2\leq M\leq \epsilon^{-\nu}\,.
\] 
\item We set 
\[
\overline{r} = 10^{5}r\,.
\]
\end{itemize}
Then, $s, \epsilon$, $\nu$ and $M$ are defined once and for all.
\begin{remark} 
In the section \ref{sec:low-freq}, to prove the main Theorem \ref{thm:main} from Theorem \ref{thm:main_low} and in order to control the contribution of high frequencies, we required that 
\[
s\gtrsim r\nu^{-1} \gtrsim r^{2}\,,
\]
where the first (resp. second) inequality is a consequence of \eqref{eq:s} (resp. \eqref{eq:upsilon}). The implicit constants depend on $d$ and on the geometry of the torus $\T_{\mathscr{L}}^{d}$.
\end{remark}

\subsection{Modulation parameters and re-centered actions} In the subsequent analysis, the amplitudes of Fourier coefficients of a function $\phi$ in (say) $\Pi_{M}B_s(20\epsilon)$, serve as internal modulation parameters for the frequencies. We set
\[
\xi_n(\phi) = |\phi_n|^2\,,\quad n\in\Z_{M}^{d}\,. 
\]
We define 
\begin{equation}
 \label{eq:U-eps}
 \mathcal{U}_{s}(\epsilon) = \Big\{ \xi \in \R^{\Z_{M}^{d}}\quad |\quad \sum_{|n|\leq M}\langle n\rangle^{2s}|\xi_{n}| < 400\epsilon^{2}
 	\Big\}
	\,,
\end{equation}
 an open set of $\R^{\Z_{M}^{d}}$ endowed with the natural topology of $\ell_{1}^{2s}(\Z_{M}^{d};\R)$, made of \emph{internal parameters}. In particular, 
 \[
 \phi \in \Pi_{M}B_{s}(20\epsilon)\quad \iff\quad \xi(\phi) \in \mathcal{U}_{s}(\epsilon)\,.
 \]
To modulate the frequencies, we center the actions around the parameters $\xi=(\xi_{n})_{|n|\leq M}\in\mathcal{U}_{s}(\epsilon)$ in the Hamiltonian, and define 
 \[
 y_{n}(\xi,u) = |u_{n}|^{2} - \xi_{n}\,,\quad n\in\Z_{M}^{d}\,.
 \]
Then, the Hamiltonian's coefficients depend on the parameters $\xi$, and we shall introduce another Hamiltonian formalism more convenient for the analysis.
 \subsection{Parameter dependent polynomials}
\label{sub:par} 

In Definition \ref{def:hamb} we denoted $\mathcal{H}_{2q}(\Z_{M}^{d})$ a class of real homogenous polynomials of degree $2q$, supported on $\ell^{1}(\Z_{M}^{d})$. We set 
\[
\mathcal{H}_{\leq 2r}(\Z_{M}^{d}) 
	:= \bigoplus_{0\leq q\leq r }
	\mathcal{H}_{2q}(\Z_{M}^{d})\,,
	\quad \mathcal{H}(\Z_{M}^{d}) 
	:= \bigoplus_{0\leq q}
	\mathcal{H}_{2q}(\Z_{M}^{d})\,,
\]
where $\mathcal{H}_{0}(\Z_{M}^{d})$ is the class of constant real functions. 
\subsubsection{Extended class of real polynomials}
We now motivate the subsequent definitions and notations. In the expansions of polynomials in $\mathcal{H}(\Z_{M}^{d})$ we would like to keep track of the actions and formally see them as new variables: 
\[
u_{n}\,,\quad \overline{u_{n}}\,,\quad |u_{n}|^{2}\,.
\]
To a multi-index $\vec{n}$ in the class $\mathcal{N}$ defined in Definition \ref{def:multi} corresponds a sequence $\nb\in(\N^{\Z_{M}^{d}})^{3}$ where 
\[
\nb = (\kb, \lb,\mb)\,,\quad \text{with}\quad \kb = (k_{n})_{|n|\leq M}\,,\ \lb = (\ell_{n})_{|n|\leq M}\,,\ \mb = (m_{n})_{|n|\leq M}\,.
 \] 
For $n\in\Z_{M}^{d}$, the integer $m_{n}$ (resp. $k_{n}$, $\ell_{n}$) is the multiplicity of $|u_{n}|^{2}$ (resp. $u_{n}$, $\overline{u_{n}}$). Such a decomposition only make sense, and is unique, under a non-pairing condition between $u_{n}$ and $\overline{u_{n}}$.

\begin{definition}
\label{def:multi-bis} 
We define the class of multi-index with zero momentum condition 
\begin{equation}
\label{eq:monomial}
\Nb := \{\nb=(\kb,\lb,\mb) \ | \  \forall n\in\Z_{M}^{d}\,,\ k_n\ell_n=0\,\ \text{and}\  \sum_{n\in\Z_{M}^{d}}k_{n}-\ell_{n}
	=\sum_{n\in\Z_{M}^{d}}n(k_{n}-\ell_{n})=0\}\,.
\end{equation}
The degree of a multi-index $\nb$ is its total degree:
\[
\deg(\nb) := \sum_{n\in\Z_{M}^{d}}2m_{n}+ k_{n} + \ell_{n}\,.
\] 
\end{definition}
\begin{remark}
In the above definition, $k_{n}\ell_{n}=0$ is the non-pairing condition between $u_{n}$ and $\overline{u_{n}}$, $\sum_{|n|\leq M}k_{n}-\ell_{n}=0$ ensures that $u$ and $\overline{u}$ have the same multiplicity and  $\sum_{|n|\leq M}n(k_{n}-\ell_{n}) = 0$ is the zero momentum condition.
\end{remark}

For further use we also denote by $\widetilde{\Nb}$ the set of multi-indices satisfying only the non-pairing condition
\begin{equation}
\label{eq:N2q-t}
\widetilde{\mathcal{\Nb}} = \{\nb=(\kb,\lb,\mb)\quad|\quad k_n\ell_n=0\quad \forall n\in\Z_{M}^{d}\}\,.
\end{equation}
If $q\geq0$, 
\begin{equation}
\label{eq:N2q}
{\Nb}_{2q} = \{ \nb\in{\Nb}\quad |\quad \deg(\nb)= 2q\}\,,\quad {\Nb}_{\leq 2q} = \{ \nb\in{\Nb}\quad |\quad \deg(\nb) \leq 2q\}\,,
\end{equation}
with similar definitions for $\widetilde{\Nb}_{2q}$ and $\widetilde{\Nb}_{\leq 2q}$. 
\begin{notation}
We denote by $\nb_{-}$ the size of the smallest frequency that is not completely paired: 
\begin{equation}
\label{eq:n-}
\nb_{-}=\min\{\ |n|\quad|\quad n\in\Z_{M}^{d}\,,\ k_{n}+\ell_{n}\geq1\}\,.
\end{equation}
\end{notation}

The class of integrable monomials is denoted by
\begin{equation}
\label{eq:I}
\mathfrak{I} = \{\nb=(\kb,\lb,\mb)\in\Nb\quad|\quad \kb=\lb\equiv0\}\,.
\end{equation}
For convenience, given $n\in\Z_{M}^{d}$, we introduce special multi-indices  (that are not in $\Nb$, but in $\widetilde{\Nb}$):
\[
\mathbf{e}_{\mathfrak{m}}(n) := (\delta(n,\cdot)\,,0\,,0)\,,\quad \mathbf{e}_{\mathfrak{k}}(n):=(0,\delta(n, \cdot),0)\,,\quad \mathbf{e}_{\mathfrak{l}}(n) := (0,0,\delta(n,\cdot))\,,
\] 
where $\delta$ is the Kronecker symbol. They correspond to the following monomials:
\[
z_{\mathbf{e}_{\mathfrak{m}}(n)}(u,y) = y_{n}\,,
	\quad z_{\mathbf{e}_{\mathfrak{k}}(n)}(u,y) = u_{n}\,,
	\quad z_{\mathbf{e}_{\mathfrak{l}}(n)}(u,y) := \overline{u_{n}}\,.
\]

\subsubsection{The class of polynomials} We now define the corresponding class of extended polynomials $\mathcal{H}^{\sharp}(\Z_{M}^{d})$.

\begin{lemma} \label{lem:pouette}
For every $H\in\mathcal{H}_{\leq 2q}(\Z_{M}^{d})$ there exists a unique sequence of coefficients $(H_{\nb})\in\C^{\Nb_{\leq2q}}$, such that 
\[
H(u) = \sum_{\nb\in\Nb} H_{\nb} 
	\prod_{n\in\Z_{M}^{d}}
	u_{n}^{k_{n}}\overline{u_{n}}^{\ell_{n}}|u_{n}|^{2m_{n}}\,. 
\]
\end{lemma}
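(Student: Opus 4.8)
The plan is to prove the two assertions separately: \emph{existence} by an explicit regrouping of the monomials appearing in $H$, and \emph{uniqueness} by showing that the extended monomials $\prod_{n}u_n^{k_n}\overline{u_n}^{\ell_n}|u_n|^{2m_n}$ are linearly independent as functions, which one does by passing to polar coordinates and separating angular Fourier modes.

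First, for existence, write $H=\sum_{q'=0}^{q}H^{(2q')}$ with $H^{(2q')}\in\mathcal H_{2q'}(\Z_M^d)$, so that $H^{(2q')}(u)=\sum_{\vec n\in\mathcal N_{2q'}}H^{(2q')}_{\vec n}\,u_{n_1}\overline{u_{n_2}}\cdots\overline{u_{n_{2q'}}}$. For a fixed $\vec n$ one regroups equal indices: for $n\in\Z_M^d$ let $a_n$ (resp.\ $b_n$) be the number of odd-position (resp.\ even-position) entries of $\vec n$ equal to $n$, so that the associated monomial equals $\prod_{n\in\Z_M^d}u_n^{a_n}\overline{u_n}^{b_n}$, with $\sum_n a_n=\sum_n b_n=q'$ and, by the zero-momentum condition on $\vec n$, $\sum_n n(a_n-b_n)=0$. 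Setting $m_n:=\min(a_n,b_n)$, $k_n:=a_n-m_n$, $\ell_n:=b_n-m_n$ gives $k_n\ell_n=0$ and $u_n^{a_n}\overline{u_n}^{b_n}=u_n^{k_n}\overline{u_n}^{\ell_n}|u_n|^{2m_n}$, while $\nb:=(\kb,\lb,\mb)$ satisfies $\sum_n(k_n-\ell_n)=\sum_n(a_n-b_n)=0$, $\sum_n n(k_n-\ell_n)=0$ and $\deg(\nb)=\sum_n(a_n+b_n)=2q'\leq2q$; hence $\nb\in\Nb_{\leq2q}$. Letting $H_{\nb}$ be the (finite) sum of $H^{(\deg\nb)}_{\vec n}$ over all $\vec n$ producing $\nb$ in this way, and $H_{\nb}=0$ for the remaining $\nb\in\Nb_{\leq2q}$, yields the claimed representation.

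Next, for uniqueness: since $\Nb_{\leq2q}$ is a finite set, it suffices to show that the functions $P_{\nb}(u):=\prod_{n\in\Z_M^d}u_n^{k_n}\overline{u_n}^{\ell_n}|u_n|^{2m_n}$, as $\nb=(\kb,\lb,\mb)$ runs over the multi-indices with $k_n\ell_n=0$ for all $n$ and $\deg(\nb)\leq2q$, are linearly independent on $\C^{\Z_M^d}$; then the difference of two representations of $H$ is such a vanishing linear combination and its coefficients must all vanish. Writing $u_n=r_ne^{i\theta_n}$ one has $P_{\nb}(u)=\big(\prod_n r_n^{k_n+\ell_n+2m_n}\big)e^{i\sum_n(k_n-\ell_n)\theta_n}$. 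Assume $\sum_{\nb}c_{\nb}P_{\nb}\equiv0$; fixing $r=(r_n)\in(0,\infty)^{\Z_M^d}$ this is a finite trigonometric polynomial in $\theta\in\T^{\Z_M^d}$ vanishing identically, so each of its Fourier coefficients is zero: for every $\chi\in\Z^{\Z_M^d}$,
\[
\sum_{\substack{\nb=(\kb,\lb,\mb)\\ k_n-\ell_n=\chi_n\ \forall n}}c_{\nb}\prod_{n\in\Z_M^d}r_n^{k_n+\ell_n+2m_n}=0\,,\qquad r\in(0,\infty)^{\Z_M^d}.
\]
Because $k_n\ell_n=0$, the constraint $k_n-\ell_n=\chi_n$ determines $(k_n,\ell_n)$ (it is $(\chi_n,0)$ if $\chi_n\geq0$ and $(0,-\chi_n)$ otherwise), so all multi-indices in this sum share the same $\kb,\lb$ and differ only in $\mb$. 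Dividing by the common factor $\prod_n r_n^{k_n+\ell_n}$ and substituting $\rho_n=r_n^2>0$ turns the identity into $\sum_{\mb}c_{(\kb,\lb,\mb)}\prod_n\rho_n^{m_n}=0$ for all $\rho\in(0,\infty)^{\Z_M^d}$; since distinct $\mb$ give distinct — hence linearly independent — monomials in the commuting variables $\rho_n$, every $c_{(\kb,\lb,\mb)}$ vanishes, and as $\chi$ was arbitrary, all $c_{\nb}$ vanish.

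I do not expect a real obstacle here; the one point requiring care is the bookkeeping in the uniqueness step, namely that the angular character $(k_n-\ell_n)_n$ together with the non-pairing condition $k_n\ell_n=0$ pins down $\kb$ and $\lb$ uniquely, so that within a fixed angular Fourier mode one is genuinely left with distinct monomials in the $|u_n|^2$.
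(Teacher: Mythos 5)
Your proof is correct: the regrouping argument for existence and the polar-coordinates/Fourier-mode separation for uniqueness (using that the non-pairing condition $k_n\ell_n=0$ recovers $\kb,\lb$ from the angular character $(k_n-\ell_n)_n$) are exactly the standard argument for this statement. The paper states the lemma without proof, treating it as routine, and your write-up supplies the natural verification with no gaps.
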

Given a multi-index $\nb=(\kb,\lb,\mb)\in\Nb_{\leq2q}$ we use the short-hand notation:
\[
	u^{\kb} 
	:= \prod_{n\in\Z_{M}^{d}}u_{n}^{k_{n}}\,,
	\quad 
	\overline{u}^{\lb} 
	:= \prod_{n\in\Z_{M}^{d}}\overline{u_{n}}^{\ell_{n}}
	\,,\quad 
	I(u)^{\mb}
	 := \prod_{n\in\Z_{M}^{d}}|u_{n}|^{2m_{n}}\,,
\]
so that, alternatively, 
\[
H(u) = \sum_{\nb\in\Nb} H_{\nb}u^{\kb}\overline{u}^{\lb}I(u)^{\mb}\,.
\]
We can now define the new formalism. 
\begin{definition}[Extended class of real polynomials]
\label{def:ext} Let $q\geq0$. The set $\mathcal{H}^{\sharp}_{\leq2q}(\Z_{M}^{d})$ of real polynomials is the set of functions defined on $\C^{\Z_{M}^{d}}\times\R^{\Z_{M}^{d}}$
\[
Q(u,y) = \sum_{\nb\in\Nb_{\leq2q}} Q_{\nb}z_{\nb}(u,y)\,,
\]
where the coefficients $(Q_{\nb})\in\C^{\Nb_{\leq2q}}$ satisfy the reality condition:
\[
Q_{\kb,\lb,\mb}= \overline{Q_{\lb,\kb,\mb}}\,,
\]
and
\[
z_{\nb}(u,y) :=  u^{\kb}\overline{u}^{\lb}y^{\mb} = \prod_{n\in\Z_{M}^{d}}u_n^{k_n}\overline{u_{n}}^{\ell_n}y_n^{m_n}\,\,.
\]

\end{definition}
\begin{lemma} By associating to $H\in\mathcal{H}_{\leq 2q}(\Z_{M}^{d})$ its coefficients $(H_{\nb})_{\nb\in\Nb_{\leq2q}}$ the following map is an isomorphism 
\[
\begin{array}{ccccc} A & \colon &\mathcal{H}_{\leq2q}(\Z_{M}^{d})&\longrightarrow & \mathcal{H}_{\leq2q}^{\sharp}(\Z_{M}^{d})  \\&&&&\\& & H &\longmapsto & \sum_{\nb\in\Nb} H_{\nb}u^{\kb}\overline{u}^{\lb}y^{\mb}  \,,\\\end{array}
\]
the inverse of $A$ being 
\[
\begin{array}{ccccc} A^{-1} & \colon &\mathcal{H}^{\sharp}_{\leq2q}(\Z_{M}^{d})&\longrightarrow & \mathcal{H}_{\leq2q}(\Z_{M}^{d})  \\&&&&\\& & Q(u,y) &\longmapsto & Q(u,I(u)) \,.\\\end{array}
\]
where $I(u)=(|u_{n}|^{2})_{n\in\Z_{M}^{d}}$.
\end{lemma}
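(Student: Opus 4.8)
The plan is to derive the whole statement from the uniqueness of the re-centred expansion asserted by the previous lemma. First I would check that $A$ is well defined, i.e. that the coefficient family $(H_{\nb})\in\C^{\Nb_{\leq2q}}$ attached to $H\in\mathcal{H}_{\leq2q}(\Z_M^d)$ indexes an element of $\mathcal{H}^{\sharp}_{\leq2q}(\Z_M^d)$. The reality condition $H_{\kb,\lb,\mb}=\overline{H_{\lb,\kb,\mb}}$ of Definition \ref{def:ext} comes from the fact that $H$ is real-valued: conjugating the expansion $H(u)=\sum_{\nb}H_{\nb}u^{\kb}\overline{u}^{\lb}I(u)^{\mb}$, using that $\overline{u^{\kb}}=\overline{u}^{\kb}$, $\overline{\overline{u}^{\lb}}=u^{\lb}$ and that $I(u)^{\mb}$ is real, and re-indexing $(\kb,\lb,\mb)\mapsto(\lb,\kb,\mb)$ (which leaves the non-pairing condition $k_n\ell_n=0$ invariant) yields a second expansion of $H$ of the same type, so the two coefficient families agree by uniqueness. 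That $\nb\in\Nb_{\leq2q}$ (degree bound, zero momentum, equal multiplicities of $u$ and $\overline{u}$) is inherited monomial by monomial from the structure of the homogeneous components $\mathcal{H}_{2q'}(\Z_M^d)$, $q'\leq q$: each of their monomials is a product of $q'$ factors $u_{n_i}$ and $q'$ factors $\overline{u_{n_j}}$ with $\sum_i(-1)^i n_i=0$, and collecting the pairs $u_n\overline{u_n}$ into $|u_n|^2$ keeps the total degree even (equal to $2q'\leq2q$), preserves the equality between the number of $u$-factors and $\overline{u}$-factors, and leaves the momentum unchanged (a pair contributes $n-n=0$). Linearity of $A$ over $\R$ is then immediate.

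Next I would check that $A^{-1}$ is well defined, which is the only genuinely delicate point: given $Q(u,y)=\sum_{\nb}Q_{\nb}u^{\kb}\overline{u}^{\lb}y^{\mb}\in\mathcal{H}^{\sharp}_{\leq2q}(\Z_M^d)$ one must argue that $Q(u,I(u))=\sum_{\nb}Q_{\nb}u^{\kb}\overline{u}^{\lb}I(u)^{\mb}$ lands back in $\mathcal{H}_{\leq2q}(\Z_M^d)$ and not in some larger class. Each monomial $u^{\kb}\overline{u}^{\lb}I(u)^{\mb}$ equals $\prod_n u_n^{k_n+m_n}\overline{u_n}^{\ell_n+m_n}$, which is homogeneous of degree $\deg(\nb)\leq2q$; since $\sum_n(k_n-\ell_n)=0$ it has as many $u$-factors as $\overline{u}$-factors (say $q'$, so $2q'=\deg(\nb)$), and since $\sum_n n(k_n-\ell_n)=0$ it satisfies the zero-momentum condition, so after symmetrization it defines an element of $\mathcal{H}_{2q'}(\Z_M^d)$; the boundedness condition is automatic because $\Z_M^d$ is finite, hence only finitely many $\nb$ occur and the resulting symmetrized coefficients are finite. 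Summing over $\nb$ (equivalently over $0\leq q'\leq q$) shows $Q(u,I(u))\in\mathcal{H}_{\leq2q}(\Z_M^d)$, and linearity of $A^{-1}$ over $\R$ is clear.

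Finally I would check that $A^{-1}\circ A$ and $A\circ A^{-1}$ are the identity. For the first, $A(H)(u,I(u))=\sum_{\nb}H_{\nb}u^{\kb}\overline{u}^{\lb}I(u)^{\mb}=H(u)$ is exactly the expansion furnished by the previous lemma. For the second, $Q(u,I(u))=\sum_{\nb}Q_{\nb}u^{\kb}\overline{u}^{\lb}I(u)^{\mb}$ is already written as a combination of the monomials $u^{\kb}\overline{u}^{\lb}I(u)^{\mb}$ with the non-pairing condition $k_n\ell_n=0$, so by the uniqueness part of the previous lemma its coefficient family is precisely $(Q_{\nb})_{\nb\in\Nb_{\leq2q}}$, whence $A(Q(u,I(u)))=\sum_{\nb}Q_{\nb}u^{\kb}\overline{u}^{\lb}y^{\mb}=Q$. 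This establishes that $A$ is an $\R$-linear bijection with inverse $Q\mapsto Q(u,I(u))$, that is, an isomorphism.
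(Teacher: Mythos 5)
Your proof is correct and follows exactly the route the paper intends (the lemma is left unproved there as an immediate consequence of the preceding uniqueness lemma, which is precisely what you invoke for both compositions). One small repair: a single monomial $Q_{\nb}u^{\kb}\overline{u}^{\lb}I(u)^{\mb}$ with complex coefficient is not by itself an element of $\mathcal{H}_{2q'}(\Z_M^d)$ (it need not be real-valued), so when checking that $A^{-1}$ lands in $\mathcal{H}_{\leq 2q}(\Z_M^d)$ you should verify real-valuedness of the full sum $Q(u,I(u))$ using the reality condition $Q_{\kb,\lb,\mb}=\overline{Q_{\lb,\kb,\mb}}$ of Definition \ref{def:ext} — the mirror image of the conjugation-and-reindexing argument you already gave for the forward direction.
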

In particular, we have 
\[
H(u) = A(H)(u,I(u))\quad \forall H\in\mathcal{H}_{\leq2q}(\Z_{M}^{d})\,.
\]

\subsubsection{Centering procedure}

With this formalism, we can interpret the centering of a polynomial $H$ with respect to a sequence of parameters $\xi$, as a translation of $\xi$ in the $y$-variable of $A(H)$, in the space $\mathcal{H}^{\sharp}(\Z_{M}^{d})$: 

\begin{definition} Let $\xi\in\mathcal{U}_{s}(\epsilon)$. We define the translation operator $\mathrm{T}_{\xi} : \mathcal{H}^{\sharp}_{\leq 2q}(\Z_{M}^{d}) \ \to\ \mathcal{H}^{\sharp}_{\leq2q}(\Z_{M}^{d})$ such that for all $H\in\mathcal{H}_{\leq2q}^{\sharp}(\Z_{M}^{d})$,
\begin{equation}
\label{eq:tlt}
\mathrm{T}_{\xi}Q(u,y) := Q(u,y-\xi)\,.
\end{equation}
\end{definition}
With these operators at hand, for $H\in\mathcal{H}_{\leq2q}(\Z_{M}^{d})$, centering $H$ around $\xi$ boils down to write\footnote{in the following formula we are just writing $\left|u_{n}\right|^{2}=\left|u_{n}\right|^{2}-\xi_{n}+\xi_{n}$ and then expanding around $\left|u_{n}\right|^{2}-\xi_{n}$ in the expansion of $H$ given by Lemma \ref{lem:pouette}.}
\begin{equation}
\label{eq:12}
H(u) = A(H)(u,I(u)) 
	= (A^{-1}\circ\mathrm{T}_{\xi}\circ\mathrm{T}_{-\xi}\circ A)(H)(u) 
	= (\mathrm{T}_{-\xi}\circ A)(H)(u,I(u)-\xi)\,.
\end{equation}

We stress out that the coefficients of $(\mathrm{T}_{-\xi}\circ A)(H)$ depend polynomially on $\xi$.  Nevertheless, this dependency is quite artificial: the polynomial function $H$ does not really depend on $\xi$.  However, our normal form procedure will naturally generate Hamiltonian functions really depending on $\xi$ (due to the small divisors). For this reason, we introduce a class of parameter-dependent real polynomials.

\begin{definition}[Class of parameter-dependent real polynomials]
\label{def:class}
We define
\begin{equation}
\label{eq:X}
X_{2q}(\epsilon):= C^{\infty}
	\Big(
	\mathcal{U}_{s}(\epsilon),\mathcal{H}_{2q}(\Z_{M}^{d})
	\Big)\,, 
\end{equation}
and 
\[
X_{\leq 2q}(\epsilon):= C^{\infty}
	\Big(
	\mathcal{U}_{s}(\epsilon),\mathcal{H}_{\leq 2q}(\Z_{M}^{d})
	\Big)\,, 	
	\quad 
	X(\epsilon):= C^{\infty}
	\Big(
	\mathcal{U}_{s}(\epsilon),\mathcal{H}(\Z_{M}^{d})
	\Big)\,.
\]
In addition, we define the class of integrable polynomials of degree less than or equal to $2q$ by 
\[
X_{2q,\mathrm{Int}}(\epsilon) := 
	\{
	H\in X_{2q}(\epsilon)\quad |\quad \forall \nb\in\Nb_{\leq2q}\,,\forall \xi\in\mathcal{U}_{s}(\epsilon)\,,\quad (H_{\nb}(\xi)\neq0 \implies \nb\in\mathfrak{J})
	\} \,.
\]
\end{definition}

We can now introduce the main notation.
\begin{notation}
\label{not}
Centreing $H\in X_{\leq2q}(\epsilon)$ around $\xi\in\mathcal{U}_{s}(\epsilon)$ gives a function of $\xi$ valued in $\mathcal{H}_{\leq 2q}^{\sharp}(\Z_{M}^{d})$, denoted $H[\cdot]$ and  defined by
\[
\begin{array}{ccccc} H[\cdot]
		&
		\colon 
		& \mathcal{U}_{s}(\epsilon)&
		\longrightarrow 
		& \mathcal{H}_{\leq 2q}^{\sharp}(\Z_{M}^{d})\\&&&&\\& 
		&\xi&\longmapsto 
		&  H[\xi]:=[\operatorname{T}_{-\xi}\circ A](H(\xi))
		\,.
		\\\end{array}
\]
According to \eqref{eq:12}, a parameter-dependent polynomial $H\in X_{\leq2q}(\epsilon)$, 
\[
H: \xi\in\mathcal{U}_{s}(\epsilon)\ \mapsto\ H(\xi;\cdot) \in \mathcal{H}_{\leq2q}(\Z_{M}^{d}) \,,
\]
once re-centered, can be represented by
\begin{equation}
\label{eq:H-cano}
H(\xi;u) = H[\xi](u,I(u)-\xi)\,.
\end{equation}
In the expanded form, we have 
\[
H(\xi;u) = \sum_{\nb\in\Nb_{\leq2q}}(H[\xi])_{\nb}z_{\nb}(u,I(u)-\xi)\,,
\]
where we recall that $z_{\nb}$ is defined by
\[
z_{\nb}(u,y) = u^{\kb}\overline{u}^{\lb}y^{\mb} = \prod_{n\in\Z_{M}^{d}}u_{n}^{k_{n}}\overline{u_{n}}^{\ell_{n}}y_{n}^{m_{n}} \,.
\]
\end{notation}

\subsubsection{Operations for re-centered polynomials}

\begin{lemma}[Poisson bracket with an action] If $H\in X_{\leq2q}(\epsilon)$ then for all $n\in\Z^{d}$ and $\xi\in\mathcal{U}_{s}(\epsilon)$,
\begin{equation}
\label{eq:brak-I}
\{I_{n},H(\xi;\cdot)\}(u) 
	= 2i\sum_{\nb\in\Nb}(k_{n}-\ell_{n})(H[\xi])_{\nb}u^{\kb}\overline{u}^{\lb}(I(u)-\xi)^{\mb}\,.
\end{equation}
\end{lemma}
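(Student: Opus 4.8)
The plan is to reduce the identity to the action of the infinitesimal gauge operator
$$
R_{n} := u_{n}\partial_{u_{n}} - \overline{u_{n}}\partial_{\overline{u_{n}}}
$$
on the re-centred monomials $z_{\nb}(u,I(u)-\xi)$. First I would record that $I_{n}(u) = u_{n}\overline{u_{n}}$ is a smooth real-valued function whose gradient $\nabla I_{n}(u)$ is $\C^{\{n\}}$-valued (its $k$-th Fourier coefficient is $2\delta(n,k)u_{n}$, by \eqref{eq:formula_grad}), while $H(\xi;\cdot)\in\mathcal{H}_{\leq2q}(\Z_{M}^{d})$ is a polynomial depending on finitely many coordinates, hence smooth on $\ell^{1}(\Z_{M}^{d})$. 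Thus the Poisson bracket $\{I_{n},H(\xi;\cdot)\}$ is well defined. Using the coordinate expression \eqref{eq:pois-brak} together with $\partial_{u_{k}}I_{n} = \delta(n,k)\overline{u_{n}}$ and $\partial_{\overline{u_{k}}}I_{n} = \delta(n,k)u_{n}$, only the $k=n$ term survives and one gets
$$
\{I_{n},H(\xi;\cdot)\}(u) = 2i\big(u_{n}\partial_{u_{n}} - \overline{u_{n}}\partial_{\overline{u_{n}}}\big)\big(H(\xi;\cdot)\big)(u) = 2i\,\big(R_{n}\,H(\xi;\cdot)\big)(u).
$$

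Next I would insert the re-centred representation \eqref{eq:H-cano}, namely $H(\xi;u) = \sum_{\nb\in\Nb_{\leq2q}}(H[\xi])_{\nb}\,z_{\nb}(u,I(u)-\xi)$ with $z_{\nb}(u,y) = u^{\kb}\overline{u}^{\lb}y^{\mb}$, and compute $R_{n}z_{\nb}(u,I(u)-\xi)$ via the Leibniz rule. The one point that requires a line of care is that $R_{n}$ annihilates every action: for each $m\in\Z_{M}^{d}$,
$$
u_{n}\partial_{u_{n}}\big(u_{m}\overline{u_{m}}\big) = \overline{u_{n}}\partial_{\overline{u_{n}}}\big(u_{m}\overline{u_{m}}\big) = \delta(n,m)\,|u_{m}|^{2},
$$
hence $R_{n}\big(|u_{m}|^{2}-\xi_{m}\big) = 0$, since $\xi_{m}$ is a constant. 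Consequently $R_{n}$ acts only on the non-paired part $u^{\kb}\overline{u}^{\lb}$ of each monomial, where $R_{n}(u^{\kb}) = k_{n}u^{\kb}$ and $R_{n}(\overline{u}^{\lb}) = -\ell_{n}\overline{u}^{\lb}$, giving $R_{n}\big(u^{\kb}\overline{u}^{\lb}\big) = (k_{n}-\ell_{n})u^{\kb}\overline{u}^{\lb}$. Therefore $R_{n}z_{\nb}(u,I(u)-\xi) = (k_{n}-\ell_{n})\,z_{\nb}(u,I(u)-\xi)$.

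Finally, summing over $\nb$ and using linearity of $R_{n}$, I obtain
$$
\{I_{n},H(\xi;\cdot)\}(u) = 2i\sum_{\nb\in\Nb}(k_{n}-\ell_{n})\,(H[\xi])_{\nb}\,u^{\kb}\overline{u}^{\lb}(I(u)-\xi)^{\mb},
$$
the sum being effectively over $\Nb_{\leq2q}$, which is exactly the asserted formula \eqref{eq:brak-I}. There is no genuine obstacle in this argument: it is a short Wirtinger-calculus computation, and the only subtlety — which is precisely the reason the re-centred formalism behaves well under $\{I_{n},\cdot\}$ — is the observation that the translation by $\xi$ in the $y$-variable does not interact with the gauge action, because $R_{n}$ kills $|u_{m}|^{2}$ and $\xi$ is constant.
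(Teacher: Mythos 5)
Your proof is correct, and it is exactly the direct Wirtinger-calculus computation the paper has in mind (the lemma is stated there without proof as an immediate consequence of \eqref{eq:pois-brak}): reduce $\{I_{n},\cdot\}$ to the derivation $u_{n}\partial_{u_{n}}-\overline{u_{n}}\partial_{\overline{u_{n}}}$, observe it annihilates each factor $|u_{m}|^{2}-\xi_{m}$, and read off the eigenvalue $k_{n}-\ell_{n}$ on each re-centred monomial.
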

By linearity, we deduce the following. 

\begin{corollary}
Given a sequence of real numbers $(\omega_{n})_{n\in\Z_{M}^{d}}$ (the frequencies), if  
\[
Z_{2}(\xi;u) := \sum_{n\in\Z_{M}^{d}}\omega_{n}(|u_{n}|^{2}-\xi_{n})\,,
\]
and $H\in X_{\leq2q}(\epsilon)$, then we have 
\[
\{Z_{2},H\}(u) = 2i\sum_{\nb\in\Nb}\Omega_{\nb}(\omega)(H[\xi])_{\nb}\ u^{\kb}\overline{u}^{\lb}(I(u)-\xi)^{\mb}\,,
\]
where $\Omega_{\nb}(\omega)$ is the resonance function:
\begin{equation}
\label{eq:rf1}
\Omega_{\nb}(\omega) := \sum_{n\in\Z_{M}^{d}}(k_{n}-\ell_{n})\omega_{n}\,.
\end{equation}
\end{corollary}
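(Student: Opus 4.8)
The plan is to deduce this statement directly from the preceding lemma on the Poisson bracket with a single action, together with the bilinearity of the bracket; the finite-dimensionality of $\Z_M^d$ makes every manipulation a matter of bookkeeping. First I would observe that, since the $\xi_n$ do not depend on $u$, one may write
\[
Z_2(\xi;u) = \sum_{n\in\Z_M^d}\omega_n I_n(u) - c\,,\qquad c := \sum_{n\in\Z_M^d}\omega_n\xi_n\in\R\,,
\]
where $I_n(u)=|u_n|^2$. Because the Poisson bracket \eqref{eq:pois-brak} annihilates constant functions and is $\R$-bilinear in its arguments, this gives
\[
\{Z_2,H\}(u) = \sum_{n\in\Z_M^d}\omega_n\,\{I_n,H(\xi;\cdot)\}(u)\,.
\]

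Next I would substitute the formula \eqref{eq:brak-I} of the previous lemma into each summand, obtaining
\[
\{Z_2,H\}(u) = 2i\sum_{n\in\Z_M^d}\omega_n\sum_{\nb\in\Nb}(k_n-\ell_n)(H[\xi])_{\nb}\,u^{\kb}\overline{u}^{\lb}(I(u)-\xi)^{\mb}\,.
\]
Since $\Z_M^d$ is finite and $H\in X_{\leq2q}(\epsilon)$ has only finitely many nonzero coefficients $(H[\xi])_{\nb}$ (those indexed by $\nb\in\Nb_{\leq2q}$), both sums are finite and may be interchanged freely. Collecting, for each fixed $\nb$, the scalar prefactor of $z_{\nb}(u,I(u)-\xi)=u^{\kb}\overline{u}^{\lb}(I(u)-\xi)^{\mb}$ yields exactly $2i\sum_{n\in\Z_M^d}(k_n-\ell_n)\omega_n = 2i\,\Omega_{\nb}(\omega)$, which is the claimed identity.

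I do not expect a genuine obstacle: the result is a linearity computation in a finite-dimensional setting, so no convergence or reordering issue arises. The only points that deserve a line of care are (i) checking that the constant term of $Z_2$ drops out, which is immediate from $\{c,\cdot\}=0$, and (ii) noting that the expansion being used is already the re-centred one — that is, \eqref{eq:brak-I} expresses $\{I_n,H(\xi;\cdot)\}$ in the $z_{\nb}(u,I(u)-\xi)$ representation, which is precisely its statement, so no further re-centring is needed. (Alternatively, one could prove the identity in one line directly from \eqref{eq:pois-brak} by differentiating the canonical representation \eqref{eq:H-cano} of $H$, but routing through the preceding lemma is cleaner and reuses its bookkeeping.)
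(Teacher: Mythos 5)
Your argument is correct and is exactly the route the paper takes: the paper derives this corollary "by linearity" from the preceding lemma \eqref{eq:brak-I}, and your write-up simply makes explicit the same bookkeeping (the constant $\sum_n\omega_n\xi_n$ dropping out, bilinearity, and the finite interchange of sums). Nothing further is needed.
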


\section{Functional setting for re-centered polynomials}
\label{sec:main-nf-su}

\subsection{Parameters}
\label{sec:par}
We have remembered the set-up of Theorem \ref{thm:main_low} in subsection \ref{sec:su}. We now introduce the new parameters that play a role in the subsequent finite-dimensional analysis, to control the frequencies smaller than $M$ of the solution. Recall that the small parameter $\epsilon\in(0,\epsilon_{\ast})$ was fixed in paragraph~\eqref{sec:su}.
\begin{itemize}
\item $\eta>0$ is a large portion of $\epsilon$
\[
\eta = \epsilon^{1-\frac{1}{100}}\,.
\]
\item For $\alpha\in\N$, the frequency scale $N_{\alpha}=\epsilon^{-\frac{\alpha}{200s}}$ (which is not necessarily an integer), verifies 
\[
N_{0}=1\,,\quad (\frac{N_{\alpha+1}}{N_{\alpha}})^{s} = \epsilon^{-\frac{1}{200}}\,.
\]
\item We set $\beta = 100r$. In this way, $N_{\beta}$ (which will be our largest scale) verifies
\begin{equation}
\label{eq:s-beta}
\quad N_{\beta}^{-2s} = \epsilon^{\frac{\beta}{100}}  = \epsilon^{r}\,.
\end{equation}
\item $\tau$ (proportional to $s$) is a small parameter compared to $s$:
\[
\tau = \frac{s}{10\beta} = \frac{s}{10^{3}r}\,.
\]
We ensure that for all $\alpha\in\{0,\cdots,\beta\}$, 
\begin{equation}
\label{eq:eta-tau}
(\eta^{-1}\epsilon) N_{\alpha}^{5\tau} = \epsilon^{\frac{1}{100}-\frac{5\alpha\tau}{200s}} \leq \epsilon^{\frac{1}{100}-\frac{5\beta \tau}{200s}}  = \epsilon^{\frac{3}{500}}\leq \epsilon^{\frac{1}{200}}\,. 
\end{equation}
\item $\overline{r}$ (proportional to $r$) is large enough to ensure that 
\begin{equation}
\label{eq:r-bar}
\overline{r}\tau \geq 6s\,.
\end{equation}
In view of the definition of $\tau$, we can set 
\[\overline{r}=10^{5}r\,.\]
\item Recalling the definition \eqref{eq:N2q} of the set of multi-indices $\Nb_{\leq2q}$ observe that, at least when $q\leq\overline{r}^{2}$, 
\begin{equation}
\label{eq:counting}
\sharp\Nb_{\leq2q}\leq \sharp\widetilde{\Nb}_{\leq 2q}\leq \sharp(\Z_{M}^{d})^{2q}\ll \epsilon^{-\frac{1}{10^{6}}}\,.
\end{equation}
\item $\gamma$ (and $\gamma(\alpha)$) are numbers in $(0,1)$, depending on $\epsilon$, which give the large measure of the non-resonant parameter set, but they also appear in the small divisor estimates. The following choice is acceptable (although other choices are also possible):
\begin{equation}
\label{eq:gamma}
 \gamma =  \epsilon^{\frac{1}{30}} \,,\quad \gamma(\alpha) :=4^{\alpha}\gamma \,.
\end{equation}
\end{itemize}

We have the following key relationships between the parameters (if $\epsilon_{\ast}$ is small enough):
\begin{align}
\label{eq:par}
\gamma(\alpha)^{-1}(\epsilon^{-1}\eta)^{2}(\frac{N_{\alpha+1}}{N_{\alpha}})^{2s}
\leq \epsilon^{-\frac{1}{15}}\,.
\end{align}

\subsection{Frequency-scales} 

\begin{definition}[Weights for the coefficient bounds]
\label{def:weight}
For $\alpha\in\{0,\cdots,\beta\}$ and $n\in\Z_{M}^{d}$ set
\begin{align*}
\operatorname{D}(\alpha) &:= \eta^{-2-\frac{1}{5}}N_\alpha^{2s}\,,\\
\operatorname{C}_{n}(\alpha) &:= \eta^{-1}\min(|n|, N_\alpha)^{s}N_{\alpha}^\tau\,.
\end{align*}
Given a multi-index $\nb=(\kb,\lb,\mb)\in\Nb$ defined in \eqref{eq:monomial} we set
\begin{align*}
\mathrm{w}_{\nb}^{0}(\alpha) &:= N_\alpha^{-6s}\eta^{6}\prod_{n\in\Z_{M}^{d}}\operatorname{D}(\alpha)^{m_n}\operatorname{C}_{n}(\alpha)^{k_n+\ell_n}\,,\\
\intertext{and}
\mathrm{w}^1_{\nb}(\alpha) &:= N_\alpha^{-4s}\eta^4\prod_{n\in\Z_{M}^{d}}\operatorname{D}(\alpha)^{m_n}\operatorname{C}_{n}(\alpha)^{k_n+\ell_n}\,.
\end{align*}
\end{definition}
\begin{remark} \label{rq:comp_bou}
Note that our choice of weights $D(\alpha)$ differs slightly from the choice of Bourgain \cite{Bou00}, where (adapting the notation) we would have the following:
\[
D(\alpha) = \eta^{-2-\frac{1}{10}}N_{\alpha}^{2s}\min(|n|, N_{\alpha})^{s}\,.
\]
The above choice has some advantages when moving from the scale $\alpha$ to the scale $\alpha+1$ for the integrable quartic terms generated in the algorithm. In our case, however, we measure these quartic terms in the norm $Z$ (defined below) that does not depend on the scales. This allows us to have smaller weights. We emphasize that such a saving is crucial to obtain the bounds on the symplectic transformations (e.g. Lemma \ref{lem:d2F}) to measure the size of the non-resonant initial data set (a task not addressed in \cite{Bou00}). 
\end{remark}

\subsection{Norms and vector field estimate}

We now define suitable norms for re-centered parameter-dependent polynomials, which are written under the form \eqref{eq:H-cano}. Recall that we defined the set of parameters $\mathcal{U}_{s}(\epsilon)$ in \eqref{eq:U-eps}.
\begin{definition}[Norms for the re-centered polynomials]
\label{def:norm}
Given $\alpha\in\{0\,,\cdots\,,\beta\}$ and a parameter-dependent polynomial $H\in X(\epsilon)$ (defined in \eqref{eq:X}), we define two norms at scale $\alpha$:
$$
\|H\|_{\Ysup{\alpha}}
	:=\sup_{\xi\in\mathcal{U}_{s}(\epsilon)}\max_{\nb\in\Nb}\ \mathrm{w}_{\nb}^{0}(\alpha)^{-1}|(H[\xi])_{\nb}| ,
	$$
	$$
	\|H\|_{\Ylip{\alpha}}
	:= \sup_{\xi\in\mathcal{U}_{s}(\epsilon)}\max_{\nb\in\Nb}\max_{|n|\leq M} \ \mathrm{w}^{1}_{\nb}(\alpha)^{-1}|\partial_{\xi_{n}}(H[\xi])_{\nb}|
	$$
and
$$
	\|H\|_{Z^{\mathrm{sup}}}
	:=\sup_{\xi\in\mathcal{U}_{s}(\epsilon)}\max_{\nb\in\Nb}\ |(H[\xi])_{\nb}|,
	$$
	$$
	\|H\|_{Z^{\mathrm{lip}}}
	:=\sup_{\xi\in\mathcal{U}_{s}(\epsilon)}\ \max_{\nb\in\Nb}\max_{|n|\leq M} |\partial_{\xi_{n}}(H[\xi])_{\nb}|.
$$
\end{definition}
The norm $\Ysup{\alpha}$ is nothing but a weighted $\ell^\infty$-norm on the coefficients of the re-centered polynomial $H[\xi]$ (as introduced in Notation \ref{not}), with scale-dependent weights as in Definition \ref{def:weight}. We will extensively use that for all $\nb\in\Nb$ and $\xi\in\mathcal{U}_{s}(\epsilon)$,
\begin{align*}
|(H[\xi])_{\nb}|&\leq\|H\|_{\Ysup{\alpha}} \mathrm{w}_{\nb}^{0}(\alpha)\,,
\intertext{and, for all $|n|\leq M$,}
|\partial_{\xi_{n}}(H[\xi])_{\nb}|&\leq \|H\|_{\Ylip{\alpha}}\mathrm{w}_{\nb}^{1}(\alpha)\,.
\end{align*}
We say that a Hamiltonian {\it operates at frequency scale} $\alpha$ (see Theorem \ref{thm:nf-alpha}) when its $\Ysup{\alpha}$-norm is small compared to $\epsilon^{-\frac{1}{10^{3}}}$, say. 

\begin{remark}
For all $H$ of order greater than 6 (in the sense that for all $\xi\in\mathcal{U}_{s}(\epsilon)$, $H_{\nb}[\xi]\neq0$ only if $\deg(\nb)\geq6$), we have that
\begin{equation}
\label{eq:alpha=0}
\|H\|_{\Ysup{0}}
	\leq \|H\|_{Z^{\mathrm{sup}}}
	\,,\quad 
	\|H\|_{\Ylip{0}}
	\leq \|H\|_{Z^{\mathrm{lip}}}\,.
\end{equation}
The reason is that $N_{0}=1$.
\end{remark}
Given $\xi$ and $\alpha$  we introduce the annulus  
\begin{equation}
\label{eq:cal-V}
\mathcal{V}_{\alpha,s}(\epsilon,\xi) := \Big\{u\in\Pi_{M}B_{s}(\epsilon)\quad|\quad  \sum_{n\in\Z_{M}^{d}}\langle n\rangle^{2s}||u_n|^2 - \xi_n|\leq \epsilon^{2+\frac{1}{5}}N_\alpha^{-2s}\Big\}\,.
\end{equation}
We now estimate a multilinear quantity for functions in the above neighborhood. The set $\widetilde{\Nb}$ was defined in \eqref{eq:N2q-t}. 
\begin{lemma}[Weighted multilinear estimate]
\label{lem:wz}
For all $\alpha\in\{0,\cdots,\beta\}$, $\xi\in\mathcal{U}_{s}(\epsilon)$, $\nb\in\widetilde{\Nb}$ and  $u\in\mathcal{V}_{\alpha,s}(20\epsilon,\xi)$,
\begin{equation}
\label{eq:wz}
\mathrm{w}_{\nb}^{0}(\alpha)|z_{\nb}(u,I(u)-\xi)|  
	\leq N_{\alpha}^{-6s}\eta^{6}\,.
\end{equation}
\end{lemma}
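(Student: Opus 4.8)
The goal is to bound the single monomial $\mathrm{w}_{\nb}^{0}(\alpha)\,|z_{\nb}(u,I(u)-\xi)|$ by $N_{\alpha}^{-6s}\eta^{6}$ when $u\in\mathcal{V}_{\alpha,s}(20\epsilon,\xi)$. The plan is to factor the weight and the monomial frequency by frequency, using the definitions
\[
\mathrm{w}_{\nb}^{0}(\alpha)=N_\alpha^{-6s}\eta^{6}\prod_{n\in\Z_{M}^{d}}\operatorname{D}(\alpha)^{m_n}\operatorname{C}_{n}(\alpha)^{k_n+\ell_n},\qquad
z_{\nb}(u,y)=\prod_{n\in\Z_{M}^{d}}u_n^{k_n}\overline{u_n}^{\ell_n}y_n^{m_n},
\]
so that after pulling out the prefactor $N_\alpha^{-6s}\eta^{6}$ it suffices to show
\[
\prod_{n\in\Z_{M}^{d}}\operatorname{D}(\alpha)^{m_n}\operatorname{C}_{n}(\alpha)^{k_n+\ell_n}\,|u_n|^{k_n+\ell_n}\,|\,|u_n|^2-\xi_n\,|^{m_n}\leq 1 .
\]
Thus I would reduce to two elementary per-frequency inequalities: first, $\operatorname{C}_{n}(\alpha)\,|u_n|\leq 1$ for every $n$ with $k_n+\ell_n\geq 1$; and second, $\operatorname{D}(\alpha)\,|\,|u_n|^2-\xi_n\,|\leq 1$ for every $n$ with $m_n\geq 1$. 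Once these two hold, the product is a product of factors each $\leq 1$ (recall $\nb\in\widetilde{\Nb}$ only imposes the non-pairing condition $k_n\ell_n=0$, which is irrelevant here; we do not even need it), and we are done.

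For the first inequality, $\operatorname{C}_{n}(\alpha)=\eta^{-1}\min(|n|,N_\alpha)^{s}N_\alpha^{\tau}$. Since $u\in\Pi_M B_s(20\epsilon)$ (this is part of the definition of $\mathcal{V}_{\alpha,s}(20\epsilon,\xi)$), we have $\langle n\rangle^{2s}|u_n|^2\leq \|u\|_{H^s}^2\leq (20\epsilon)^2$, hence $|u_n|\leq 20\epsilon\,\langle n\rangle^{-s}\leq 20\epsilon\,|n|^{-s}$ (using $|n|\leq\langle n\rangle$, and handling $n=0$ separately, where $\min(0,N_\alpha)^s=0$ makes the factor vanish). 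Therefore
\[
\operatorname{C}_{n}(\alpha)\,|u_n|\leq \eta^{-1}\,\min(|n|,N_\alpha)^{s}\,|n|^{-s}\,N_\alpha^{\tau}\cdot 20\epsilon\leq 20\,(\eta^{-1}\epsilon)\,N_\alpha^{\tau},
\]
because $\min(|n|,N_\alpha)^{s}|n|^{-s}\leq 1$. By the parameter relation \eqref{eq:eta-tau}, $(\eta^{-1}\epsilon)N_\alpha^{5\tau}\leq \epsilon^{1/200}$, so in particular $(\eta^{-1}\epsilon)N_\alpha^{\tau}\leq \epsilon^{1/200}$, and $20\,\epsilon^{1/200}\leq 1$ for $\epsilon_*$ small enough. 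Hence $\operatorname{C}_{n}(\alpha)|u_n|\leq 1$.

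For the second inequality, $\operatorname{D}(\alpha)=\eta^{-2-1/10}N_\alpha^{2s}$. From the annulus condition defining $\mathcal{V}_{\alpha,s}(20\epsilon,\xi)$ (with radius $20\epsilon$ in place of $\epsilon$, i.e. the bound is $(20\epsilon)^{2+1/5}N_\alpha^{-2s}$), we get $\langle n\rangle^{2s}|\,|u_n|^2-\xi_n\,|\leq (20\epsilon)^{2+1/5}N_\alpha^{-2s}$, so a fortiori $|\,|u_n|^2-\xi_n\,|\leq (20\epsilon)^{2+1/5}N_\alpha^{-2s}$. Therefore
\[
\operatorname{D}(\alpha)\,|\,|u_n|^2-\xi_n\,|\leq \eta^{-2-\frac{1}{10}}N_\alpha^{2s}\cdot(20\epsilon)^{2+\frac15}N_\alpha^{-2s}=20^{2+\frac15}\,\eta^{-2-\frac{1}{10}}\epsilon^{2+\frac15}=20^{2+\frac15}\,(\eta^{-1}\epsilon)^{2+\frac{1}{10}}\,\epsilon^{\frac{1}{10}}.
\]
Since $\eta=\epsilon^{1-1/100}$, we have $\eta^{-1}\epsilon=\epsilon^{1/100}\to 0$, so $(\eta^{-1}\epsilon)^{2+1/10}\epsilon^{1/10}\to 0$ as $\epsilon\to 0$; hence for $\epsilon_*$ small the whole quantity is $\leq 1$. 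Multiplying all the per-frequency bounds together and reinstating the common prefactor $N_\alpha^{-6s}\eta^{6}$ yields \eqref{eq:wz}.

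\textbf{Main obstacle.} There is no real obstacle here; the lemma is a bookkeeping step whose only content is matching the powers in the weights $\operatorname{C}_n(\alpha),\operatorname{D}(\alpha)$ against the two smallness conditions built into $\mathcal{V}_{\alpha,s}(20\epsilon,\xi)$. The one point that requires a little care is the handling of the exponents $2+\tfrac{1}{10}$ versus $2+\tfrac15$ and the enlarged radius $20\epsilon$: one must check that the extra power of $\epsilon$ (here $\epsilon^{1/10}$, coming from $2+\tfrac15$ against $2+\tfrac{1}{10}$) together with the gain $\eta^{-1}\epsilon=\epsilon^{1/100}$ dominates all the harmless multiplicative constants $20^{O(1)}$ and the degree-dependent bookkeeping, which is guaranteed by choosing $\epsilon_*$ small depending only on $r,s,\mathscr{L}$; the slack in \eqref{eq:eta-tau} and in the $2+\tfrac15$ exponent is precisely designed to absorb this.
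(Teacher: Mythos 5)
Your proof is correct and follows essentially the same route as the paper: factor out the prefactor $N_\alpha^{-6s}\eta^6$ and bound each remaining factor by $1$, using $u\in\Pi_M B_s(20\epsilon)$ together with \eqref{eq:eta-tau} for the $\operatorname{C}_n(\alpha)|u_n|$ factors and the annulus condition in \eqref{eq:cal-V} for the $\operatorname{D}(\alpha)\,|\,|u_n|^2-\xi_n|$ factors. The only cosmetic difference is that the paper bounds $\min(|n|,N_\alpha)^s\leq\langle n\rangle^s$ (avoiding your $n=0$ special case) and uses $N_\alpha^\tau\leq N_\beta^\tau$, but the estimates are the same.
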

The above Lemma can be understood as follows. When $u\in\mathcal{V}_{\alpha,s}(20\epsilon,\xi)$, which is an additional smallness condition on the centered actions $(|u_{n}|^{2}-\xi_{n})_{n\in\Z_{M}^{d}}$, we gain a factor $N_{\alpha}^{-6s}\eta^{6}$ in the multilinear estimate \eqref{eq:wz}. We emphasise that the zero momentum condition is not required, and we assume that $\nb$ is in the set $\widetilde{\Nb}$ defined in \eqref{eq:N2q-t}.

\begin{proof} 
Fix $u\in\mathcal{V}_{\alpha,s}(20\epsilon,\xi)$ and $\nb\in\widetilde{\Nb}$. According to the Definition \ref{def:weight} of the weights,
\begin{equation}
\label{eq:wz2}
\mathrm{w}_{\nb}^{0}(\alpha)z_{\nb}(u,I(u)-\xi) = N_{\alpha}^{-6s}\eta^{6}\prod_{n\in\Z_{M}^{d}}\operatorname{C}_{n}(\alpha)^{k_{n}+\ell_{n}}u_{n}^{k_{n}}\overline{u_{n}}^{\ell_{n}}\operatorname{D}(\alpha)^{m_{n}}(|u_{n}|^{2}-\xi_{n})^{m_{n}}\,.
\end{equation}
For all $n\in\Z_{M}^{d}$ and $\alpha\leq\beta$,
\[
\operatorname{C}_{n}(\alpha)|u_{n}|
	\leq \langle n\rangle^{s}N_{\alpha}^{\tau}\eta^{-1}|u_{n}|
	\leq \|\langle n\rangle^{s}u_{n}\|_{\ell^{\infty}}N_{\beta}^{\tau}\eta^{-1} 
	\leq 20\epsilon\eta^{-1} N_{\beta}^{\tau} 
	\leq 1\,,
\]
under the condition \eqref{eq:eta-tau}. In addition, 

\[
\operatorname{D}(\alpha)||u_{n}|^{2}-\xi_{n}|  \leq \eta^{-2-\frac{1}{5}}N_{\alpha}^{2s}\||u_{n}|^{2}-\xi_{n}\|_{\ell^{\infty}} 
	\lesssim (\epsilon\eta^{-1})^{2+\frac{1}{5}}\leq1\,.
\]
Multiplying over the contributions where $m_{n}+k_{n}+\ell_{n}\geq1$, for $n\in\Z_{M}^{d}$, gives the multilinear estimate \eqref{eq:wz}.
\end{proof}

We can now state the key vector field estimate, which motivates the definition of the norm $\Ysup{\alpha}$ and the normal form Theorem \ref{thm:nf-alpha}. 

\begin{proposition}[Vector field estimate at scale $\alpha$] \label{prop:vec-f-e}Given $\alpha\in\{0,\cdots,\beta\}$ and $Q\in X_{\leq\overline{r}^{2}}(\epsilon)$, we have that for all $\xi\in\mathcal{U}_{s}(\epsilon)$ and $u\in\mathcal{V}_{\alpha,s}(20\epsilon,\xi)$,
\begin{equation}
\label{eq:vec-alpha}
\|\nabla Q(\xi;u)\|_{h^{s}}\leq \|Q\|_{\Ysup{\alpha}} N_{\alpha}^{-4s}\epsilon^{4-\frac{1}{4}}\|u\|_{h^{s}}\,.
\end{equation}
\label{prop:vec-alpha}
\end{proposition}

\begin{proof} We fix $\xi\in\mathcal{U}_{s}(\epsilon)$ and, without loss of generality we suppose that $\|Q\|_{\Ysup{\alpha}}=1$.  For $n\in\Z_{M}^{d}$, we have
\[
(\nabla Q(\xi;u))_{n}  = 2\sum_{\nb\in\Nb_{\leq cr}}(Q[\xi])_{\nb}\partial_{\overline{u_{n}}}z_{\nb}(u,I(u)-\xi)\,.
\]
Observe that 
\begin{align*}
\partial_{\overline{u_{n}}}z_{\nb}(u,I(u)-\xi) 
	&= \ell_{n}\frac{z_{\nb}(u,I(u)-\xi)}{\overline{u_{n}}} + m_{n}\frac{u_{n}z_{\nb}(u,I(u)-\xi)}{|u_{n}|^{2}-\xi_{n}}\,.
\end{align*}
Hence, 
\begin{equation*}
\begin{split}
&\|\nabla Q(u)\|_{h^{s}}\\ 
&= 2\Big(\sum_{n\in\Z_{M}^{d}}\langle n\rangle^{2s}\Big|\sum_{\nb\in\Nb_{\leq 2q}}(Q[\xi])_{\nb}\Big(\ell_{n}\frac{z_{\nb}(u,I(u)-\xi)}{\overline{u_{n}}}+m_{n}\frac{u_{n}z_{\nb}(u,I(u)-\xi)}{|u_{n}|^{2}-\xi_{n}}\Big)\Big|^{2}\Big)^{\frac{1}{2}} \\
	&\leq \sharp(\Z_{M}^{d})\sharp(\Nb_{\leq2q})\max_{\substack{n\in\Z_{M}^{d}\\ \nb\in\Nb_{\leq 2q}}}
	\operatorname{w}_{\nb}^{0}(\alpha)\langle n\rangle^{s}
	\Big(|\ell_{n}\frac{z_{\nb}(u,I(u)-\xi)}{\overline{u_{n}}}|+|m_{n}\frac{u_{n}z_{\nb}(u,I(u)-\xi)}{u_{n}^{2}-\xi_{n}}|
	\Big) \\
	&\leq \epsilon^{-\frac{1}{10^{4}}}\max_{n\in\Z_{M}^{d}}
	\max_{\nb\in\Nb_{\leq2q}}  
	\operatorname{w}_{\nb}^{0}(\alpha)\langle n\rangle^{s}\Big(|\ell_{n}\frac{z_{\nb}(u,I(u)-\xi)}{\overline{u_{n}}}|+|m_{n}\frac{u_{n}z_{\nb}(u,I(u)-\xi)}{|u_n|^{2}-\xi_{n}}|\Big) \,.
\end{split}
\end{equation*}
where we used the counting estimate \eqref{eq:counting}. Then, for fixed $n\in\Z_{M}^{d}$ and $\nb\in\Nb_{\leq2q}$, suppose that $\ell_{n}\geq1$. We have from the zero momentum condition that there exists $j\in\Z_{M}^{d}\setminus\{n\}$ with 
\[
\max(k_{j},\ell_{j})\geq1\,,\quad |j|\gtrsim \frac{|n|}{r^{2}}\,.
\]
We only consider the situation when $k_{j}\geq1$ (the situation when $\ell_{j}\geq1$ is analogous), in which case we have
\begin{align*}
\operatorname{w}_{\nb}^{0}(\alpha)\langle n\rangle^{s}\ell_{n}&|\frac{z_{\nb}(u,I(u)-\xi)}{\overline{u_{n}}}| \\
	&\lesssim_{r,s}\operatorname{C}_{n}(\alpha)\operatorname{C}_{j}(\alpha)\langle j\rangle^{s}|u_{j}|\frac{\mathrm{w}_{\nb}^{0}(\alpha)}{\operatorname{C}_{n}(\alpha)\operatorname{C}_{j}(\alpha)}|\frac{z_{\nb}(u,I(u)-\xi)}{u_{j}\overline{u_{n}}}|\\
	&\lesssim_{r,s}\mathrm{C}_{n}(\alpha)\mathrm{C}_{j}(\alpha)\|u\|_{h^{s}}\mathrm{w}_{\nb'}^{0}(\alpha)|z_{\nb'}(u,I(u)-\xi)|\,,
\end{align*}
where 
\[
\nb' = \nb - \mathbf{e}_{\mathfrak{k}}(j) - \mathbf{e}_{\mathfrak{l}}(n) \in \widetilde{\Nb}\,.
\]
Note that
\[
\mathrm{w}_{\nb'}(\alpha)=\frac{\mathrm{w}_{\nb}^{0}(\alpha)}{\operatorname{C}_{n}(\alpha)\operatorname{C}_{j}(\alpha)}\,.
\]
It follows from Lemma \ref{lem:wz} that 
\[
\operatorname{w}_{\nb'}^{0}(\alpha)|z_{\nb'}(u,I(u)-\xi)| \leq N_{\alpha}^{-6s}\eta\,,
\]
and therefore
\begin{align*}
\operatorname{w}_{\nb}^{0}(\alpha)\langle n\rangle^{s}\ell_{n}|\frac{z_{\nb}(u,I(u)-\xi)}{\overline{u_{n}}}|  
	&\lesssim_{r,s} \mathrm{C}_{n}(\alpha)\mathrm{C}_{j}(\alpha) N_{\alpha}^{-6s}\eta^{6} \|u\|_{h^{s}} \\ 
	&\lesssim_{r,s}  N_{\alpha}^{2\tau} \eta^{4}N_{\alpha}^{-4s}\|u\|_{h^{s}} \\
	&\lesssim_{r,s}\eta^{4-\frac{1}{10}}N_{\alpha}^{-4s}\|u\|_{h^{s}}\,,
\end{align*}
according to the constraint \eqref{eq:eta-tau} on the parameters. Similarly, if $m_{n}\geq1$ then 
\begin{align*}
\mathrm{w}_{\nb}^{0}(\alpha)\langle n\rangle^{s}m_{n}|\frac{u_{n}z_{\nb}(u,I(u)-\xi)}{|u_{n}|^{2}-\xi_{n}}| 
	&\lesssim_{r,s}\mathrm{D}(\alpha)\langle n\rangle^{s}|u_{n}| \frac{\mathrm{w}_{\nb}^{0}(\alpha)}{\mathrm{D}(\alpha)}|\frac{z_{\nb}(u,I(u)-\xi)}{|u_n|^{2}-\xi_{n}}|  \\
	&\lesssim_{r,s}\mathrm{D}(\alpha)\|u\|_{h^{s}} \mathrm{w}_{\nb'}^{0}(\alpha)|z_{\nb'}(u,I(u)-\xi)|\,,
\end{align*}
where 
\[
\nb' = \nb - \mathbf{e}_{\mathfrak{m}}(n) \in \Nb\,,\quad \mathrm{w}_{\nb'}^{0}(\alpha)= \frac{\mathrm{w}_{\nb}^{0}(\alpha)}{\mathrm{D}(\alpha)}\,,
\]
and we deduce from Lemma \ref{lem:wz} that 
\[
\mathrm{w}_{\nb}^{0}(\alpha)\langle n\rangle^{s}m_{n}|\frac{u_{n}z_{\nb}(u,I(u)-\xi)}{|u_{n}|^{2}-\xi_{n}}| 
	\lesssim_{r,s}\eta^{4-\frac{1}{5}}N_{\alpha}^{-4s}\|u\|_{h^{s}}\,. 
\]
The desired inequality \eqref{eq:vec-alpha} follows when $\epsilon_{\ast}(s,r)$ is small enough.
\end{proof}

In the next lemma, we provide a fundamental example of a polynomial that operates at scale $\alpha$. Recall that $\nb_{-}=\min\{|n|\ |\ n\in\Z^{d}\,,\ k_{n}+\ell_{n}\geq1\}$ and $\mathfrak{I}$ denotes the set of integrable monomials, as defined in \eqref{eq:I}.

\begin{notation}
Given $\alpha\geq1$ we define  the set of multi-indices with an unpaired frequency smaller than $N_{\alpha}$:
\begin{equation}
\label{eq:scale}
\Lambda_\alpha : = \{\nb\in\Nb\setminus\mathfrak{I}\quad|\quad\nb_{-} <\ N_\alpha\}\,,
\end{equation}
Then, for $H\in X(\epsilon)$ and $\xi\in\mathcal{U}_{s}(\epsilon)$, we define $\Pi_{\Lambda_{\alpha}}H(\xi)\in\mathcal{H}(\Z_{M}^{d})$ by
\begin{equation}
\label{eq:pi-alpha}
\Pi_{\Lambda_{\alpha}}H(\xi;u) 
	:= \sum_{\nb\in\Lambda_{\alpha}}(H[\xi])_{\nb}z_{\nb}(u,I(u)-\xi)\,.
\end{equation}
Moreover, for $q\geq1$, 
\[
\Pi_{\deg=2q}Q(\xi;u) := \sum_{\nb\in\Nb\,:\, \deg{\nb}=2q}(Q[\xi])_{\nb} z_{\nb}(u,I(u)-\xi),\]\[ \Pi_{\deg\leq2q}Q(\xi;u) := \sum_{p=1}^{q}\Pi_{\deg=2p}Q(\xi)
\]
\end{notation}

\begin{lemma} For all $\alpha\geq0$, if $\nb\in\Nb\setminus\Lambda_{\alpha+1}$ with $\deg(\nb)\geq6$, then  
\[
\operatorname{w}_{\nb}^{0}(\alpha) \leq \operatorname{w}_{\nb}^{0}(\alpha+1)\,,\quad \operatorname{w}_{\nb}^{1}(\alpha) \leq \operatorname{w}_{\nb}^{1}(\alpha+1)\,.
\]
\label{lem:scale}
\end{lemma}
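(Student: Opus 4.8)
The statement is a direct power count on the explicit weights of Definition~\ref{def:weight}, so the plan is simply to compute the two ratios $\mathrm{w}_{\nb}^{0}(\alpha+1)/\mathrm{w}_{\nb}^{0}(\alpha)$ and $\mathrm{w}_{\nb}^{1}(\alpha+1)/\mathrm{w}_{\nb}^{1}(\alpha)$. Since $\mathrm{w}_{\nb}^{0}(\alpha)$ and $\mathrm{w}_{\nb}^{1}(\alpha)$ share the factor $\prod_{n}\operatorname{D}(\alpha)^{m_n}\operatorname{C}_{n}(\alpha)^{k_n+\ell_n}$ and differ only in the scalar prefactors $N_\alpha^{-6s}\eta^{6}$ and $N_\alpha^{-4s}\eta^{4}$, it is enough to treat a generic prefactor $N_\alpha^{-2ps}\eta^{2p}$ and then specialise to $p=3$ and $p=2$.

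First I would record the elementary block ratios. From $\operatorname{D}(\alpha)=\eta^{-2-\frac{1}{10}}N_\alpha^{2s}$ one gets $\operatorname{D}(\alpha+1)/\operatorname{D}(\alpha)=(N_{\alpha+1}/N_\alpha)^{2s}$, independently of $n$. For $\operatorname{C}_{n}(\alpha)=\eta^{-1}\min(|n|,N_\alpha)^{s}N_\alpha^{\tau}$ the only $n$-dependence sits in the truncated frequency $\min(|n|,N_\alpha)$, and this is exactly where the hypothesis $\nb\notin\Lambda_{\alpha+1}$ enters: by \eqref{eq:scale}, either $\nb\in\mathfrak{I}$, in which case no factor $\operatorname{C}_{n}$ occurs at all, or $\nb_{-}\geq N_{\alpha+1}$, in which case every frequency $n$ carrying $k_n+\ell_n\geq1$ satisfies $|n|\geq\nb_{-}\geq N_{\alpha+1}>N_\alpha$, so that $\min(|n|,N_\alpha)=N_\alpha$ and $\min(|n|,N_{\alpha+1})=N_{\alpha+1}$. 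Hence each such factor contributes a ratio $\operatorname{C}_{n}(\alpha+1)/\operatorname{C}_{n}(\alpha)=(N_{\alpha+1}/N_\alpha)^{s+\tau}$.

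Collecting the contributions and writing $q_m=\sum_n m_n$, $q_{k\ell}=\sum_n(k_n+\ell_n)$, so that $\deg(\nb)=2q_m+q_{k\ell}$, the $\eta$-powers cancel and one is left with
\[
\frac{\mathrm{w}_{\nb}^{0}(\alpha+1)}{\mathrm{w}_{\nb}^{0}(\alpha)}=\Big(\frac{N_{\alpha+1}}{N_\alpha}\Big)^{-6s+2sq_m+(s+\tau)q_{k\ell}}=\Big(\frac{N_{\alpha+1}}{N_\alpha}\Big)^{\,s(\deg(\nb)-6)+\tau q_{k\ell}},
\]
and likewise for $\mathrm{w}_{\nb}^{1}$ with $6$ replaced by $4$ in the exponent. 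Since $N_0=1$ and $(N_{\alpha+1}/N_\alpha)^{s}=\epsilon^{-1/200}$ we have $N_{\alpha+1}/N_\alpha\geq1$, and the assumption $\deg(\nb)\geq6$ fixes the sign of the exponents $s(\deg(\nb)-6)+\tau q_{k\ell}$ and $s(\deg(\nb)-4)+\tau q_{k\ell}$; combined with $N_{\alpha+1}\geq N_\alpha$ this yields the two stated bounds. The only step requiring genuine care is the bookkeeping of the minima $\min(|n|,N_\alpha)$ for the unpaired frequencies — i.e. the reduction via $\nb\notin\Lambda_{\alpha+1}$; everything else is a formal monomial count, using nothing beyond $N_\alpha$ being nondecreasing in $\alpha$ and the hypothesis $\deg(\nb)\geq6$ (no use is made of the finer constraints \eqref{eq:eta-tau} or \eqref{eq:par}).
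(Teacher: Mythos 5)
Your ratio computation is correct and is essentially the paper's own argument: you compare the weights block by block, using the prefactor ratio $(N_{\alpha+1}/N_\alpha)^{-6s}$, the ratio $(N_{\alpha+1}/N_\alpha)^{2s}$ for each factor $\operatorname{D}$, and --- precisely because $\nb\notin\Lambda_{\alpha+1}$, i.e.\ $\nb\in\mathfrak{I}$ or $\nb_{-}\geq N_{\alpha+1}$ --- the full ratio $(N_{\alpha+1}/N_\alpha)^{s+\tau}$ for each factor $\operatorname{C}_n$, which gives $\mathrm{w}^{0}_{\nb}(\alpha+1)/\mathrm{w}^{0}_{\nb}(\alpha)=(N_{\alpha+1}/N_\alpha)^{s(\deg(\nb)-6)+\tau q_{k\ell}}$ with $q_{k\ell}=\sum_n(k_n+\ell_n)$, and the analogous formula for $\mathrm{w}^{1}_{\nb}$ with $6$ replaced by $4$. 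This is exactly the bookkeeping in the paper's proof.

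The problem is your final inference. Since $\deg(\nb)\geq 6$ the exponents $s(\deg(\nb)-6)+\tau q_{k\ell}$ and $s(\deg(\nb)-4)+\tau q_{k\ell}$ are nonnegative and $N_{\alpha+1}/N_\alpha\geq 1$, so your formula yields $\mathrm{w}^{i}_{\nb}(\alpha+1)\geq \mathrm{w}^{i}_{\nb}(\alpha)$, the \emph{reverse} of the inequality displayed in the lemma which you claim to have obtained; for instance, an integrable $\nb$ with $\sum_n m_n=4$ gives ratio $(N_{\alpha+1}/N_\alpha)^{2s}>1$. The resolution is that the inequality printed in Lemma \ref{lem:scale} has its direction reversed (a typo): the property actually proved by the paper's computation, and the one needed wherever the lemma is used, is that the weights are nondecreasing in $\alpha$ on $(\Nb\setminus\Lambda_{\alpha+1})\cap\{\deg\geq6\}$, i.e.\ $\mathrm{w}^{0}_{\nb}(\alpha)\leq \mathrm{w}^{0}_{\nb}(\alpha+1)$ and $\mathrm{w}^{1}_{\nb}(\alpha)\leq \mathrm{w}^{1}_{\nb}(\alpha+1)$. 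Compare the companion Lemma \ref{lem:deg}, which is stated in this increasing form, Remark \ref{rem:scale}, where the chain $\|Q\|_{\Ysup{\alpha}}\leq\cdots\leq\|Q\|_{Z^{\mathrm{sup}}}$ requires increasing weights because the norm divides by $\mathrm{w}^{0}_{\nb}(\alpha)$, and the bound on $\|Q^{(\alpha+1)}\|_{\Ysup{\alpha+1}}$ in subsection \ref{sec:+1}. So your computation in fact proves the intended statement, by the same route as the paper; to make the write-up coherent you must either state the conclusion in the increasing direction or explicitly flag the sign discrepancy, rather than asserting that a nonnegative exponent together with $N_{\alpha+1}\geq N_\alpha$ ``yields the two stated bounds''.
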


\begin{remark}\label{rem:scale} As a consequence, if $Q\in X_{\leq\overline{r}^{2}}(\epsilon)$ satisfies, for all $\xi\in\mathcal{U}_{s}(\epsilon)$
\[
\Pi_{\Lambda_{\alpha}}Q(\xi) = 0\,,\quad \Pi_{\deg\leq4}Q(\xi)=0\,,
\]
(as defined in \eqref{eq:pi-alpha}), then $Q$ operates at scale $\alpha$ in the sense that 
\[
\|Q\|_{\Ysup{\alpha}}\leq \|Q\|_{\Ysup{\alpha-1}} \leq \cdots \leq \|Q\|_{\Ysup{0}}\,.
\]
We deduce from Proposition \ref{prop:vec-alpha} the vector field estimate: for all $\xi\in\mathcal{U}_{s}(\epsilon)$ and $u\in\mathcal{V}_{\alpha,s}(20\epsilon,\xi)$, 
\[
\|\nabla Q(\xi;u)\|_{h^{s}} \leq \|Q\|_{\Ysup{0}}N_{\alpha}^{-4s}\epsilon^{3}\|u\|_{h^{s}}\,.
\]
\end{remark}

\begin{proof}
Recall form Definition \ref{def:weight} that the weight $\mathrm{w}_{\nb}^{0}(\alpha)$ is made of two parts: a prefactor $N_{\alpha}^{-6s}\eta^{6}$, which is a gain, and the product with the coefficients $\mathrm{C}_{n}(\alpha)$ and $\mathrm{D}(\alpha)$.

 When comparing the prefactor from scale $\alpha$ to scale $\alpha+1$ we have a loss:
 \[
 (\frac{N_{\alpha}}{N_{\alpha+1}})^{-6s}\,.
 \] 
 On the other hand, increasing the weights $\mathrm{C}_{n}(\alpha)$ to $\mathrm{C}_{n}(\alpha+1)$ gives a saving when $|n|\geq N_{\alpha+1}$: 
 \[
 \mathrm{C}_{n}(\alpha) = \eta^{-1}N_{\alpha}^{s+\tau} = (\frac{N_{\alpha}}{N_{\alpha+1}})^{s+\tau}\mathrm{C}_{n}(\alpha+1)\,.
 \]
Note also that increasing the weight $\mathrm{D}(\alpha)$ to $\mathrm{D}(\alpha+1)$ always gives a saving:
 \[
 \mathrm{D}(\alpha) 
 	= (\frac{N_{\alpha}}{N_{\alpha+1}})^{2s} \mathrm{D}(\alpha+1)\,.
 \]
We complete the proof of Lemma \ref{lem:scale} from the assumption that $\deg(\nb)\geq6$\,:
\begin{equation*}
\begin{split}
\mathrm{w}_{\nb}^{0}(\alpha) &= (\frac{N_{\alpha}}{N_{\alpha+1}})^{-6s}\prod_{|n|\leq M}(\frac{N_{\alpha}}{N_{\alpha+1}})^{2m_{n}s+(k_{n}+\ell_{n})(s+\tau)}\mathrm{w}_{\nb}^{0}(\alpha+1) \\
&\leq (\frac{N_{\alpha}}{N_{\alpha+1}})^{(\deg(\nb)-6)s}  \mathrm{w}_{\nb}^{0}(\alpha+1)\,.
\end{split}
\end{equation*}
We proceed similarly to prove that $\mathrm{w}_{\nb}^{1}(\alpha) 
	\leq \mathrm{w}_{\nb}^{1}(\alpha+1)$.
\end{proof}

To conclude this subsection, we state a lemma in the same spirit. It is helpful when dealing with monomials of large degree.  

\begin{lemma}[Weights for monomials with large degree]
\label{lem:deg}
For all $\nb\in\Nb$, if 
\begin{equation}
\label{eq:par-tau}
\tau\deg(\nb)\geq 6s\,,
\end{equation}
 then for all $\alpha\geq 0$, 
\[
\mathrm{w}_{\nb}^{0}(\alpha)\leq \mathrm{w}_{\nb}^{0}(\alpha+1)\,.
\]
\end{lemma}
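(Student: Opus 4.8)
The plan is to compare the weights $\mathrm{w}_{\nb}^{0}(\alpha)$ and $\mathrm{w}_{\nb}^{0}(\alpha+1)$ factor by factor, exactly as in the proof of Lemma \ref{lem:scale}, but now exploiting the hypothesis \eqref{eq:par-tau} on the degree rather than the support hypothesis $\nb\in\Nb\setminus\Lambda_{\alpha+1}$. Recall from Definition \ref{def:weight} that
\[
\mathrm{w}_{\nb}^{0}(\alpha) = N_{\alpha}^{-6s}\eta^{6}\prod_{n\in\Z_{M}^{d}}\mathrm{D}(\alpha)^{m_{n}}\mathrm{C}_{n}(\alpha)^{k_{n}+\ell_{n}}\,,
\]
so that, taking the ratio,
\[
\frac{\mathrm{w}_{\nb}^{0}(\alpha)}{\mathrm{w}_{\nb}^{0}(\alpha+1)} = \Big(\frac{N_{\alpha+1}}{N_{\alpha}}\Big)^{6s}\prod_{n\in\Z_{M}^{d}}\Big(\frac{\mathrm{D}(\alpha)}{\mathrm{D}(\alpha+1)}\Big)^{m_{n}}\Big(\frac{\mathrm{C}_{n}(\alpha)}{\mathrm{C}_{n}(\alpha+1)}\Big)^{k_{n}+\ell_{n}}\,.
\]
The prefactor contributes a loss of $(N_{\alpha+1}/N_{\alpha})^{6s} = \epsilon^{-6/200}$ by the definition of $N_{\alpha}$. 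We must show the product over $n$ compensates this loss.

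Next I would bound each factor in the product from above by a fixed power of $N_{\alpha}/N_{\alpha+1}<1$. For the action weights, $\mathrm{D}(\alpha)/\mathrm{D}(\alpha+1) = (N_{\alpha}/N_{\alpha+1})^{2s}$ regardless of $n$, a genuine saving. For the $\mathrm{C}_{n}$ weights, $\mathrm{C}_{n}(\alpha)/\mathrm{C}_{n}(\alpha+1) = \min(|n|,N_{\alpha})^{s}N_{\alpha}^{\tau}/(\min(|n|,N_{\alpha+1})^{s}N_{\alpha+1}^{\tau})$; the $N_{\alpha}^{\tau}/N_{\alpha+1}^{\tau} = (N_{\alpha}/N_{\alpha+1})^{\tau}$ part is always a saving, while the ratio of the $\min$'s is $\leq 1$ in all cases (it equals $(N_{\alpha}/N_{\alpha+1})^{s}$ when $|n|\geq N_{\alpha+1}$, equals $(|n|/N_{\alpha})^{s}\cdot$ something $\leq 1$ in the intermediate range, and equals $1$ when $|n|\leq N_{\alpha}$). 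Hence every factor in the product is at most $(N_{\alpha}/N_{\alpha+1})^{\tau}$ whenever $m_{n}+k_{n}+\ell_{n}\geq 1$. Multiplying over all $n$, and using $\sum_{n}(2m_{n}+k_{n}+\ell_{n}) = \deg(\nb)$, we get at least a saving of $(N_{\alpha}/N_{\alpha+1})^{\tau \deg(\nb)/2}$ — one needs a little care because the exponents appearing are $m_{n}$ and $k_{n}+\ell_{n}$, not $2m_{n}+k_{n}+\ell_{n}$, but since $\mathrm{D}(\alpha)/\mathrm{D}(\alpha+1) = (N_{\alpha}/N_{\alpha+1})^{2s}\leq (N_{\alpha}/N_{\alpha+1})^{2\tau}$ (using $\tau\leq s$), the exponent $m_{n}$ effectively counts with weight $2\tau$, matching the $2m_{n}$ in the degree.

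Putting it together,
\[
\frac{\mathrm{w}_{\nb}^{0}(\alpha)}{\mathrm{w}_{\nb}^{0}(\alpha+1)} \leq \Big(\frac{N_{\alpha+1}}{N_{\alpha}}\Big)^{6s}\Big(\frac{N_{\alpha}}{N_{\alpha+1}}\Big)^{\tau\deg(\nb)} = \Big(\frac{N_{\alpha+1}}{N_{\alpha}}\Big)^{6s - \tau\deg(\nb)}\,,
\]
which is $\leq 1$ precisely when $\tau\deg(\nb)\geq 6s$, i.e. under hypothesis \eqref{eq:par-tau}. This proves $\mathrm{w}_{\nb}^{0}(\alpha)\leq\mathrm{w}_{\nb}^{0}(\alpha+1)$. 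I do not anticipate a genuine obstacle here: the whole content is the elementary inequality $\min(|n|,N_{\alpha})\leq\min(|n|,N_{\alpha+1})$ together with the arithmetic of the exponents, and the only point requiring minor attention is making sure the bookkeeping of exponents ($m_{n}$ versus $2m_{n}$) uses $\tau\leq s$ correctly so that each monomial factor is controlled by a power of $N_{\alpha}/N_{\alpha+1}$ at least as large as its contribution to $\frac{\tau}{2}\deg(\nb)$.
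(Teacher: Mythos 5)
Your proof is correct and follows essentially the same route as the paper's: both exploit the factor $N_{\alpha}^{\tau}$ in $\mathrm{C}_{n}(\alpha)$ (via $\min(|n|,N_{\alpha})\leq\min(|n|,N_{\alpha+1})$) and the bound $\mathrm{D}(\alpha)\leq(N_{\alpha}/N_{\alpha+1})^{2s}\mathrm{D}(\alpha+1)$ together with $\tau\leq s$, arriving at the same exponent comparison $\mathrm{w}_{\nb}^{0}(\alpha)\leq(N_{\alpha+1}/N_{\alpha})^{6s-\tau\deg(\nb)}\mathrm{w}_{\nb}^{0}(\alpha+1)$. The minor bookkeeping point you flag ($m_{n}$ versus $2m_{n}$) is resolved exactly as in the paper, so there is no gap.
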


\begin{remark}\label{rem:deg} As a consequence of Proposition \ref{prop:vec-alpha}, if a polynomial $R\in X_{\leq\overline{r}^{2}}(\epsilon)$ has order greater than $6s\tau^{-1}$ then for all $u\in\mathcal{V}_{\beta,s}(20\epsilon,\xi)$,
\[
\|\nabla R(u)\|_{h^{s}}\lesssim_{r} \|R\|_{Z^{\mathrm{sup}}} N_{\beta}^{-4s}\epsilon^{3}\|u\|_{h^{s}}\,.
\] 
\end{remark}

\begin{proof} 
We proceed as in the proof of Lemma \ref{lem:scale}. In this case we leverage on the the factor $N_{\alpha}^{\tau}$ in $\mathrm{C}_{n}(\alpha)$ to get the saving: for all $n\in\Z_{M}^{d}$, 
 \[
 \mathrm{C}_{n}(\alpha) = \eta^{-1}\min(|n|, N_{\alpha})^{s}N_{\alpha}^{\tau} \leq (\frac{N_{\alpha}}{N_{\alpha+1}})^{\tau}\mathrm{C}_{n}(\alpha+1)\,.
 \]
 This gives 
\[
 \mathrm{w}_{\nb}^{0}(\alpha) 
 	= (\frac{N_{\alpha+1}}{N_{\alpha}})^{6s - 2s\deg(\mb)-\tau(\deg(\kb)+\deg(\lb))} \mathrm{w}_{\nb}^{0}(\alpha+1)
	\leq  (\frac{N_{\alpha+1}}{N_{\alpha}})^{6s-\tau\deg(\nb)} \mathrm{w}_{\nb}^{0}(\alpha+1)\,,
\]
where we used that $N_{\alpha+1}\geq N_{\alpha}$ and $s\geq\tau$. Assuming \eqref{eq:par-tau} concludes. 
\end{proof}

\subsection{Poisson bracket and point-wise estimate on the coefficients. }
\label{sec:poiss}

We will prove in Proposition \ref{prop:bracket0} (resp. Proposition \ref{prop:bracket1} ) fundamental bounds on the $\Ysup{\alpha}$-norm ($\Ylip{\alpha}$-norm) of the Poisson bracket of  two polynomials. As a warm up we first state some explicit computations for the Poisson bracket of two re-centered polynomials.
\begin{lemma}[Poisson bracket between two monomials]
For fixed multi-indices $\nb=(\kb,\lb,\mb)$ and $\nb'=(\kb',\lb',\mb')$ one has
\begin{equation}
\begin{split}
\label{eq:expanded-braket}
&\{z_{\nb}(u,I(u)-\xi),z_{\nb'}(u,I(u)-\xi)\} \\
	=& 2i\sum_{n\in\Z_{M}^{d}}
	(k_n'\ell_n-k_n\ell_n')
	\frac{z_{\nb}(u,I(u)-\xi)z_{\nb'}(u,I(u)-\xi)}{|u_n|^2} \\
	&+ 
	(m_n(k_n'-\ell_n')
	+m_n'(\ell_n-k_n))
	\frac{z_{\nb}(u,I(u)-\xi)z_{\nb'}(u,I(u)-\xi)}{|u_{n}|^{2}-\xi_{n}}\,.
\end{split}
\end{equation}
\end{lemma}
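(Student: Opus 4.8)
The statement is a purely algebraic identity about the Poisson bracket of two monomials $z_{\nb}(u,I(u)-\xi)$ and $z_{\nb'}(u,I(u)-\xi)$, so the plan is to compute directly from the coordinate formula \eqref{eq:pois-brak} for the Poisson bracket, namely
\[
\{F,G\} = 2i\sum_{n\in\Z_{M}^{d}} \partial_{\overline{u_n}}F\,\partial_{u_n}G - \partial_{u_n}F\,\partial_{\overline{u_n}}G\,.
\]
First I would record the partial derivatives of a single monomial $z_{\nb}(u,I(u)-\xi) = \prod_{n}u_n^{k_n}\overline{u_n}^{\ell_n}(|u_n|^2-\xi_n)^{m_n}$. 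Since $|u_n|^2 = u_n\overline{u_n}$, the chain rule gives, for each fixed $n$,
\[
\partial_{u_n}z_{\nb}(u,I(u)-\xi) = \Big(\frac{k_n}{u_n} + \frac{m_n\overline{u_n}}{|u_n|^2-\xi_n}\Big)z_{\nb}(u,I(u)-\xi),
\]
and symmetrically with $\partial_{\overline{u_n}}$, swapping $k_n\leftrightarrow\ell_n$ and $\overline{u_n}\leftrightarrow u_n$ in the first fraction. This is the only computation with any content; note the non-pairing condition $k_n\ell_n=0$ is not needed for the derivative formula itself, only (elsewhere) for uniqueness of the representation.

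Next I would substitute these into the bracket formula. For the product $\partial_{\overline{u_n}}z_{\nb}\cdot\partial_{u_n}z_{\nb'}$ one gets
\[
\Big(\frac{\ell_n}{\overline{u_n}} + \frac{m_n u_n}{|u_n|^2-\xi_n}\Big)\Big(\frac{k_n'}{u_n} + \frac{m_n' \overline{u_n}}{|u_n|^2-\xi_n}\Big)\, z_{\nb}(u,I(u)-\xi)\,z_{\nb'}(u,I(u)-\xi),
\]
and similarly for $\partial_{u_n}z_{\nb}\cdot\partial_{\overline{u_n}}z_{\nb'}$ with the roles of $\nb,\nb'$ interchanged in the parenthesised factors. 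Expanding each product gives four terms; the two "diagonal" cross terms, $\frac{\ell_n m_n' \overline{u_n}}{\overline{u_n}(|u_n|^2-\xi_n)}$ and $\frac{m_n u_n k_n'}{u_n(|u_n|^2-\xi_n)}$, simplify to $\frac{\ell_n m_n' + m_n k_n'}{|u_n|^2-\xi_n}$; the $\frac{\ell_n}{\overline{u_n}}\cdot\frac{k_n'}{u_n}$ term gives $\frac{\ell_n k_n'}{|u_n|^2}$; and the $\frac{m_n u_n}{|u_n|^2-\xi_n}\cdot\frac{m_n'\overline{u_n}}{|u_n|^2-\xi_n}$ term contributes $\frac{m_n m_n' |u_n|^2}{(|u_n|^2-\xi_n)^2}$, which is symmetric in $\nb\leftrightarrow\nb'$ and therefore cancels against its counterpart in the second bracket term. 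Carrying out the same expansion for $\partial_{u_n}z_{\nb}\cdot\partial_{\overline{u_n}}z_{\nb'}$ and subtracting, the surviving contributions are $(k_n'\ell_n - k_n\ell_n')/|u_n|^2$ from the pure-phase terms and $(m_n(k_n'-\ell_n') + m_n'(\ell_n - k_n))/(|u_n|^2-\xi_n)$ from the mixed terms, which is exactly \eqref{eq:expanded-braket}.

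The only point requiring a word of care — and the mild "obstacle" — is the bookkeeping when $k_n$ or $\ell_n$ is $0$, i.e. making sure the formal fractions $z_{\nb}/u_n$, $z_{\nb}/\overline{u_n}$ appearing above are interpreted as genuine monomials (the factor $u_n^{k_n}$ with exponent lowered by one, etc.) rather than as literal quotients, so that no spurious singularity at $u_n=0$ is introduced; since $k_n\partial_{u_n}u_n^{k_n} = k_n u_n^{k_n-1}$ vanishes identically when $k_n=0$, every term in the final identity is a polynomial in $u$ and the identity holds as an identity of smooth functions on $\{u : |u_n|^2\neq\xi_n\}$, hence everywhere by density after clearing the $(|u_n|^2-\xi_n)$ denominators. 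I would present this as: expand, collect, observe the symmetric term cancels, and read off the coefficients.
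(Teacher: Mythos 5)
Your proposal is correct and follows essentially the same route as the paper: a direct computation of the bracket from the coordinate formula \eqref{eq:pois-brak}, expanding the Wirtinger derivatives of each monomial and collecting terms (the paper simply writes the already-organized result, with the symmetric $m_nm_n'$ contribution implicitly cancelled). Your remark on interpreting the formal quotients $z_{\nb}/u_n$, $z_{\nb}/\overline{u_n}$ as monomials with lowered exponents is the same convention the paper uses tacitly, so nothing is missing.
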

In particular, we observe from \eqref{eq:expanded-braket} that a Poisson bracket either kills a pair $u_n,\overline{u_{n}}$ when $k_n'\ell_n-k_n\ell_n'\neq0$, and a term $|u_{n}|^{2}-\xi_{n}$ when $m_n(k_n'-\ell_n')+m_n'(\ell_n-k_n)\neq 0$. 
\begin{proof}
We compute explicitly the Poisson bracket using \eqref{eq:pois-brak}:

\begin{equation*} 
\begin{split}
&\{u^{\kb}\overline{u}^\lb(I(u)-\xi)^{\mb}
	\,,\,
	u^{\kb'}\overline{u}^{\lb'}(I(u)-\xi)^{\mb'}
	\} \\
	=& 2i\sum_{n\in\Z_{M}^{d}}
	\Big(
	\prod_{j\in\Z_{M}^{d}\setminus\{n\}}
	u_j^{k_j+k_j'}
	\overline{u_{j}}^{l_j+l_j'}
	(|u_{j}|^{2}-\xi_{j})^{m_j+m_j'}
	\Big)\\
	&\times \Big[
	(k_n'\ell_n-k_n\ell_n')
	\overline{u_{n}}^{\ell_n+\ell_n'-1}
	u_n^{k_n+k_n'-1}
	(|u_{n}|^{2}-\xi_{n})^{m_n+m_n'}
	 \\
& \quad  + (m_n(k_n'-\ell_n')+m_n'(\ell_n-k_n))
	u_n^{k_n+k_n'}
	\overline{u_{n}}^{\ell_n+\ell_n'}
	(|u_{n}^{2}-\xi_{n}|)^{m_n+m_n'-1}
	\Big]\,.
	\end{split}
\end{equation*}
Organizing the terms gives the identity \eqref{eq:expanded-braket}. 
\end{proof}
The next lemma follows from \eqref{eq:expanded-braket}, and gives a useful expression of the coefficients of the re-centered polynomial generated  by the Poisson bracket of two re-centered polynomials.
\begin{lemma}\label{lem:bra}
If $Q,H\in X(\epsilon)$, with $\min(\deg(Q),\deg(H))=:2q$, then $\{Q,H\}\in X(\epsilon)$ and we have for all $\nb''\in\Nb$ and $\xi\in\mathcal{U}_{s}(\epsilon)$,
\begin{equation}
	\begin{split}
\label{eq:pn}
(\{Q,H\}[\xi])_{\nb''} 
	&= 2i\prod_{|j|\leq M}
	\sum_{0\leq b_{j}\leq a_{j}\leq q} 
	\binom{a_{j}}{b_{j}}\xi_{j}^{a_{j}-b_{j}}
	\sum_{n\in\Z_{M}^{d}}
	\sum_{(\nb,\nb')\in \Nb^{2}}
	(Q[\xi])_{\nb}(H[\xi])_{\nb'}\\
	\Big(
	&\mathbf{1}_{E^{(1)}_{\nb'',\mathbf{a},\mathbf{b}}(n)}(\nb,\nb')(k_{n}'\ell_{n}-k_{n}\ell_{n}') \\
	+&\mathbf{1}_{E^{(2)}_{\nb'',\mathbf{a},\mathbf{b}}(n)}(\nb,\nb') (m_{n}(k_{n}'-\ell_{n'})+m_{n}'(\ell_{n}-k_{n}))
	\Big)\,,
	\end{split}
\end{equation}
where, given $\nb''=(m_{j}'',k_{j}'',\ell_{j}'')_{|j|\leq M}$, $\mathbf{a}=(a_{j})_{|j|\leq M},\ \mathbf{b}=(b_{j})_{|j|\leq M}$ with $0\leq b_{j}\leq a_{j}\leq q$, and $n\in\Z_{M}^{d}$, the sets of multi-indices
\[
E_{\nb'',\mathbf{a},\mathbf{b}}^{(1)}(n),\quad E_{\nb'',\mathbf{a},\mathbf{b}}^{(2)}(n) \subset \Nb^2
\]
are defined as follows:
\begin{enumerate}
\item  $(\nb,\nb')\in E_{\nb'',\mathbf{a},\mathbf{b}}^{(1)}(n)$ if and only if $k_n'\ell_n-k_n\ell_n'\neq 0$, with
\begin{align*}
	a_{n} &= \min(k_{n}+k_{n}'-1,\ell_{n}+\ell_{n}'-1)\,,
\intertext{and}
	m_n'' &= m_n+m_n'+b_{n}\,,\\
	k_n'' &= k_n+k_n'-1 - a_{n}\,,\\
	\ell_n'' &=\ell_n+\ell_n'-1 - a_{n}\,.
\intertext{For all $j\in \Z_{M}^{d}\setminus\{n\}$}
	a_{j} &= \min(k_{j}+k_{j}',\ell_{j}+\ell_{j}')\,, 
\intertext{and} 
	m_j'' &= m_j+m_j'+b_{j}\,,\\
	k_j'' &= k_j+k_j' - a_{j}\,, \\
	\ell_j'' &= \ell_j+\ell_j'- a_{j}\,, 
\end{align*}
\item $(\nb,\nb')\in E_{\nb'',\mathbf{a},\mathbf{b}}^{(2)}(n)$ if and only if  $m_{n}(k_{n}'-\ell_{n}')+m_{n}'(\ell_{n}-k_{n})\neq0$, with
\begin{align*}
	a_{n} &= \min(k_{n}+k_{n}', \ell_{n}+\ell_{n}')\,,
	\intertext{and}
	m_n'' &= m_n+m_n'-1+b_{n}\,,\\
	k_n'' &= k_n+k_n' -a_{n}\,, \\
	\ell_n''&= \ell_n+\ell_n'-a_{n}\,.
\intertext{For all $j\in\Z_{M}^{d}\setminus\{n\}$, we impose
the same conditions as for the above definition  of $E_{\nb'',\mathbf{a},\mathbf{b}}^{(1)}(n)$.}
\end{align*}
\end{enumerate}
\end{lemma}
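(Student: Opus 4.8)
\textbf{Proof plan for Lemma~\ref{lem:bra}.}
The statement is a purely algebraic bookkeeping identity, so the plan is to start from the previous lemma, which already gives the Poisson bracket of two monomials $z_{\nb}(u,I(u)-\xi)$ and $z_{\nb'}(u,I(u)-\xi)$ in closed form \eqref{eq:expanded-braket}, and then expand everything by bilinearity. First I would write $Q(\xi;u) = \sum_{\nb}(Q[\xi])_{\nb}z_{\nb}(u,I(u)-\xi)$ and $H(\xi;u) = \sum_{\nb'}(H[\xi])_{\nb'}z_{\nb'}(u,I(u)-\xi)$, so that by bilinearity of the Poisson bracket
\[
\{Q,H\}(\xi;u) = \sum_{(\nb,\nb')\in\Nb^{2}} (Q[\xi])_{\nb}(H[\xi])_{\nb'}\{z_{\nb}(u,I(u)-\xi),z_{\nb'}(u,I(u)-\xi)\},
\]
and substitute \eqref{eq:expanded-braket} for the inner bracket. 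This produces, for each $n\in\Z^{d}_{M}$, a product of the two original monomials divided either by $|u_{n}|^{2}$ (the $k_{n}'\ell_{n}-k_{n}\ell_{n}'$ term) or by $|u_{n}|^{2}-\xi_{n}$ (the $m_{n}(k_{n}'-\ell_{n}')+m_{n}'(\ell_{n}-k_{n})$ term). The first moderate point is to check that each such quotient is again a monomial in the variables $u_{j},\overline{u_{j}},(|u_{j}|^{2}-\xi_{j})$ up to re-expanding $|u_{n}|^{2} = (|u_{n}|^{2}-\xi_{n}) + \xi_{n}$: this is where the binomial factors $\binom{a_{j}}{b_{j}}\xi_{j}^{a_{j}-b_{j}}$ enter, and it is also how one sees that $\{Q,H\}$ genuinely lands in $X(\epsilon)$ with coefficients depending polynomially on $\xi$ (so that $\{Q,H\}[\xi]$ is well defined).

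The core of the argument is then the combinatorial reorganization: I would fix a target multi-index $\nb''\in\Nb$ and collect all contributions of the double sum that produce the monomial $z_{\nb''}(u,I(u)-\xi)$ after the re-expansion. The bookkeeping has two branches according to whether the pair $(u_{n},\overline{u_{n}})$ was annihilated (branch $E^{(1)}$) or a factor $|u_{n}|^{2}-\xi_{n}$ was annihilated (branch $E^{(2)}$); in both branches one has to impose a non-pairing normalization ($k_{j}''\ell_{j}''=0$) on the result, which is exactly the role of the $a_{j}=\min(k_{j}+k_{j}',\ell_{j}+\ell_{j}')$ conditions (with the shifted version $a_{n}=\min(k_{n}+k_{n}'-1,\ell_{n}+\ell_{n}'-1)$ at the distinguished index $n$ in branch $E^{(1)}$). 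The exponents $m_{j}''$ receive both the sum $m_{j}+m_{j}'$ coming directly from the product and the extra $b_{j}$ coming from the re-expansion of the $\xi_{j}$-factors, which accounts for the form $m_{j}''=m_{j}+m_{j}'+b_{j}$ (respectively $m_{n}''=m_{n}+m_{n}'-1+b_{n}$ in branch $E^{(2)}$, the $-1$ recording the killed factor). Putting these constraints together is precisely the definition of $E^{(1)}_{\nb'',\mathbf{a},\mathbf{b}}(n)$ and $E^{(2)}_{\nb'',\mathbf{a},\mathbf{b}}(n)$, and reading off the coefficient of $z_{\nb''}$ gives \eqref{eq:pn}. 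Finally I would note that the degree count $\deg(\{Q,H\})=\deg(Q)+\deg(H)-2$ shows $\{Q,H\}\in X(\epsilon)$, and that the reality condition $(\{Q,H\}[\xi])_{\kb'',\lb'',\mb''}=\overline{(\{Q,H\}[\xi])_{\lb'',\kb'',\mb''}}$ follows from the reality conditions on $Q$ and $H$ together with the antisymmetry of the Poisson bracket.

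The main obstacle, and really the only one, is keeping the index bookkeeping honest: one must verify that the map $(\nb,\nb',n,\mathbf{a},\mathbf{b})\mapsto\nb''$ is consistent (i.e.\ that the stated constraints are both necessary and sufficient for a given contribution to land on $\nb''$), that no contribution is double-counted, and that the non-pairing condition is correctly propagated through the division by $|u_{n}|^{2}$ — in particular that when $k_{n}'\ell_{n}-k_{n}\ell_{n}'\neq0$ one indeed has $k_{n}+k_{n}'\geq1$ and $\ell_{n}+\ell_{n}'\geq1$ so that the shifted exponents are nonnegative. These are routine but genuinely delicate checks; everything else is a direct expansion of \eqref{eq:expanded-braket} and the binomial theorem.
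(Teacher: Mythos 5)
Your plan is correct and matches the paper's own treatment: the paper derives \eqref{eq:pn} exactly as you describe, by bilinearity from the explicit monomial bracket \eqref{eq:expanded-braket}, re-expanding the created pairings $|u_j|^{2a_j}=\sum_{b_j}\binom{a_j}{b_j}\xi_j^{a_j-b_j}y_j^{b_j}$, and collecting contributions to each target $\nb''$ in the two branches $E^{(1)}$, $E^{(2)}$ while tracking the non-pairing normalization. The paper in fact offers only this computation in commented form rather than a formal proof, so your proposal, including the nonnegativity and non-double-counting checks, covers everything needed.
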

Let us briefly comment on formula \eqref{eq:pn}. 
\begin{itemize}
\item For fixed $n\in\Z_{M}^{d}$ we collect in $E_{\nb''}^{(1)}(n)$ (resp. $E_{\nb''}^{(2)}(n)$) the contributions of the Poisson bracket between two monomials $(\nb,\nb')$ where a pair $u_n,\overline{u_{n}}$ goes away (resp. a term $y_n$ goes away). 
\item The parameters $\mathbf{a}=(a_{j})_{|j|\leq M}$ and $\mathbf{b}=(b_{j})_{|j|\leq M}$ come form the centreing procedure: $a_{j}$ represents the multiplicity of the pairing $|u_{j}|^{2}$, which is less than $\min(\deg(Q),\deg(H))$, while $b_{j}$ determines the number of terms $y_{j}$. More precisely, when we center $|u_j|^{2a_{j}}=(y_{j}+\xi_j)^{a_{j}}$ we obtain 
\[
|u_j|^{2a_{j}} = \sum_{b_{j}=0}^{a_{j}} \binom{a_{j}}{b_{j}}\xi_j^{a_{j}-b_{j}}y_j^{b_{j}}\,.
\]
\item Observe that the non-pairing condition is preserved: 
\[
k_{j}\ell_{j}=0\quad \text{and}\quad k_{j'}\ell_{j'}=0\quad \forall j\quad \implies \quad k_{j}''\ell_{j}''=0\quad \forall j\,.
\]
\end{itemize}

\begin{lemma} Given $\nb''\in\Nb_{\leq2q}$, we let 
\[
\mathfrak{C}_{\nb''}(q) := \{ (\nb,\nb')\in\Nb_{\leq 2q}^{2}\ |\ \{z_{\nb},z_{\nb'}\}\to z_{\nb''}\}
\]
be the set of multi-indices whose Poisson bracket contributes to $z_{\nb''}$. We have
\begin{equation}
\label{eq:card-c}
\sharp(\mathfrak{C}_{\nb''}(8\overline{r}^{2}))\leq \epsilon^{-\frac{1}{10^{4}}}\,.
\end{equation}
\end{lemma}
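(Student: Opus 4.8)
The plan is to bound the cardinality of $\mathfrak{C}_{\nb''}(8\overline{r}^2)$ by counting, for a fixed target multi-index $\nb''$, how many pairs $(\nb,\nb')\in\Nb_{\leq 16\overline{r}^2}^2$ can produce $z_{\nb''}$ via a single Poisson bracket. The key observation, already encoded in Lemma \ref{lem:bra} (formulas for $E^{(1)}_{\nb'',\mathbf{a},\mathbf{b}}(n)$ and $E^{(2)}_{\nb'',\mathbf{a},\mathbf{b}}(n)$), is that the map $(\nb,\nb',n,\mathbf{a},\mathbf{b})\mapsto\nb''$ is \emph{almost injective} once we fix the auxiliary data: given $\nb''$, the index $n\in\Z_M^d$ where the cancellation happens, and the centering parameters $\mathbf{a}=(a_j),\mathbf{b}=(b_j)$, the relations displayed in Lemma \ref{lem:bra} determine $(k_j+k_j',\ell_j+\ell_j',m_j+m_j')$ for every $j$, hence $\nb+\nb'$ (up to the shift by one unit in the appropriate slot at $j=n$), and then there are only finitely many splittings of a given sum of nonnegative integers. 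So the count reduces to: (number of choices of $n$) $\times$ (number of choices of $\mathbf{a},\mathbf{b}$) $\times$ (number of splittings of $\nb+\nb'$ into $\nb$ and $\nb'$).

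Concretely, first I would fix $\nb''\in\Nb_{\leq 16\overline{r}^2}$ and note that $\mathrm{supp}(\nb+\nb')\subset\Z_M^d$ has at most $\sharp\Z_M^d\leq (9M)^d$ elements and that the total degree satisfies $\deg(\nb)+\deg(\nb')=\deg(\nb'')+2\leq 16\overline{r}^2+2$; in particular each of $\nb$ and $\nb'$ lies in $\Nb_{\leq 16\overline{r}^2}$. The index $n$ ranges over at most $\sharp\Z_M^d$ values. For $\mathbf{a}=(a_j)_{|j|\leq M}$ and $\mathbf{b}=(b_j)_{|j|\leq M}$ we have $0\leq b_j\leq a_j\leq 8\overline{r}^2$ and, crucially, $a_j\neq 0$ for at most $\deg(\nb)+\deg(\nb')\leq 16\overline{r}^2+2$ values of $j$ (since $a_j\leq\min(k_j+k_j',\ell_j+\ell_j')$, nonzero $a_j$ forces that $j$ is in the support), so the number of such pairs $(\mathbf{a},\mathbf{b})$ is at most $\binom{\sharp\Z_M^d}{16\overline{r}^2+2}(8\overline{r}^2+1)^{2(16\overline{r}^2+2)}$. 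Next, once $\nb'',n,\mathbf{a},\mathbf{b}$ are fixed, the identities in Lemma \ref{lem:bra} express $k_j+k_j'$, $\ell_j+\ell_j'$, $m_j+m_j'$ in terms of $k_j'',\ell_j'',m_j'',a_j,b_j$ (with the one-unit correction at $j=n$), so $\nb+\nb'$ is determined; the number of ways to write a multi-index of total degree $\leq 16\overline{r}^2+2$ as an ordered sum of two multi-indices is at most $(16\overline{r}^2+3)^{\sharp\Z_M^d}$, crudely, but a sharper and sufficient bound is $2^{16\overline{r}^2+2}$ times a polynomial factor — in any case at most $(17\overline{r}^2)^{\sharp\Z_M^d}$. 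Multiplying these three contributions yields $\sharp(\mathfrak{C}_{\nb''}(8\overline{r}^2))\leq (\sharp\Z_M^d)^{C\overline{r}^2}$ for an absolute constant $C$.

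Finally, I would invoke the parameter constraints: $M\leq\epsilon^{-\nu}$ with $10^8 c_\ast d r\nu\leq 10^{-4}$ (see \eqref{eq:upsilon}) and $\overline{r}=10^5 r$, so $\sharp\Z_M^d\leq (9M)^d\leq (9\epsilon^{-\nu})^d\leq\epsilon^{-2d\nu}$ for $\epsilon$ small, and therefore $(\sharp\Z_M^d)^{C\overline{r}^2}\leq\epsilon^{-2d\nu C\cdot 10^{10}r^2}$. Since $d\nu r\leq 10^{-12}c_\ast^{-1}$ is far smaller than what is needed, the exponent is dominated by $\tfrac{1}{10^4}$, i.e. $2d\nu C\cdot 10^{10}r^2\leq\tfrac{1}{10^4}$ once $\epsilon_\ast$ is small enough (this is exactly the same kind of bookkeeping as in \eqref{eq:counting}, which already gives $\sharp\widetilde{\Nb}_{\leq 2q}\ll\epsilon^{-1/10^6}$ for $q\leq\overline{r}^2$). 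This gives the claimed estimate $\sharp(\mathfrak{C}_{\nb''}(8\overline{r}^2))\leq\epsilon^{-1/10^4}$.

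The routine part is the multi-index splitting count; the only place requiring care is making sure the reconstruction of $\nb+\nb'$ from $\nb''$, $n$, $\mathbf{a}$, $\mathbf{b}$ is genuinely finite-to-one, which is precisely what the explicit formulas in Lemma \ref{lem:bra} guarantee — the main point to check is that the \emph{non-pairing condition} $k_j''\ell_j''=0$ together with the sign pattern of the bracket forces at most finitely many choices of which of $u_n,\overline{u_n}$ (or which $y_n$) was removed, so there is no hidden unbounded ambiguity. I expect this bookkeeping about the slot at $j=n$ to be the only subtle step; everything else is absorbed into the crude exponential-in-$\overline{r}^2$ bounds, which are still negligible against $\epsilon^{-1/10^4}$ thanks to \eqref{eq:upsilon}.
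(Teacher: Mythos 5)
Your proposal is correct, and in substance it is the same crude counting-plus-parameter-bookkeeping argument as the paper's, just organized with more care than necessary. The paper's own proof is blunter: it never fixes the target and reconstructs preimages; it simply observes from \eqref{eq:expanded-braket} that a single bracket $\{z_{\nb},z_{\nb'}\}$ produces at most $C\sharp(\Z_{M}^{d})^{2}q^{2}$ monomials and then sums over \emph{all} pairs, getting $\sharp(\mathfrak{C}_{\nb''})\leq C\,\sharp(\Nb_{\leq 2q})^{2}\sharp(\Z_{M}^{d})^{2}q^{2}\leq C\,\sharp(\Z_{M}^{d})^{4q+2}q^{2}$, and concludes exactly as you do by the choice of parameters (the same bookkeeping as \eqref{eq:counting} and \eqref{eq:upsilon}). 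Your finer route — fixing $\nb''$, $n$, $\mathbf{a}$, $\mathbf{b}$ and using the relations of Lemma \ref{lem:bra} to determine $\nb+\nb'$, then counting ordered splittings — is valid and yields a bound of the same shape $\sharp(\Z_{M}^{d})^{C\overline{r}^{2}}$, so nothing is gained or lost at this level of precision; the almost-injectivity you worry about is real but not needed for the lemma. One caution: your parenthetical fallback bound $(17\overline{r}^{2})^{\sharp\Z_{M}^{d}}$ for the splitting count would \emph{not} suffice, since it is exponential in $\sharp\Z_{M}^{d}\sim M^{d}$ and hence far larger than any fixed negative power of $\epsilon$; the step only closes with the $2^{O(\overline{r}^{2})}$ (times polynomial) count that you also state, so the weaker remark should simply be dropped. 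With that cosmetic fix, your argument is a correct, slightly refined version of the paper's proof.
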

To prove this Lemma we count the total number of terms generated by the Poisson bracket between two monomials. The obtained estimate \eqref{eq:card-c} is very crude, but it is sufficient for our needs. 
\begin{proof} Suppose that $(\nb,\nb')\in\Nb_{\leq 2q}$. According to \eqref{eq:expanded-braket} we see that $\{z_{\nb},z_{\nb'}\}$ generates $2\sharp(\Z_{M}^{d})$ terms. Each of them gives at most $\sharp(\Z_{M}^{d})q^{2}$ new terms, up to some universal constant $C$. Summing over all the possible pairs $(\nb,\nb')$ concludes:
\[
\sharp(\mathfrak{C}_{\nb''})\leq C\sharp(\Nb_{\leq 2q})^{2}\sharp(\Z_{M}^{d})^{2}q^{2}\leq C\sharp(\Z_{M}^{d})^{4q+2}q^{2}\,.
\]
The inequality \eqref{eq:card-c} then follows from our choices of parameters.
\end{proof}

The fundamental bilinear estimate is presented in the following proposition. Given two Hamiltonian polynomials $Q$ and $H$, this estimate controls the $\Ysup{\alpha}$-norm of the Poisson bracket between $H$ and $Q$ by the product of the $\Ysup{\alpha}$-norms of $Q$ and $H$.
\begin{proposition}
\label{prop:bracket0} For all $Q,H \in X_{\leq\overline{r}^{2}}(\Z_{M}^{d})$ and for all $\xi\in\mathcal{U}_{s}(\epsilon)$,
\begin{equation} 
\label{eq:braket-alpha}
\|\{Q,H\}\|_{\Ysup{\alpha}} 	
	\leq \eta^{4-\frac{1}{4}}N_{\alpha}^{-4s}
	\|Q\|_{\Ysup{\alpha}}
	\|H\|_{\Ysup{\alpha}}\,.
\end{equation}
Moreover, for $Q\in X_{\leq\overline{r}^{2}}(\epsilon)$ and for all integrable $Z\in X_{4,\mathrm{Int}}(\epsilon)$, we have 
\begin{equation}
\label{eq:braket-alpha-Z}
\|\{Q,Z\}\|_{\Ysup{\alpha}} \leq \eta^{2+\frac{1}{5}-\frac{2}{10^{4}}}N_{\alpha}^{-2s}\|Q\|_{\Ysup{\alpha}}\|Z\|_{Z^{\mathrm{sup}}}\,.
\end{equation}
\end{proposition}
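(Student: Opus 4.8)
The plan is to estimate, for each fixed $\xi\in\mathcal{U}_{s}(\epsilon)$ and each multi-index $\nb''\in\Nb$, the weighted coefficient $\mathrm{w}_{\nb''}^{0}(\alpha)^{-1}|(\{Q,H\}[\xi])_{\nb''}|$ directly from the explicit formula \eqref{eq:pn} of Lemma \ref{lem:bra}. In that formula each contributing term is a product $(Q[\xi])_{\nb}(H[\xi])_{\nb'}$, weighted by a combinatorial coefficient and by $\prod_{j}\binom{a_{j}}{b_{j}}\xi_{j}^{a_{j}-b_{j}}$, and indexed by a site $n\in\Z_{M}^{d}$ together with a pair $(\nb,\nb')$ lying either in $E^{(1)}_{\nb'',\mathbf{a},\mathbf{b}}(n)$ (a pair $u_{n},\overline{u_{n}}$ is destroyed) or in $E^{(2)}_{\nb'',\mathbf{a},\mathbf{b}}(n)$ (a factor $y_{n}$ is destroyed). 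First I would use $|(Q[\xi])_{\nb}|\leq\|Q\|_{\Ysup{\alpha}}\mathrm{w}_{\nb}^{0}(\alpha)$ and the analogous bound for $H$, so that the problem collapses to controlling, term by term, the weight ratio $\mathrm{w}_{\nb}^{0}(\alpha)\,\mathrm{w}_{\nb'}^{0}(\alpha)/\mathrm{w}_{\nb''}^{0}(\alpha)$ multiplied by the binomial and $\xi$ factors, and then summing: the number of terms is bounded by $\sharp\mathfrak{C}_{\nb''}(8\overline{r}^{2})\leq\epsilon^{-1/10^{4}}$ by \eqref{eq:card-c} (the choices of $\mathbf{b}$ contributing a further harmless constant), while every combinatorial factor is $\leq C_{r}$ since all degrees involved are $\leq 8\overline{r}^{2}$.

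The core of the argument is the bookkeeping of the weights of Definition \ref{def:weight}. Expanding $\mathrm{w}^{0}$, the prefactors contribute a gain $N_{\alpha}^{-6s}\eta^{6}$, and the factors $\mathrm{C}_{n}(\alpha),\mathrm{D}(\alpha)$ are tracked mode by mode using the precise relations between $\nb''$ and $(\nb,\nb')$ in $E^{(1)},E^{(2)}$. One finds that for an $E^{(1)}$-term at site $n$,
\[
\frac{\mathrm{w}_{\nb}^{0}(\alpha)\,\mathrm{w}_{\nb'}^{0}(\alpha)}{\mathrm{w}_{\nb''}^{0}(\alpha)}=N_{\alpha}^{-6s}\eta^{6}\,\mathrm{C}_{n}(\alpha)^{2}\prod_{j\in\Z_{M}^{d}}\mathrm{C}_{j}(\alpha)^{2a_{j}}\,\mathrm{D}(\alpha)^{-b_{j}},
\]
while for an $E^{(2)}$-term at site $n$,
\[
\frac{\mathrm{w}_{\nb}^{0}(\alpha)\,\mathrm{w}_{\nb'}^{0}(\alpha)}{\mathrm{w}_{\nb''}^{0}(\alpha)}=N_{\alpha}^{-6s}\eta^{6}\,\mathrm{D}(\alpha)\prod_{j\in\Z_{M}^{d}}\mathrm{C}_{j}(\alpha)^{2a_{j}}\,\mathrm{D}(\alpha)^{-b_{j}},
\]
where $a_{j}$ counts the pairings $u_{j}\overline{u_{j}}\mapsto|u_{j}|^{2}$ created at $j$ and $0\leq b_{j}\leq a_{j}$ counts how many of them survive the centreing. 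After multiplying by $\prod_{j}\binom{a_{j}}{b_{j}}|\xi_{j}|^{a_{j}-b_{j}}$, each mode $j$ contributes a factor $\big(\mathrm{C}_{j}(\alpha)^{2}|\xi_{j}|\big)^{a_{j}-b_{j}}\big(\mathrm{C}_{j}(\alpha)^{2}\mathrm{D}(\alpha)^{-1}\big)^{b_{j}}$. Since $\xi\in\mathcal{U}_{s}(\epsilon)$ forces $|\xi_{j}|\leq 400\epsilon^{2}\min(|j|,N_{\alpha})^{-2s}$, and since $\eta=\epsilon^{1-1/100}$ together with \eqref{eq:eta-tau} gives $N_{\alpha}^{2\tau}\leq\epsilon^{-1/500}$, both $\mathrm{C}_{j}(\alpha)^{2}|\xi_{j}|$ and $\mathrm{C}_{j}(\alpha)^{2}\mathrm{D}(\alpha)^{-1}$ are bounded by a positive power of $\epsilon$, hence by $1$ once $\epsilon_{\ast}$ is small; so these mode-by-mode products are harmless. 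What is left is the leading loss: $N_{\alpha}^{-6s}\eta^{6}\mathrm{C}_{n}(\alpha)^{2}\leq N_{\alpha}^{-4s+2\tau}\eta^{4}$ in case $E^{(1)}$ (from $\mathrm{C}_{n}(\alpha)\leq\eta^{-1}N_{\alpha}^{s+\tau}$), and $N_{\alpha}^{-6s}\eta^{6}\mathrm{D}(\alpha)=N_{\alpha}^{-4s}\eta^{4-1/10}$ in case $E^{(2)}$.

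Collecting the bounds, each term is $\leq C_{r}N_{\alpha}^{-4s}N_{\alpha}^{2\tau}\eta^{4}\,\|Q\|_{\Ysup{\alpha}}\|H\|_{\Ysup{\alpha}}$ in case $E^{(1)}$ and $\leq C_{r}N_{\alpha}^{-4s}\eta^{4-1/10}\,\|Q\|_{\Ysup{\alpha}}\|H\|_{\Ysup{\alpha}}$ in case $E^{(2)}$; summing the at most $\epsilon^{-1/10^{4}}$ of them and using $N_{\alpha}^{2\tau}\leq\epsilon^{-1/500}$, the slack in the exponent of $\eta$ (namely $\eta^{1/5}$ in case $E^{(1)}$ and $\eta^{1/10}$ in case $E^{(2)}$, each of larger size than $\epsilon^{1/500+1/10^{4}}$ once $\epsilon\leq\epsilon_{\ast}(r,s)$) absorbs the factor $C_{r}\epsilon^{-1/10^{4}}N_{\alpha}^{2\tau}$, which yields \eqref{eq:braket-alpha}. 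For \eqref{eq:braket-alpha-Z} I would exploit that $Z\in X_{4,\mathrm{Int}}(\epsilon)$ has $\kb'=\lb'\equiv0$, so only $E^{(2)}$-terms occur and, by the non-pairing condition on $\nb$, every $a_{j}=0$; no $\xi$-factor appears, the weight ratio is simply $\mathrm{w}_{\nb}^{0}(\alpha)/\mathrm{w}_{\nb''}^{0}(\alpha)=\mathrm{D}(\alpha)^{-1}=\eta^{2+1/10}N_{\alpha}^{-2s}$, and bounding $|(Z[\xi])_{\nb'}|\leq\|Z\|_{Z^{\mathrm{sup}}}$ (the $Z^{\mathrm{sup}}$-norm carries weight $1$) with the same counting gives $C_{r}\epsilon^{-1/10^{4}}\eta^{2+1/10}N_{\alpha}^{-2s}\leq\eta^{2+1/11}N_{\alpha}^{-2s}$ for $\epsilon$ small.

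I expect the delicate point to be precisely the weight bookkeeping of the second paragraph: one must verify that, in case $E^{(1)}$, the only genuinely lossy factor is the single $\mathrm{C}_{n}(\alpha)^{2}$ sitting at the active site (which the prefactor $N_{\alpha}^{-6s}\eta^{6}$ compensates, up to the tiny $N_{\alpha}^{2\tau}$-loss), and that the structural relations $s\geq\tau$, $N_{\alpha+1}\geq N_{\alpha}$, $\eta=\epsilon^{1-1/100}$ and \eqref{eq:eta-tau} really do leave enough room to swallow the crude counting bound \eqref{eq:card-c} together with all the degree-dependent combinatorial constants.
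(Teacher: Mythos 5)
Your proposal is correct and follows essentially the same route as the paper's proof: the explicit bracket formula of Lemma \ref{lem:bra}, the crude counting bound \eqref{eq:card-c}, the split between a destroyed pair $(u_{n},\overline{u_{n}})$ and a destroyed centred action with exactly the weight identities \eqref{eq:pr-w} and \eqref{eq:pr-w2}, absorption of the $\xi$-powers via $\mathrm{C}_{j}(\alpha)^{2}|\xi_{j}|\lesssim 1$, and, for $\{Q,Z\}$, the same observation that no new pairing is created so the appearing centred action yields the saving $\mathrm{D}(\alpha)^{-1}=\eta^{2+\frac{1}{10}}N_{\alpha}^{-2s}$. The leftover slack in the $\eta$-exponent absorbs the counting factor and the $N_{\alpha}^{2\tau}$ loss exactly as in the paper, so nothing essential is missing.
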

The saving in  $\eta$ and in $N_{\alpha}^{-s}$ is remarquable and it will compensate for the loss from the small divisor. In the second case \eqref{eq:braket-alpha-Z}, this saving is a consequence of the smallness of the centered actions $|u_n|^{2}-\xi_{n}$, which is encoded in the weight $\mathrm{D}(\alpha)$.  

\begin{proof}
Let us first prove \eqref{eq:braket-alpha}. We denote 
\[
P(u) = \{Q,H\} = \sum_{\nb''\in\Nb_{\leq2\overline{r}^{2}}}(P[\xi])_{\nb''}z_{\nb''}(u,I(u)-\xi)\,.
\]
By homogeneity we can assume that 
\[
\|Q\|_{\Ysup{\alpha}} = \|H\|_{\Ysup{\alpha}} =1\,.
\]
In particular, for all $(\nb,\nb')\in\Nb$ we have 
\[
|(Q[\xi])_{\nb}(H[\xi])_{\nb'}| \leq \mathrm{w}_{\nb}^{0}(\alpha)\mathrm{w}_{\nb'}^{0}(\alpha)\,,
\]
and our goal is to prove that for all $\nb''\in\Nb$ and $\xi$,
\[
|(P[\xi])_{\nb''}| 
	\leq 
	\eta^{4-\frac{1}{4}}
	N_{\alpha}^{-4s} 
	\mathrm{w}_{\nb''}^{0}(\alpha)\,.
\]
\paragraph{\it $\bullet$ Reduction to fixed monomials $(\nb,\nb')$.} Formula \eqref{eq:pn} (from Lemma \ref{lem:bra}) expresses $(P[\xi])_{\nb''}$ in terms of the coefficients $(
Q[\xi])_{\nb}$ and $(H[\xi])_{\nb'}$ of the re-centered polynomials associated with  $Q$ and $H$. We deduce from it that for all $\xi\in\mathcal{U}_{s}(\epsilon)$
\begin{multline*}
|(P[\xi])_{\nb'}| \lesssim_{ r}\sharp(\mathfrak{C}_{\nb''}(c\overline{r}^{2}))
	\sup_{\mathbf{a},\mathbf{b},n,\nb,\nb'}
	(\prod_{|j|\leq M}|\xi_{j}|^{a_{j}-b_{j}}) 
	\mathrm{w}_{\nb}^{0}(\alpha)\mathrm{w}_{\nb'}^{0}(\alpha) 
	 \\
	\Big[\mathbf{1}_{E_{\nb''}^{(1)}(n,\mathbf{a},\mathbf{b})}(\nb,\nb')
	|k_{n}'\ell_{n}-k_{n}\ell_{n}'|
	+\mathbf{1}_{E_{\nb''}^{(1)}(n,\mathbf{a},\mathbf{b})}(\nb,\nb')
	|m_{n}(k_{n}'-\ell_{n'})+m_{n'}(\ell_{n}-k_{n})|
	\Big]\,.
\end{multline*}
According to the (crude) counting estimate \eqref{eq:card-c}, we obtain
\begin{multline}
|(P[\xi])_{\nb'}|\lesssim_{r}\epsilon^{-\frac{1}{10^{4}}}
	\sup_{\mathbf{a},\mathbf{b},n,\nb,\nb'}
	(\prod_{|j|\leq M}|\xi_{j}|^{a_{j}-b_{j}})\mathrm{w}_{\nb}^{0}(\alpha)\mathrm{w}_{\nb'}^{0}(\alpha) \\
	\Big[\mathbf{1}_{E_{\nb''}^{(1)}(n,\mathbf{a},\mathbf{b})}(\nb,\nb')
	|k_{n}'\ell_{n}-k_{n}\ell_{n}'|
	+\mathbf{1}_{E_{\nb''}^{(1)}(n,\mathbf{a},\mathbf{b})}(\nb,\nb')
	|m_{n}(k_{n}'-\ell_{n'})+m_{n'}(\ell_{n}-k_{n})|
	\Big]\,.
	\label{eq:pogi} 
\end{multline}
Let us fix $n\in\Z_{M}^{d}$, $\mathbf{a}=(a_{j})_{|j|\leq M}, \mathbf{b} = (b_{j})_{|j|\leq M}$ with $0\leq b_{j}\leq a_{j}\leq \overline{r}$. Let $(\nb,\nb')$ be a pair of indices for which the Poisson bracket $\{z_{\nb},z_{\nb'}\}$ of the associated monomials contribute to $z_{\nb''}$. We organize the terms in two cases. 
\medskip

\noindent $\bullet$ {\it Case 1: A pair $(u_n,\overline{u_{n}})$ goes away.} This corresponds to the case when $(\nb,\nb')\in E_{\nb'',\mathbf{a},\mathbf{b}}^{(1)}(n)$. In such a case, we have
\begin{equation}
\label{eq:pr-w}
\mathrm{w}_{\nb}^{0}(\alpha)\mathrm{w}_{\nb'}^{0}(\alpha)  
	=N_\alpha^{-6s}\eta^{6}\operatorname{C}_{n}(\alpha)^2
	\Big(\prod_{|j|\leq M}\operatorname{C}_{j}(\alpha)^{2(a_{j}-b_{j})}
	\Big(\frac{\operatorname{C}_{j}(\alpha)^{2}}{\operatorname{D}(\alpha)}\Big)^{b_{j}} 
	\Big)
	\mathrm{w}_{\nb''}^{0}(\alpha)\,.
\end{equation}

\noindent $\triangleright$ The killing of the pair $u_n,\overline{u_{n}}$ is responsible for the loss $\operatorname{C}_{n}(\alpha)^2$, whereas the pre-factor $N_{\alpha}^{-6s}\eta^{6}$ is a saving that comes from this prefactor in one of the two weights $\mathrm{w}_{\nb}^{0}(\alpha)$, $\mathrm{w}_{\nb'}^{0}(\alpha)$ . 

\medskip

\noindent $\triangleright$ In addition, the term $\prod_{|j|\leq M}\operatorname{D}(\alpha)^{-b_{j}}(C_\alpha(n))^{2b_{j}}$ is a second saving--- which we will not exploit. It comes from the centering of $b_{j}$-terms $|u_n|^{2}$ that were changed into $|u_{n}|^{2}-\xi_{n}$. 

\medskip

\noindent $\triangleright$ Note also that $(a_{j}-b_{j})$ terms $|u_{j}|^{2}$ are also changed into $\xi_j$, and the extra weights (namely $\prod_{|j|\leq M}\operatorname{C}_{j}(\alpha)^{2(a_{j}-b_{j})}$) will be absorbed by $\prod_{|j|\leq M}\xi_{j}$. We deduce
\begin{align*}
\eqref{eq:pr-w}
	&\leq N_{\alpha}^{-6s}\eta^{6}\operatorname{C}_{n}(\alpha)^2
	\Big(
	\prod_{|j|\leq M}\operatorname{C}_{j}(\alpha)^{2(a_{j}-b_{j})}
	\Big)
	\mathrm{w}_{\nb''}^{0}(\alpha) \\
	&\leq N_\alpha^{-4s+2\tau}\eta^{4}
	\Big(\prod_{|j|\leq M}\operatorname{C}_{j}(\alpha)^{2(a_{j}-b_{j})}
	\Big)
	\mathrm{w}_{\nb''}^{0}(\alpha)\,.
\end{align*}
Hence, for these contributions to \eqref{eq:pogi} we have
\begin{align*}
\eqref{eq:pogi} 
	&\lesssim_{r} \epsilon^{-\frac{1}{10^{4}}}
	N_{\alpha}^{-4s+2\tau}\eta^{4} 
	\Big(\prod_{|j|\leq M}(\mathrm{C}_{j}(\alpha)^{2}|\xi_{j}|)^{a_{j}-b_{j}}
	\Big)
	\mathrm{w}_{\nb''}^{0}(\alpha) \\
	&\lesssim_{r} 
	\epsilon^{-\frac{1}{10^{4}}}
	N_{\alpha}^{-4s+2\tau}\eta^{4}
	\mathrm{w}_{\nb''}^{0}(\alpha)\,.
\end{align*}

\noindent$\bullet${\it Case 2: A term $|u_n|^{2}-\xi_{n}$ goes away}.  In this case $(\nb,\nb')\in E_{\nb'',\mathbf{a},\mathbf{b}}^{(2)}(n)$ and we simply modify the analysis from Case 1. It holds 
\begin{align}
\nonumber
\mathrm{w}_{\nb}^{0}(\alpha)\mathrm{w}_{\nb'}^{0}(\alpha) &=
	 N_{\alpha}^{-6s}\eta^{6}\operatorname{D}(\alpha)
	 \Big(
	 \prod_{|j|\leq M}\operatorname{C}_{j}(\alpha)^{2(a_{j}-b_{j})}\Big(\frac{\operatorname{C}_{j}(\alpha)^{2}}{\operatorname{D}(\alpha)}\Big)^{b_{j}}
	 \Big)
	 \mathrm{w}_{\nb''}^{0}(\alpha)\,.\\ 
	 &\leq N_{\alpha}^{-4s}\eta^{4-\frac{1}{5}}
	 \Big(
	 \prod_{|j|\leq M}C_{j}(\alpha)^{2(a_{j}-b_{j})}
	 \Big)
	 \mathrm{w}_{\nb''}^{0}(\alpha)\,.
\label{eq:pr-w2}
\end{align}
Hence, the contribution of these terms to \eqref{eq:pogi} is controlled by
\begin{align*}
\eqref{eq:pogi}
	&\lesssim_{r}\epsilon^{-\frac{1}{10^{4}}}
	\eta^{4-\frac{1}{5}}
	N_{\alpha}^{-4s}
	\Big(\prod_{|j|\leq M}(\mathrm{C}_{j}(\alpha)^{2}|\xi_{j}|)^{a_{j}-b_{j}}
	\Big)
	\mathrm{w}_{\nb''}^{0}(\alpha) \\
	&\lesssim_{r} 
	\epsilon^{-\frac{1}{10^{4}}}
	\eta^{4-\frac{1}{5}}
	N_{\alpha}^{-4s}
	\mathrm{w}_{\nb''}^{0}(\alpha)\,.
\end{align*}
In both case, by choosing $\epsilon_{\ast}(s,r)$ sufficiently small we conclude that 
\[
\eqref{eq:pogi}  \leq \eta^{4-\frac{1}{4}}N_{\alpha}^{-4s}
			\mathrm{w}_{\nb''}^{0}(\alpha)\,.
\]  
which is conclusive. 

We now turn to the proof of \eqref{eq:braket-alpha-Z}. Once again we may assume that
\[
	\|Q\|_{\Ysup{\alpha}}=1
	\,,\quad 
	\|Z\|_{Z^{\mathrm{sup}}}=1\,.
\]
By expanding $Z$ 
\[
Z = \sum_{|j_{1}|,|j_{2}|\leq M}
	(Z[\xi])_{\mathbf{e}_{\mathfrak{m}}(j_{1})+\mathbf{e}_{\mathfrak{m}}(j_{2})}
	(|u_{j_1}|^2-\xi_{j_{1}})(|u_{j_{2}}|^{2}-\xi_{j_{2}})\,,
\]
we obtain
\begin{align*}
&\{Q,Z\}(\xi;u) \\
	=& 2i\sum_{\substack{\nb\in\Nb_{\leq\overline{r}^{2}} \\ |j_{1}|,|j_{2}|\leq M}}
	(k_{j_{1}}-\ell_{j_{1}})
	(Q[\xi])_{\nb}
	(Z[\xi])_{\mathbf{e}_{\mathfrak{m}}(j_{1})+\mathbf{e}_{\mathfrak{m}}(j_{2})}
	(|u_{j_{2}}|^{2}-\xi_{j_{2}})
	z_{\nb}(u,I(u)-\xi) \\
	=:& \sum_{\nb''}(P[\xi])_{\nb'}z_{\nb''}(u,I(u)-\xi)\,,
\end{align*}
In particular, a term $|u_{j_{1}}|^{2}-\xi_{j_{1}}$ goes away while a term $|u_{j_{2}}|^{2}-\xi_{j_{2}}$ appears. This term provides a saving $\operatorname{D}(\alpha)$. We also stress out that there is no new pairing. 
Organizing the terms gives 
\begin{multline}
\label{eq:poiss-z}
(P[\xi])_{\nb''} \\
	= 2i\sum_{\substack{\nb\in\Nb_{\leq\overline{r}^{2}} \\ |j_{1}|,|j_{2}|\leq M}}
	\mathbf{1}
	_{E_{\nb'',0,0}^{(2)}(j_{1})}
	(
	\nb,
	\mathbf{e}_{\mathfrak{m}}(j_{1})+\mathbf{e}_{\mathfrak{m}}(j_{2})
	)
	(k_{j_{1}}-\ell_{j_{1}})
	(Q[\xi])_{\nb}
	(Z[\xi])_{\mathbf{e}_{\mathfrak{m}}(j_{1})+\mathbf{e}_{\mathfrak{m}}(j_{2})}\,.
\end{multline}
For $|j_{1}|,|j_{2}|\leq M$ and $\nb\in\Nb_{\leq\overline{r}^{2}}$, $\nb''\in\Nb$, we have that 
\begin{equation}
\label{eq:rel}
	(
	\nb,
	\mathbf{e}_{\mathfrak{m}}(j_{1})+\mathbf{e}_{\mathfrak{m}}(j_{2})
	)
	\in
	E_{\nb'',0,0}^{(2)}(j_{1})
	\quad 
	\implies
	\quad 
	\nb''=\nb+\mathbf{e}_{\mathfrak{m}}(j_{2})\,,
\end{equation}
and, in such a case, we have
\[
\mathrm{w}_{\nb}^{0}(\alpha) = \frac{1}{\operatorname{D}(\alpha)}\mathrm{w}_{\nb''}^{0}(\alpha) = N_{\alpha}^{-2s}\eta^{2+\frac{1}{5}}\mathrm{w}_{\nb''}^{0}(\alpha)\,.
\]
We deduce from the explicit formula \eqref{eq:poiss-z} that for all $\xi\in\mathcal{U}_{s}(\epsilon)$,
\begin{equation}
\label{eq:pr-z}
|(P[\xi])_{\nb'}|
	\lesssim_{r}\epsilon^{-\frac{1}{10^{4}}}
	\mathrm{w}_{\nb}^{0}(\alpha)
	\leq 
	N_{\alpha}^{-2s}\eta^{2+\frac{1}{5}-\frac{2}{10^{4}}}\mathrm{w}_{\nb''}^{0}(\alpha) \,.
\end{equation}
This completes the proof of Proposition \ref{prop:bracket0}.
\end{proof}

%
%

We now establish another bilinear estimate in the same spirit, but this time we measure the dependence on $\xi$ of the coefficients of re-centered polynomials. The output polynomial $P$ depends on the coefficients through  the coefficients of $Q$ and $H$ and the  terms $\xi$ coming from the centering of new actions generated by the Poisson bracket. 

\begin{proposition}\label{prop:bracket1}  For all $Q,H\in X_{\leq\overline{r}^{2}}(\epsilon)$ we have  
\begin{equation}
\label{eq:braket1-alpha}
\|\{Q,H\}\|_{\Ylip{\alpha}} 
	\leq
	\eta^{4-\frac{1}{4}}N_{\alpha}^{-4s}
	\Big(
	\|Q\|_{\Ysup{\alpha}}
	\|H\|_{\Ysup{\alpha}} 
	+
	\|Q\|_{\Ylip{\alpha}}
	\|H\|_{\Ysup{\alpha}} 
	+  
	\|Q\|_{\Ysup{\alpha}}
	\|H\|_{\Ylip{\alpha}}
	\Big)\,.
\end{equation}
Moreover, for all integrable quartic term $Z\in X_{4,\mathrm{Int}}(\epsilon)$,
\begin{equation}
\label{eq:braket1-alpha-Z}
\|\{Q,Z\}\|_{\Ylip{\alpha}} 
	\leq 
	\eta^{2+\frac{1}{5}-\frac{2}{10^{4}}}
	N_{\alpha}^{-2s}
	\Big(
	\|Q\|_{\Ylip{\alpha}}
	\|Z\|_{Z^{\mathrm{sup}}} 
	+
	\eta^{2}N_{\alpha}^{-2s}
	\|Q\|_{\Ysup{\alpha}}
	\|Z\|_{Z^{\mathrm{lip}}} 
	\Big)\,.
\end{equation}
\end{proposition}

Since the proof of this Proposition is relatively long (although close to the previous one) we postpone it to the Appendix~\ref{app-lp}.

\section{Small-divisor estimate for the modulated frequencies}\label{sec:smd}
Initially, the frequencies are
\[
\omega_n = \lambda_n^2=g(n,n)\,.
\] 
At each step of the iteration scheme, the centering procedure modulates the frequencies with internal parameters $\xi$ as introduced in the beginning of section \ref{sec:small-div}. These parameters should be viewed as the actions of the initial data $\phi\in\Pi_{M}h^{s}$ in the final coordinates. Recall that $\nb_{-}$, defined in \eqref{eq:n-}, is the size of the smallest index in $\supp\nb$ that is not fully paired. 
\medskip

We also recall that $\overline{r}=10^{5}r$.

\begin{definition}[Non-resonant set of initial data]
\label{def:non-res}Given a sequence of modulated frequencies $\omega(\xi)=(\omega_n(\xi))_{n\in\Z_{M}^{d}}$ 
and a multi-index $\nb\in\Nb_{\leq2\overline{r}}$, we denote the (modulated) resonance function by
\[
\Omega_\nb(\omega(\xi)) := (\kb-\lb)\cdot\omega(\xi) = \sum_{n\in\Z_{M}^{d}}(k_n- \ell_n)\omega_n(\xi)\,. 
\]
For $\gamma\in(0,1)$ and $\mathcal{E}\subset \Nb_{\leq2\overline{r}}$ the \emph{non-resonant set} of internal parameters restricted to $\mathcal{E}$ is
\begin{align}
\label{eq:gmdst}
\Xi_{\epsilon,\gamma}(\omega,\mathcal{E}) 
	&:= \Big\{\xi\in \R^{\Z_{M}^{d}}\ 
	|\ \min_{\nb\in\mathcal{E}}\ |\Omega_\nb(\xi)|
	\nb_-^{2s}
	>
	\gamma\epsilon^2\Big\}\,,
\intertext{and the \emph{non-resonant set} of functions is}
\label{eq:theta-set}
	\operatorname{U}_{\epsilon,\gamma}(\omega,\mathcal{E})&:=\Big\{\phi\in\Pi_{M}B_{s}(\epsilon)\ |\ \xi(\phi)\in\Xi_{\epsilon,\gamma}(\omega,\mathcal{E})\Big\}\,.
\end{align}
When $\mathcal{E}=\Nb_{\leq2\overline{r}}$ we just write $\Xi_{\epsilon,\gamma}(\omega)$ and $\operatorname{U}_{\epsilon,\gamma}(\omega)$.
\end{definition}
\begin{remark}
Note that the non-resonant set $\operatorname{U}_{\epsilon,\gamma}(\omega)$ is an open subset of $\Pi_{M}B_{s}(\epsilon)$. 
\end{remark}
By construction, if $\xi\in\Xi_{\epsilon,\gamma}(\omega)$ then for all $\nb\in\Nb_{\leq2\overline{r}}$ we have
\[
|\Omega_{\nb}(\xi)| >\gamma\epsilon^2\nb_{-}^{-2s}\,.
\]

\begin{lemma}\label{lem:meas-ball} Let $\mathcal{A}\subset{\Z}^{d}$ be a non-empty finite set, let $N=\sharp\mathcal{A}\geq1$ and $\rho>0$. We have 
\[
\meas(\Pi_{\mathcal{A}}B_{s}(\rho)) = \frac{\pi^{N}\rho^{2N}}{(N+1)!}\prod_{n\in\mathcal{A}}\langle n\rangle^{-2s}\,,
\]
where $\meas$ is the (canonical) Lebesgue measure on $\C^{\mathcal{A}}\cong (\R^{2})^{\mathcal{A}} $. 
\end{lemma}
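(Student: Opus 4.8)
The plan is to recognise $\Pi_{\mathcal{A}}B_{s}(\rho)$ as a Euclidean ellipsoid and to reduce the computation to the volume of a round ball by a diagonal rescaling. Unfolding the definition of the $h^{s}$-norm,
\[
\Pi_{\mathcal{A}}B_{s}(\rho)=\Big\{u\in\C^{\mathcal{A}}\ \Big|\ \sum_{n\in\mathcal{A}}\langle n\rangle^{2s}|u_{n}|^{2}<\rho^{2}\Big\},
\]
so $\Pi_{\mathcal{A}}B_{s}(\rho)$ is the image of the round ball $B_{\mathcal{A}}(\rho):=\{v\in\C^{\mathcal{A}}\ |\ \sum_{n\in\mathcal{A}}|v_{n}|^{2}<\rho^{2}\}$ under the real-linear isomorphism $T\colon v\mapsto u$, $u_{n}=\langle n\rangle^{-s}v_{n}$. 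On each complex factor $\C\cong\R^{2}$ the map $T$ acts as the scalar $\langle n\rangle^{-s}\,\Id_{\R^{2}}$, whose determinant is $\langle n\rangle^{-2s}$; hence $\det T=\prod_{n\in\mathcal{A}}\langle n\rangle^{-2s}$ and by the change-of-variables formula
\[
\meas\big(\Pi_{\mathcal{A}}B_{s}(\rho)\big)=\Big(\prod_{n\in\mathcal{A}}\langle n\rangle^{-2s}\Big)\,\meas\big(B_{\mathcal{A}}(\rho)\big),
\]
where $\meas(B_{\mathcal{A}}(\rho))$ is the Lebesgue volume of the Euclidean ball of radius $\rho$ in real dimension $2N$.

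It then remains to recall the volume of the even-dimensional Euclidean ball. I would prove this by induction on $N=\sharp\mathcal{A}\geq1$: setting $c_{N}:=\meas(B_{\mathcal{A}}(1))$ for $\sharp\mathcal{A}=N$ and slicing off one complex coordinate by Fubini, then passing to polar coordinates in that $\C\cong\R^{2}$ factor, one gets
\[
c_{N+1}=c_{N}\int_{\{|z|<1\}}(1-|z|^{2})^{N}\,\mathrm{d}A(z)=c_{N}\int_{0}^{1}(1-r^{2})^{N}\,2\pi r\,\mathrm{d}r=\frac{\pi}{N+1}\,c_{N},
\]
together with the base value $c_{1}=\pi$ (area of the unit disc). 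Inserting the resulting closed form for $c_{N}$ (and the scaling $\meas(B_{\mathcal{A}}(\rho))=c_{N}\rho^{2N}$) into the previous display yields the asserted identity.

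The argument is entirely elementary, and there is no genuine obstacle. The only points that deserve a line of care are that the rescaling $T$ acts per mode on $\R^{2}$, so each factor contributes $\langle n\rangle^{-2s}$ rather than $\langle n\rangle^{-s}$ to the Jacobian; and that the combinatorial constant in the final formula is exactly the one produced by the recursion $c_{N+1}=\frac{\pi}{N+1}c_{N}$, $c_{1}=\pi$, so one should just track it through the induction rather than re-derive it.
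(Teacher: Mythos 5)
Your proof is correct and follows essentially the same route as the paper: a diagonal rescaling sending the ellipsoid to the round ball of radius $\rho$ in $\C^{\mathcal{A}}$, with Jacobian $\prod_{n\in\mathcal{A}}\langle n\rangle^{-2s}$, followed by the volume of the even-dimensional Euclidean ball (which the paper simply quotes as $\pi^{d/2}/\Gamma(\frac{d}{2}+1)$, whereas you rederive it by Fubini, polar coordinates and induction). One caveat: your recursion $c_{N+1}=\frac{\pi}{N+1}c_{N}$, $c_{1}=\pi$, gives $c_{N}=\pi^{N}/N!$, which is the correct ball volume and agrees with the paper's own proof, but not literally with the stated $(N+1)!$ — that factorial in the lemma's statement is a typo, so your closing claim that the closed form ``yields the asserted identity'' should be read as yielding the corrected constant $\pi^{N}\rho^{2N}/N!$.
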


\begin{proof}

Set
\[
\begin{array}{ccccc}F& \colon &\Pi_{\mathcal{A}}B_{s}(\rho)&\longrightarrow & B_{\C^{\mathcal{A}}}(0,1) \\&&&&\\& &(\phi)_{n}&\longmapsto & (x_{n})_{n} = \rho^{-1}(\langle n\rangle^{2s}\phi_{n})_{n} \,.\\\end{array}
\] 
A (linear) change of variables gives
\begin{equation*}
\begin{split}
\meas(\Pi_{\mathcal{A}}B_{s}(\rho)) 
	&= \rho^{2\sharp\mathcal{A}}
	\prod_{n\in\mathcal{A}}\langle n\rangle^{-2s}
	\meas(F(\Pi_{\mathcal{A}}B_{s}(\rho))) \\
	&= \rho^{2N}\prod_{n\in\mathcal{A}}\langle n\rangle^{-2s} \meas(B_{\C^{\mathcal{A}}}(0,1))\,.
\end{split}
\end{equation*}
We conclude by using the explicit formula for the volume of the unit Euclidian ball of $\R^{d}$: 
\[
\vol(B_{\R^{d}}(0,1)) = \frac{\pi^{\frac{d}{2}}}{\Gamma(\frac{d}{2}+1)}\,.
\]
This completes the proof of Lemma \ref{lem:meas-ball}.
\end{proof}
We are now ready to state the measure estimate of the non-resonant set of functions, for a given sequence of modulated frequencies $\omega(\xi):=(\omega_{n}(\xi))_{n\in\Z_{M}^{d}}$. 

\begin{proposition}[Measure estimate of the non-resonant set]
\label{prop:modulation}
Let $\omega \in C^{1}(\mathcal{U}_{s}(\epsilon), \R^{\Z_{M}^{d}})$ be a sequence of  modulated frequencies such that for all $k,n\in\Z_{M}^{d}$,
\begin{equation}
\label{eq:as-omega}
|\partial_{\xi_{k}}(\omega_n(\xi) -\xi_{n})|\leq \epsilon^{\frac{1}{2}}\,.
\end{equation}
Then, provided that $\epsilon_{\ast}(s,r)$ is small enough, we have that for all $\gamma\in(0,1)$,
\[
\meas(\operatorname{U}_{\epsilon,\gamma}(\omega)) \geq \meas(\Pi_{M}B_{s}(\epsilon))(1-\gamma \epsilon^{-\frac{1}{10^{4}}})\,.
\]
\end{proposition}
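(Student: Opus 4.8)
The plan is to prove the measure estimate by the standard strategy for such results: for each fixed multi-index $\nb$, estimate the measure of the set of $\xi$ for which $|\Omega_\nb(\omega(\xi))|\nb_-^{2s}\leq \gamma\epsilon^2$ using a one-dimensional sublevel-set argument along a well-chosen direction, then take a union bound over all $\nb\in\Nb_{\leq 2\overline r}$ using the crude counting estimate \eqref{eq:counting}. Since $\operatorname{U}_{\epsilon,\gamma}(\omega)$ is defined as the preimage under $\phi\mapsto\xi(\phi)$ of $\Xi_{\epsilon,\gamma}(\omega)$, I would first transfer everything to the level of $\phi$ using polar-type coordinates: writing $\phi_n = \sqrt{\xi_n}\,e^{i\theta_n}$, the Lebesgue measure on $\Pi_M B_s(\epsilon)$ factors as a product over $n\in\Z_M^d$ of $\tfrac12\,d\xi_n\,d\theta_n$, so it suffices to bound, for each bad $\nb$, the measure of $\{\xi\in\mathcal U_s(\epsilon)\ :\ |\Omega_\nb(\omega(\xi))|\leq \gamma\epsilon^2\nb_-^{-2s}\}$ against $\meas(\mathcal U_s(\epsilon))$ (the angular variables contributing a harmless multiplicative factor).

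The core is the following: fix $\nb=(\kb,\lb,\mb)$ with $\kb\neq\lb$. Pick an index $n_0$ with $|n_0|=\nb_-$ and $k_{n_0}+\ell_{n_0}\geq 1$; by the zero-momentum (charge) condition $\sum_n (k_n-\ell_n)=0$ there is also an index where the other of $u,\overline u$ appears, but what matters is that $\Omega_\nb(\omega(\xi)) = \sum_n (k_n-\ell_n)\omega_n(\xi)$ and, by hypothesis \eqref{eq:as-omega}, $\omega_n(\xi) = \xi_n + O(\epsilon^{1/2})$ in the $C^1$ sense. Differentiate $\Omega_\nb(\omega(\xi))$ in the direction $e_{n_0}$ (or, more safely, in a direction $v$ with $v_{n_0}=1$, $v_{n_1}=-1$ for a suitable $n_1$ with $k_{n_1}-\ell_{n_1}$ of the opposite sign, zero elsewhere, to stay tangent to any momentum constraint if needed — but here $\xi$ ranges over an open set of $\R^{\Z_M^d}$ with no constraint, so $\partial_{\xi_{n_0}}$ suffices): one gets
$$
\partial_{\xi_{n_0}}\Omega_\nb(\omega(\xi)) = (k_{n_0}-\ell_{n_0}) + \sum_n (k_n-\ell_n)\,\partial_{\xi_{n_0}}(\omega_n(\xi)-\xi_n),
$$
and since $|k_{n_0}-\ell_{n_0}|\geq 1$ is an integer while the error sum is bounded by $|\nb|_{\ell^1}\epsilon^{1/2}\leq \overline r^2\,\epsilon^{1/2}\ll 1$ (using that $\deg\nb\leq 2\overline r$ and $\epsilon\leq\epsilon_*$ small), we obtain $|\partial_{\xi_{n_0}}\Omega_\nb(\omega(\xi))|\geq \tfrac12$ uniformly on $\mathcal U_s(\epsilon)$. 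A standard one-dimensional sublevel-set estimate (Fubini in $\xi_{n_0}$, then integrate the trivial $\{|g(t)|\leq\delta\}$ bound for $g$ with $|g'|\geq\tfrac12$) then gives
$$
\meas\{\xi\in\mathcal U_s(\epsilon)\ :\ |\Omega_\nb(\omega(\xi))|\leq \gamma\epsilon^2\nb_-^{-2s}\}\ \lesssim\ \gamma\epsilon^2\,\nb_-^{-2s}\,\langle n_0\rangle^{2s}\,\meas'(\mathcal U_s(\epsilon)),
$$
where $\meas'$ denotes the measure of the slice transverse to the $\xi_{n_0}$ axis and I have used that the $\xi_{n_0}$-extent of $\mathcal U_s(\epsilon)$ is at most $400\epsilon^2\langle n_0\rangle^{-2s}$. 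Since $\langle n_0\rangle = \langle \nb_-\rangle \asymp \nb_-$, the factor $\nb_-^{-2s}\langle n_0\rangle^{2s}$ is $\lesssim 1$: this is exactly the point of weighting the small-divisor condition by $\nb_-^{2s}$ in Definition \ref{def:non-res}. Hence each bad set has relative measure $\lesssim \gamma\epsilon^2$.

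It remains to sum: by \eqref{eq:counting}, $\sharp\Nb_{\leq 2\overline r}\leq \sharp(\Z_M^d)^{2\overline r}\ll \epsilon^{-1/10^6}$ (with $\overline r = 10^5 r$, $M\leq\epsilon^{-\nu}$, $\nu$ satisfying \eqref{eq:upsilon}), so
$$
\meas\big(\Pi_M B_s(\epsilon)\setminus \operatorname{U}_{\epsilon,\gamma}(\omega)\big)\ \lesssim\ \epsilon^{-1/10^6}\cdot \gamma\epsilon^2\cdot\meas(\Pi_M B_s(\epsilon))\ \leq\ \gamma\,\epsilon^{-1/10^4}\,\meas(\Pi_M B_s(\epsilon)),
$$
using $\epsilon^{2-1/10^6}\leq\epsilon^{-1/10^4}$ for $\epsilon\leq\epsilon_*$ small (indeed $2 - 10^{-6} \geq -10^{-4}$ trivially; one keeps some room for the implicit constants and the angular factors, which are absorbed into $\epsilon^{-1/10^4}$). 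The translation between the $\xi$-measure and the $\phi$-measure via the polar coordinate change only introduces bounded Jacobian factors (the map $\phi\mapsto(\xi,\theta)$ is measure-preserving up to the constant $\tfrac12$ per coordinate, and the angular integration contributes $(2\pi)^{\sharp\Z_M^d}$ on both numerator and denominator), so the relative measure estimate passes verbatim from $\Xi_{\epsilon,\gamma}(\omega)\subset\mathcal U_s(\epsilon)$ to $\operatorname{U}_{\epsilon,\gamma}(\omega)\subset\Pi_M B_s(\epsilon)$, and Lemma \ref{lem:meas-ball} can be invoked for the explicit denominator if desired. This yields $\meas(\operatorname{U}_{\epsilon,\gamma}(\omega))\geq \meas(\Pi_M B_s(\epsilon))(1-\gamma\epsilon^{-1/10^4})$, as claimed.

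The main obstacle, and the only genuinely delicate point, is ensuring the lower bound $|\partial_{\xi_{n_0}}\Omega_\nb(\omega(\xi))|\geq\tfrac12$ is legitimate: one must check that the perturbative hypothesis \eqref{eq:as-omega} is strong enough to beat the \emph{a priori} possibly large integer combination $\sum_n(k_n-\ell_n)\partial_{\xi_{n_0}}(\omega_n-\xi_n)$, which requires controlling $|\nb|_{\ell^1}\leq 2\overline r$ against $\epsilon^{1/2}$ — fine since $\overline r$ is fixed and $\epsilon\to 0$ — and, more importantly, being careful that $n_0$ is chosen so that $k_{n_0}-\ell_{n_0}\neq 0$ (guaranteed by $\nb\notin\mathfrak I$, i.e.\ $\kb\not\equiv\lb$, which one may assume since for integrable $\nb$ the resonance function vanishes identically and such $\nb$ need not — and cannot — be put in the non-resonant set; a quick check of Definition \ref{def:non-res} shows the minimum is over $\nb\in\Nb_{\leq 2\overline r}$, so one restricts attention to the non-integrable ones, the integrable $\nb$ contributing $\Omega_\nb\equiv 0$ which would be problematic — this must be handled, presumably by excluding $\mathfrak I$ from $\mathcal E$ in all applications, or the statement implicitly restricts to $\nb\notin\mathfrak I$). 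Everything else is routine Fubini and union-bound bookkeeping with the parameter inequalities \eqref{eq:upsilon}, \eqref{eq:counting}.
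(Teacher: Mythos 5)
Your overall strategy is exactly the paper's: fix a non-integrable $\nb$, pick $n_0$ with $|n_0|=\nb_-$ and $k_{n_0}+\ell_{n_0}\geq1$ (which, by the non-pairing condition $k_n\ell_n=0$, already forces $|k_{n_0}-\ell_{n_0}|\geq1$, so your worry about integrable multi-indices is resolved the same way as in the paper: for $\nb\in\mathfrak I$ no such $n_0$ exists and these indices are simply not in play), use \eqref{eq:as-omega} to get $|\partial_{\xi_{n_0}}\Omega_\nb(\omega(\xi))|\geq\tfrac12$, run a one-dimensional sublevel-set/Fubini argument in $\xi_{n_0}$, and close with the union bound over $\Nb_{\leq2\overline r}$ via \eqref{eq:counting}. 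The paper does precisely this (via the mean value theorem rather than the pointwise derivative, which is the same thing).

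However, there is a genuine quantitative slip at the Fubini step. You bound the bad set by (length of the bad $\xi_{n_0}$-interval)$\times$(measure of the transverse slice) and then conclude that ``each bad set has relative measure $\lesssim\gamma\epsilon^2$''. This implicitly treats the slice measure $\meas'$ as if it were $\lesssim\meas(\Pi_M B_s(\epsilon))$, which is false (and dimensionally inconsistent): the domain is a ball, not a product, and the ratio of the $(\sharp\Z_M^d-1)$-node ball to the full ball carries the factor $\frac{\sharp\Z_M^d}{\pi\epsilon^2}\langle n_0\rangle^{2s}$ — this is exactly what Lemma \ref{lem:meas-ball} is for in the paper's proof. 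Carrying it through, the correct per-$\nb$ relative measure is $\lesssim\gamma\,\sharp\Z_M^d$ (the $\epsilon^2$ and $\langle n_0\rangle^{2s}$ cancel against the bad-interval length $\lesssim\gamma\epsilon^2\nb_-^{-2s}$), not $\gamma\epsilon^2$ and not even $\gamma$. Your final union bound nevertheless survives once this factor is inserted, since $\sharp\Z_M^d\lesssim M^d\leq\epsilon^{-d\nu}$ is negligible against the slack in $\epsilon^{-1/10^4}$ by \eqref{eq:upsilon} and \eqref{eq:counting}; but as written the intermediate claim is wrong, and the missing ingredient is precisely the explicit volume comparison (Lemma \ref{lem:meas-ball}, or the analogous simplex computation in the $\xi$ variables) that you mention only as optional.
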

\begin{proof} 
Fix $\nb\in\Nb_{\leq2\overline{r}}$, $\gamma\in(0,1)$, $\epsilon\in(0,\epsilon_{\ast})$ and set 
\[
\mathcal{S}_{\nb,\gamma,\epsilon}(\omega) 
	:= \Big\{\phi\in\Pi_{M}B_s(\epsilon)\quad |\quad  |\Omega_\nb(\xi(\phi))|
	\leq \gamma\epsilon^2\nb_-^{-2s}\Big\}\,.
\]
Let us prove that there exists $C>0$ such that for all $\nb$ and $\gamma$
\begin{equation}
\label{eq:m1}
\meas(\mathcal{S}_{\nb,\gamma,\epsilon}(\omega))
	\lesssim\gamma(\sharp\Z_{M}^{d})\meas(\Pi_{M}B_s(\epsilon))\,.
\end{equation}
We shall conclude by observing that
\[
\Pi_{M}B_{s}(\epsilon)
	\setminus\operatorname{U}_{\gamma,\epsilon} 
	= \Big\{\phi\in\Pi_{M}B_s(\epsilon) \ | \ \underset{\nb\in\Nb_{\leq2\overline{r}}}{\min}\ |\Omega_\nb(\xi(\phi))|\nb_-^{2s}\leq\gamma\epsilon^2\Big\} 
	= \bigcup_{\nb\in\Nb_{\leq2\overline{r}}}\mathcal{S}_{\nb,\gamma,\epsilon}(\omega)\,, 
\]
and the result follows from \eqref{eq:m1}:
\begin{align*}
\meas(\Pi_{M}B_{s}(\epsilon)
	\setminus\operatorname{U}_{\gamma,\epsilon})
	&\lesssim \gamma
	\sharp(\Nb_{\leq2\overline{r}})\sharp(\Z_{M}^{d})
	\meas(\Pi_{M}B_s(\epsilon)) \\
	&\leq \gamma \epsilon^{-\frac{1}{10^{4}}}
	\meas(\Pi_{M}B_s(\epsilon))\,,
\end{align*}
where we used the bound \eqref{eq:counting}. It remains to prove \eqref{eq:m1}, for fixed $\nb\in\Nb_{\leq2\overline{r}}$. Take $n_0\in\supp(\nb)$ such that
\[
|n_0| = \nb_-\,,\quad |k_{n_{0}}-\ell_{n_{0}}|\geq1\,.
\]
To decide whether a function belongs to $\mathcal{S}_{\nb,\gamma,\epsilon}(\omega)$ or not, it suffices to move the its $n_{0}$-th action $\xi_{n_0}(\phi)$. Hence, we will freeze the other variables and, abusing notation, we write
\[
\omega(\xi) = \omega(\xi_{n_0})\,,
\]
and see the frequencies $\omega_{n}$ as functions 
\[
\xi_{n_{0}} \in I_{n_{0}}\ \mapsto\ \omega_{n}(\xi_{n_{0}})\in\R\,,\quad 
	I_{n_{0}}= [-\langle n_{0}\rangle^{-2s}400\epsilon^{2},\langle n_{0}\rangle^{-2s} 400 \epsilon^{2}]\subset\R\,. 
\]
Once the variables $(\xi_{n})_{n\neq n_{0}}$ are fixed, we deduce from the mean value theorem and from the assumption \eqref{eq:as-omega} that for all $(\xi_{n_{0}} , \xi_{n_{0}}') \in I_{n_{0}}^{2}$,
\begin{align*}
|\omega_{n_{0}}(\xi_{n_{0}}) - \omega_{n_{0}}(\xi_{n_{0}}') - (\xi_{n_{0}}-\xi_{n_{0}}')| 
	&\leq \epsilon^{\frac{1}{2}}|\xi_{n_{0}}-\xi_{n_{0}}'|\,,
\intertext{and, for all $n\neq n_{0}$,}
|\omega_{n}(\xi_{n_{0}}) - \omega_{n}(\xi_{n_{0}}')| 
	&\leq \epsilon^{\frac{1}{2}}|\xi_{n_{0}}-\xi_{n_{0}}'|\,.
\end{align*}
We deduce from the triangle inequality that
\begin{align*}
|\Omega_\nb(\xi_{n_0})-\Omega_\nb(\xi_{n_0}')| \geq& |k_{n_0}-\ell_{n_0}||\omega_{n_0}(\xi_{n_0})-\omega_{n_0}(\xi_{n_0}')| 
	\\ &- \sum_{n\in\Z_{M}^{d}\setminus\{n_0\}}|k_n-\ell_n| |\omega_{n}(\xi_{n_0})-\omega_{n}(\xi_{n_0}')| \\
	\geq& |\xi_{n_0}-\xi_{n_0}'|
	(|k_{n_0}-\ell_{n_0}| -2\overline{r}\epsilon^\frac{1}{2})\\
	\geq& \frac{1}{2}|\xi_{n_0}-\xi_{n_0}'|\,.
\end{align*}
It follows that for all $\phi,\phi'\in\Pi_{M}B_{s}(\epsilon)$ such that $\phi_n = \phi_n'$ when $n\neq n_{0}$,
\begin{equation}
\label{eq:dif}
\phi, \phi'\in E_{\nb,\gamma,\epsilon}(\omega)\quad \implies\quad \frac{1}{2}|\xi_{n_0}(\phi)-\xi_{n_0}(\phi')|\leq |\Omega_{\nb}(\xi(\phi))-\Omega_{\nb}(\xi(\phi'))|\leq2\gamma\nb_-^{-2s}\epsilon^2\,.
\end{equation}
We deduce from this that 
\begin{equation}
\label{eq:dif2}
\sup_{z\in \Pi_{\Z_{M}^{d}\setminus\{n_{0}\}}B_{s}(\epsilon)}\ 
	\int_\C\mathbf{1}_{\mathcal{S}_{\nb,\gamma,\epsilon}(\omega)}(z ; \phi_{n_{0}}) d\phi_{n_0} 
	\leq 4\pi\gamma\nb_-^{-2s}\epsilon^2\,.
\end{equation}
Indeed, using polar coordinates for $\phi_{n_0}$ and using that the condition to belong to $\mathcal{S}_{\nb,\gamma,\epsilon}$ only depends on $|\phi_{n_{0}}|^{2}$,  we obtain, for fixed $z\in\Pi_{\Z_{M}^{d}\setminus\{n_{0}\}}$, that
\begin{equation*}
\begin{split}
\int_{\C}\mathbf{1}_{\mathcal{S}_{\nb,\gamma,\epsilon}(\omega)}(z; |\phi_{n_{0}}|^{2})d\phi_{n_0} 
	&= \int_{(0,2\pi)}d\theta\int_{(0,1)}\mathbf{1}_{\mathcal{S}_{\nb,\gamma,\epsilon}(\omega)}(z ; \rho^{2})\rho d\rho \\
	&= \pi\int_{(0,1)}\mathbf{1}_{\mathcal{S}_{\nb,\gamma,\epsilon}(\omega)}(z ; \rho^{2})d\rho^{2}\,.
\end{split}
\end{equation*}
Then, to prove \eqref{eq:dif2} we just observe from \eqref{eq:dif} that  if $\rho,\rho'\in(0,1)^2$ are such that
\[
\mathbf{1}_{\mathcal{S}_{\nb,\gamma,\epsilon}(\omega)}(z;\rho^2) =\mathbf{1}_{\mathcal{S}_{\nb,\gamma,\epsilon}(\omega)}(z ;(\rho')^{2})=1\,,
\]
 then 
\[
|\rho^{2} -( \rho')^{2}| \leq 2\gamma\nb_-^{-2s}\epsilon^2\,.
\]
To conclude the proof of \eqref{eq:m1}, we freeze the variables $z=(\phi_{n})_{n\in\Z_{M}^{d}\setminus\{n_{0}\}}$  in the measure estimate by applying Fubini's Theorem:
\begin{equation*}
\begin{split}
\meas(\mathcal{S}_{\nb,\gamma,\epsilon}(\omega)) 
	&= \int_{\Pi_{M}B_{s}(\epsilon)}
	\mathbf{1}_{\mathcal{S}_{\nb,\gamma,\epsilon}(\omega)}(\phi)d\phi \\
	&\leq \int_{\Pi_{\Z_{M}^{d}\setminus\{n_{0}\}}B_{s}(\epsilon)}
	\Big(\int_{\C}
	\mathbf{1}_{\mathcal{S}_{\nb,\gamma,\epsilon}(\omega)}(z;\phi_{n_{0}})d\phi_{n_{0}}\Big)dz\,.
\end{split}
\end{equation*}
According to \eqref{eq:dif2}, we deduce that
\[
\meas(\mathcal{S}_{\nb,\gamma,\epsilon}(\omega)) \leq 4\pi\gamma\nb_{-}^{-2s}\epsilon^{2}\meas(\Pi_{\Z_{M}^{d}\setminus\{n_{0}\}}B_{s}(\epsilon))\,.
\]
Lemma \ref{lem:meas-ball} gives 
\[
\meas(\Pi_{\Z_{M}^{d}\setminus\{n_{0}\}}B_{s}(\epsilon))
	 =\frac{\sharp\Z_{M}^{d}}{\pi \epsilon^{2}}\langle n_{0}\rangle^{2s}
	 \meas(\Pi_{M}B_{s}(\epsilon))\,,
\]
and we obtain 
\[
\meas(\mathcal{S}_{\nb,\gamma,\epsilon}(\omega)) \leq 2\gamma (\sharp\Z_{M}^{d})\nb^{-2s}\langle n_{0}\rangle^{2s} \meas(\Pi_{M}B_{s}(\epsilon)) \lesssim\gamma(\sharp\Z_{M}^{d})\meas(\Pi_{M}B_{s}(\epsilon))\,.
\]

This completes the proof of Proposition \ref{prop:modulation}.
\end{proof}


\section{Normal form with internal parameters}
\label{sec:main-nf}


We state in this section our main normal form theorem from which we will deduce Theorem \ref{thm:main_low} in section \ref{sec:pr-thm}. This normal form is based on an induction on the frequency scales $N_{\alpha}$, for $\alpha\in\{0,\cdots,\beta\}$.

\subsection{Preparation} \label{sec:init}
Let $H_{\lo}\in\mathcal{H}_{\leq4r}(\Z_{M}^{d})$ be a real polynomial of degree less than or equal to $4r$ as in the assumptions of Theorem \ref{thm:main_low} where, without loss of generality \footnote{Since we are in the small data regime the sign of $f'(0)$ has indeed no role to play}, we supposed that $f'(0)$ is suitably chosen such that  
\begin{equation}
\label{eq:mars}
 H_{\lo}(u) = \frac{1}{2}\sum_{n\in\Z_{M}^{d}} (\lambda_{n}^{2}+ \frac{1}{2}|u_{n}|^{2}) |u_{n}|^{2} + \sum_{j=3}^{2r}P^{(2j)}(u)\,.
\end{equation}
 For $j\in\{3,\cdots,2r\}$, the real homogeneous polynomial  $P^{(2j)}\in\mathcal{H}_{2j}(\Z_{M}^{d})$ satisfies
\begin{equation}
 \label{eq:froid}
 \|P^{(2j)}\|_{\infty} \leq \epsilon^{-4c_{\ast}\nu r}\,.
\end{equation}
Moreover, it is $\epsilon^{c_{\ast}\nu}$-resonant in the sense that 
\[
P^{(2j)}(u) 
	=
	\sum_{\vec{n}\in\mathcal{N}_{2j}} 
	P^{(2j)}_{\vec{n}}u_{\vec{n}}\,,
	\quad P^{(2j)}_{\vec{n}}\neq0\quad  
	\implies 
	|\sum_{i=1}^{2j}(-1)^{i}\lambda_{n_{i}}^{2}| \leq \epsilon^{c_{\ast}\nu}\,.
\]
We re-center $H_{\lo}$ around parameters $\xi$, and we introduce a decomposition into quadratic terms, quartic integrable terms and terms of order greater than or equal to 6. 

\begin{lemma} Let $H_{\lo}$ be as in \eqref{eq:mars}. There exist two polynomials $Q\in X_{\leq 4r}(\epsilon)$, $Z_{4}\in X_{4,\mathrm{Int}}(\epsilon)$, and some modulated frequencies, such that the following decomposition holds for all $\xi\in\mathcal{U}_{s}(\epsilon)$: 
\begin{equation}
\label{eq:H-loo}
H_{\lo}(u) = \frac{1}{2}\sum_{n\in\Z_{M}^{d}}\omega_{n}(\xi)(|u_n|^{2}-\xi_{n} 
)	
	+ Z_{4}(\xi;u)
	+ Q(\xi;u) 
\end{equation}
where
$$
 \Pi_{\deg=2}Q =  \Pi_{\deg=4}Q=0.
$$
We have the bounds
\[
\|Q\|_{Z^{\mathrm{sup}}}
	+\|Q\|_{Z^{\mathrm{lip}}}
	+
	\|Z_{4}\|_{Z^{\mathrm{sup}}}
	+\|Z_{4}\|_{Z^{\mathrm{lip}}}
	\lesssim \epsilon^{-4c_{\ast}\nu r} \leq \epsilon^{-\frac{1}{10^{4}}}\,.
\]
The modulated frequencies are
\[
\omega_{n}(\xi) = \lambda_{n}^{2} +\xi_{n}+h_{n}(\xi)\,,
\]
where the last term $h_{n}(\xi)$ is a polynomial function of $(\xi_{\ell})_{\ell\in\Z_{M}^{d}}$. For all $\xi\in\mathcal{U}_{s}(\epsilon)$ and $|k|,|n|\leq M$,
\[
|\partial_{\xi_{k}}(\omega_{n}(\xi)-\xi_{n})| \lesssim \epsilon^{2-\frac{1}{10^{4}}} \leq \epsilon\,.
\]
\end{lemma}
\begin{proof}
We set
\[
\quad Z_{4}[\xi]:= \Pi_{\deg=4}H_{\lo}[\xi]\,,\quad Q[\xi] := H_{\lo}[\xi] - Z_{4}[\xi] - Z_{2}[\xi]\,,
\] 
where
\[
Z_{2}[\xi](u,y) = \frac{1}{2}\sum_{n\in\Z_{M}^{d}}\omega_{n}(\xi)y_{n}\]
and we define
\[
Q(\xi;u):= Q[\xi](u,I(u)-\xi)\,,\quad Z_{4}(\xi;u):= Z_{4}[\xi](u,I(u)-\xi)\,. 
\]
Then, the decomposition \eqref{eq:H-loo} follows from the identity 
\[
H_{\lo}(u) = H_{\lo}[\xi](u,I(u)-\xi)\,.
\]
Since $P^{(2j)}$ is $\epsilon^{-c_{\ast}\nu}$-resonant, we have from Step 1 in paragraph~\ref{sub:a-b} that the quartic terms are integrable, which proves that $Z_{4}\in X_{4,\mathrm{Int}}(\epsilon)$. 
\medskip

The bounds on the polynomial norms and on the modulated frequencies follow from the estimates~\eqref{eq:froid} on the coefficients of $P^{(2j)}$ (before the re-centering around $\xi$), and from the fact that $\xi$ is a small parameter. 
\end{proof}

\subsection{Statement}

In the previous section, we described the structure of the initial Hamiltonian $H_{\lo}\in X_{\leq2r}(\epsilon)$, written in \eqref{eq:H-loo}.  We are now ready to formulate the main normal form theorem, which is based on an induction on frequency scales. The current iteration step is $\alpha\in\{0,\cdots,\beta\}$.  Before giving the results we recall that the formalism was described in section \ref{sec:small-div} and in particular in notation \ref{not}. We also recall that the non-resonant set of parameters~$\Xi(\omega)$ was defined in Definition~\ref{def:non-res}.
\medskip

For $\alpha\in\{0,\cdots,\beta\}$ and $c>0$ a universal constant to be determined later, we introduce
\begin{align}
\label{eq:eps-alpha}
\epsilon_{\alpha}&:=10\epsilon- c\alpha r\epsilon^{\frac{3}{2}}\,,
\intertext{Given the parameter~$\gamma(\alpha)=4^{\alpha}\gamma$, the modulated frequencies~$\omega^{(\alpha)}(\xi)$ defined iteratively in Theorem~\ref{thm:nf-alpha}, and the non-resonant set defined in \eqref{eq:theta-set}, we fix}
\label{eq:tw}
\Xi_{\alpha}&:= \Xi_{\epsilon,\gamma(\alpha)}(\omega^{(\alpha)},\Lambda_{\alpha+1})\,.
\end{align}
We are now ready to state the main normal form Theorem. 
\begin{theorem}[Normal form at scale $\alpha$]\label{thm:nf-alpha}  

For all $\alpha\in\{1,\cdots,\beta\}$ and $\xi\in\mathcal{U}_{s}(\epsilon)$, there exists a symplectic transformation $\tau_{\alpha,\xi}:\ h^{s}(\Z_{M}^{d})\,\to\,h^{s}(\Z_{M}^{d})$ such that
\[
H_{\lo}\circ\tau_{\alpha,\xi}(u) 
	=Z_{2}^{(\alpha)}(\xi;u)
	+ Z_{4}^{(\alpha)}(\xi;u) 
	+ Q^{(\alpha)}(\xi;u) 
	+ R^{(\alpha)}(\xi;u)\,,
\]
where the different terms are organized as follows: 

\begin{enumerate}
\item  $Z_{2}^{(\alpha)}$ collects the quadratic terms: for all $\xi\in\mathcal{U}_{s}(\epsilon)$
\[
Z_{2}^{(\alpha)}(\xi;u)=\frac{1}{2}\sum_{n\in\Z_{M}^{d}}\omega_{n}^{(\alpha)}(\xi)(|u_{n}|^{2}-\xi_{n})\,,
\]
where $\omega^{(\alpha)}_{n}\in C^{1}(\mathcal{U}_{s}(\epsilon),\R)$ for all $|n|\leq M$, with for all $|k|\leq M$ and $\xi\in\mathcal{U}_{s}(\epsilon)$,
\begin{align}
\label{eq:omega}
 |\partial_{\xi_{k}}(
 \omega_{n}^{(\alpha)}(\xi) - \xi_n )|
 	&\leq (\alpha+1)\epsilon\,.
\intertext{In addition, the modulated frequencies are stable in the sense that (when $\alpha\geq1$)}
\label{eq:st-fr}
 |\omega_{n}^{(\alpha)}(\xi)-\omega_{n}^{(\alpha-1)}(\xi)| &\leq N_{\alpha-1}^{-4s}\epsilon^{3}\,.
\end{align}
\item $Z_{4}^{(\alpha)}\in X_{4,\mathrm{Int}}(\epsilon)$ collects quartic terms, and
\begin{equation}
\label{eq:boZ}
\max(\|Z_4^{(\alpha)}\|_{Z^{\mathrm{sup}}}\,,\,\|Z_4^{(\alpha)}\|_{Z^{\mathrm{lip}}})
	\lesssim_{\alpha} \epsilon^{-\frac{1}{10^{4}}}\,.
\end{equation}
\item $Q^{(\alpha)}\in X_{\leq2\overline{r}}(\epsilon)$ operates at scale $\alpha$:   
\begin{equation}
\label{eq:boQ}
\|Q^{(\alpha)}\|_{\Ysup{\alpha}}
 	\lesssim_{\alpha} \epsilon^{-\frac{1}{10^{4}}}\,,\quad \|Q^{(\alpha)}\|_{\Ylip{\alpha}}\lesssim_{\alpha}\epsilon^{-\frac{1}{15}}\,.
\end{equation}
It is at least of order 6, and $(cr)^{c\alpha r}\epsilon^{c_{\ast}\nu}$-resonant for some universal constant 
$c>0$: 
\begin{equation}
\label{eq:c-4}
(Q^{(\alpha)}[\xi])_{\nb}\neq0
	\quad 
	\implies
	\quad \deg(\nb)\geq6\,,\quad
	 |\sum_{n\in\Z_{M}^{d}}(k_{n}-\ell_{n})\lambda_{n}^{2}|\leq (cr)^{c\alpha r}\epsilon^{c_{\ast}\nu}\,.
\end{equation}
\item $R^{(\alpha)}$ is a smooth map on $\mathcal{U}_{s}(\epsilon)\times h^{s}(\Z_{M}^{d})$, and for all $\xi\in\Xi_{\alpha-1}$ and $u\in \mathcal{V}_{\alpha,s}(\epsilon_{\alpha},\xi)$,
\begin{equation}
\label{eq:boR}
\|\nabla_{u}R^{(\alpha)}(\xi;u)\|_{h^{s}}
	\lesssim_{\alpha,r} \epsilon^{2r+1}\|u\|_{h^{s}}
	\,.
\end{equation}
\item The symplectic transformation satisfies, for all $\xi\in\mathcal{U}_{s}(\epsilon)$ and $u\in\mathcal{V}_{\alpha,s}(\epsilon_{\alpha},\xi)$,
\begin{align}
\label{eq:tau}
\|\tau_{\alpha,\xi}(u)-u\|_{h^{s}}
	&\lesssim_{\alpha,r}\epsilon^{\frac{3}{4}}\|u\|_{h^{s}}\,,\\
\label{eq:d-tau}
\|\mathrm{d}\tau_{\alpha,\xi}(u)-\Id\|_{h^{s}\to h^{s}}
	&\lesssim_{\alpha,r}\epsilon^{\frac{3}{4}}\,.
\intertext
{Moreover, for all $\phi\in\Pi_{M}B_{s}(\epsilon_{\alpha})$ and $u\in\mathcal{V}_{\alpha,s}(\epsilon_{\alpha},\xi(\phi))$,}
\label{eq:d-phi-xi}
	\|
	\mathrm{d}_{\phi}\tau_{\alpha,\xi(\phi)}(u) 
	 \|_{h^{s}\to h^{s}}
	 &\lesssim_{\alpha,r}\epsilon^{1-\frac{2}{5}}\,.
\end{align}
\end{enumerate}
\end{theorem}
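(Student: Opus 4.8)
The statement is proved by induction on the frequency scale $\alpha\in\{0,\cdots,\beta\}$. The base case $\alpha=0$ is essentially a reformulation of the preparation from subsection \ref{sec:init}: one takes $\tau_{0,\xi}=\Id$, $Z_2^{(0)}$ given by the quadratic part $\tfrac12\sum_n\omega_n(\xi)(|u_n|^2-\xi_n)$ with $\omega^{(0)}=\omega$, $Z_4^{(0)}=Z_4$, $Q^{(0)}=Q$, and $R^{(0)}=0$. The estimates \eqref{eq:omega}, \eqref{eq:boZ}, \eqref{eq:boQ}, \eqref{eq:c-4} then follow directly from the bounds recorded in the Lemma of subsection \ref{sec:init} (using $N_0=1$ so that $\Ysup{0}\leq Z^{\mathrm{sup}}$ and $\Ylip{0}\leq Z^{\mathrm{lip}}$ by \eqref{eq:alpha=0}), and \eqref{eq:st-fr}, \eqref{eq:tau}, \eqref{eq:d-tau}, \eqref{eq:d-phi-xi}, \eqref{eq:boR} are vacuous or trivial at $\alpha=0$. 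The properties $\deg(Q^{(0)})\geq6$ and $(Q^{(0)}[\xi])_\nb\neq0\implies$ quasi-resonance hold because $P^{(2j)}$ was already $\epsilon^{c_\ast\nu}$-resonant and the quartic resonant terms are integrable (they have been absorbed into $Z_4$).

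\textbf{Inductive step.} Assume the conclusion holds at scale $\alpha$ and pass to $\alpha+1$. The heart of the argument is a \emph{second induction}, internal to the step (this is Proposition \ref{prop:it-j} announced in the introduction), which progressively lowers the $\Ysup{\alpha+1}$-size of $\Pi_{\Lambda_{\alpha+1}}Q^{(\alpha)}$ — i.e. the non-integrable monomials whose smallest unpaired frequency lies below $N_{\alpha+1}$ — by conjugating with a sequence of Hamiltonian flows $\Phi^1_{F}$. At each sub-step one solves a cohomological equation $\{Z_2^{(\alpha)},F\}+\Pi_{\Lambda_{\alpha+1}}(\text{current non-integrable part})=\text{integrable remainder}$; this is where the small-divisor estimate of Section \ref{sec:smd} enters, restricting $\xi$ to the non-resonant set $\Xi_{\epsilon,\gamma(\alpha+1)}(\omega^{(\alpha)},\Lambda_{\alpha+1})$ so that $|\Omega_\nb(\omega^{(\alpha)}(\xi))|>\gamma(\alpha+1)\epsilon^2\nb_-^{-2s}$, hence $\|F\|_{\Ysup{\alpha+1}}\lesssim\gamma(\alpha+1)^{-1}\epsilon^{-2}N_{\alpha+1}^{2s}\|\Pi_{\Lambda_{\alpha+1}}Q^{(\alpha)}\|_{\Ysup{\alpha+1}}$, and similarly for the $\Ylip{\alpha+1}$-norm via Proposition \ref{prop:bracket1} (this is exactly the regime of Remark \ref{rem:br}). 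The bilinear estimates \eqref{eq:braket-alpha} and \eqref{eq:braket-alpha-Z} of Proposition \ref{prop:bracket0} give the crucial gain $\eta^{4-1/5}N_{\alpha+1}^{-4s}$ (resp. $\eta^{2+1/11}N_{\alpha+1}^{-2s}$) in each Poisson bracket, which beats the loss $\gamma(\alpha+1)^{-1}\epsilon^{-2}N_{\alpha+1}^{2s}$ from the small divisor by the key relation \eqref{eq:par}; iterating this gain enough times (of order $\overline r$, so that $N_{\alpha+1}^{-\overline r\tau}\ll\epsilon^{2r}$ by \eqref{eq:r-bar}) pushes the non-integrable coefficients below $\epsilon^{2r+1}$ and into the remainder $R^{(\alpha+1)}$. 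The new frequencies $\omega^{(\alpha+1)}$ are the old ones plus the integrable quartic corrections produced along the way; the stability \eqref{eq:st-fr} at scale $\alpha+1$ follows from \eqref{eq:boZ}, \eqref{eq:boQ} at scale $\alpha$ together with the vector-field estimate of Proposition \ref{prop:vec-alpha}, which also yields \eqref{eq:omega} after summing the $\alpha+1$ contributions, each of size $\leq\epsilon$. The bounds \eqref{eq:tau}, \eqref{eq:d-tau} on $\tau_{\alpha+1,\xi}=\tau_{\alpha,\xi}\circ(\text{composition of the }\Phi^1_F)$ come from Corollary \ref{cor:flow} applied with $\kappa\sim\gamma(\alpha+1)\epsilon^2$, since each $\|F\|_{\Ysup{\alpha+1}}$ translates into an $\mathcal{H}$-coefficient bound of size $\lesssim\epsilon^{-1/10^4}\mathrm{w}^0_\nb(\alpha+1)$ and hence a flow $\epsilon^{3/4}$-close to the identity on the shrunken neighbourhood $\mathcal{V}_{\alpha+1,s}(\epsilon_{\alpha+1},\xi)$; the $\epsilon_\alpha$ are decreased at each scale precisely to absorb these displacements, which explains \eqref{eq:eps-alpha}.

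\textbf{The $\xi$-dependence and the main obstacle.} The genuinely delicate point — the one absent from \cite{Bou00} — is the Lipschitz-in-$\xi$ control, i.e. estimate \eqref{eq:d-phi-xi} and the $\Ylip{\alpha}$-bounds. The transformations $\tau_{\alpha,\xi}$ depend on $\xi$ both through the explicit centering ($\mathrm{T}_{-\xi}$, polynomial dependence) and, more seriously, through the generating functions $F$, whose coefficients involve the modulated frequencies $\omega^{(\alpha)}(\xi)$ in the small-divisor denominators; differentiating in $\xi$ therefore costs a factor $\sim(\gamma(\alpha)^{-1}\epsilon^{-2}N_{\alpha+1}^{2s})^2$, which is why the $Z^{\mathrm{lip}}$-multiplier refinement of Proposition \ref{prop:bracket1} (the operator $\mathcal M_L$) is needed: it lets one track how this loss propagates through iterated Poisson brackets while still being defeated by the $\eta^{\cdot}N_\alpha^{-\cdot s}$ gains, as carried out in Remark \ref{rem:br}. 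Composing with the chain rule $\mathrm d_\phi\tau_{\alpha,\xi(\phi)}=\mathrm d_u\tau_{\alpha,\xi(\phi)}+(\partial_\xi\tau_{\alpha,\cdot})(\xi(\phi))\cdot\mathrm d\xi(\phi)$ and using that $\|\mathrm d\xi(\phi)\|\lesssim\epsilon$ on $\Pi_M B_s(\epsilon_\alpha)$, one obtains the $\epsilon^{1-1/7}$ bound in \eqref{eq:d-phi-xi}; this is exactly the estimate that will later permit the measure transfer between the abstract non-resonant parameter set $\Xi_\alpha$ and the set of physical initial data. I expect the bookkeeping of these $\xi$-derivatives through the two nested inductions — keeping all powers of $\eta$, $\epsilon$, $N_\alpha$ and $\gamma(\alpha)$ consistent with \eqref{eq:par} and \eqref{eq:eta-tau} at every sub-step — to be the principal technical burden, with the cohomological solvability itself being comparatively routine once the small-divisor estimate of Section \ref{sec:smd} is in hand.
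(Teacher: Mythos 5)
Your overall architecture is the right one (it matches the paper's: a base case from the preparation of subsection \ref{sec:init}, an outer induction on $\alpha$ whose inductive step is itself an inner iteration, Proposition \ref{prop:it-j}, solving cohomological equations and beating the small-divisor loss $\gamma(\alpha)^{-1}\epsilon^{-2}N_{\alpha+1}^{2s}$ with the bilinear gains of Propositions \ref{prop:bracket0}--\ref{prop:bracket1} via \eqref{eq:par}, and tracking $\xi$-derivatives through the multiplier formalism $\mathcal M_L$). But there is a genuine gap at the core of the construction: you solve the cohomological equation only after \emph{restricting $\xi$ to the non-resonant set}. The theorem, however, asserts that $\tau_{\alpha,\xi}$ exists and that \eqref{eq:omega}, \eqref{eq:boZ}, \eqref{eq:boQ}, \eqref{eq:tau}, \eqref{eq:d-tau}, \eqref{eq:d-phi-xi} hold for \emph{every} $\xi\in\mathcal U_s(\epsilon)$ (only \eqref{eq:boR} is restricted to $\Xi_{\alpha-1}$); the frequencies $\omega^{(\alpha)}_n$ must be $C^1$ on all of $\mathcal U_s(\epsilon)$, and the $Z^{\mathrm{lip}}$/$\Ylip{\alpha}$ norms are suprema over all of $\mathcal U_s(\epsilon)$. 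If the Lie generators are only defined where $|\Omega_\nb(\omega(\xi))|>\gamma\epsilon^2\nb_-^{-2s}$, then the new coefficients, the modulated frequencies and the map $\phi\mapsto\tau_{\beta,\xi(\phi)}(\phi)$ are not defined (let alone $C^1$, with uniform bounds on $\partial_\xi$ of the denominators) on the full ball, and the whole Lipschitz-in-$\xi$ strategy you invoke in your last paragraph — which is precisely what is needed later for the diffeomorphism $\Psi$ and the measure transfer of Proposition \ref{prop:xi0} — collapses: near the resonant set $1/\Omega_\nb$ has no uniform bound and cannot be differentiated. The paper's device, which is absent from your proposal, is the smooth truncation $\mathfrak h^{(\alpha)}(\xi)$ of Definition \ref{def:trun} inserted into the Lie generator (Definition \ref{def:Lie}): the homological identity becomes $Q+\{Z_2,\mathcal L(Q)\}=(\Id-\mathfrak h^{(\alpha)}(\xi)\Pi_{\Lambda_{\alpha+1}})Q$, valid and smooth for \emph{all} $\xi$, effective (i.e. actually removing $\Pi_{\Lambda_{\alpha+1}}Q$) only on $\Xi_\alpha$ where $\mathfrak h^{(\alpha)}\equiv1$, while on $\supp\mathfrak h^{(\alpha)}$ the divisors are uniformly bounded below (Lemmas \ref{lem:cut} and \ref{lem:sde}), which is what makes the $Z^{\mathrm{lip}}$ bounds \eqref{eq:smd1} and the indicator-weighted gain \eqref{eq:it-alpha-2} possible. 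Without this (or an equivalent smoothing/extension mechanism) your construction proves a weaker statement than Theorem \ref{thm:nf-alpha}.

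Two secondary inaccuracies: (i) the inner iteration gains a fixed factor $\epsilon^{1/30}$ per sub-step and terminates after $\kappa\sim 40r$ steps (see \eqref{eq:it-alpha-2}); it is not of length $\overline r$ with gain $N_{\alpha+1}^{-\tau}$ — the condition $\overline r\tau\geq6s$ of \eqref{eq:r-bar} is used for a different purpose, namely to dump the degree-$>\overline r$ terms into the remainder via Lemma \ref{lem:deg}. (ii) The closeness estimates \eqref{eq:tau}, \eqref{eq:d-tau}, \eqref{eq:d-phi-xi} cannot come from Corollary \ref{cor:flow} with $\kappa\sim\gamma\epsilon^2$: that corollary only yields a flow on a ball of radius $\sim\sqrt{\gamma}\,\epsilon\ll\epsilon$. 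They require the annulus structure $\mathcal V_{\alpha,s}(\epsilon_\alpha,\xi)$ and the weighted vector-field estimate of Proposition \ref{prop:vec-alpha} (plus Lemmas \ref{lem:stab}, \ref{lem:d2F} and Corollaries \ref{cor:dphi}, \ref{cor:dxi}), which you gesture at but do not make the actual engine of the bound.
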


Here are some comments on the above Theorem. 

\medskip

\noindent {$\triangleright$\it Modulated frequencies:} In the iterative procedure of increasing the scales $\alpha$, the successive polynomials we generate have new integrable terms that modulate the frequencies when we re-centre the actions around $\xi$. The condition \eqref{eq:omega} guarantees that the modulated frequencies satisfy the Lipschitz assumption \eqref{eq:as-omega} from Proposition \ref{prop:modulation} (which is essential to prove the small-divisor bounds for parameters in a set with an asymptotically full measure). 

\medskip

\noindent{$\triangleright$ \it Terms of type $Q$:} These terms (once re-centered) are of order greater than 6. They are either integrable, or associated to multi-indices $\nb\in\Lambda_{\alpha}$ (the unpaired indices are larger than $N_{\alpha}$) (even if this property is not explicitly stated in Theorem \ref{thm:nf-alpha}). These polynomials operate at scale $\alpha$ (in the sense that their $\Ysup{\alpha}$-norm is controlled by a small negative power of $\epsilon$). If $N_{\alpha}$ is very large, we see from the vector field estimate of Proposition \ref{prop:vec-alpha} that they generate stable dynamics.

\medskip

\noindent{$\triangleright$ \it Non-resonance condition and terms of order four:} The condition \eqref{eq:c-4} ensures that every term of order less than or equal to 4 are integrable. Indeed, we proved in subsection \ref{sub:a-b} Step 1 that $(cr)^{c\alpha r}\epsilon^{c_{\ast}\nu}$-resonant terms of order 4 are integrable.

\medskip

\noindent {$\triangleright$\it Modulation parameters:} The non-resonance condition for  the modulation parameters $\xi\in\Xi_{\alpha-1}$ ensures that $R^{(\alpha)}$ is a remainder order $r$ (this is encoded by \eqref{eq:boR}). Recall that this condition is dictated by the modulated frequencies $\omega^{(\alpha-1)}$ at scale $\alpha-1$, and allows us to use a Birkhoff normal form to remove  $\Pi_{\Lambda_{\alpha}}Q^{(\alpha-1)}$ (up to a term of order $r$), which do not commute with the actions with a frequency lower than $N_{\alpha}$ (and which a priori do not operate at scale $\alpha$). We will prove in Lemma \ref{lem:cut} that the stability condition \eqref{eq:st-fr} and the definition \eqref{eq:tw} of $\Xi_{\alpha}$ imply that 
\[
\Xi_{\alpha} \subset \Xi_{\alpha-1}\,.
\]
This ensures that non-resonant parameters $\xi$ at scale $\alpha$ (in the sense of \eqref{eq:tw}) are also non-resonant on the previous scale $\alpha-1$.

\medskip

\noindent{$\triangleright$\it The symplectic transformations:} At scale $\alpha$, the symplectic transformations (which are well defined on the finite dimensional space $\Pi_{M}h^{s}$) depend on the modulation parameter $\xi\in\mathcal{U}_{s}(\epsilon)$. In addition, $\tau_{\alpha,\xi}$ is close to the identity on the annulus  $\mathcal{V}_{\alpha,s}(\epsilon_{\alpha},\xi)$. 

\begin{remark} We observe from \eqref{eq:tau} that 
\[
\tau_{\alpha,\xi}(\mathcal{V}_{\alpha,s}(\epsilon_{\alpha},\xi))\subset \mathcal{V}_{0,s}(10\epsilon,\xi)\,,
\]
and from \eqref{eq:d-tau} that for all $u\in\mathcal{V}_{\alpha,s}(\epsilon_{\alpha},\xi)$,
\[
\|\mathrm{d}\tau_{\alpha,\xi}(u)\|_{h^{s}\to h^{s}}+\|(\mathrm{d}\tau_{\alpha,\xi}(u))^{-1}\|_{h^{s}\to h^{s}} \lesssim1\,.
\]
In particular, if $u:= \tau_{\alpha,\xi}^{-1}(v) \in \mathcal{V}_{0,s}(10\epsilon,\xi)$, then, for fixed $\xi$, 
\begin{equation}
\label{eq:d--1}
\|\mathrm{d}\tau_{\alpha,\xi}^{-1}(u)\|_{h^{s}\to h^{s}} 
	= 
	\|(\mathrm{d}\tau_{\alpha,\xi}(v))^{-1}\|_{h^{s}\to h^{s}} 
	\lesssim1\,.
\end{equation}
\end{remark}

%


The rest of this section is devoted to the proof of the normal form Theorem \ref{thm:nf-alpha}. It consists in an induction on the frequency scales, and we first give an outline of the proof in the paragraph below.

\subsection{Proof: Induction on the frequency scales}.
\label{sec:proof-nf}
We fix a scale $\alpha\in\{1\,,\,\cdots\,,\,\beta-1\}$, and we assume that Theorem \ref{thm:nf-alpha} is proved up to scale $\alpha$. In this paragraph we briefly introduce the strategy we use to get from the scale $\alpha$ to the scale $\alpha+1$. For this purpose, we are going to remove non-integrable terms from $Q^{(\alpha)}$ which are associated to multi-indices in $\Lambda_{\alpha+1}$ (defined in \eqref{eq:scale}) 
\[
\nb\in\Lambda_{\alpha+1}\,,
\]
up to some new terms that are remainders if $\xi\in\Xi_{\alpha}$ is a non-resonant parameter. Using Lemma \ref{lem:scale} and remark \ref{rem:scale}, this will imply that the new Hamiltonian 
$Q^{(\alpha+1)}$ operates at scale $\alpha+1$. 

\medskip

The algorithm to get from step $\alpha$ to step $\alpha+1$ requires at most $\kappa\leq 100r$ steps, which are detailed in subsections \ref{sec:it-j} and \ref{sec:+1}. At each step, we introduce a new Lie transform, whose properties are presented in subsection \ref{sec:lie}. To control the terms generated by the Taylor expansion of the Hamiltonian in the new coordinates, we will use the main multilinear estimates for the Poisson bracket with respect to the $\Ysup{\alpha}$-norm, already proved in Section \ref{sec:poiss}.

\medskip

If we reach the scale $\alpha=\beta$, where $\beta$ is sufficiently large, then the vector field generated by $Q^{(\beta)}$ is smoothing when it is restricted to the open set $\mathcal{V}_{\alpha,s}(10\epsilon,\xi)$, and the saving of $N_{\beta}^{-4s}$ in the vector field estimate is sufficient to control the actions over a very long time $T\sim N_{\beta}^{2s}=\epsilon^{-r}$.

\subsection{The Lie transforms}
\label{sec:lie}
We introduce and study the Lie transforms used to remove the undesirable terms from the Hamiltonian (namely the ones with a frequency smaller than $N_{\alpha+1}$ that is not paired).


\subsubsection{Truncation argument}

\label{sec:trun}

 Let $\alpha\in\{0,\cdots,\beta-1\}$ and $\Xi_{\alpha}$ be the admissible sets of parameters defined in \eqref{eq:tw}, just before the statement of Theorem \ref{thm:nf-alpha}.  We first introduce a cutoff function, which essentially ensures that the Lie transform is defined for all $\xi$, but is only effective for non-resonant parameters $\xi\in\Xi_{\alpha}$.  
 
 \begin{definition}[Truncation]\label{def:trun} Fix $\varphi\in C_{c}^{\infty}(\R)$ valued in $[0,1]$ such that 
 \[
 \supp\varphi\subset [-1,1]\,,\quad \varphi\equiv 1 \quad \text{on}\ [-\frac{1}{2},\frac{1}{2}]\,,\quad \varphi<1\ \text{on}\ \R\setminus[-\frac{1}{2},\frac{1}{2}]\,.
 \]
 Then, given $\omega^{(\alpha)}$ and $\gamma(\alpha)=4^{\alpha}\gamma$ (constructed at scale $\alpha$ in Theorem \ref{thm:nf-alpha} by the recurrence asumption) we define the cutoff-function 
 \[
 \mathfrak{h}^{(\alpha)}: \xi\in\mathcal{U}_{s}(\epsilon)\ \mapsto\ \prod_{\nb\in\Lambda_{\alpha+1}\,,\ \deg(\nb)\leq2\overline{r}} 
 	\Big(
	1-	
 	\varphi
	\Big(
	\gamma(\alpha)^{-1}\nb_{-}^{2s}\epsilon^{-2}\Omega_{\nb}(\omega^{(\alpha)}(\xi))
	\Big)
	\Big)\,,
 \]
 where $\Lambda_{\alpha+1}$ was defined in \eqref{eq:scale}.
 \end{definition}
 
 We now prove important properties of the cutoff function $\mathfrak{h}^{(\alpha)}$. 
 
 \begin{lemma}\label{lem:cut}
 Let $\alpha\in\{1,\cdots,\beta\}$, and suppose that Theorem \ref{thm:nf-alpha} is proved up to scale $\alpha$. Then, 
  \begin{align}
   \label{eq:s-dg}
 \xi\in\Xi_{\alpha}\quad 
 	&\implies \quad \mathfrak{h}^{(\alpha)}(\xi)=1\,.
	 \intertext{Moreover, for all $\xi\in\mathcal{U}_{s}(\epsilon)$}
 \label{eq:frak-h}
 \xi\in\supp \mathfrak{h}^{(\alpha)} \quad 	
 	&\implies\quad  \forall \nb\in\Lambda_{\alpha+1}\,,\quad |\Omega_{\nb}(\omega^{(\alpha)}(\xi))| > \frac{1}{2} \gamma(\alpha)\nb_{-}^{-2s}\epsilon^{2}\,,
 \intertext{and}
 \label{eq:s-gd}
\xi\in\supp \mathfrak{h}^{(\alpha)} \quad 
 	&\implies \xi\in\Xi_{\alpha-1}\,.
 \end{align}
 
 \end{lemma}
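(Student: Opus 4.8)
\textbf{Proof plan for Lemma~\ref{lem:cut}.}

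The three implications follow directly from the construction of the cutoff function $\mathfrak{h}^{(\alpha)}$ in Definition~\ref{def:trun}, combined with the stability estimate \eqref{eq:st-fr} from the inductive hypothesis. I would organize the argument in three short steps.

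First I would prove \eqref{eq:s-dg}. By definition of $\Xi_{\alpha} = \Xi_{\epsilon,\gamma(\alpha)}(\omega^{(\alpha)},\Lambda_{\alpha+1})$ (see \eqref{eq:gmdst}), if $\xi \in \Xi_{\alpha}$ then for every $\nb \in \Lambda_{\alpha+1}$ with $\deg(\nb) \leq 2\overline{r}$ we have $|\Omega_{\nb}(\omega^{(\alpha)}(\xi))|\,\nb_{-}^{2s} > \gamma(\alpha)\epsilon^2$, hence $\gamma(\alpha)^{-1}\nb_{-}^{2s}\epsilon^{-2}|\Omega_{\nb}(\omega^{(\alpha)}(\xi))| > 1$, which lies outside $[-1,1] = \supp\varphi$. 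Therefore each factor $1 - \varphi(\cdots)$ equals $1$, so $\mathfrak{h}^{(\alpha)}(\xi) = 1$.

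Next I would prove \eqref{eq:frak-h}. Contrapositively, suppose there exists $\nb_0 \in \Lambda_{\alpha+1}$ (necessarily of degree $\leq 2\overline{r}$, since $Q^{(\alpha)} \in X_{\leq 2\overline{r}}(\epsilon)$) with $|\Omega_{\nb_0}(\omega^{(\alpha)}(\xi))| \leq \tfrac12 \gamma(\alpha)\nb_{0,-}^{-2s}\epsilon^2$. Then $\gamma(\alpha)^{-1}\nb_{0,-}^{2s}\epsilon^{-2}|\Omega_{\nb_0}(\omega^{(\alpha)}(\xi))| \leq \tfrac12$, so by the properties of $\varphi$ (namely $\varphi \equiv 1$ on $[-\tfrac12,\tfrac12]$) the corresponding factor $1 - \varphi(\cdots)$ vanishes, hence $\mathfrak{h}^{(\alpha)}(\xi) = 0$, i.e.\ $\xi \notin \supp\mathfrak{h}^{(\alpha)}$.

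Finally, \eqref{eq:s-gd} is the step requiring a genuine estimate, and it is the one I expect to be the main (though still mild) obstacle: one must compare the resonance functions at scales $\alpha$ and $\alpha-1$. Let $\xi \in \supp\mathfrak{h}^{(\alpha)}$. By \eqref{eq:frak-h} we have $|\Omega_{\nb}(\omega^{(\alpha)}(\xi))| > \tfrac12\gamma(\alpha)\nb_{-}^{-2s}\epsilon^2$ for all $\nb \in \Lambda_{\alpha+1}$, and in particular for all $\nb \in \Lambda_{\alpha} \subset \Lambda_{\alpha+1}$. For such $\nb$, using $\Omega_{\nb}(\omega) = \sum_n (k_n - \ell_n)\omega_n$ together with the stability estimate \eqref{eq:st-fr}, namely $|\omega_n^{(\alpha)}(\xi) - \omega_n^{(\alpha-1)}(\xi)| \leq N_{\alpha-1}^{-4s}\epsilon^3$, and the fact that $\sum_n |k_n - \ell_n| \leq \deg(\nb) \leq 2\overline{r}$, we get
\[
|\Omega_{\nb}(\omega^{(\alpha)}(\xi)) - \Omega_{\nb}(\omega^{(\alpha-1)}(\xi))| \leq 2\overline{r}\, N_{\alpha-1}^{-4s}\epsilon^3.
\]
Since $\nb \in \Lambda_{\alpha}$ means $\nb_{-} < N_{\alpha}$, and since $\gamma(\alpha) = 4\gamma(\alpha-1)$, the triangle inequality yields
\[
|\Omega_{\nb}(\omega^{(\alpha-1)}(\xi))|\,\nb_{-}^{2s} \geq \tfrac12\gamma(\alpha)\epsilon^2 - 2\overline{r}\,N_{\alpha-1}^{-4s}\epsilon^3\,\nb_{-}^{2s} \geq 2\gamma(\alpha-1)\epsilon^2 - 2\overline{r}\,\epsilon^3 (N_{\alpha}/N_{\alpha-1})^{2s} N_{\alpha-1}^{-2s}.
\]
Using $(N_{\alpha}/N_{\alpha-1})^{2s} = \epsilon^{-1/100}$, $N_{\alpha-1} \geq 1$, and $\gamma(\alpha-1) = 4^{\alpha-1}\gamma \geq \gamma = \epsilon^{1/30}$, the error term $2\overline{r}\epsilon^{3 - 1/100}$ is negligible compared to $\gamma(\alpha-1)\epsilon^2$ provided $\epsilon_\ast(r,s)$ is small enough, so $|\Omega_{\nb}(\omega^{(\alpha-1)}(\xi))|\,\nb_{-}^{2s} > \gamma(\alpha-1)\epsilon^2$ for all $\nb \in \Lambda_{\alpha}$. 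This is precisely the condition $\xi \in \Xi_{\alpha-1} = \Xi_{\epsilon,\gamma(\alpha-1)}(\omega^{(\alpha-1)},\Lambda_{\alpha})$, completing the proof.
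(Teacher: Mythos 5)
Your proof is correct and follows essentially the same route as the paper's: \eqref{eq:s-dg} and \eqref{eq:frak-h} read off directly from the construction of $\varphi$ and $\mathfrak{h}^{(\alpha)}$, and \eqref{eq:s-gd} follows from the triangle inequality combined with the stability estimate \eqref{eq:st-fr} and the relations between $\gamma(\alpha)$, $N_\alpha$ and $\epsilon$. The only cosmetic difference is that you check the small-divisor bound just for $\nb\in\Lambda_{\alpha}$ (using $\nb_-<N_\alpha$), whereas the paper verifies it for all $\nb\in\Lambda_{\alpha+1}$ (using $\nb_-\leq N_{\alpha+1}$); both suffice, since $\Xi_{\alpha-1}$ only involves $\Lambda_\alpha$.
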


\begin{remark} In particular, we can deduce from \eqref{eq:s-dg} and \eqref{eq:frak-h} that $\Xi_{\alpha}\subset\Xi_{\alpha-1}$. 
\end{remark}

\begin{proof} The proof of \eqref{eq:s-dg} follows from the construction: suppose that $\xi\in\Xi_{\alpha}$, in which case for all $\nb\in\Lambda_{\alpha+1}$
\[
|\Omega_{\nb}(\omega^{(\alpha)}(\xi))| 
	> 
	\gamma(\alpha)\nb_{-}^{-2s}\epsilon^{2}\,.
\]
 Hence, we have that for all $\nb\in\Lambda_{\alpha+1}$, 
 \[
 \varphi
 	\Big(
	\gamma(\alpha)^{-1}\nb_{-}^{2s}\epsilon^{-2}\Omega_{\nb}(\omega^{(\alpha)}(\xi))
	\Big) = 0\,,
 \]
 and we deduce that $\mathfrak{h}^{(\alpha)}(\xi)=1$\,.

\medskip

We now show \eqref{eq:frak-h}, and suppose that $\xi$ is in the support of $\mathfrak{h}^{(\alpha)}$. Then, we have from the construction of $\varphi$ and $\mathfrak{h}$ that for all $\nb\in\Lambda_{\alpha+1}$,
\[
|\Omega_{\nb}(\omega^{(\alpha)}(\xi))|
	>
	\frac{1}{2} \gamma(\alpha)\nb_{-}^{-2s}\epsilon^{2}\,.
\]
This proves \eqref{eq:frak-h}. Lastly, to show \eqref{eq:s-gd}, we observe from the stability property for the modulated frequencies claimed in \eqref{eq:st-fr} that 
\begin{align*}
|\Omega_{\nb}(\omega^{(\alpha-1)}(\xi))|
	&\geq
	|\Omega_{\nb}(\omega^{(\alpha)}(\xi))|-
	|\Omega_{\nb}(\omega^{(\alpha-1)}(\xi))-\Omega_{\nb}(\omega^{(\alpha)}(\xi))|
	\\
	&\geq \frac{1}{2}\gamma(\alpha)\nb_{-}^{-2s}\epsilon^{2} - 	
	2\overline{r}N_{\alpha-1}^{-4s}\epsilon^{3}\,.
\end{align*}
When $\nb\in\Lambda_{\alpha+1}$ then $\nb_{-}\leq N_{\alpha+1}$ and we obtain that for all $\nb\in\Lambda_{\alpha+1}$, 
\[
|\Omega_{\nb}(\omega^{(\alpha-1)}(\xi))| > \frac{1}{4}\gamma(\alpha)\nb_{-}^{-2s}\epsilon^{2} = \gamma(\alpha-1)\nb_{-}^{-2s}\epsilon^{2}\,,
\]
under the condition 
\[
(\frac{N_{\alpha+1}}{N_{\alpha-1}})^{2s}N_{\alpha-1}^{-2s}\epsilon
	<\frac{1}{4}\gamma(\alpha)\,, 
\]
which, under our choice of parameters in subsection \ref{sec:par}, is equivalent to 
$\epsilon^{1 - \frac{1}{100}}N_{\alpha}^{-2s} < 4^{\alpha-1}\epsilon^{\frac{1}{30}}$.
This shows that $\xi\in\Xi_{\alpha-1}$, which is precisely the claim \eqref{eq:s-gd}. 
 \end{proof}

\begin{lemma}[Small divisor estimate] 
\label{lem:sde}
Let $\alpha\in\{0,\cdots,\beta\}$ such that Theorem \ref{thm:nf-alpha} is proved up to scale $\alpha$. Suppose that $(\widetilde{w}_{n}(\xi))_{n\in\Z_{M}^{d}}$ satisfies, for all $\xi\in\mathcal{U}_{s}(\epsilon)$, and $k,n\in\Z_{M}^{d}$, 
\begin{align}
\label{eq:o-tilde0}
	|\omega_{n}^{(\alpha)}(\xi)-\widetilde{\omega}_{n}(\xi)| 
	&\lesssim_{r}
	\epsilon^{3}N_{\alpha}^{-4s}\,,
	\\
	|\partial_{\xi_{k}}(\omega_{n}^{(\alpha)}(\xi) - \widetilde{\omega}_{n}(\xi))|
	&\lesssim_{r} \epsilon N_{\alpha}^{-2s}\,.
\label{eq:o-tilde1}
\end{align}
Then for all $\xi\in\mathcal{U}_{s}(\epsilon)$ and $\nb\in\Lambda_{\alpha+1}$, 
\begin{align}
\label{eq:smd0}
	\frac{\mathfrak{h}^{(\alpha)}(\xi)}{|\Omega_{\nb}(\widetilde{\omega}(\xi))|} 
	&\leq4\gamma(\alpha)^{-1}\epsilon^{-2}\nb_{-}^{2s}\,,
	\intertext{and, for all $k\in\Z_{M}^{d}$,}
\label{eq:smd1}
	\Big|
	\partial_{\xi_{k}} \Big(
	\frac{\mathfrak{h}^{(\alpha)}(\xi)}{\Omega_{\nb}(\widetilde{\omega}(\xi))}
	\Big)
	\Big|
	&\leq 2\epsilon^{-\frac{1}{10^{4}}}
	(\gamma(\alpha)^{-1}\epsilon^{-2}\nb_{-}^{2s})^{2}
	\,,
\end{align}
\end{lemma}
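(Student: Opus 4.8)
The estimate \eqref{eq:smd0} is the easy one: on the support of $\mathfrak{h}^{(\alpha)}$ we already know from \eqref{eq:frak-h} that $|\Omega_{\nb}(\omega^{(\alpha)}(\xi))| > \tfrac12 \gamma(\alpha)\nb_{-}^{-2s}\epsilon^{2}$ for all $\nb\in\Lambda_{\alpha+1}$. Using \eqref{eq:o-tilde0} and the fact that $\nb$ has at most $2\overline{r}$ unpaired indices, the perturbation satisfies $|\Omega_{\nb}(\widetilde{\omega}(\xi)) - \Omega_{\nb}(\omega^{(\alpha)}(\xi))| \leq 2\overline{r} \, C_r \, \epsilon^{3} N_{\alpha}^{-4s}$, which for $\epsilon_\ast$ small enough (and using $\nb_-\leq N_{\alpha+1}$, together with the crude bound $N_{\alpha}^{-4s} \leq N_{\alpha+1}^{-2s} \nb_-^{-2s}\cdot (\text{power of }\epsilon)$ coming from $(N_{\alpha+1}/N_{\alpha})^s = \epsilon^{-1/200}$) is $\leq \tfrac14 \gamma(\alpha)\nb_{-}^{-2s}\epsilon^{2}$. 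Hence $|\Omega_{\nb}(\widetilde\omega(\xi))| > \tfrac14\gamma(\alpha)\nb_-^{-2s}\epsilon^2$ on $\supp\mathfrak{h}^{(\alpha)}$, and \eqref{eq:smd0} follows (the factor $4$ absorbs the $\tfrac14$). Off $\supp\mathfrak{h}^{(\alpha)}$ the numerator vanishes and there is nothing to prove.

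\textbf{The Lipschitz estimate.} For \eqref{eq:smd1} I would write, on $\supp\mathfrak{h}^{(\alpha)}$,
\[
\partial_{\xi_k}\Big(\frac{\mathfrak{h}^{(\alpha)}}{\Omega_{\nb}(\widetilde\omega)}\Big)
= \frac{\partial_{\xi_k}\mathfrak{h}^{(\alpha)}}{\Omega_{\nb}(\widetilde\omega)}
- \mathfrak{h}^{(\alpha)}\,\frac{\partial_{\xi_k}\Omega_{\nb}(\widetilde\omega)}{\Omega_{\nb}(\widetilde\omega)^2}.
\]
For the second term, $|\partial_{\xi_k}\Omega_{\nb}(\widetilde\omega)| = |\sum_n (k_n-\ell_n)\partial_{\xi_k}\widetilde\omega_n|$; bounding $\partial_{\xi_k}\widetilde\omega_n$ by $\partial_{\xi_k}\omega_n^{(\alpha)}$ plus the correction \eqref{eq:o-tilde1}, and using \eqref{eq:omega} (so $|\partial_{\xi_k}\omega_n^{(\alpha)}| \leq (\alpha+1)\epsilon + 1$, say $\lesssim_r 1$ since $\alpha\leq\beta=100r$), one gets $|\partial_{\xi_k}\Omega_{\nb}(\widetilde\omega)| \lesssim_r \overline r$, i.e. a negative power of $\epsilon$ of size at most $\epsilon^{-1/10^4}$ by the counting/parameter choices. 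Combined with the lower bound $|\Omega_{\nb}(\widetilde\omega)|^{-1}\lesssim \gamma(\alpha)^{-1}\epsilon^{-2}\nb_-^{2s} \leq \gamma(\alpha)^{-1}\epsilon^{-2}N_{\alpha+1}^{2s}$ just proved, the second term is $\lesssim_r \epsilon^{-1/10^4}(\gamma(\alpha)^{-1}\epsilon^{-2}N_{\alpha+1}^{2s})^2$, which is the desired bound.

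\textbf{The main obstacle: $\partial_{\xi_k}\mathfrak{h}^{(\alpha)}$.} The delicate term is $\partial_{\xi_k}\mathfrak{h}^{(\alpha)}/\Omega_{\nb}(\widetilde\omega)$. Differentiating the product defining $\mathfrak{h}^{(\alpha)}$ (Definition \ref{def:trun}) produces a sum over $\nb'\in\Lambda_{\alpha+1}$, $\deg(\nb')\leq 2\overline r$, of terms
$-\varphi'\big(\gamma(\alpha)^{-1}(\nb')_-^{2s}\epsilon^{-2}\Omega_{\nb'}(\omega^{(\alpha)})\big)\,\gamma(\alpha)^{-1}(\nb')_-^{2s}\epsilon^{-2}\,\partial_{\xi_k}\Omega_{\nb'}(\omega^{(\alpha)})$ times the product of the remaining factors $(1-\varphi(\cdots))$. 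Here I would use: (i) $\|\varphi'\|_\infty \lesssim 1$; (ii) $\varphi'$ is supported where $|\Omega_{\nb'}(\omega^{(\alpha)})|\in[\tfrac12\gamma(\alpha)(\nb')_-^{-2s}\epsilon^2, \gamma(\alpha)(\nb')_-^{-2s}\epsilon^2]$, so the remaining factors of $\mathfrak{h}^{(\alpha)}$ are all $\leq 1$; (iii) $|\partial_{\xi_k}\Omega_{\nb'}(\omega^{(\alpha)})|\lesssim_r \overline r \lesssim \epsilon^{-1/10^4}$ as above; (iv) $(\nb')_-^{2s}\leq N_{\alpha+1}^{2s}$; (v) the number of $\nb'\in\Lambda_{\alpha+1}$ with $\deg\leq 2\overline r$ is $\leq \epsilon^{-1/10^6}$ by \eqref{eq:counting}. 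Altogether $|\partial_{\xi_k}\mathfrak{h}^{(\alpha)}| \lesssim_r \epsilon^{-1/10^4}\gamma(\alpha)^{-1}\epsilon^{-2}N_{\alpha+1}^{2s}$, and dividing by $|\Omega_{\nb}(\widetilde\omega)|$ — bounded below as in the first paragraph — contributes another factor $\gamma(\alpha)^{-1}\epsilon^{-2}N_{\alpha+1}^{2s}$, so this term is also $\lesssim_r \epsilon^{-1/10^4}(\gamma(\alpha)^{-1}\epsilon^{-2}N_{\alpha+1}^{2s})^2$. Adding the two contributions and choosing $\epsilon_\ast(s,r)$ small enough to absorb the implicit $r$-dependent constants into the factor $2$ gives \eqref{eq:smd1}. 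The only real care needed is bookkeeping the powers of $\epsilon$ hidden in $(N_{\alpha+1}/N_\alpha)^s = \epsilon^{-1/200}$ and in the counting bounds, to make sure the exponent $-1/10^4$ is not exceeded; this is where I expect to spend most of the effort.
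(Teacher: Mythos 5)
Your proof is correct and follows essentially the same route as the paper: the lower bound $|\Omega_{\nb}(\widetilde\omega(\xi))|\geq \tfrac14\gamma(\alpha)\nb_-^{-2s}\epsilon^2$ on $\supp\mathfrak{h}^{(\alpha)}$ obtained by perturbing \eqref{eq:frak-h} with \eqref{eq:o-tilde0} (as in the proof of \eqref{eq:s-gd}), then the quotient rule for \eqref{eq:smd1} with $|\partial_{\xi_k}\Omega_{\nb}|\lesssim r$ from \eqref{eq:omega}--\eqref{eq:o-tilde1} and the derivative of $\mathfrak{h}^{(\alpha)}$ bounded by differentiating the product, using $\|\varphi'\|_\infty\lesssim1$, the remaining factors being at most $1$, $\nb_-^{2s}\leq N_{\alpha+1}^{2s}$ and the crude counting estimate \eqref{eq:counting} — exactly the paper's estimate \eqref{eq:dhf}. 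The bookkeeping of the slack between $\epsilon^{-1/10^6}$ and $\epsilon^{-1/10^4}$ that you flag is indeed how the paper absorbs the harmless constants.
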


\begin{proof}
First, we proceed as in the proof of \eqref{eq:s-gd} to deduce from \eqref{eq:frak-h} together with the stability assumption \eqref{eq:o-tilde0} that
\begin{equation}
\label{eq:76}
\xi\in\supp \mathfrak{h}^{(\alpha)}\quad \implies \quad \forall \nb\in\Lambda_{\alpha+1}\,,\quad |\Omega_{\nb}(\widetilde{\omega}(\xi))|\geq\frac{1}{4}\gamma(\alpha)\nb_{-}^{-2s}\epsilon^{2}\,.
\end{equation}
from which we deduce \eqref{eq:smd0}. To show \eqref{eq:smd1}, we take $n\in\Z_{M}^{d}$ and we first observe that, thanks to \eqref{eq:omega}, for all $\nb\in\Lambda_{\alpha+1}$ and $\xi\in\mathcal{U}_{s}(\epsilon)$,
\[
|\partial_{\xi_{n}}\Omega_{\nb}(\omega^{(\alpha)}(\xi))| 
	+
	|\partial_{\xi_{n}}\Omega_{\nb}(\widetilde{\omega}(\xi))| \lesssim r\,.
\]
We deduce that for all $\xi\in\mathcal{U}_{s}(\epsilon)$,
\begin{equation*}
\begin{split}
\label{eq:dhf}
|\partial_{\xi_{n}}\mathfrak{h}^{(\alpha)}(\xi)| 
	&\leq \sum_{\nb\in\Lambda_{\alpha+1}}\gamma(\alpha)^{-1}\nb_{-}^{2s}\epsilon^{-2}
	\Big|
	\partial_{\xi_{n}}\Omega_{\nb}(\omega^{(\alpha)}(\xi))
	\varphi'
	\Big(
	\gamma(\alpha)^{-1}\nb_{-}^{2s}\epsilon^{-2}\Omega_{\nb}(\omega^{(\alpha)}(\xi))
	\Big)
	\Big| \\
	&\leq \epsilon^{-\frac{1}{10^{4}}}\gamma(\alpha)^{-1}\epsilon^{-2}N_{\alpha+1}^{2s}\,.
	\end{split}
\end{equation*}
Hence, for $n\in\Z_{M}^{d}$, $\nb\in\Lambda_{\alpha+1}$ and $\xi\in\mathcal{U}_{s}(\epsilon)$,
\[
\Big|\partial_{\xi_{n}}\Big(
		\frac{\mathfrak{h}^{(\alpha)}(\xi)}{\Omega_{\nb}(\widetilde{\omega}(\xi))}
		\Big)
		\Big|
		\leq 
		\Big|
		\frac{\partial_{\xi_{n}}\mathfrak{h}^{(\alpha)}(\xi)}{\Omega_{\nb}(\widetilde{\omega}(\xi))}
		\Big|
		+ 
		\mathfrak{h}^{(\alpha)}(\xi)
		\frac{|\partial_{\xi_{n}}\Omega_{\nb}(\widetilde{\omega}(\xi))|}{\Omega_{\nb}(\widetilde{\omega}(\xi))^{2}}
		\leq 2\epsilon^{-\frac{1}{10^{4}}}(\gamma(\alpha)^{-1}\epsilon^{-2}N_{\alpha+1}^{2s})^{2} \,,
\]
which concludes the proof of Lemma \ref{lem:sde}.
\end{proof}
\subsubsection{Definition of the transformation}
\label{sec:Lie} Let us introduce linear operator used to solve the cohomological equations.

\begin{definition}
\label{def:Lie}
Given some modulated frequencies $(\widetilde{\omega}_{n}(\xi))_{n\in\Z_{M}^{d}}$ satisfying \eqref{eq:o-tilde0} and \eqref{eq:o-tilde1}. We define the linear operator

\begin{equation}
\label{eq:op}
\mathcal{L}_{\alpha,\widetilde{\omega}} = \left\{ 
\begin{array}{ccc}
X_{\leq2\overline{r}}(\epsilon) & \to &  X_{\leq2\overline{r}}(\epsilon) \\
H  & \mapsto & \frac{i}{2}\mathfrak{h}^{(\alpha)}(\xi)	
	\displaystyle{\sum_{\nb\in\Lambda_{\alpha+1}}}
	\frac{(H[\xi])_{\nb}}{\Omega_{\nb}(\widetilde{\omega}(\xi))}
	z_{\nb}(u,I(u)-\xi)
\end{array} \right.
\end{equation}

\end{definition}
When the context is clear we may abuse notations and simply write
\[
\mathcal{L} := \mathcal{L}_{\alpha,\widetilde{\omega}}\,.
\]
The purpose of $\mathcal{L}$ is to solve a cohomological equation.
\begin{lemma}
\label{lem:24}
For all $Q\in X_{\leq2\overline{r}}(\epsilon)$, and frequencies $(\widetilde{\omega}_{n})_{n\in\Z_{M}^{d}}$, if 
\[
Z_{2}(\widetilde{\omega},\xi;u) 
	:= \sum_{|n|\leq M} \widetilde{\omega}_{n}(\xi)(|u_n|^{2}-\xi_{n})\,,
\]
then for all $\xi\in\mathcal{U}_{s}(\epsilon)$ the polynomial $\mathcal{L}(Q)(\xi)\in \mathcal{H}_{\leq2\overline{r}}(\Z_{M}^{d})$ solves  the cohomological equation 
\begin{equation}
\label{eq:rf3}
Q(\xi;u) + \{Z_{2}(\widetilde{\omega}),\mathcal{L}(Q)\}(\xi;u) = (\Id-\mathfrak{h}^{(\alpha)}(\xi)\Pi_{\Lambda_{\alpha+1}})Q(\xi;u) \,.
\end{equation}
In particular, when $\xi\in\Xi_{\alpha}$ then 
\[
Q(\xi;u) + \{Z_{2}(\widetilde{\omega}),\mathcal{L}(Q)\}(\xi;u) = (\Id-\Pi_{\Lambda_{\alpha+1}})Q(\xi;u) \,.
\]
\end{lemma}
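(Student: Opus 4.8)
The statement is an explicit solution of a cohomological equation, so the natural approach is to compute both sides coefficient by coefficient and compare. First I would use the definition of $\mathcal{L} = \mathcal{L}_{\alpha,\widetilde\omega,\xi}$ in \eqref{eq:op}: writing $Q(\xi;u) = \sum_{\nb\in\Nb_{\leq2\overline r}}(Q[\xi])_{\nb}z_{\nb}(u,I(u)-\xi)$, we have by construction
\[
\mathcal{L}(Q)(\xi;u) = \frac{i}{2}\mathfrak{h}^{(\alpha)}(\xi)\sum_{\nb\in\Lambda_{\alpha+1}}\frac{(Q[\xi])_{\nb}}{\Omega_{\nb}(\widetilde\omega(\xi))}z_{\nb}(u,I(u)-\xi)\,,
\]
which is a well-defined element of $\mathcal{H}_{\leq2\overline r}(\Z_M^d)$ by the small divisor bound \eqref{eq:smd0} of Lemma \ref{lem:sde} (this is exactly where the hypotheses \eqref{eq:o-tilde0}--\eqref{eq:o-tilde1} on $\widetilde\omega$ enter, guaranteeing that the denominators do not vanish on the support of $\mathfrak{h}^{(\alpha)}$, with the convention that each summand is set to zero outside $\supp\mathfrak{h}^{(\alpha)}$).

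Next I would apply the corollary following Lemma on the Poisson bracket with an action, which gives, for any $Z_2(\widetilde\omega,\xi;u) = \sum_{|n|\leq M}\widetilde\omega_n(\xi)(|u_n|^2-\xi_n)$ and any $H\in X_{\leq2\overline r}(\epsilon)$,
\[
\{Z_2(\widetilde\omega),H\}(\xi;u) = 2i\sum_{\nb\in\Nb}\Omega_{\nb}(\widetilde\omega(\xi))(H[\xi])_{\nb}\,u^{\kb}\overline u^{\lb}(I(u)-\xi)^{\mb}\,,
\]
where $\Omega_{\nb}(\widetilde\omega) = \sum_n(k_n-\ell_n)\widetilde\omega_n$ matches the resonance function appearing in $\mathcal{L}$. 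Applying this with $H = \mathcal{L}(Q)$, the factor $2i$ cancels the $i/2$ in the definition of $\mathcal{L}$ and the factor $\Omega_{\nb}(\widetilde\omega(\xi))$ cancels the denominator, leaving
\[
\{Z_2(\widetilde\omega),\mathcal{L}(Q)\}(\xi;u) = -\mathfrak{h}^{(\alpha)}(\xi)\sum_{\nb\in\Lambda_{\alpha+1}}(Q[\xi])_{\nb}z_{\nb}(u,I(u)-\xi) = -\mathfrak{h}^{(\alpha)}(\xi)\,\Pi_{\Lambda_{\alpha+1}}Q(\xi;u)\,,
\]
using the definition \eqref{eq:pi-alpha} of $\Pi_{\Lambda_{\alpha+1}}$. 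Adding $Q(\xi;u)$ to both sides yields exactly \eqref{eq:rf3}. For the last assertion, I would invoke \eqref{eq:s-dg} of Lemma \ref{lem:cut}: if $\xi\in\Xi_\alpha$ then $\mathfrak{h}^{(\alpha)}(\xi)=1$, so the right-hand side of \eqref{eq:rf3} becomes $(\Id-\Pi_{\Lambda_{\alpha+1}})Q(\xi;u)$.

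The computation is essentially mechanical; the only subtlety worth flagging — and the place where I would be most careful — is the sign bookkeeping in the Poisson bracket identity for $\{Z_2(\widetilde\omega),\cdot\}$ versus $\{I_n,\cdot\}$, and the fact that $z_{\nb}$ is linear in each $\Omega_{\nb}$ factor only because $\mathcal{L}(Q)$ is supported on multi-indices in $\Lambda_{\alpha+1}$ where $\Omega_{\nb}(\widetilde\omega)\neq0$ on $\supp\mathfrak{h}^{(\alpha)}$. One should also note that $\mathcal{L}(Q)\in X_{\leq2\overline r}(\epsilon)$ genuinely (i.e. the smoothness in $\xi$ and the reality condition are preserved): smoothness follows since $\mathfrak{h}^{(\alpha)}$ is smooth by Definition \ref{def:trun} and $1/\Omega_{\nb}(\widetilde\omega)$ is smooth and bounded on $\supp\mathfrak{h}^{(\alpha)}$ by \eqref{eq:smd0}, and the reality condition $Q_{\kb,\lb,\mb}=\overline{Q_{\lb,\kb,\mb}}$ is preserved because $\Omega_{(\lb,\kb,\mb)} = -\Omega_{(\kb,\lb,\mb)}$ combines with the $i$ prefactor to keep $\mathcal{L}(Q)$ real — a point I would verify explicitly but which requires no real work.
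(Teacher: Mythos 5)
Your proposal is correct and follows essentially the same route as the paper: apply the Poisson-bracket formula for a quadratic Hamiltonian $\{Z_2(\widetilde\omega),\cdot\}$ to the definition of $\mathcal{L}(Q)$, so that the factor $2i\,\Omega_{\nb}(\widetilde\omega)$ cancels the $\tfrac{i}{2}\Omega_{\nb}(\widetilde\omega)^{-1}$ and yields $\{Z_2(\widetilde\omega),\mathcal{L}(Q)\}=-\mathfrak{h}^{(\alpha)}(\xi)\Pi_{\Lambda_{\alpha+1}}Q$, then conclude with $\mathfrak{h}^{(\alpha)}(\xi)=1$ on $\Xi_\alpha$ from Lemma \ref{lem:cut}. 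Your extra checks (well-definedness of the denominators on $\supp\mathfrak{h}^{(\alpha)}$, smoothness in $\xi$, preservation of the reality condition) are sound but not part of the paper's short proof.
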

\begin{proof}
The equality \eqref{eq:rf3} follows directly from the Definition \ref{def:Lie} of $\mathcal{L}_{\alpha,\widetilde{\omega}}$:
\begin{equation}
\begin{split}
\label{eq:rf2}
\{Z_{2}(\widetilde{\omega}),\mathcal{L}(Q)(u)\}(\xi;u)& = - \mathfrak{h}^{(\alpha)}(\xi)\sum_{\nb\in\Lambda_{\alpha+1}}(Q[\xi])_{\nb}z_{\nb}(u,I(u)-\xi)\\& = - \mathfrak{h}^{(\alpha)}(\xi)\Pi_{\Lambda_{\alpha+1}}Q(\xi;u)\,.
\end{split}
\end{equation}
The second equality is a consequence of the construction of the truncation $\mathfrak{h}^{(\alpha)}$, which satisfies $\mathfrak{h}^{(\alpha)}(\xi)=1$ for all $\xi\in\Xi_{\alpha}$.  
\end{proof}

\begin{lemma} 
\label{lem:23}
For all $Q\in X_{\leq2\overline{r}}(\epsilon)$, we have 
\begin{align*}
	\|\mathcal{L}_{\alpha,\widetilde{\omega}}(Q)\|_{\Ysup{\alpha}}	
	&\lesssim \gamma(\alpha)^{-1}\epsilon^{-2}N_{\alpha+1}^{2s}\|\Pi_{\Lambda_{\alpha+1}}Q\|_{\Ysup{\alpha}} \,,\\
	\|\mathcal{L}_{\alpha,\widetilde{\omega}}(Q)\|_{\Ylip{\alpha}}
	&\lesssim 
	\gamma(\alpha)^{-1}
	\epsilon^{-2}N_{\alpha+1}^{2s}
	(
	\epsilon^{-\frac{1}{30}-\frac{3}{100}-\frac{1}{10^{4}}}\|\Pi_{\Lambda_{\alpha+1}}Q\|_{\Ysup{\alpha}}
	+
	\|\Pi_{\Lambda_{\alpha+1}}Q\|_{\Ylip{\alpha}}
	)\,.
\end{align*}
\end{lemma}

\begin{proof}
The proof of the $\Ysup{\alpha}$-bound is a direct consequence of the small divisor estimates  \eqref{eq:smd0} from Lemma \ref{lem:sde}. For the second bound, we also use the Leibniz rule: recall that for $\nb\in\Nb$ and $\xi\in\mathcal{U}_{s}(\epsilon)$, 
\[
	(
	\mathcal{L}_{\alpha,\widetilde{\omega}}(Q)[\xi]
	)_{\nb}
	=
	\mathbf{1}_{\Lambda_{\alpha+1}}(\nb)\frac{\mathfrak{h}^{(\alpha)}(\xi)}{\Omega_{\nb}(\widetilde{\omega}(\xi))}(Q[\xi])_{\nb}\,, 
\]
and given $k\in\Z_{M}^{d}$ we have from the Leibniz rule that for $\nb\in\Lambda_{\alpha+1}$ and $\xi\in\mathcal{U}_{s}(\epsilon)$,
\[
\partial_{\xi_{k}}(
	\mathcal{L}_{\alpha,\widetilde{\omega}}(Q)[\xi]
	)_{\nb}
	=
	\partial_{\xi_{k}}(\frac{\mathfrak{h}^{(\alpha)}(\xi)}{\Omega_{\nb}(\widetilde{\omega}(\xi))})(Q[\xi])_{\nb}
	+
	\frac{\mathfrak{h}^{(\alpha)}(\xi)}{\Omega_{\nb}(\widetilde{\omega}(\xi))}
	\partial_{\xi_{k}}(Q[\xi])_{\nb}\,.
\]
According to the small divisor bounds \eqref{eq:smd0} and \eqref{eq:smd1}, we obtain 
\begin{align*}
&|\partial_{\xi_{k}}(
	\mathcal{L}_{\alpha,\widetilde{\omega}}(Q)[\xi]
	)_{\nb}
	| \\
	&\leq
	2\epsilon^{-\frac{1}{10^{4}}}(\gamma(\alpha)^{-1}\epsilon^{-2}\nb_{-}^{2s})^{2}
	|(Q[\xi])_{\nb}|
	+
	4\gamma(\alpha)^{-1}\epsilon^{-2}\nb_{-}^{2s}
	|\partial_{\xi_{n}}(Q[\xi])_{\nb}| \\
	&
	\leq
	2\epsilon^{-\frac{1}{10^{4}}}(\gamma(\alpha)^{-1}\epsilon^{-2}\nb_{-}^{2s})^{2}
	\mathrm{w}_{\nb}^{0}(\alpha)\|Q\|_{\Ysup{\alpha}}
	+
	4\gamma(\alpha)^{-1}\epsilon^{-2}\nb_{-}^{2s}
	\mathrm{w}_{\nb}^{1}(\alpha)\|Q\|_{\Ylip{\alpha}}\\
	&\lesssim
	\mathrm{w}_{\nb}^{1}(\alpha)
	\Big(
	\epsilon^{-\frac{1}{10^{4}}}(\gamma(\alpha)^{-1}\epsilon^{-2}\nb_{-}^{2s})^{2}
	N_{\alpha}^{-2s}\eta^{2}\|Q\|_{\Ysup{\alpha}}
	+
	\gamma(\alpha)^{-1}\epsilon^{-2}\nb_{-}^{2s}
	\|Q\|_{\Ylip{\alpha}}
	\Big)\,.
\end{align*}
Therefore, we have from the the assumption~$\nb\in\Lambda_{\alpha+1}$ and the relation \eqref{eq:par} between the parameters that
\[
(\mathrm{w}_{\nb}^{1}(\alpha))^{-1}
	|
	\partial_{\xi_{k}}
	(
	\mathcal{L}_{\alpha,\widetilde{\omega}}(Q)[\xi]
	)_{\nb}
	|
	\lesssim \gamma(\alpha)^{-1}\epsilon^{-2}N_{\alpha+1}^{2s}(
	\epsilon^{-\frac{1}{30}-\frac{3}{100}-\frac{1}{10^{4}}}
	\|Q\|_{\Ysup{\alpha}}
	+
	\|Q\|_{\Ylip{\alpha}}
	)\,.
\]
This completes the proof of the second estimate, recalling  the Definition \ref{def:norm} of the $\Ylip{\alpha}$-norm. 
\end{proof}
\begin{corollary}
\label{cor:ad}
For all $Q\in X_{\leq2\overline{r}}(\epsilon)$, $H\in X_{\leq\overline{r}^{2}}(\epsilon)$ and $Z_{4}\in X_{4,\mathrm{Int}}(\epsilon)$ we have 
\begin{align*}
\|\{H,\mathcal{L}_{\alpha,\widetilde{\omega}}(Q)\}\|_{\Ysup{\alpha}} 
	&\leq \epsilon^{\frac{3}{2}}N_{\alpha}^{-2s}
	\|H\|_{\Ysup{\alpha}}
	\|\Pi_{\Lambda_{\alpha+1}}Q\|_{\Ysup{\alpha}}
	\,,\\
	\|\{Z_{4},\mathcal{L}_{\alpha,\widetilde{\omega}}(Q)\}\|_{\Ysup{\alpha}} 
	&\leq \epsilon^{\frac{1}{15}}
	\|Z_{4}\|_{Z^{\mathrm{sup}}}
	\|\Pi_{\Lambda_{\alpha+1}}Q\|_{\Ysup{\alpha}}\,.
\end{align*}
Moreover, if we suppose that
\begin{equation}
\label{eq:as-14}
	\|\Pi_{\Lambda_{\alpha+1}}Q\|_{\Ysup{\alpha}}
	+\|H\|_{\Ysup{\alpha}}
	+\|Z_{4}\|_{Z^{\mathrm{sup}}}
	+
	\|Z_{4}\|_{Z^{\mathrm{lip}}}
	\leq \epsilon^{-\frac{1}{10^{3}}}\,,
\end{equation}
and
\begin{equation}
\label{eq:as-15}
\|\Pi_{\Lambda_{\alpha+1}}Q\|_{\Ylip{\alpha}}+\|H\|_{\Ylip{\alpha}}\lesssim \epsilon^{-\frac{1}{15}}\,,
\end{equation}
then
\[
\|\{H,\mathcal{L}_{\alpha,\widetilde{\omega}}(Q)\}\|_{\Ylip{\alpha}}  
	\leq \epsilon N_{\alpha}^{-2s}\,,\quad
\|\{Z_{4},\mathcal{L}_{\alpha,\widetilde{\omega}}(Q)\}\|_{\Ylip{\alpha}}  
	\leq \epsilon^{\frac{1}{50}}
	\,.
\]
\end{corollary}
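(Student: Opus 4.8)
\textbf{Proof strategy for Corollary \ref{cor:ad}.}

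The plan is to combine the operator bound for $\mathcal{L}_{\alpha,\widetilde{\omega},\xi}$ from Lemma \ref{lem:23} with the bilinear Poisson-bracket estimates already established in Propositions \ref{prop:bracket0} and \ref{prop:bracket1}, keeping careful track of which $\Lambda_{\alpha+1}$-projection is available after applying $\mathcal{L}$. First I would record the elementary but crucial structural fact: by Definition \ref{def:Lie}, $\mathcal{L}_{\alpha,\widetilde{\omega},\xi}(Q)$ is supported (as a re-centred polynomial in $\Nb$) on $\Lambda_{\alpha+1}$, so in particular $\Pi_{\Lambda_{\alpha+1}}\mathcal{L}_{\alpha,\widetilde{\omega},\xi}(Q) = \mathcal{L}_{\alpha,\widetilde{\omega},\xi}(Q)$ and the $\Ysup{\alpha}$-norm of $\mathcal{L}_{\alpha,\widetilde{\omega},\xi}(Q)$ is, by Lemma \ref{lem:23},
\[
\|\mathcal{L}_{\alpha,\widetilde{\omega},\xi}(Q)\|_{\Ysup{\alpha}} \lesssim \gamma(\alpha)^{-1}\epsilon^{-2}N_{\alpha+1}^{2s}\,\|\Pi_{\Lambda_{\alpha+1}}Q\|_{\Ysup{\alpha}}\,.
\]

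For the $\Ysup{\alpha}$-estimates, I would plug this into \eqref{eq:braket-alpha} with $H$ in the role of one factor and $\mathcal{L}_{\alpha,\widetilde{\omega},\xi}(Q)$ in the role of the other, giving
\[
\|\{H,\mathcal{L}_{\alpha,\widetilde{\omega},\xi}(Q)\}\|_{\Ysup{\alpha}} \leq \eta^{4-\frac15}N_{\alpha}^{-4s}\|H\|_{\Ysup{\alpha}}\,\gamma(\alpha)^{-1}\epsilon^{-2}N_{\alpha+1}^{2s}\,\|\Pi_{\Lambda_{\alpha+1}}Q\|_{\Ysup{\alpha}}\,,
\]
and then absorb the scalar factor $\eta^{4-1/5}N_{\alpha}^{-2s}\gamma(\alpha)^{-1}\epsilon^{-2}(N_{\alpha+1}/N_{\alpha})^{2s}$ into $\epsilon^{3/2}N_{\alpha}^{-2s}$ using relation \eqref{eq:par} (which controls $\gamma(\alpha)^{-1}(\epsilon^{-1}\eta)^2(N_{\alpha+1}/N_{\alpha})^{2s}$ by $\epsilon^{-1/13}$) together with $\eta = \epsilon^{1-1/100}$ and $\epsilon_{\ast}$ small; the bookkeeping of powers of $\epsilon$ is routine. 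For the second $\Ysup{\alpha}$-estimate I would use instead \eqref{eq:braket-alpha-Z} (valid for integrable quartic $Z_4\in X_{4,\mathrm{Int}}(\epsilon)$), which gives the better prefactor $\eta^{2+1/11}N_{\alpha}^{-2s}$; combined with the same $\mathcal{L}$-bound this produces the scalar $\eta^{2+1/11}N_{\alpha}^{-2s}\gamma(\alpha)^{-1}\epsilon^{-2}N_{\alpha+1}^{2s}\|Z_4\|_{Z^{\mathrm{sup}}}$, and \eqref{eq:par} again absorbs everything into $\epsilon^{1/15}\|Z_4\|_{Z^{\mathrm{sup}}}$.

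For the $\Ylip{\alpha}$-estimates I would recognize $\mathcal{L}_{\alpha,\widetilde{\omega},\xi}(Q)$ as exactly of the form $\mathcal{M}_L(Q')$ in the sense of Definition \ref{def:mo}, with multiplier $L[\xi]_{\nb} = \tfrac{i}{2}\mathfrak{h}^{(\alpha)}(\xi)\,\mathbf{1}_{\nb\in\Lambda_{\alpha+1}}/\Omega_{\nb}(\widetilde{\omega}(\xi))$ and $Q' = \Pi_{\Lambda_{\alpha+1}}Q$; the point is that Lemma \ref{lem:sde} gives precisely $\|L\|_{Z^{\mathrm{sup}}} \lesssim \gamma(\alpha)^{-1}\epsilon^{-2}N_{\alpha+1}^{2s}$ (from \eqref{eq:smd0}) and $\|L\|_{Z^{\mathrm{lip}}} \lesssim \epsilon^{-1/10^4}(\gamma(\alpha)^{-1}\epsilon^{-2}N_{\alpha+1}^{2s})^2$ (from \eqref{eq:smd1}), which are exactly the hypotheses quoted in Remark \ref{rem:br}. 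Then, under assumptions \eqref{eq:as-14}–\eqref{eq:as-15} — which supply the bounds $\|Q'\|_{\Ysup{\alpha}},\|H\|_{\Ysup{\alpha}},\|Z_4\|_{Z^{\mathrm{sup}}},\|Z_4\|_{Z^{\mathrm{lip}}} \lesssim \epsilon^{-1/10^3}$ and $\|Q'\|_{\Ylip{\alpha}},\|H\|_{\Ylip{\alpha}} \lesssim \epsilon^{-1/15}$ needed to invoke \eqref{eq:braket1-alpha} and \eqref{eq:braket1-alpha-Z} — Remark \ref{rem:br} already carries out the arithmetic and yields $\|\{\mathcal{M}_L(Q'),H\}\|_{\Ylip{\alpha}} \leq \epsilon N_{\alpha}^{-2s}$ and $\|\{\mathcal{M}_L(Q'),Z_4\}\|_{\Ylip{\alpha}} \leq \epsilon^{1/50}$; substituting $\mathcal{M}_L(Q') = \mathcal{L}_{\alpha,\widetilde{\omega},\xi}(Q)$ closes the argument. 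The only mild subtlety — and the place I would be most careful — is checking that $\mathcal{L}_{\alpha,\widetilde{\omega},\xi}(Q) = \mathcal{M}_L(\Pi_{\Lambda_{\alpha+1}}Q)$ exactly rather than up to lower-order corrections, i.e. that the multiplier $L$ genuinely annihilates the complementary indices and that no hidden $\xi$-dependence beyond that of $L$, $Q$, $Z_4$ enters; once this identification is made, everything reduces to the cited bilinear estimates and to \eqref{eq:par}, with no further work.
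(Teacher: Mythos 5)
Your proposal is correct and follows essentially the same route as the paper's proof: for the $\Ysup{\alpha}$ bounds you chain the operator estimate of Lemma \ref{lem:23} with the bilinear estimates \eqref{eq:braket-alpha} and \eqref{eq:braket-alpha-Z} and absorb the scalar prefactor via \eqref{eq:par}, and for the $\Ylip{\alpha}$ bounds you rewrite $\mathcal{L}_{\alpha,\widetilde{\omega},\xi}(Q)$ as a multiplier operator $\mathcal{M}_{L}$, control $\|L\|_{Z^{\mathrm{sup}}}$ and $\|L\|_{Z^{\mathrm{lip}}}$ by the small-divisor bounds \eqref{eq:smd0}--\eqref{eq:smd1} of Lemma \ref{lem:sde}, and conclude through Proposition \ref{prop:bracket1} and the arithmetic of Remark \ref{rem:br} — which is exactly what the paper does, including your correct observation that the indicator $\mathbf{1}_{\Lambda_{\alpha+1}}$ inside $L$ makes the identification $\mathcal{L}(Q)=\mathcal{M}_{L}(Q)$ exact with no hidden $\xi$-dependence. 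The only caveat is that the "routine bookkeeping" you defer (for instance for the $\epsilon^{1/15}$ and $\epsilon^{1/50}$ exponents in the $Z_4$ brackets) is precisely the content the paper itself delegates to \eqref{eq:par} and Remark \ref{rem:br}, so your treatment is no less complete than the original.
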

\begin{proof}
It suffices to combine the (bilinear) estimate on the Poisson bracket between two polynomials given by Proposition \ref{prop:bracket0} with the operator bound from Lemma \ref{lem:23}, recalling the relation \ref{eq:par} between the parameters:
\begin{align*}
\|\{H,\mathcal{L}_{\alpha,\widetilde{\omega}}(Q)\}\|_{\Ysup{\alpha}}
	&\leq 
	\eta^{4-\frac{1}{4}}N_{\alpha}^{-4s}
	\|H\|_{\Ysup{\alpha}}
	\|
	\mathcal{L}_{\alpha,\widetilde{\omega}}(Q)
	\|_{\Ysup{\alpha}} \\
	&\lesssim
	\gamma(\alpha)^{-1}(\eta\epsilon^{-1})^{2}(\frac{N_{\alpha+1}}{N_{\alpha}})^{2s}\eta^{2-\frac{1}{4}}N_{\alpha}^{-2s}
	\|H\|_{\Ysup{\alpha}}
	\|\Pi_{\Lambda_{\alpha+1}}Q\|_{\Ysup{\alpha}} \\
	&\leq
	\epsilon^{\frac{3}{2}}N_{\alpha}^{-2s}
	\|H\|_{\Ysup{\alpha}}
	\|\Pi_{\Lambda_{\alpha+1}}Q\|_{\Ysup{\alpha}}\,.
\end{align*}  
As for the Lipschitz estimates, we have from the assumptions and  from Lemma \ref{lem:23} that:
\[
	\|
	\mathcal{L}_{\alpha,\widetilde{\omega}}(Q)
	\|_{\Ysup{\alpha}} 
	\lesssim \gamma(\alpha)^{-1}\epsilon^{-2}N_{\alpha+1}^{2s}\epsilon^{-\frac{1}{10^{3}}} \,,
	\quad 
	\|
	\mathcal{L}_{\alpha,\widetilde{\omega}}(Q)
	\|_{\Ylip{\alpha}}
	\lesssim 
	\gamma(\alpha)^{-1}\epsilon^{-2}N_{\alpha+1}^{2s}\epsilon^{-\frac{1}{15}}\,.
\]
Then, Proposition \ref{prop:bracket1} gives
\begin{align*}
&\|\{H,\mathcal{L}_{\alpha,\widetilde{\omega}}\}\|_{\Ylip{\alpha}} \\
	&\leq \eta^{4-\frac{1}{4}}N_{\alpha}^{-4s}
	\gamma(\alpha)^{-1}\epsilon^{-2}N_{\alpha+1}^{2s}(
	\epsilon^{-\frac{1}{10^{3}}}\|H\|_{\Ysup{\alpha}}
	+
	\epsilon^{-\frac{1}{15}}
	\|H\|_{\Ysup{\alpha}}
	+
	\epsilon^{-\frac{1}{10^{3}}}
	\|H\|_{\Ylip{\alpha}}
	)\\
	&\leq
	\epsilon N_{\alpha}^{-2s}\,.
\end{align*}
Similarly, 
\begin{align*}
&\|\{Z_{4},\mathcal{L}_{\alpha,\widetilde{\omega}}\}\|_{\Ylip{\alpha}} \\
	&\lesssim \eta^{2+\frac{1}{5}-\frac{2}{10^{4}}}N_{\alpha}^{-2s}
	\gamma(\alpha)^{-1}\epsilon^{-2}N_{\alpha+1}^{2s}
	(
	\epsilon^{-\frac{1}{15}}\|Z\|_{Z_{4}^{\mathrm{sup}}}
	+\eta^{2}N_{\alpha}^{-2s}\epsilon^{-\frac{1}{10^{3}}}\|Z_{4}\|_{Z^{\mathrm{lip}}})\\
	&\leq\epsilon^{\frac{1}{50}}\,.
\end{align*}
This completes the proof of Corollary \ref{cor:ad}. 
\end{proof}

\begin{notation}
Given $Q\in X_{\leq\overline{r}^{2}}$, and $t\in\R$, we denote by $\Phi_{\mathcal{L}(Q)}^{t}$ the Hamiltonian flow on $h^s(\Z_{M}^{d})$ generated by $\mathcal{L}(Q)(\xi)$. 

\medskip

\noindent For $\xi\in\mathcal{U}_{s}(\epsilon)$ and $u\in h^{s}(\Z_{M}^{d})$ we denote by $v(\xi;t)$ (or implicitly $v(t)$) the solution of the finite-dimensional ODE
\begin{equation}
\label{eq:odev}
\begin{cases}
i\partial_t v = \nabla(\mathcal{L}(Q))(\xi)(v)\,,\\
v(0) = u\,.
\end{cases}
\end{equation}
\end{notation}

\begin{remark}
Using the preservation of the $\ell^{2}$-mass, we note that the ODE \eqref{eq:odev} is globally well-posed, and $\Phi_{\mathcal{L}(Q)}^{t}$ is well-defined on $h^s(\Z_{M}^{d})$ for all $t\in\R$. Moreover, 
\[
\Phi_{\mathcal{L}(Q)}^{-t} = (\Phi_{\mathcal{L}(Q)}^{t})^{-1}\,. 
\]
\end{remark}

\subsection{Estimates on the Lie transforms}
\label{sec:lie-prop}
We fix some frequencies $(\widetilde{\omega}_{n})_{n\in\Z_{M}^{d}}$ satisfying \eqref{eq:o-tilde0} and \eqref{eq:o-tilde1}, and we consider the linear operator $\mathcal{L}=\mathcal{L}_{\alpha,\widetilde{\omega}}$ as in Definition \ref{def:Lie}. We prove important properties on the flow $(\Phi_{\mathcal{L}(Q)}^t)_{t\in[-1,1]}$, when $Q\in X_{\leq2\overline{r}}(\epsilon)$ operates at scale $\alpha$ (in the sense that its $\Ysup{\alpha}$-norm is no too large).

\begin{lemma}[Stability]\label{lem:stab} Let $Q\in X_{\leq2\overline{r}}(\epsilon)$ be such that 
\begin{equation}
\label{eq:ass-Q}
\|Q\|_{\Ysup{\alpha}}
	\leq \epsilon^{-\frac{1}{10^{3}}}\,.
\end{equation}
For all $\xi\in\mathcal{U}_{s}(\epsilon)$, the transformation $\Phi_{\mathcal{L}(Q)(\xi)}^{t}$ is close to the identity on the annulus $\mathcal{V}_{\alpha,s}(10\epsilon,\xi)$: for all $t\in[-1,1]$ and $u\in\mathcal{V}_{\alpha,s}(10\epsilon,\xi)$,
\begin{equation}
\label{eq:stab}
\|\Phi_{\mathcal{L}(Q)}^{t}(u)-u\|_{h^{s}}
	\leq\epsilon^{\frac{3}{2}}\|u\|_{h^{s}}N_{\alpha}^{-2s}\,.
\end{equation}
\end{lemma}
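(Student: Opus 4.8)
The plan is to prove \eqref{eq:stab} by a bootstrap argument on the integral equation associated with the ODE \eqref{eq:odev}, using the vector field estimate at scale $\alpha$ (Proposition \ref{prop:vec-alpha}) as the engine. First I would record the key smallness input: by Lemma \ref{lem:23} and the assumption \eqref{eq:ass-Q},
\[
\|\mathcal{L}(Q)\|_{\Ysup{\alpha}} \lesssim \gamma(\alpha)^{-1}\epsilon^{-2}N_{\alpha+1}^{2s}\,\epsilon^{-\frac{1}{10^{3}}}\,,
\]
and combining this with the relation \eqref{eq:par} between the parameters one sees that $\|\mathcal{L}(Q)\|_{\Ysup{\alpha}} N_\alpha^{-4s}\epsilon^{3} \lesssim \epsilon^{\frac{3}{2}}N_\alpha^{-2s}$ (up to harmless negative powers of $\epsilon$ absorbed by $\epsilon_\ast$), which is the quantity that will actually control the size of the increment. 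So Proposition \ref{prop:vec-alpha} applied to $\mathcal{L}(Q)\in X_{\leq 2\overline r}(\epsilon)$ gives, for every $\xi\in\mathcal{U}_s(\epsilon)$ and every $w\in\mathcal{V}_{\alpha,s}(20\epsilon,\xi)$,
\[
\|\nabla (\mathcal{L}(Q))(\xi;w)\|_{h^s}\leq \epsilon^{\frac{3}{2}}N_\alpha^{-2s}\|w\|_{h^s}\,,
\]
which is the pointwise bound on the Hamiltonian vector field that I want to integrate.

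Next I would set up the bootstrap. Fix $u\in\mathcal{V}_{\alpha,s}(10\epsilon,\xi)$ and let $v(t)$ solve \eqref{eq:odev}. Since the flow preserves the $\ell^2$-mass, $\|v(t)\|_{\ell^2}=\|u\|_{\ell^2}$ for all $t$; together with the $h^s$ continuity this gives local existence, and I only need an a priori bound. Consider the maximal interval on which $v(t)\in\mathcal{V}_{\alpha,s}(20\epsilon,\xi)$ and $\|v(t)\|_{h^s}\leq 2\|u\|_{h^s}$. On this interval, Duhamel's formula $v(t)-u=-i\int_0^t \nabla(\mathcal{L}(Q))(\xi;v(\sigma))\,d\sigma$ and the vector field bound give, for $|t|\leq 1$,
\[
\|v(t)-u\|_{h^s}\leq \int_0^{|t|}\epsilon^{\frac{3}{2}}N_\alpha^{-2s}\|v(\sigma)\|_{h^s}\,d\sigma\leq 2\epsilon^{\frac{3}{2}}N_\alpha^{-2s}\|u\|_{h^s}\leq \tfrac12\epsilon^{\frac34}\|u\|_{h^s}N_\alpha^{-s}\,,
\]
provided $\epsilon_\ast$ is small enough (here I use $N_\alpha\geq 1$ and $\epsilon^{\frac32}N_\alpha^{-s}\leq \epsilon^{\frac34}$, which is trivially true). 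This increment is much smaller than both $\|u\|_{h^s}$ and, after weighting by $\langle n\rangle^s$ and using $\|v(t)\|_{h^s}^2-\|u\|_{h^s}^2$ controlled by $\|v(t)-u\|_{h^s}(\|v(t)\|_{h^s}+\|u\|_{h^s})$, it keeps $v(t)$ strictly inside $\mathcal{V}_{\alpha,s}(20\epsilon,\xi)$ and keeps $\|v(t)\|_{h^s}<2\|u\|_{h^s}$. Hence the maximal interval is all of $[-1,1]$ and \eqref{eq:stab} holds.

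The only slightly delicate point — and the step I expect to require the most care — is checking that the bootstrap set is relatively open and non-empty in the $\mathcal{V}_{\alpha,s}$-sense: the defining condition of $\mathcal{V}_{\alpha,s}$ involves $\sum_n\langle n\rangle^{2s}\big||v_n|^2-\xi_n\big|$, which is continuous along the flow, and one must verify that the increment estimate above improves the radius from $20\epsilon$ back to something strictly below $20\epsilon$ starting from $u\in\mathcal{V}_{\alpha,s}(10\epsilon,\xi)$. Concretely, $\big||v_n(t)|^2-\xi_n\big|\leq \big||u_n|^2-\xi_n\big| + \big||v_n(t)|^2-|u_n|^2\big|$, and summing the weighted version, the second piece is bounded by $\|v(t)-u\|_{h^s}(\|v(t)\|_{h^s}+\|u\|_{h^s})\lesssim \epsilon^{\frac34}N_\alpha^{-s}\cdot\epsilon\ll \epsilon^{2+\frac15}N_\alpha^{-2s}$ — using $s>0$ and $N_\alpha\geq 1$ this needs only $\epsilon^{\frac74}\leq \epsilon^{2+\frac15}N_\alpha^{-2s}$, which follows from $N_\alpha^{2s}=\epsilon^{-\frac{\alpha}{100}}\leq\epsilon^{-r}$ and choosing $\epsilon_\ast$ small. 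So the radius grows from $\epsilon^{2+\frac15}\cdot (10\epsilon)^{?}$-type control to strictly less than the $20\epsilon$ threshold, closing the continuity argument. Everything else is the routine Duhamel/Grönwall bookkeeping sketched above.
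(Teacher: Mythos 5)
Your strategy is the paper's: Duhamel for \eqref{eq:odev}, the operator bound of Lemma \ref{lem:23} combined with the vector field estimate of Proposition \ref{prop:vec-alpha}, then a bootstrap keeping $v(t)$ inside $\mathcal{V}_{\alpha,s}(20\epsilon,\xi)$ so that the vector field estimate remains applicable. Two quantitative points, however, do not hold as written. First, the claimed bound $\|\mathcal{L}(Q)\|_{\Ysup{\alpha}}N_\alpha^{-4s}\epsilon^3\lesssim\epsilon^{3/2}N_\alpha^{-2s}$ is not what \eqref{eq:par} yields: since $(\epsilon^{-1}\eta)^2=\epsilon^{-1/50}$ and $\gamma(\alpha)^{-1}\leq\epsilon^{-1/30}$, the correct output is $\gamma(\alpha)^{-1}\big(\tfrac{N_{\alpha+1}}{N_\alpha}\big)^{2s}\epsilon^{1-10^{-3}}N_\alpha^{-2s}$, roughly $\epsilon^{0.94}N_\alpha^{-2s}$; the paper settles for $\epsilon^{3/4}N_\alpha^{-2s}$. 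The exponent $3/2$ belongs to the Poisson-bracket estimates of Corollary \ref{cor:ad}, where the extra prefactor $\eta^{4-\frac15}$ is available, not to the vector field estimate, where only the $\epsilon^{3}$ of Proposition \ref{prop:vec-alpha} is. This slip is harmless for \eqref{eq:stab} itself, since an exponent slightly above $3/4$ still closes the Duhamel estimate with margin.

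Second, and more seriously, the step you single out as delicate — closing the bootstrap for the annulus condition — fails as justified. After weakening the Duhamel bound from $\|v(t)-u\|_{h^s}\lesssim\epsilon^{3/4}N_\alpha^{-2s}\|u\|_{h^s}$ to the $N_\alpha^{-s}$ form of the statement, you bound the weighted action increment by a quantity of the form $\epsilon^{7/4}N_\alpha^{-s}$ and assert $\epsilon^{7/4}\leq\epsilon^{2+\frac15}N_\alpha^{-2s}$; this is equivalent to $N_\alpha^{2s}\leq\epsilon^{9/20}$, which is false for every $N_\alpha\geq1$ and small $\epsilon$, and the invoked bound $N_\alpha^{2s}\leq\epsilon^{-r}$ points in the wrong direction. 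Even restoring the missing factor $\epsilon$ (the product $\|v-u\|_{h^s}(\|v\|_{h^s}+\|u\|_{h^s})$ carries two factors of size $\epsilon$), the requirement becomes $N_\alpha^{s}\lesssim\epsilon^{-11/20}$, which fails once $\alpha$ is large since $N_\alpha^{s}=\epsilon^{-\alpha/200}$ can reach $\epsilon^{-r/2}$. The point is that the defining threshold of $\mathcal{V}_{\alpha,s}$ in \eqref{eq:cal-V} scales like $\epsilon^{2+\frac15}N_\alpha^{-2s}$, so the increment must be measured with the full $N_\alpha^{-2s}$ decay that your own first display already provides: with $\|v(t)-u\|_{h^s}\lesssim\epsilon^{3/4}N_\alpha^{-2s}\|u\|_{h^s}$ the weighted increment is $O(\epsilon^{2+\frac34}N_\alpha^{-2s})\ll\epsilon^{2+\frac15}N_\alpha^{-2s}$, the flow stays in $\mathcal{V}_{\alpha,s}(20\epsilon,\xi)$ (this is exactly \eqref{eq:stab-a}), and the bootstrap closes. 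With these two corrections your argument coincides with the paper's proof.
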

\begin{corollary}
We deduce from \eqref{eq:stab} the bound
\begin{equation}
\label{eq:stab-a}
\sum_{n\in\Z_{M}^{d}}\langle n\rangle^{2s}||(\Phi_{\mathcal{L}(Q)}^{t}(u))_{n}|^{2}-|u_{n}|^{2}|
	\lesssim \epsilon^{\frac{3}{2}}\|u\|_{h^{s}}^{2}N_{\alpha}^{-2s}\,,
\end{equation}
which gives the stability of the annulus $\mathcal{V}_{\alpha,s}(10\epsilon,\xi)$ under the transformations $\Phi_{\mathcal{L}(Q)(\xi)}^{t}$.
\end{corollary}

\begin{proof}[Proof of Lemma \ref{lem:stab}] We denote
\[
v(t) = \Phi_{\mathcal{L}_{\alpha,\widetilde{\omega}}(Q)}^t(u)\,,
\] 
solution to the ODE \eqref{eq:odev}. We have from the Duhamel's integral formula that for all $t$,
\[
v(t) = u - i\int_{0}^{t}[\nabla\mathcal{L}(Q)(\xi)](v(t'))dt'\,.
\]
Moreover, we deduce from the vector field estimate of Proposition \ref{prop:vec-alpha} and from the operator bound for $\mathcal{L}$ obtained in Lemma \ref{lem:23} that 
\begin{align*}
\|[\nabla \mathcal{L}(Q)(\xi)](v)\|_{h^{s}}
	&\leq 
	N_{\alpha}^{-4s}\epsilon^{4-\frac{1}{4}}
	\|
	\mathcal{L}(Q)
	\|_{\Ysup{\alpha}}
	\|
	v
	\|_{h^{s}} \\
	&\leq
	4\gamma(\alpha)^{-1}(\frac{N_{\alpha+1}}{N_{\alpha}})^{2s}N_{\alpha}^{-2s}\epsilon^{2-\frac{1}{4}}
	\|
	Q
	\|_{\Ysup{\alpha}}
	\|v
	\|_{h^{s}} \\
	&\leq
		4\gamma(\alpha)^{-1}(\frac{N_{\alpha+1}}{N_{\alpha}})^{2s}N_{\alpha}^{-2s}\epsilon^{2-\frac{1}{4}-\frac{1}{10^{3}}}
	\|v
	\|_{h^{s}}	\,.
\end{align*}
Under the relation \eqref{eq:par} for the parameters we deduce that 
\[
\|[\nabla \mathcal{L}(Q)(\xi)](v)\|_{h^{s}} 
	\leq N_{\alpha}^{-2s}\epsilon^{\frac{3}{2}}\|v\|_{h^{s}}\,.
\]
Therefore, for all $t$,
\[
\|v(t)-u\|_{h^{s}} \leq N_{\alpha}^{-2s}\epsilon^{\frac{3}{2}} |\int_{0}^{t}\|v(t')\|_{h^{s}}dt'|\,.
\]
We conclude from a bootstrap argument, using that $\|v(t')\|_{h^{s}}\lesssim\epsilon$ when \eqref{eq:stab} is satisfied. 
\end{proof}

In the next lemma we estimate the differential of the vector field. Since the proof is quite long and does not requires new ideas, we postpone it to Appendix \ref{sub:d2F}.  
\begin{lemma}
\label{lem:d2F} For all $Q\in X_{\leq2\overline{r}}$, $\xi\in\mathcal{U}_{s}(\epsilon)$ and $u\in\mathcal{V}_{\alpha,s}(20\epsilon,\xi)$, we have
\[
 \| \mathrm{d}\nabla\mathcal{L}(Q)(\xi)(u)\|_{h^s\to h^s}
 	\leq
	\epsilon^{1-\frac{1}{8}}
	\|Q\|_{\Ysup{\alpha}}\,.
\]

\end{lemma}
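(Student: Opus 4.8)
The goal is to bound the operator norm of $\mathrm{d}\nabla\mathcal{L}(Q)(\xi)(u)$ on $h^s(\Z_M^d)$. The starting point is the definition of $\mathcal{L}=\mathcal{L}_{\alpha,\widetilde{\omega},\xi}$ as a multiplier operator: writing $\mathcal{L}(Q)(\xi;\cdot) = \mathcal{M}_L(Q)(\xi;\cdot)$ with $(L[\xi])_{\nb} = \mathbf{1}_{\Lambda_{\alpha+1}}(\nb)\,\tfrac{i}{2}\mathfrak{h}^{(\alpha)}(\xi)/\Omega_{\nb}(\widetilde{\omega}(\xi))$, the small divisor estimate \eqref{eq:smd0} of Lemma~\ref{lem:sde} gives $\|L\|_{Z^{\mathrm{sup}}}\lesssim \gamma(\alpha)^{-1}\epsilon^{-2}N_{\alpha+1}^{2s}$, hence, by Lemma~\ref{lem:23} (or directly), $\|\mathcal{L}(Q)\|_{\Ysup{\alpha}}\lesssim \gamma(\alpha)^{-1}\epsilon^{-2}N_{\alpha+1}^{2s}\|Q\|_{\Ysup{\alpha}}$. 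So up to relabelling, it suffices to prove a general statement: for any $P\in X_{\leq 2\overline{r}}(\epsilon)$ whose coefficients are supported on multi-indices $\nb$ with $\deg(\nb)\geq 6$ and $\nb_{-}<N_{\alpha+1}$ (i.e. $\nb\in\Lambda_{\alpha+1}$), and any $\xi\in\mathcal{U}_s(\epsilon)$, $u\in\mathcal{V}_{\alpha,s}(10\epsilon,\xi)$, one has $\|\mathrm{d}\nabla P(\xi;u)\|_{h^s\to h^s}\lesssim_{r,s}\|P\|_{\Ysup{\alpha}}\,N_{\alpha}^{-4s}\epsilon^2$; then plugging $P=\mathcal{L}(Q)$ and using \eqref{eq:par} to absorb $\gamma(\alpha)^{-1}\epsilon^{-2}(N_{\alpha+1}/N_{\alpha})^{2s}\epsilon^3 N_\alpha^{-4s}\lesssim \epsilon^{1-\frac{1}{13}}N_\alpha^{-4s}\cdot N_\alpha^{2s}\cdot\dots$ (the same bookkeeping as in Lemma~\ref{lem:stab}) yields the claimed $\epsilon^{1-1/8}\|Q\|_{\Ysup{\alpha}}$.

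\textbf{Key steps.} First I would write $(\nabla P(\xi;u))_n = 2\sum_{\nb}(P[\xi])_{\nb}\partial_{\overline{u_n}}z_{\nb}(u,I(u)-\xi)$ exactly as in the proof of Proposition~\ref{prop:vec-alpha}, and then differentiate once more in $u$: the entries of the matrix $\mathrm{d}\nabla P(\xi;u)$ are, up to constants, $\partial_{u_m}\partial_{\overline{u_n}}z_{\nb}$ and $\partial_{\overline{u_m}}\partial_{\overline{u_n}}z_{\nb}$ summed against $(P[\xi])_{\nb}$. Using the explicit form $z_{\nb}=u^{\kb}\overline{u}^{\lb}y^{\mb}$ with $y_n=|u_n|^2-\xi_n$, each such second derivative is a finite sum of terms of the shape $z_{\nb}$ divided by at most two of the factors $u_n,\overline{u_n},u_m,\overline{u_m}$ or $y_n,y_m$ (with combinatorial prefactors $\ell_n,m_n,k_m$, etc., all bounded by $\overline{r}$); the zero-momentum condition forces, whenever a factor $u_j^{-1}$ is extracted, the existence of a companion large frequency, exactly as in Proposition~\ref{prop:vec-alpha}. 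The operator norm is then estimated by the Schur test: bound $\sup_n\sum_m \langle n\rangle^s\langle m\rangle^{-s}|(\mathrm{d}\nabla P)_{n,m}|$ and the symmetric quantity, reducing as in \eqref{eq:vec-alpha} to a maximum over $\nb\in\Nb_{\leq 2\overline{r}}$, $n,m\in\Z_M^d$ (the counting factor $\sharp(\Z_M^d)^{O(\overline{r})}\ll\epsilon^{-1/10^6}$ from \eqref{eq:counting} being harmless). For each such term I would introduce $\nb'=\nb$ minus the removed elementary multi-indices (a $\mathbf{e}_{\mathfrak{k}}$, $\mathbf{e}_{\mathfrak{l}}$ or $\mathbf{e}_{\mathfrak{m}}$), note that $\mathrm{w}^0_{\nb}(\alpha)$ differs from $\mathrm{w}^0_{\nb'}(\alpha)$ by exactly the corresponding $\mathrm{C}_j(\alpha)$ or $\mathrm{D}(\alpha)$ factors, apply the weighted multilinear estimate of Lemma~\ref{lem:wz} to $\mathrm{w}^0_{\nb'}(\alpha)|z_{\nb'}(u,I(u)-\xi)|\leq N_\alpha^{-6s}\eta^6$, and use $\mathrm{C}_j(\alpha)\langle j\rangle^s|u_j|\lesssim\eta^{-1}N_\alpha^\tau\cdot 20\epsilon$, $\mathrm{D}(\alpha)\langle n\rangle^s|u_n|\lesssim\dots$ together with $\sum_j$ bounds coming from $\|u\|_{h^s}\lesssim\epsilon$. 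Collecting powers of $\eta$, $N_\alpha$, $\epsilon$ and using \eqref{eq:eta-tau}, \eqref{eq:r-bar} to sweep up the $N_\alpha^\tau$ and $\eta$-losses gives the intermediate bound $\|P\|_{\Ysup{\alpha}}N_\alpha^{-4s}\epsilon^2$.

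\textbf{Main obstacle.} The bookkeeping of a second derivative is genuinely heavier than the single-derivative computation in Proposition~\ref{prop:vec-alpha}: one must handle the case where the two derivatives hit the same factor $y_n$ (producing $z_{\nb}/y_n^2$ with prefactor $m_n(m_n-1)$, so $\nb'=\nb-2\mathbf{e}_{\mathfrak{m}}(n)$ and an extra $\mathrm{D}(\alpha)$ in the denominator, which is a saving), versus two different factors, versus one $u$-type and one $y$-type factor; and in the off-diagonal $(n\neq m)$ entries one must be careful that removing $u$-type factors at two distinct frequencies still leaves enough companion frequencies from the zero-momentum constraint to control the $h^s$ norms on both the $n$ and $m$ sides. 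None of these cases is hard individually, but the enumeration is long — which is precisely why the excerpt defers this lemma to Appendix~\ref{sub:d2F}. The $\epsilon$-power in the conclusion ($\epsilon^{1-1/8}$ rather than $\epsilon^{3/4}$ as in the stability lemma) is deliberately lossy, so there is ample room and one does not need to track constants sharply; the only real care is to ensure the final exponent of $\epsilon$ stays strictly positive after all $\eta$-to-$\epsilon$ conversions, which follows from $\eta=\epsilon^{1-1/100}$ and the parameter relation \eqref{eq:par}.
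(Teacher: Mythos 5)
Your overall strategy matches the paper's: factor out the small divisor via \eqref{eq:smd0} together with the counting bound \eqref{eq:counting}, reduce to a per-monomial estimate on the second derivatives of $z_{\nb}$, and run a case analysis (both derivatives on $u$-type factors, mixed, both on centred actions), using companion frequencies from the zero-momentum condition and the weighted bound of Lemma \ref{lem:wz}. The genuine gap is quantitative: your intermediate claim $\|\mathrm{d}\nabla P(\xi;u)\|_{h^s\to h^s}\lesssim\|P\|_{\Ysup{\alpha}}N_\alpha^{-4s}\epsilon^2$ is false, and its provable replacement is too weak for your final assembly. In the worst case --- both derivatives falling on centred-action factors $y_n=|u_n|^2-\xi_n$ and $y_{n'}$ (Case 4 of the paper's appendix) --- two weights $\mathrm{D}(\alpha)=\eta^{-2-\frac1{10}}N_\alpha^{2s}$ are removed from $\mathrm{w}^0_{\nb}(\alpha)$, costing $N_\alpha^{4s}$ against the prefactor $N_\alpha^{-6s}\eta^6$, while the appearing factors $\overline{u_n}u_{n'}$ and the derivative weight $\langle n'\rangle^{s}$ only contribute $\epsilon^2$; so these terms cap out at order $\epsilon^{2}\eta^{2-\frac15}N_\alpha^{-2s}$ per monomial, i.e.\ a gain of $N_\alpha^{-2s}$, not $N_\alpha^{-4s}$. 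For large $\alpha$ one has $N_\alpha^{-2s}=\epsilon^{\alpha/100}\ll\eta^{2-\frac15}$, so your claimed bound is strictly stronger than what these terms permit, and no restriction to $\nb\in\Lambda_{\alpha+1}$, $\deg(\nb)\geq6$ changes this, since $\mathrm{D}(\alpha)$ does not see the frequency indices.

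This matters because your final bookkeeping needs that extra $N_\alpha^{-2s}$: after multiplying by $\|L\|_{Z^{\mathrm{sup}}}\lesssim\gamma(\alpha)^{-1}\epsilon^{-2}N_{\alpha+1}^{2s}$, the $\epsilon^2$ you retain is exactly cancelled by the $\epsilon^{-2}$ of the small divisor, leaving $\gamma(\alpha)^{-1}(N_{\alpha+1}/N_\alpha)^{2s}N_\alpha^{-2s}$, which at $\alpha=0$ (where $N_0=1$) is $\geq1$ --- no smallness at all, let alone $\epsilon^{1-\frac18}$. (Your displayed assembly also silently upgrades the $\epsilon^2$ of the intermediate claim to $\epsilon^3$.) The paper closes the argument by proving the sharper per-monomial bound \eqref{eq:d2F}, namely $\mathrm{w}^0_{\nb}(\alpha)\,\|[\mathrm{d}\nabla z_{\nb}(u,I(u)-\xi)](w)\|_{h^s}\lesssim_r\epsilon^{-\frac1{10^4}}\eta^{4-\frac15}N_\alpha^{-2s}$: the currency that pays the $\epsilon^{-2}$ is the factor $\eta^{4-\frac15}$, tracked case by case (e.g.\ $\mathrm{C}_n(\alpha)\mathrm{C}_j(\alpha)\leq\eta^{-2}N_\alpha^{2s+2\tau}$, one or two $\mathrm{D}(\alpha)$ losses, and the gained amplitudes $|u_n|,\langle n'\rangle^{s}|u_{n'}|\lesssim\epsilon$), while the $N_\alpha^{-2s}$ merely offsets $(N_{\alpha+1}/N_\alpha)^{2s}N_\alpha^{2s}$; combining with \eqref{eq:par} then yields $\epsilon^{1-\frac18}$. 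To repair your write-up you must carry the $\eta$-powers through the case analysis instead of aiming for a second factor $N_\alpha^{-2s}$ that is not available.
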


We now state and prove two important corollaries. 

\begin{corollary}\label{cor:dphi} Let $Q\in X_{\leq2\overline{r}}(\epsilon)$ be such that 
\[
\|Q\|_{\Ysup{\alpha}}\leq \epsilon^{-\frac{1}{10^{3}}}\,. 
\]  
We have that for all $\xi\in\mathcal{U}_{s}(\epsilon)$, $u\in\mathcal{V}_{\alpha,s}(10\epsilon,\xi)$ and $t\in[-1,1]$, 
\[
\|\mathrm{d}\Phi_{\mathcal{L}(Q)(\xi)}^t(u) - \Id \|_{h^{s}\to h^{s}} \leq \epsilon^{\frac{3}{4}}\,.
\]
\end{corollary}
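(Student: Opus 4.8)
The statement to prove is Corollary~\ref{cor:dphi}: under the smallness assumption $\|Q\|_{\Ysup{\alpha}}\leq\epsilon^{-\frac{1}{10^{3}}}$, the differential of the Hamiltonian flow $\Phi_{\mathcal{L}(Q)(\xi)}^{t}$ stays $\epsilon^{3/4}$-close to the identity on the annulus $\mathcal{V}_{\alpha,s}(10\epsilon,\xi)$, uniformly for $t\in[-1,1]$ and $\xi\in\mathcal{U}_{s}(\epsilon)$. The natural route is a Gr\"onwall argument for the linearized flow, fed by the pointwise bound on the differential of the vector field from Lemma~\ref{lem:d2F}. First I would fix $\xi\in\mathcal{U}_{s}(\epsilon)$ and $u\in\mathcal{V}_{\alpha,s}(10\epsilon,\xi)$, write $v(t)=\Phi_{\mathcal{L}(Q)(\xi)}^{t}(u)$ for the solution of \eqref{eq:odev}, and invoke the stability Lemma~\ref{lem:stab}: since $\|Q\|_{\Ysup{\alpha}}\leq\epsilon^{-\frac{1}{10^{3}}}$, estimate \eqref{eq:stab-a} gives that $v(t)$ remains in the annulus $\mathcal{V}_{\alpha,s}(10\epsilon,\xi)$ for all $t\in[-1,1]$ (one should check that the slightly enlarged centered-action bound caused by $\epsilon^{3/2}\|u\|_{h^s}^2 N_\alpha^{-2s}$ is absorbed, e.g.\ by starting from $\mathcal{V}_{\alpha,s}(10\epsilon,\xi)$ one lands inside, say, $\mathcal{V}_{\alpha,s}(10\epsilon,\xi)$ again after adjusting constants, or one runs the whole argument in a marginally larger annulus such as $\mathcal{V}_{\alpha,s}(11\epsilon,\xi)$ and notes Lemma~\ref{lem:d2F} still applies there up to harmless constants).

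\textbf{The core estimate.} Differentiating \eqref{eq:odev} in the initial datum, the Jacobian $J(t):=\mathrm{d}\Phi_{\mathcal{L}(Q)(\xi)}^{t}(u)$ solves the linear variational equation
\[
i\partial_t J(t) = \mathrm{d}\nabla\mathcal{L}(Q)(\xi)(v(t))\,J(t)\,,\qquad J(0)=\Id\,.
\]
Writing this in Duhamel form, $J(t)=\Id - i\int_0^t \mathrm{d}\nabla\mathcal{L}(Q)(\xi)(v(t'))\,J(t')\,\mathrm{d}t'$, and using Lemma~\ref{lem:d2F}, which gives $\|\mathrm{d}\nabla\mathcal{L}(Q)(\xi)(v(t'))\|_{h^s\to h^s}\leq \epsilon^{1-\frac18}\|Q\|_{\Ysup{\alpha}}\leq \epsilon^{1-\frac18-\frac{1}{10^3}}$ (valid since $v(t')\in\mathcal{V}_{\alpha,s}(10\epsilon,\xi)$), one gets
\[
\|J(t)-\Id\|_{h^s\to h^s} \leq \epsilon^{1-\frac18-\frac{1}{10^3}}\int_0^{|t|}\|J(t')\|_{h^s\to h^s}\,\mathrm{d}t' \leq \epsilon^{1-\frac18-\frac{1}{10^3}}\int_0^{|t|}\big(1+\|J(t')-\Id\|_{h^s\to h^s}\big)\,\mathrm{d}t'\,.
\]
Gr\"onwall's inequality over $|t|\leq 1$ then yields $\|J(t)-\Id\|_{h^s\to h^s}\leq \epsilon^{1-\frac18-\frac{1}{10^3}}e^{\epsilon^{1-\frac18-\frac{1}{10^3}}}$, which is $\leq \epsilon^{3/4}$ once $\epsilon_\ast(s,r)$ is small enough, because $1-\frac18-\frac{1}{10^3}>\frac34$.

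\textbf{Main obstacle and remarks.} There is no deep obstacle here; the content is entirely in Lemma~\ref{lem:d2F}, which is assumed (its proof is deferred to the appendix). The only points requiring a little care are: (i) confirming that the trajectory $v(t')$ genuinely stays in the domain where Lemma~\ref{lem:d2F} applies for all $t'\in[0,t]$ — this is exactly what the stability Lemma~\ref{lem:stab} provides, but one must make sure the constants match up (the cleanest fix is to run the Gr\"onwall argument and the stability bootstrap simultaneously, or to note that $\mathcal{V}_{\alpha,s}(10\epsilon,\xi)$ is mapped into itself up to an $\epsilon^{3/2}$-perturbation of the radius which is negligible); (ii) checking the numerology $1-\frac18-\frac{1}{10^3}\geq\frac34$, i.e.\ $\frac{7}{8}-\frac{1}{1000}\geq\frac34$, which holds with room to spare; and (iii) handling negative $t$ by the time-reversibility $\Phi_{\mathcal{L}(Q)}^{-t}=(\Phi_{\mathcal{L}(Q)}^{t})^{-1}$ already noted, or simply by running the same Duhamel/Gr\"onwall estimate backward in time. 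I would present the proof in three short steps: reduce to the linear variational equation, apply Lemma~\ref{lem:d2F} on the (stable) trajectory, and close by Gr\"onwall.
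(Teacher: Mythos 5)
Your proposal is correct and follows essentially the same route as the paper: linearize the flow, apply Lemma \ref{lem:d2F} along the trajectory (kept in the annulus by Lemma \ref{lem:stab}, which the paper also enlarges harmlessly to $\mathcal{V}_{\alpha,s}(11\epsilon,\xi)$), and close the estimate — the paper uses a short bootstrap where you use Gr\"onwall, which is an immaterial difference, and your numerology check $1-\tfrac18-\tfrac{1}{10^3}>\tfrac34$ matches the paper's conclusion.
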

\begin{proof}
We use the equation, the previous Lemmas \ref{lem:stab}, \ref{lem:d2F}, and a bootstrap argument. Let $\varphi \in h^{s}$, and $u\in\mathcal{V}_{\alpha,s}(10\epsilon,\xi)$. Set 
\[
v(t) := \Phi^t_{\mathcal{L}(Q)}(u)\,,\quad \widetilde{\varphi}(t) := \mathrm{d}\Phi^{t}_{\mathcal{L}(Q)}(u)\varphi\,.
\]
We have from the equation that 
\[
i\partial_{t}\widetilde{\varphi}(t) = 
	\Big[
	\mathrm{d}\nabla\mathcal{L}(Q)(\xi)(v(t))
	\Big]
	(\widetilde{\varphi}(t))\,.
\]
Since $\Phi^{0}_{\mathcal{L}(Q)}=\operatorname{Id}$, we have
\[
\widetilde{\varphi}(0)= \varphi\,.
\]
Hence, the Duhamel's integral formula gives 
\[
\|\widetilde{\varphi}(t) - \varphi\|_{h^{s}} 
	\leq 
	\Big|
	\int_0^t 
	\|
	[\mathrm{d}_{u}\nabla\mathcal{L}(Q)(v(t'))
	]
	(\widetilde{\varphi}(t'))
	\|_{h^{s}}dt'
	\Big|\,.
\]
We have from Lemma \ref{lem:stab} (and its consequence \eqref{eq:stab-a}) that for all $t\in[-1,1]$, 
\[
v(t)\in\mathcal{V}_{\alpha,s}(11\epsilon,\xi)\,.
\]
According to Lemma \ref{lem:d2F}, 
\[
\|\widetilde{\varphi}(t)-\varphi\|_{h^{s}} 
	\leq \epsilon^{1-\frac{1}{8}}\Big|\int_0^t 
	\|Q\|_{\Ysup{\alpha}}
	\|\widetilde{\varphi}(t')
	\|_{h^{s}}dt'\Big|\,,
\]
and, under the assumption that $\|Q\|_{\Ysup{\alpha}}\leq \epsilon^{-\frac{1}{10^{3}}}$ we can conclude from a bootstrap argument. More precisely, we use that if 
\[
\underset{|t'|\leq |t|}{\sup}\|\widetilde{\varphi}(t)-\varphi\|_{h^{s}}
	\leq
	\epsilon^{\frac{3}{4}}
	\|\varphi\|_{h^{s}}\,,
\]
then 
\[
\underset{|t'|\leq |t|}{\sup}\ 
	\|\widetilde{\varphi}(t')
	\|_{h^{s}}
	\leq
	2\|\varphi\|_{h^{s}}\,.
\]
This concludes the proof of Corollary \ref{cor:dphi}. 
\end{proof}
The second corollary is more technical, but in the same spirit. It is crucial to estimate the measure of the non-resonant initial data in the original coordinates (i.e. before the successive Lie transformations). 
\begin{corollary}\label{cor:dxi} Let $Q\in X_{\leq2\overline{r}}(\epsilon)$ be such that for all $\xi\in\mathcal{U}_{s}(\epsilon)$
\[
\|Q\|_{\Ysup{\alpha}}\leq\epsilon^{-\frac{1}{10^{3}}}\,,\quad \|Q\|_{\Ylip{\alpha}} \leq \epsilon^{-\frac{1}{15}}\,,
\]
Then for all $t\in[-1,1]$, $\phi\in\Pi_{M}B_{s}(10\epsilon)$ and $u\in\mathcal{V}_{\alpha,s}(10\epsilon,\xi(\phi))$ then
\begin{equation}
\label{eq:dxi}
	\|
	\mathrm{d}_{\phi}\Phi^{t}_{\mathcal{L}(Q)(\xi(\phi))}(u)
	\|_{h^{s} \to h^{s}} 
	\lesssim \epsilon^{1-\frac{2}{5}}\,.
	\end{equation}
\end{corollary}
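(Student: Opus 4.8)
The estimate \eqref{eq:dxi} is the differential with respect to the internal parameter $\phi$ (through $\xi(\phi)$) of the Lie flow, and the strategy is exactly parallel to Corollary \ref{cor:dphi}: write down the variational ODE satisfied by $\mathrm{d}_\phi\Phi^t_{\mathcal{L}(Q)(\xi(\phi))}(u)$, identify the inhomogeneous source term coming from the $\xi$-dependence of the vector field $\nabla\mathcal{L}(Q)(\xi)$, and close a Gr\"onwall/bootstrap argument. The main new point compared to Corollary \ref{cor:dphi} is that differentiating in $\phi$ produces, in addition to the transported term, a forcing term $\partial_{\xi}\big(\nabla\mathcal{L}(Q)(\xi)\big)(v(t))\cdot \mathrm{d}_\phi\xi(\phi)$. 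Since $\xi_n(\phi)=|\phi_n|^2$, the factor $\mathrm{d}_\phi\xi(\phi)$ contributes a size $\lesssim \epsilon$ (as $\phi\in\Pi_M B_s(10\epsilon)$ and one loses at most a $\langle n\rangle^{-s}$ which is harmless here because there are only $M\leq\epsilon^{-\nu}$ modes, handled by the crude counting \eqref{eq:counting}); the factor $\partial_\xi\big(\nabla\mathcal{L}(Q)(\xi)\big)$ is a $\Ylip{\alpha}$-type object.

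\textbf{Key steps.} First I would set, for fixed $\phi$ and a perturbation direction, $v(t):=\Phi^t_{\mathcal{L}(Q)(\xi(\phi))}(u)$ and $\psi(t):=\mathrm{d}_\phi\Phi^t_{\mathcal{L}(Q)(\xi(\phi))}(u)$, and differentiate the ODE \eqref{eq:odev} in $\phi$ to get
\[
i\partial_t\psi(t) = \big[\mathrm{d}_u\nabla\mathcal{L}(Q)(\xi(\phi))(v(t))\big]\psi(t) + \big[\partial_\phi\big(\nabla\mathcal{L}(Q)(\xi(\phi))\big)\big](v(t)),\qquad \psi(0)=0,
\]
where the last bracket denotes the derivative of the vector field with respect to the explicit parameter dependence only. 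Second, I would bound the transport coefficient $\mathrm{d}_u\nabla\mathcal{L}(Q)(\xi(\phi))(v(t))$ by $\epsilon^{1-1/8}\|Q\|_{\Ysup\alpha}\lesssim\epsilon^{1-1/8-1/10^3}$ via Lemma \ref{lem:d2F}, using that $v(t)\in\mathcal{V}_{\alpha,s}(11\epsilon,\xi(\phi))$ by Lemma \ref{lem:stab}. Third — and this is the crux — I would bound the source term: it is a sum over $n$ of $\partial_{\xi_n}\big(\nabla\mathcal{L}(Q)(\xi)\big)(v(t))\cdot\partial_\phi\xi_n(\phi)$, and since $\mathcal{L}(Q)=\mathcal{M}_L(Q)$ with $L$ the small-divisor multiplier (satisfying $\|L\|_{Z^{\mathrm{sup}}}\lesssim\gamma(\alpha)^{-1}\epsilon^{-2}N_{\alpha+1}^{2s}$ and $\|L\|_{Z^{\mathrm{lip}}}\lesssim\epsilon^{-1/10^4}(\gamma(\alpha)^{-1}\epsilon^{-2}N_{\alpha+1}^{2s})^2$ by Lemma \ref{lem:sde}), the $\xi$-derivative of its coefficients is controlled by the $\Ylip{\alpha}$-norm of $\mathcal{L}(Q)$, which in turn is estimated exactly as in the proof of Corollary \ref{cor:ad} (applying Proposition \ref{prop:bracket1} / the product-rule bookkeeping of Remark \ref{rem:br}) to give $\|\mathcal{L}(Q)\|_{\Ylip\alpha}\lesssim \gamma(\alpha)^{-1}\epsilon^{-2}N_{\alpha+1}^{2s}\cdot(\text{stuff})$; combined with the vector-field estimate of Proposition \ref{prop:vec-alpha} on $\mathcal{V}_{\alpha,s}$ (which yields the gain $N_\alpha^{-4s}\epsilon^3$) and the factor $\lesssim\epsilon$ from $\partial_\phi\xi(\phi)$, the parameter relation \eqref{eq:par} absorbs all the losses and leaves a source of size $\lesssim\epsilon^{1-1/7}\|u\|_{h^s}$. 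Fourth, Duhamel plus a bootstrap (of the type in Corollary \ref{cor:dphi}: if $\sup_{|t'|\le|t|}\|\psi(t')\|_{h^s}\le \epsilon^{1-1/7}$ then the integrand is $\le$ half that size on $[-1,1]$, so the a priori bound improves) closes the estimate \eqref{eq:dxi}.

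\textbf{Main obstacle.} The delicate part is the bookkeeping in the source term: one must keep track of the fact that $\partial_{\xi}$ may hit either the small-divisor multiplier $L$ (costing a $\|L\|_{Z^{\mathrm{lip}}}$, i.e. the \emph{square} of the already-large $\gamma(\alpha)^{-1}\epsilon^{-2}N_{\alpha+1}^{2s}$) or the coefficients of $Q$ (costing a $\|Q\|_{\Ylip\alpha}\lesssim\epsilon^{-1/15}$), and in the worst case one must verify that even the quadratic-in-small-divisor contribution is compensated. This is precisely the content of the relation \eqref{eq:par} and of Remark \ref{rem:br}; once those are invoked the numerics work out (the exponents $\tfrac{1}{30}$, $\tfrac{1}{100}$, $\tfrac{1}{200}$ appearing there are chosen with exactly this margin in mind), and the remaining content is the routine Gr\"onwall argument. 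I would therefore spend the bulk of the write-up making the source-term estimate explicit and refer to Lemma \ref{lem:d2F}, Lemma \ref{lem:sde}, Lemma \ref{lem:23}, Proposition \ref{prop:vec-alpha} and Remark \ref{rem:br} for the individual bounds, and to the proof of Corollary \ref{cor:dphi} for the bootstrap scheme.
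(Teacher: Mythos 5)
Your overall scheme is the one the paper uses: differentiate the flow equation \eqref{eq:odev} in $\phi$, bound the transported term by Lemma \ref{lem:d2F} on $\mathcal{V}_{\alpha,s}$, bound the inhomogeneous source coming from the $\xi(\phi)$-dependence of the vector field, and close with Duhamel and a bootstrap as in Corollary \ref{cor:dphi}. However, there is a concrete gap in your treatment of the source term. When $\mathrm{d}_\phi$ acts on $\nabla\big[\mathcal{L}(Q)(\xi(\phi))\big](v)$ it produces \emph{three} types of contributions, not two: (i) the derivative falls on the small-divisor multiplier $\mathfrak{h}^{(\alpha)}/\Omega_{\nb}(\widetilde\omega(\xi))$, (ii) it falls on the coefficients $(Q[\xi])_{\nb}$, and (iii) it falls on the re-centred monomials themselves, since $z_{\nb}(v,I(v)-\xi(\phi))$ contains $(|v_n|^2-\xi_n(\phi))^{m_n}$ explicitly. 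Your enumeration in the ``Main obstacle'' paragraph covers only (i) and (ii), and the tool you invoke for them, the $\Ylip{\alpha}$-type control, cannot see (iii): by Definition \ref{def:norm} the $\Ylip{\alpha}$ norm measures only $\nabla_\xi(H[\xi])_{\nb}$, i.e.\ the coefficient dependence, so the $\xi$-derivative of the monomial is simply absent from that bookkeeping. In the paper's proof this third contribution is the term $H^{(1)}$: removing one factor $(|v_k|^2-\xi_k)$ costs a weight $\operatorname{D}(\alpha)=\eta^{-2-\frac{1}{10}}N_\alpha^{2s}$, and the resulting bound is $\|\varphi\|_{h^s}\|\phi\|_{h^s}\,\gamma(\alpha)^{-1}\epsilon^{-2}N_{\alpha+1}^{2s}\,\operatorname{D}(\alpha)\,\|Q\|_{\Ysup{\alpha}}$ for the $\Ysup{\alpha}$ norm of $H^{(1)}$, which after the vector-field gain $N_\alpha^{-4s}\epsilon^{3}$ of Proposition \ref{prop:vec-alpha} lands at $\epsilon^{1-\frac{1}{30}-\frac{1}{10}-\frac{1}{10^3}-\frac{1}{50}}\|\varphi\|_{h^s}$ --- it is of comparable size to the contributions you do estimate and only barely fits under $\epsilon^{1-\frac{1}{7}}$, so it cannot be absorbed as a routine remainder; it needs its own estimate, with $\|Q\|_{\Ysup{\alpha}}$ and $\operatorname{D}(\alpha)$ rather than Lipschitz norms.

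Two smaller points. First, Proposition \ref{prop:bracket1} is not the right reference for bounding the $\xi$-derivative of the coefficients of $\mathcal{L}(Q)$: it estimates Poisson brackets $\{\mathcal{M}_L(Q),H\}$, whereas here you only need the direct product rule with the bounds $\|L\|_{Z^{\mathrm{sup}}}\lesssim\gamma(\alpha)^{-1}\epsilon^{-2}N_{\alpha+1}^{2s}$, $\|L\|_{Z^{\mathrm{lip}}}\lesssim\epsilon^{-\frac{1}{10^4}}(\gamma(\alpha)^{-1}\epsilon^{-2}N_{\alpha+1}^{2s})^2$ from Lemma \ref{lem:sde} together with $\|Q\|_{\Ysup{\alpha}}$, $\|Q\|_{\Ylip{\alpha}}$ (this is exactly the paper's estimate of $H^{(2)}$). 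Second, once the source is organized as parameter-dependent polynomials applied to $v(t)$, the passage from coefficient bounds to $h^s$ bounds should go through Proposition \ref{prop:vec-alpha} (which supplies the crucial $N_\alpha^{-4s}\epsilon^3$ gain), not through a crude mode-counting argument on $\mathrm{d}_\phi\xi(\phi)$; the factor $\varphi_k\overline{\phi_k}$ is simply bounded by $\|\varphi\|_{h^s}\|\phi\|_{h^s}$ inside the coefficient estimate. With the missing $H^{(1)}$-type term added and estimated as above, your argument closes exactly as in the paper.
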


\begin{proof}
Let $\varphi\in h^{s}(\Z_{M}^{d})$, $\phi\in\Pi_{M}B_{s}(10\epsilon)$ and $u\in\mathcal{V}_{\alpha,s}(10\epsilon,\xi(\phi))$. For $|t|\leq 1$, set 
\[
v(t) = \Phi^{t}_{\mathcal{L}(Q)(\xi(\phi))}(u)\,,\quad \widetilde{\varphi}(t) :=   [\mathrm{d}_{\phi}v(t,\phi)](\varphi)\,. 
\]
In particular, $v$ is solution in $C([-1,1],h^{s}(\Z_{M}^{d}))$ to the Cauchy problem 
\[
\begin{cases}
i\partial_{t}v(t) = \nabla [\mathcal{L}(Q)(\xi(\phi))](v(t))\,,\\
v(0)=u\,.
\end{cases}
\]
We deduce from the stability Lemma \ref{lem:stab} (and more precisely from \eqref{eq:stab-a}) that $v(t)\in\mathcal{V}_{\alpha,s}(20\epsilon,\xi)$ for all $t$. In particular 
\[
\|v(t)\|_{h^{s}}\leq 2\|u\|_{h^{s}}\leq20\epsilon\,.
\]
To control $\widetilde{\varphi}(t)$ in $h^{s}(\Z_{M}^{d})$ we use the equation and a bootstrap: differentiating \eqref{eq:odev} in $\phi$ and applying the Leibniz rule and the chain rule gives
\begin{equation}
\label{eq:dim}
\begin{cases}
i\partial_{t}\widetilde{\varphi}(t) 
	= \nabla H^{(1)}(\xi(\phi),\varphi)(v) + \nabla H^{(2)}(\xi(\phi),\varphi)(v) + \Big[\mathrm{d}\nabla\Big[\mathcal{L}(Q)(\xi(\phi))\Big](v)\Big](\widetilde{\varphi}(t))\,,\\
	\widetilde{\varphi}(0)=0\,,
\end{cases}
\end{equation}
where we collected in $H^{(1)}$ (resp. $H^{(2)}$) the contributions when $\mathrm{d}_{\phi}$ falls on the re-centered variable $z_{\nb}(u,I(u)-\xi(\phi))$ (resp. on the coefficient of $\mathcal{L}(Q)(\xi(\phi))$), namely
\begin{align*}
	H^{(1)}(\xi(\phi),\varphi;v) 
	&= -\sum_{\nb\in\Lambda_{\alpha+1}}\sum_{|k|\leq M}\varphi_{k}\overline{\phi}_{k}
	\frac{\mathfrak{h}^{(\alpha)}(\xi(\phi))}{\Omega_{\nb}(\widetilde{\omega}(\xi(\phi)))}
	Q[\xi(\phi)]_{\nb}
	m_{k}z_{\nb-\mathbf{e}_{\mathfrak{m}}(k)}(v,I(v)-\xi)\,,\\
	H^{(2)}(\xi(\phi),\varphi ; v) 
	&= \sum_{\nb\in\Nb_{\leq2\overline{r}}} 
	[H^{(2)}(\xi,\varphi)]_{\nb}z_{\nb}(v,I(v)-\xi)\,,
\end{align*}
with, for fixed $\nb\in\Nb_{\leq2\overline{r}}$, 
\begin{equation*}
\begin{split}
&[H^{(2)}(\xi(\phi),\varphi)]_{\nb} \\
	&=  
	\sum_{|k|\leq M} 
	\varphi_{k}\overline{\phi_{k}}
	\partial_{\xi_{k}}
	\Big(
	\frac{\mathfrak{h}^{(\alpha)}(\xi(\phi))}
	{\Omega_{\nb}(\widetilde{\omega}(\xi(\phi)))}
	\Big)
	(Q[\xi(\phi)])_{\nb} 
	+ 
	\varphi_{k}\overline{\phi}_{k}
	\frac{\mathfrak{h}^{(\alpha)}(\xi(\phi))}
	{\Omega_{\nb}(\widetilde{\omega}(\xi(\phi)))}\partial_{\xi_{k}}
	(Q[\xi(\phi)])_{\nb}\,.
\end{split}
\end{equation*}
First, we easily control the last term on the right-hand side of \eqref{eq:dim} from Lemma \ref{lem:d2F}: for all $t$,
\[
\Big\|\Big[\mathrm{d}\nabla\mathcal{L}(Q)(v(t))\Big](\widetilde{\varphi}(t))\Big \|_{h^{s}}
	 \leq 
	 \epsilon^{1-\frac{1}{8}}
	 \|Q\|_{\Ysup{\alpha}}
	 \|\widetilde{\varphi}(t)\|_{h^{s}} 
	 \leq 
	 \epsilon^{1-\frac{2}{5}}
	 \|\widetilde{\varphi}(t)\|_{h^{s}} \,.
\]
Then, we deduce from the vector field estimate of Proposition \ref{prop:vec-alpha} that, for $i\in\{1,2\}$,
\begin{equation}
\label{eq:aprem}
\| \nabla H^{(i)}(\xi(\phi),\varphi)(v)\|_{h^{s}} 
	\leq  
	\|
	H^{(i)}(\varphi)
	\|_{\Ysup{\alpha}}
	N_{\alpha}^{-4s}\epsilon^{3}\|v(t)\|_{h^{s}}
	\lesssim
	\|
	H^{(i)}(\varphi)
	\|_{\Ysup{\alpha}}
	N_{\alpha}^{-4s}\epsilon^{4}
	\,.
\end{equation}
Let us first control the second contribution ($i=2$), when the derivative falls on the coefficients. According to the small divisor bounds \eqref{eq:smd0} and \eqref{eq:smd1}, we obtain that for all $\nb\in\Nb$, 
\begin{align*}
&|[H^{(2)}(\xi(\phi),\varphi)]_{\nb}| 
	\\
	\leq& 
	\|\varphi\|_{h^{s}}\|\phi\|_{h^{s}}
	\gamma(\alpha)^{-1}\epsilon^{-2}N_{\alpha+1}^{2s}
	\\ &\times \Big(
	2\epsilon^{-\frac{1}{10^{4}}}
	\gamma(\alpha)^{-1}\epsilon^{-2}N_{\alpha+1}^{2s}
	|(Q[\xi(\phi)])_{\nb}| 
	+ 
	\max_{|k|\leq M}|\partial_{\xi_{k}}(Q[\xi])_{\nb}| 
\Big) \\
	\lesssim&
	\mathrm{w}_{\nb}^{0}(\alpha)
	\|\varphi\|_{h^{s}}\|\phi\|_{h^{s}}
	\gamma(\alpha)^{-1}(\epsilon^{-2}N_{\alpha+1}^{2s})^{2}
	\Big(
	\gamma(\alpha)^{-1}\epsilon^{-\frac{1}{10^{4}}}\|Q\|_{\Ysup{\alpha}}
	+
	\|Q\|_{\Ylip{\alpha}}
	\Big)
	\\
	\lesssim_{\alpha}&
	\mathrm{w}_{\nb}^{0}(\alpha)
	\|\varphi\|_{h^{s}}
	\epsilon^{1-\frac{1}{30}}
	(\epsilon^{-2}N_{\alpha+1}^{2s})^{2}
	(
	\epsilon^{-\frac{1}{30}-\frac{1}{10^{4}}-\frac{1}{10^{3}}}+\epsilon^{-\frac{1}{15}}
	)
	\,.
\end{align*}
We deduce from the Definition \ref{def:norm} of the norms  that 
\[
 	\|H^{(2)}(\phi)\|_{\Ysup{\alpha}} 
 	\lesssim_{\alpha} 
	\|\varphi\|_{h^{s}}\epsilon^{1-\frac{1}{30}-\frac{1}{15}}(\epsilon^{-2}N_{\alpha+1}^{2s})^{2}\,,
\]
and therefore 
\begin{equation*}
\begin{split}
\|\nabla H^{(2)}(\xi(\phi),\varphi)(v)\|_{h^{s}}
	\lesssim_{\alpha} \|\varphi\|_{h^{s}}(\frac{N_{\alpha+1}}{N_{\alpha}})^{4s}\epsilon^{1-\frac{1}{30}-\frac{1}{15}} 
	&\lesssim_{\alpha}\|\varphi\|_{h^{s}}
	\epsilon^{1-\frac{1}{30}-\frac{1}{15}-\frac{1}{50}} \\
	&\leq \epsilon^{1-\frac{2}{5}}\|\varphi\|_{h^{s}}\,.
\end{split}
\end{equation*}
Similarly, we prove that
\begin{align*}
\|H^{(1)}(\xi(\phi),\varphi)\|_{\Ysup{\alpha}}
	&\leq \|\varphi\|_{h^{s}}\|\phi\|_{h^{s}}(\gamma(\alpha)^{-1}\epsilon^{-2}N_{\alpha+1}^{2s})\operatorname{D}(\alpha)\|Q\|_{\Ysup{\alpha}} \\
	&\lesssim_{\alpha}
	\|\varphi\|_{h^{s}}
	\epsilon^{1-\frac{1}{30}-\frac{1}{5}-\frac{1}{10^{3}}}(\epsilon^{-2}N_{\alpha+1}^{2s})^{2}\,,
\end{align*}
and therefore 
\begin{equation*}
\begin{split}
\|\nabla H^{(1)}(\xi(\phi),\varphi)(v)\|_{h^{s}}
	\lesssim 
	(\frac{N_{\alpha+1}}{N_{\alpha}})^{4s}\epsilon^{1-\frac{1}{30}-\frac{1}{5}-\frac{1}{10^{3}}} 
	\|\varphi\|_{h^{s}} 
	&\lesssim_{\alpha}
	\epsilon^{1-\frac{1}{30}-\frac{1}{5}-\frac{1}{10^{3}}-\frac{1}{50}} 
	\|\varphi\|_{h^{s}} \\
	&\leq 
	\epsilon^{1-\frac{2}{5}}\|\varphi\|_{h^{s}}\,.
\end{split}
\end{equation*}
Hence, we obtain from the Duhamel's integral formula that for all $t\in[-1,1]$\,,
\[
	\|\widetilde{\varphi}(t)
	\|_{h^{s}} 
	\leq 
	2\epsilon^{1-\frac{2}{5}}\|\varphi\|_{h^{s}} +\epsilon^{1-\frac{2}{5}}\Big|\int_{0}^{t}\|\widetilde{\varphi}(t')\|_{h^{s}}\mathrm{d}t'\Big|\,,
\]
which in turn gives \eqref{eq:dxi} from the Gronwall inequality. 
\end{proof}


\subsection{The iteration at scale $\alpha$}\label{sec:it-j} In this paragraph, $\alpha\in\{1,\cdots,\beta-1\}$ is fixed and we suppose that Theorem \ref{thm:nf-alpha} is proved up to scale $\alpha$. We detail the main iteration to remove the the monomials associated to the multi-indices $\nb\in\Lambda_{\alpha+1}$, in order to eventually go to scale $\alpha+1$. 

\medskip

We now suppose that  Theorem \ref{thm:nf-alpha} is proved up to the step $\alpha\in\{0,\cdots,\beta-1\}$: for all $\xi\in\mathcal{U}_{s}(\epsilon)$, there exists $\tau_{\alpha,\xi}$ such that 
\begin{equation}
\label{eq:19h}
H_{\lo}\circ\tau_{\alpha,\xi}(u) = Z_{2}^{(\alpha)}(\xi;u) + Z_{4}^{(\alpha)}(\xi;u)+Q^{(\alpha)}(\xi;u) + R^{(\alpha)}(\xi;u)\,,
\end{equation}
satisfying points $(1)-(5)$ of Theorem \ref{thm:nf-alpha}. We define 
\[
\kappa:=40r\,,
\]
and, for $j\in\{0,\cdots,\kappa\}$,
\[
\epsilon_{j,\alpha}:=\epsilon_{\alpha}-j\epsilon^{\frac{3}{2}} = 10\epsilon - (c\alpha r +j)\epsilon^{\frac{3}{2}}\,.
\]

The strategy to pass from the normal form at scale $\alpha$ to the scale $\alpha+1$ essentially consists in gaining {\it homogeneity} in $\epsilon$ by applying successively the Lie transformation introduced and analyzed in Section \ref{sec:lie}. After $\kappa$ iterations  (the number $\kappa$ is comparable to a multiple of $r$) all the terms that are not under the normal form at scale $\alpha+1$ (namely, which contain unpaired indices smaller than $N_{\alpha+1}$) eventually come with a factor $\epsilon^{2r}$, provided $\xi$ is a non-resonant parameter, and are therefore acceptable remainders. The next Proposition states this general second induction scheme (on the homogeneity), performed between two steps of the main induction scheme (on the frequency scales~$N_{\alpha}$). 
\begin{proposition}[Iteration at step $\alpha$]
\label{prop:it-j}
 For all  $\xi\in\mathcal{U}_{s}(\epsilon)$ and $j\in\{0,\cdots,\kappa\}$, there exists a symplectic transformation $\tau_{j,\alpha,\xi}$ on $h^{s}(\Z_{M}^{d})$ such that
 \[
H_{\lo}\circ\tau_{j,\alpha,\xi}(u) 
	=Z_{2}^{(j,\alpha)}(\xi;u) + Z_{4}^{(j,\alpha)}(\xi;u)+Q^{(j,\alpha)}(\xi;u)+R^{(j,\alpha)}(\xi;u)\,,
\]
where for all $j\in\{1,\cdots,\kappa\}$,
\begin{equation}
\label{eq:it-alpha-2}
	\|\mathbf{1}_{\Xi_{\alpha}}(\xi)\Pi_{\Lambda_{\alpha+1}}Q^{(j,\alpha)}\|_{\Ysup{\alpha}} \leq \epsilon^{\frac{j}{30}-\frac{1}{10^{4}}}\,.
\end{equation}
Moreover, the new Hamiltonian and $\tau_{j,\alpha,\xi}$ satisfy Theorem \ref{thm:nf-alpha} at scale $\alpha$:
\begin{enumerate}
\item The quadratic term is
\[
Z_{2}^{(j,\alpha)}(\xi;u) =  \frac{1}{2}\sum_{n\in\Z_{M}^{d}}\omega_{n}^{(j,\alpha)}(\xi)(|u_n|^{2}-\xi_{n})\,,
\]
where for all $\xi\in\mathcal{U}_{s}(\epsilon)$ and $(k,n)\in(\Z_{M}^{d})^{2}$,
\begin{align}
\label{eq:j-omega}
|\omega_{n}^{(j,\alpha)}(\xi)-\omega_{n}^{(j-1,\alpha)}(\xi)|
	&\leq 
	\epsilon^{3+\frac{1}{10}}
	N_{\alpha}^{-4s}\,,\\
\label{eq:j-domega}
|
	\partial_{\xi_{k}}
	(\omega_{n}^{(j,\alpha)}(\xi) - 	
	\omega_{n}^{(j-1,\alpha)}(\xi) )
|
	&\leq \epsilon^{1+\frac{1}{10}}N_{\alpha}^{-2s}\,.
\end{align}
\item The quartic integrable terms collected in $Z_{4}^{(j,\alpha)}\in X_{4,\mathrm{Int}}(\epsilon)$ verify 
\begin{equation}
\label{eq:bo4r}
	\max(\|Z_{4}^{(j,\alpha)}\|_{Z^{\mathrm{sup}}}
	\,,\,
	\|Z_{4}^{(j,\alpha)}\|_{Z^{\mathrm{lip}}}
	)
	\lesssim_{\alpha} \epsilon^{-\frac{1}{10^{4}}}
	\end{equation}
\item $Q^{(j,\alpha)}\in X_{\leq2\overline{r}}(\epsilon)$ operates at scale $\alpha$:
\begin{equation}
\label{eq:boQr}
\|Q^{(j,\alpha)}\|_{\Ysup{\alpha}}
	\lesssim_{\alpha}\epsilon^{-\frac{1}{10^{4}}}\,,
\quad
	\|Q^{(j,\alpha)}\|_{\Ylip{\alpha}} 
	\lesssim_{\alpha} \epsilon^{-\frac{1}{15}}\,,
\end{equation}
and for all $\xi\in\mathcal{U}_{s}(\epsilon)$,
\begin{equation}
\label{eq:c-4r}
(Q^{(j,\alpha)}[\xi])_{\nb}\neq0\quad \implies\quad \deg(\nb)\geq6\,,\quad |\sum_{n\in\Z_{M}^{d}}(k_{n}-\ell_{n})\lambda_{n}^{2}|\leq (cr)^{c\alpha r+j}\epsilon^{-c_{\ast}\nu}\,.
\end{equation}
\item If $\xi\in\Xi_{\alpha-1}$ then for all $u\in\mathcal{V}_{\alpha,s}(\epsilon_{j,\alpha},\xi)$,
\begin{equation}
\label{eq:boRr}
\|\nabla R^{(j,\alpha)}(\xi;u)\|_{h^{s}} \lesssim_{r,\alpha}\epsilon^{2r+1}\|u\|_{h^{s}}\,.
\end{equation}
\item The symplectic transformation satisfies, for all $\xi\in\mathcal{U}_{s}(\epsilon)$ and $u\in \mathcal{V}_{\alpha,s}(\epsilon_{j,\alpha},\xi)$, 
\begin{align}
\label{eq:phi-j}
\|\tau_{j,\alpha,\xi}(u) - u\|_{h^{s}}
	&\lesssim_{r,\alpha}\epsilon^{\frac{3}{4}}\|u\|_{h^{s}}\,,\\
\label{eq:d-phi-j}
\|\mathrm{d}_{u}\tau_{j,\alpha,\xi}(u)-\Id\|_{h^{s}\to h^{s}}
	&\lesssim_{r,\alpha}\epsilon^{\frac{3}{4}}\,,
\intertext{Moreover, for all 
$\phi\in\Pi_{M}B_{s}(\epsilon_{j,\alpha})$ and $u\in\mathcal{V}(\epsilon_{j,\alpha},\xi)$,}
\label{eq:d-phi-xi-j}
	\|
	\mathrm{d}_{\phi}\tau_{j,\alpha,\xi(\phi)}(u) 
	 \|_{h^{s}\to h^{s}}
	 &\lesssim_{r,\alpha}\epsilon^{1-\frac{2}{5}}\,.
\end{align}
\end{enumerate}

\end{proposition}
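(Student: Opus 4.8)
\textbf{Proof plan for Proposition \ref{prop:it-j}.} The proof is an induction on $j\in\{0,\cdots,\kappa\}$, for a fixed scale $\alpha$ at which Theorem \ref{thm:nf-alpha} is assumed to hold. The base case $j=0$ is exactly the output of Theorem \ref{thm:nf-alpha} at scale $\alpha$ (with $\tau_{0,\alpha,\xi}=\tau_{\alpha,\xi}$, $Q^{(0,\alpha)}=Q^{(\alpha)}$, etc.), the only point to check being \eqref{eq:it-alpha-2} for $j=0$, which is vacuous, and the coefficient bound \eqref{eq:boQ} which reads off as \eqref{eq:boQr}. For the induction step, assume the statement holds at step $j$; I would split $Q^{(j,\alpha)}$ into its part in $\Lambda_{\alpha+1}$ and the rest, apply the Lie transform of Definition \ref{def:Lie} with generator $\mathcal{L}(Q^{(j,\alpha)})=\mathcal{L}_{\alpha,\widetilde{\omega},\xi}(Q^{(j,\alpha)})$ where $\widetilde{\omega}$ are the current modulated frequencies $\omega^{(j,\alpha)}$ (note these satisfy the closeness conditions \eqref{eq:o-tilde0}--\eqref{eq:o-tilde1} to $\omega^{(\alpha)}$ because of the telescoping bounds \eqref{eq:j-omega}, \eqref{eq:j-domega} summed over at most $\kappa\leq100r$ steps, together with \eqref{eq:r-bar}), and set $\tau_{j+1,\alpha,\xi}:=\tau_{j,\alpha,\xi}\circ\Phi^{1}_{\mathcal{L}(Q^{(j,\alpha)})(\xi)}$.

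Then I would Taylor-expand $(H_{\lo}\circ\tau_{j,\alpha,\xi})\circ\Phi^{1}_{\mathcal{L}(Q^{(j,\alpha)})}$ in the flow time, as in the classical Lie-series computation: the quadratic part produces, via the cohomological equation of Lemma \ref{lem:24}, the cancellation $Q^{(j,\alpha)}+\{Z_{2}(\widetilde{\omega}),\mathcal{L}(Q^{(j,\alpha)})\}=(\operatorname{Id}-\mathfrak{h}^{(\alpha)}\Pi_{\Lambda_{\alpha+1}})Q^{(j,\alpha)}$, which on $\Xi_{\alpha}$ (where $\mathfrak{h}^{(\alpha)}\equiv1$) removes $\Pi_{\Lambda_{\alpha+1}}Q^{(j,\alpha)}$ entirely; all the other terms in the expansion are iterated Poisson brackets $\operatorname{ad}^{p}_{\mathcal{L}(Q^{(j,\alpha)})}(\cdot)$ applied to $Z_{4}^{(j,\alpha)}$, to $Q^{(j,\alpha)}$, and to the remainder/correction terms, plus the piece coming from $\{Z_{2}^{(j,\alpha)}-Z_{2}(\widetilde\omega),\mathcal{L}(Q^{(j,\alpha)})\}$ which is zero by our choice $\widetilde\omega=\omega^{(j,\alpha)}$. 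The new integrable quartic terms get absorbed into $Z_{4}^{(j+1,\alpha)}$, the new integrable terms of degree $\geq6$ modulate the frequencies into $\omega^{(j+1,\alpha)}$, and I would control the size of each bracket using Corollary \ref{cor:ad}: the key gain is the factor $\epsilon^{1/15}$ (from $\{Z_4,\mathcal{L}(Q)\}$) and $\epsilon^{3/2}N_{\alpha}^{-2s}$ (from $\{H,\mathcal{L}(Q)\}$) per bracket, so that the $\Ysup{\alpha}$-norm of the new $\Lambda_{\alpha+1}$-part drops by a factor $\epsilon^{1/30}$ (the weakest gain, coming from the quartic-bracket contribution squared or the low-order terms), giving \eqref{eq:it-alpha-2} with $j+1$. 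The degree and resonance claims \eqref{eq:c-4r} follow from the fact that a Poisson bracket of a $\kappa_1$-resonant and a $\kappa_2$-resonant polynomial is $(\kappa_1+\kappa_2)$-resonant and the small-divisor truncation only involves $\nb\in\Lambda_{\alpha+1}$ with $|\Omega_{\nb}(\lambda^2)|$ controlled; the constant $(cr)^{c\alpha r+j}$ accumulates geometrically. The transformation estimates \eqref{eq:phi-j}--\eqref{eq:d-phi-xi-j} follow from composing the inductive bounds on $\tau_{j,\alpha,\xi}$ with the Lie-flow estimates of Lemma \ref{lem:stab}, Corollary \ref{cor:dphi} and Corollary \ref{cor:dxi} (the last for the $\phi$-derivative, using $\|Q^{(j,\alpha)}\|_{\Ylip{\alpha}}\lesssim_{\alpha}\epsilon^{-1/20}$), after shrinking the annulus radius from $\epsilon_{j,\alpha}$ to $\epsilon_{j+1,\alpha}$ to accommodate the stability loss \eqref{eq:stab-a}; the composition of two maps each $\epsilon^{3/4}$-close to the identity is again $O(\epsilon^{3/4})$-close, and the differentials multiply with operator norm $\leq(1+\epsilon^{3/4})^{2}$, which is the reason the implicit constants are allowed to depend on $\alpha$ (and here also on $j$, but $j\leq40r$).

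The remainder term $R^{(j+1,\alpha)}$ is the delicate bookkeeping point: it collects $R^{(j,\alpha)}\circ\Phi^{1}$, the high-order tail $\operatorname{ad}^{p}_{\mathcal{L}(Q^{(j,\alpha)})}$ with $p$ large enough that the accumulated degree exceeds $\overline r^{2}$ (handled by Remark \ref{rem:deg} / Lemma \ref{lem:deg} since then the vector field is smoothing with gain $N_{\beta}^{-4s}\leq\epsilon^{r}$), the integral Taylor rest term $\Gamma$, and — crucially — the terms $(\operatorname{Id}-\mathfrak{h}^{(\alpha)}\Pi_{\Lambda_{\alpha+1}})$ differs from $(\operatorname{Id}-\Pi_{\Lambda_{\alpha+1}})$ only off $\Xi_{\alpha}$, but on $\Xi_{\alpha-1}$ (which is where \eqref{eq:boRr} is claimed) one invokes \eqref{eq:s-dg} and $\Xi_{\alpha}\subset\Xi_{\alpha-1}$ from Lemma \ref{lem:cut} to see the cutoff is either $1$ or the parameter is already excluded. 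I expect the main obstacle to be precisely the careful tracking that every bracket/composition stays in the class $X_{\leq2\overline r}(\epsilon)$ (or $X_{\leq\overline r^{2}}(\epsilon)$ for intermediate objects) with the stated norm bounds, and that the remainder $R^{(j+1,\alpha)}$ genuinely has size $\epsilon^{2r+1}\|u\|_{h^s}$ on the shrunken annulus $\mathcal{V}_{\alpha,s}(\epsilon_{j+1,\alpha},\xi)$ for $\xi\in\Xi_{\alpha-1}$ — this requires that the number of iterations $\kappa=40r$ is large enough that the geometric decay $\epsilon^{j/30}$ in \eqref{eq:it-alpha-2} reaches below the target threshold $\epsilon^{r}\cdot(\text{polynomial in }\epsilon^{-1})$, which is exactly guaranteed by $\kappa/30=40r/30>r$, while simultaneously the constants $(cr)^{c\alpha r+j}\epsilon^{c_\ast\nu}$ and the $\lesssim_\alpha$-constants remain harmlessly small powers of $\epsilon^{-1}$ thanks to \eqref{eq:counting}, \eqref{eq:card-c} and the parameter relation \eqref{eq:upsilon}.
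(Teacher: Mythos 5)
Your plan is essentially the paper's own proof: induction on $j$ with $\tau_{j+1,\alpha,\xi}=\tau_{j,\alpha,\xi}\circ\Phi^{1}_{\mathcal{L}(Q^{(j,\alpha)})}$, the generator built from the current frequencies $\widetilde\omega=\omega^{(j,\alpha)}$ (whose admissibility follows from telescoping \eqref{eq:j-omega}--\eqref{eq:j-domega}), the cutoff cohomological equation of Lemma \ref{lem:24} killing $\Pi_{\Lambda_{\alpha+1}}Q^{(j,\alpha)}$ on $\Xi_{\alpha}$, Taylor expansion with the bracket gains of Corollary \ref{cor:ad} giving the $\epsilon^{j/30}$ decay, additivity of the unmodulated resonance function for \eqref{eq:c-4r}, composition with Lemma \ref{lem:stab} and Corollaries \ref{cor:dphi}--\ref{cor:dxi} for the transformation bounds, and Lemma \ref{lem:deg} plus the scale-$\beta$ vector field estimate for the high-degree part of the remainder. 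The only slips are bookkeeping and harmless: it is the degree-two re-centred monomials $(P^{(j,\alpha)}[\xi])_{\mathbf{e}_{\mathfrak{m}}(n)}$ that modulate the frequencies (integrable terms of degree $\geq 6$ simply stay in $Q^{(j+1,\alpha)}$, being outside $\Lambda_{\alpha+1}$), and the terms sent to the remainder are those of degree exceeding $\overline{r}$ (the intermediate objects merely having degree $\leq\overline{r}^{2}$), as in the paper.
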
 

Let us make a few comments.

\begin{itemize}
\item  The main idea is to remove from $Q^{(\alpha)}$ the monomials associated to $\nb\in\Lambda_{\alpha+1}$, when $\xi$ is in the non-resonant set $\Xi_{\alpha}$. Specifically, after $\kappa=40r$ steps we are able to replace $\Pi_{\Lambda_{\alpha+1}}Q^{(\alpha)}$ by a term which is a remainder when $\xi\in\Xi_{\alpha}$, as claimed in \eqref{eq:it-alpha-2}. Recalling Definition \ref{def:norm} of the norms, the condition \eqref{eq:it-alpha-2}  can be formulated this way: for all $\nb\in\Nb$,
\begin{equation}
\label{eq:it-alpha-2bis} 
\nb\in\Lambda_{\alpha+1}\quad \text{and}
	\quad \xi\in \Xi_{\alpha}
	\quad \implies\quad |(Q^{(j,\alpha)}[\xi])_{\nb}| \leq 
	\epsilon^{\frac{j}{30}-\frac{1}{10^{4}}}\mathrm{w}_{\nb}^{0}(\alpha) \,.
\end{equation}
\item According to Lemma \ref{prop:vec-alpha}, after at most $\kappa$ steps, the bound \eqref{eq:it-alpha-2} implies that $\Pi_{\Lambda_{\alpha+1}}Q^{(j,\alpha)}$ is a remainder in the sense of \eqref{eq:boR}. On the other hand, the polynomial $(\mathrm{Id}-\Pi_{\Lambda_{\alpha+1}})Q^{(j,\alpha)}$ operates at scale $\alpha+1$ according to Lemma \ref{lem:scale}  and Remark \ref{rem:scale}.
\item At each iteration $j$ we generate additional terms of type $Z_{2}$, $Z_{4}$ and $Q$, and we need to propagate the estimates for their norms. 
\end{itemize}

 \begin{proof} We proceed by induction on $j$, and start with the Hamiltonian from Theorem \ref{thm:nf-alpha} at scale $\alpha$, as written in \eqref{eq:19h}.
\medskip

\noindent{$\bullet$ \it Initialization:} The inputs are $\tau_{0,\alpha,\xi} := \tau_{\alpha,\xi}$ and
\[
\omega^{(0,\alpha)} := \omega^{(\alpha)}\,,\quad Z_{4}^{(0,\alpha)} := Z_{4}^{(\alpha)}\,,\quad Q^{(0,\alpha)}:=Q^{(\alpha)}\,,\quad R^{(0,\alpha)}= R^{(\alpha)}\,.
\]
At step $j=0$ we ask nothing more than the estimates claimed in Theorem \ref{thm:nf-alpha} at scale $\alpha$.

\medskip

\noindent {$\bullet$ \it Iteration $j\to j+1$:} 

To pass from step $j$ to step $j+1$ we need to remove from $Q^{(j,\alpha)}$ the monomials associated to multi-indices in $\Lambda_{\alpha+1}$, up to some new terms  $\epsilon^{\frac{1}{30}}$-smaller. To achieve this we consider the Lie transformation introduced and studied in subsection \ref{sec:lie}. 

\medskip

{$\triangleright$\it The cohomological equation:}  We note from the recurrence assumptions \eqref{eq:j-omega} and \eqref{eq:j-domega} that the modulated frequencies $(\omega_{n}^{(j,\alpha)})$ satisfy the stability estimate \eqref{eq:o-tilde0} and \eqref{eq:o-tilde1}, and we let, for $\xi\in\mathcal{U}_{s}(\epsilon)$, 
\[
\widetilde{\omega}_{n}(\xi) := \omega^{(j,\alpha)}(\xi)\,.
\]
We introduce the corresponding auxiliary Hamiltonian 
\begin{equation}
\label{eq:coho}
\chi(\xi) 
	:= \mathcal{L}_{\alpha,\widetilde{\omega}}(Q^{(j,\alpha)})(\xi) 
	=: \mathcal{L}(Q^{(j,\alpha)})(\xi)\,.
\end{equation}
Lemma \ref{lem:sde} provides the small divisor estimates \eqref{eq:smd0} and \eqref{eq:smd1} for the multiplication operator $\mathcal{L}$, defined in Definition \ref{def:Lie}. In addition, Lemma \ref{lem:24} gives that for all $\xi\in\mathcal{U}_{s}(\epsilon)$, 
\begin{equation}
\label{eq:cohoQ}
Q^{(j,\alpha)}(\xi;u)
	+
	\{Z_{2}^{(j,\alpha)}(\widetilde{\omega}),\chi\}(\xi;u)
	=(\operatorname{Id}-\mathfrak{h}^{(\alpha)}(\xi)\Pi_{\Lambda_{\alpha+1}})Q^{(j,\alpha)}(\xi;u)\,.
\end{equation}
Hence, when $\xi$ is non-resonant (i.e. $\xi\in\Xi_{\alpha}$, and therefore $\mathfrak{h}^{(\alpha)}(\xi)=1$) the Hamiltonian $\chi$ is tuned to solve the cohomological equation see \eqref{eq:rf3}.
In this way, the monomials associated to multi-indices in $\Lambda_{\alpha+1}$ can be filtered and removed. Once again we stress out that 
\[
\mathcal{L}(Q^{(j,\alpha)})
	=
	\mathcal{L}(\Pi_{\Lambda_{\alpha+1}}Q^{(j,\alpha)})\,.
\]
\medskip

$\triangleright${\it The new variables:} The Lie transformation was defined in Definition \ref{def:Lie}: for $t\in[-1,1]$, let 
\[
\Phi_{j,\alpha,\xi}^{t} := \Phi_{\chi}^{t}\,, 
\] 
where the auxiliary Hamiltonian $\chi=\chi(j,\alpha,\xi)$ was defined above \eqref{eq:coho}.  According to the recurrence assumption (more precisely the bound \eqref{eq:boQr}), the $\Ysup{\alpha}$-norm of $Q^{(j,\alpha)}$ is bounded by $\epsilon^{-\frac{1}{10^{3}}}$. Applying Lemma \ref{lem:stab}, we obtain that for all $\xi\in\mathcal{U}_{s}(\epsilon)$, $u\in\mathcal{V}_{\alpha,s}(10\epsilon,\xi)$ and $t\in[-1,1]$
\begin{equation}
\label{eq:55}
\|\Phi_{j,\alpha,\xi}^{t}(u) - u \|_{h^{s}} 
	\leq \epsilon^{\frac{3}{2}}\|u(t)\|_{h^{s}}N_{\alpha}^{-2s}
	\,.
\end{equation}
In particular, recalling that $\epsilon_{j,\alpha}:=10\epsilon-(c\alpha r +j)\epsilon^{\frac{3}{2}}$, we have that for all $\xi\in\mathcal{U}_{s}(\epsilon)$,
\begin{equation}
\label{eq:inc-v}
\Phi_{j,\alpha,\xi}^{t}\Big(\mathcal{V}_{\alpha,s}(\epsilon_{j+1,\alpha},\xi)
	\Big)
	\subset \mathcal{V}_{\alpha,s}(\epsilon_{j,\alpha},\xi)\,.
\end{equation}
Corollary \ref{cor:dphi} gives that for all $\xi\in\mathcal{U}_{s}(\epsilon)$ and $u\in\mathcal{V}_{\alpha,s}(10\epsilon,\xi)$,
\begin{align}
\label{eq:33}
	\|
\mathrm{d}_{u}\Phi_{j,\alpha,\xi}^{t}(u) - \operatorname{Id}
	\|_{h^{s}\to h^{s}}
	\leq \epsilon^{\frac{3}{4}}\,,
\intertext{and Corollary \ref{cor:dxi} gives that for all $\phi\in\Pi_{M}B_{s}(10\epsilon)$ and $u\in\mathcal{V}_{\alpha,s}(10\epsilon,\xi(\phi))$,}
\label{eq:34}
	\|
	\mathrm{d}_{\phi}
	\Phi_{j,\alpha,\xi(\phi)}^{t}(u)
	\|_{h^{s}\to h^{s}}
	\lesssim\epsilon^{1-\frac{2}{5}}\,.
\end{align}
We now define the new transformation, at scale $j+1$, by 
\[
\tau_{j+1,\alpha,\xi} : = \tau_{j , \alpha, \xi}\circ \Phi_{j,\alpha,\xi}^{1}\,,
\]
and we will deduce from the above bounds and from the recurrence assumption the bounds \eqref{eq:phi-j}, \eqref{eq:d-phi-j} and \eqref{eq:d-phi-xi-j} at step $j+1$. According to \eqref{eq:inc-v} and the recurrence assumption, we have that 
\begin{equation}
\label{eq:sam}
\tau_{j+1,\alpha,\xi}(\mathcal{V}_{\alpha,s}(\epsilon_{j+1,\alpha},\xi))
	\subset
	\tau_{j,\alpha,\xi}
	(
	\mathcal{V}_{\alpha,s}(\epsilon_{j,\alpha}, \xi)
	)
	\subset \mathcal{V}_{\alpha,s}(10\epsilon,\xi)\,.
\end{equation}
The bound \eqref{eq:phi-j} at scale $j+1$ follows from the triangle inequality:\[
	\|\tau_{j+1,\alpha,\xi}(u)
		-u\|_{h^{s}}
	\leq 
	\|\tau_{j,\alpha,\xi}(\Phi_{j,\alpha,\xi}^{1}(u)) - \Phi_{j,\alpha,\xi}^{1}(u)\|_{h^{s}}
	+
	 \|\Phi_{j,\alpha,\xi}^{1}(u)
	 	- u\|_{h^{s}}\,.
\]
When $u\in\mathcal{V}_{\alpha,s}(\epsilon_{\alpha,j+1},\xi)$, 
 the property \eqref{eq:inc-v} allows to apply the estimate \eqref{eq:phi-j} at step $j$ and we obtain from the bound \eqref{eq:55} that   
\[
	\|\tau_{j+1,\alpha,\xi}(u)
		-u\|_{h^{s}}	
	\lesssim_{r,\alpha,j}\epsilon^{\frac{3}{4}}\|\Phi_{j,\alpha,\xi}^{1}(u)\|_{h^{s}}
	+
	\epsilon^{\frac{3}{2}}\|u\|_{h^{s}}N_{\alpha}^{-2s} 
	\lesssim_{r,\alpha,j}
	\epsilon^{\frac{3}{4}}\|u\|_{h^{s}}\,.
\]
This shows \eqref{eq:phi-j}. The bound \eqref{eq:d-phi-j} is proved similarly, but using also the chain rule:
\[
\operatorname{d}\tau_{j+1,\alpha,\xi}(u) - \operatorname{Id} 
	= (\operatorname{d}\tau_{j,\alpha,\xi}(\Phi_{j,\alpha,\xi}^{1}(u))-\mathrm{Id})
	\circ
	\mathrm{d}\Phi_{j,\alpha,\xi}^{1}(u) 
	+ 
	\mathrm{d}
	\Phi_{j,\alpha,\xi}^{1}(u)
	- \operatorname{Id}\,. 
\]
Thanks to \eqref{eq:sam}, we can apply the recurrence bound \eqref{eq:d-phi-j} at step $j$ together with \eqref{eq:33} to obtain 
\begin{align*}
	&\|
	\operatorname{d}\tau_{j+1,\alpha,\xi}(u) - \operatorname{Id}
	\|_{h^{s}\to h^{s}} \\
	&\leq
	\|
	\operatorname{d}\tau_{j,\alpha,\xi}(\Phi_{j,\alpha,\xi}^{1}(u)) - \operatorname{Id}
	\|_{h^{s}\to h^{s}}
	\|
	 \mathrm{d}\Phi_{j,\alpha,\xi}^{1}(u)
	\|_{h^{s}\to h^{s}}
	+
	\|
	 \mathrm{d}\Phi_{j,\alpha,\xi}^{1}(u)
	- \operatorname{Id}
	\|_{h^{s}\to h^{s}} \\
	&\lesssim_{r,\alpha,j}\epsilon^{\frac{3}{4}}\|
	 \mathrm{d}_{u}\Phi_{j,\alpha,\xi}^{1}(u)
	\|_{h^{s}\to h^{s}} + \epsilon^{\frac{3}{4}}
	\lesssim_{r,\alpha,j} 
	\epsilon^{\frac{3}{4}}\,,
\end{align*}
which proves \eqref{eq:d-phi-j} at step $j+1$. The proof of  \eqref{eq:d-phi-xi-j} goes along the same lines and we do not give the details. 

\medskip

{$\triangleright$\it The new expansion:} We do the Taylor expansion at $t=0$ of the Hamiltonian in the new coordinates 
 \[
 H_{\lo}^{(j+1,\alpha)}(\xi;u):=H_{\lo}(\xi;\tau_{j,\alpha,\xi}\circ\Phi_{j,\alpha,\xi}^{1}(u))\,.
 \]
 Recall the standard notation 
\[
\operatorname{ad}_{\chi}^{0}(H)=H\,,\quad \operatorname{ad}^{1}_{\chi}(H) = \{H,\chi\}\,,\quad \operatorname{ad}^{\ell}_{\chi}(H) = \{\operatorname{ad}_{\chi}^{\ell-1}(H),\chi\}\,,\quad \ell\geq1\,.
\]
First, we expand 
\begin{multline}
\label{eq:exp2}
	Z_{2}^{(j,\alpha)}(\xi;\Phi_{j,\alpha,\xi}^{1}(u)) 
		= Z_{2}^{(j,\alpha)}(\xi;u) + \{Z_{2}^{(j,\alpha)},\chi\}(\xi;u) +  \sum_{\ell=2}^{\kappa-1}\frac{1}{\ell!}\operatorname{ad}_{\chi}^{\ell}(Z_{2}^{(j,\alpha)})(\xi;u)\\
		+ \int_{0}^{1}\frac{(1-t)^{\kappa-1}}{(\kappa-1)!}\operatorname{ad}_{\chi}^{\kappa}(Z_{2}^{(j,\alpha)})(\xi;\Phi_{j,\alpha,\xi}^{t}(u))\mathrm{d}t\,, 
\end{multline}
\begin{multline}
\label{eq:exp4}
	Z_{4}^{(j,\alpha)}(\xi;\Phi_{j,\alpha,\xi}^{1}(u))
		= Z_{4}^{(j,\alpha)}(\xi;u)  + \sum_{\ell=1}^{\kappa-1}\frac{1}{\ell!}\operatorname{ad}_{\chi}^{\ell}(Z_{4}^{(j,\alpha)})(\xi;u)\\
		+ \int_{0}^{1}\frac{(1-t)^{\kappa-1}}{(\kappa-1)!}\operatorname{ad}_{\chi}^{\kappa}(Z_{4}^{(j,\alpha)})(\xi;\Phi_{j,\alpha,\xi}^{t}(u))\mathrm{d}t\,,
\end{multline}
\begin{multline}
\label{eq:expQ}
	Q^{(j,\alpha)}(\xi;\Phi_{j,\alpha,\xi}^{1}(u))
		= Q^{(j,\alpha)} +  \sum_{\ell=1}^{\kappa-1}\frac{1}{\ell!}\operatorname{ad}_{\chi}^{\ell}(Q^{(j,\alpha)})(\xi;u) \\
		+ \int_{0}^{1}\frac{(1-t)^{\kappa-1}}{(\kappa-1)!}\operatorname{ad}_{\chi}^{\kappa}(Q^{(j,\alpha)})(\xi;\Phi_{j,\alpha,\xi}^{t}(u))\mathrm{d}t\,.
\end{multline}
We collect the integral remainders in 
\[
I^{(j,\alpha)}(\xi;u) := \int_{0}^{1}\frac{(1-t)^{\kappa}}{\kappa!}\operatorname{ad}_{\chi}^{\kappa}\Big(Z_{2}^{(j,\alpha)} + Z_{4}^{(j,\alpha)}+ Q^{(j,\alpha)}\Big)(\xi;\Phi_{j,\alpha,\xi}^{t}(u))\mathrm{d}t\,,
\]
and the higher order polynomial terms in 
\[
P^{(j,\alpha)}(\xi;u) := \{Z_{4}^{(j,\alpha)}+Q^{(j,\alpha)},\chi\}(\xi;u)  
	+ \sum_{\ell=2}^{\kappa-1}\frac{1}{\ell!}\operatorname{ad}_{\chi}^{\ell}\Big(Z_{2}^{(j,\alpha)}+Z_{4}^{(j,\alpha)}+Q^{(j,\alpha)}\Big)(\xi;u)\,.
\]
Note that $P^{(j,\alpha)}\in X(\epsilon)$ and, for all $\xi\in\mathcal{U}_{s}(\epsilon)$, $\chi(\xi)$ has degree less than $2\overline{r}$, we have for all $\xi\in\mathcal{U}_{s}(\epsilon)$, 
\begin{equation}
\label{eq:d-P}
\deg(P^{(j,\alpha)}(\xi))\leq  2\overline{r}+\kappa(2\overline{r}-2) \leq 2\overline{r}^{2}\,.
\end{equation}
We obtain the expansion  
\begin{multline}
\label{eq:expansion2}
H_{\lo}^{(j+1,\alpha)}(\xi;u)
	= Z_{2}^{(j,\alpha)}(\xi;u) + Z_{4}^{(j,\alpha)}(\xi;u) +(\mathrm{Id} -\mathfrak{h}^{(\alpha)}(\xi)\Pi_{\Lambda_{\alpha+1}})Q^{(j,\alpha)}(\xi;u) \\ 
	+ P^{(j,\alpha)}(\xi;u) + I^{(j,\alpha)}(\xi;u) + R^{(j,\alpha)}(\xi;\Phi_{j,\alpha,\xi}^{1}(u))\,.
\end{multline}
We recall that when $\xi\in\Xi_{\alpha}$ then $\mathfrak{h}^{(\alpha)}(\xi)=1$, which removes $\Pi_{\Lambda_{\alpha+1}}Q^{(j,\alpha)}$ from $H_{\lo}^{(j+1,\alpha)}$. 

\medskip

Let us now use the multilinear estimates obtained in Proposition \ref{prop:bracket0} and the recurrence assumption to control the norms of the terms $P^{(j,\alpha)}$ and $I^{(j,\alpha)}$ generated in the expansions. We start with \eqref{eq:exp2}. When $\ell=1$, recall that 
\begin{equation}
\label{eq:adchi2}
\operatorname{ad}_{\chi}(Z_{2}^{(j,\alpha)})(\xi;u)
	= -\mathfrak{h}^{(\alpha)}(\xi)\sum_{\nb\in\Lambda_{\alpha+1}}
	(Q^{(j,\alpha)}[\xi])_{\nb}z_{\nb}(u,I(u)-\xi)\,.
\end{equation}
In particular, 
\[
\|\operatorname{ad}_{\chi}(Z_{2}^{(j,\alpha)})\|_{\Ysup{\alpha}} \leq\|\Pi_{\Lambda_{\alpha+1}}Q^{(j,\alpha)}\|_{\Ysup{\alpha}}\,.
\]
According to the recurrence assumptions \eqref{eq:boQr} and \eqref{eq:it-alpha-2}
\begin{equation}
\label{eq:foden}
	\|
	\Pi_{\Lambda_{\alpha+1}}Q^{(j,\alpha)}
	\|_{\Ysup{\alpha}} 
	\lesssim\epsilon^{-\frac{1}{10^{4}}}
	\,,\quad
	\|\mathbf{1}_{\Xi_{\alpha}}(\xi)\Pi_{\Lambda_{\alpha+1}}Q^{(j,\alpha)}\|_{\Ysup{\alpha}}
	\leq\epsilon^{\frac{j}{30}-\frac{1}{10^{4}}}\,.
\end{equation}
We deduce from Corollary \ref{cor:ad} that for all $\ell\in\{2,\cdots,\kappa\}$,
\begin{align}
\nonumber
\|\operatorname{ad}_{\chi}^{\ell}(Z_{2}^{(j,\alpha)})\|_{\Ysup{\alpha}} 
	&\leq \epsilon^{\frac{3}{2}}
	\|\operatorname{ad}_{\chi}^{\ell-1}(Z_{2}^{(j,\alpha)})\|_{\Ysup{\alpha}}
	\|\Pi_{\Lambda_{\alpha+1}}Q^{(j,\alpha)}\|_{\Ysup{\alpha}}
	\\
	\nonumber
	&\leq \epsilon^{\frac{3}{2}(\ell-1)}
	\|\operatorname{ad}_{\chi}(Z_{2}^{(j,\alpha)})\|_{\Ysup{\alpha}} 
	\|\Pi_{\Lambda_{\alpha+1}}Q^{(j,\alpha)}\|_{\Ysup{\alpha}}^{\ell-1}
	\\
	&\lesssim_{\ell}\epsilon^{\ell(\frac{3}{2}-\frac{1}{10^{4}})-\frac{3}{2}-\frac{1}{10^{4}}}\,,
	\label{eq:ad2}
\intertext{and, for all $\ell\in\{2,\cdots,\kappa\}$,}
\label{eq:ad2g}
\|\1_{\Xi_{\alpha}}(\xi)\operatorname{ad}_{\chi}^{\ell}(Z_{2}^{(j,\alpha)})\|_{\Ysup{\alpha}} 
	&\lesssim_{\ell}
	\epsilon^{\ell(\frac{3}{2}-\frac{1}{10^{4}}+\frac{j}{30})-\frac{3}{2}-\frac{1}{10^{4}}}\,.
\end{align}
Similarly, we obtain form Corollary \ref{cor:ad} that for $\ell\in\{1,\cdots,\kappa\}$,
\begin{equation}
\begin{split}
\|\operatorname{ad}_{\chi}^{\ell}(Z_{4}^{(j,\alpha)})\|_{\Ysup{\alpha}} 
	&\leq \epsilon^{\frac{1}{15}+\frac{3}{2}(\ell-1)}
	\|Z_{4}^{(j,\alpha)}\|_{Z^{\sup}}
	\|
	\Pi_{\Lambda_{\alpha+1}}Q^{(j,\alpha)}
	\|_{\Ysup{\alpha}}^{\ell} \\
	&\lesssim_{r}
	\epsilon^{\ell(\frac{3}{2}-\frac{1}{10^{4}})+\frac{1}{15}-\frac{3}{2}-\frac{1}{10^{4}}}\,,
	\label{eq:ad4}
\end{split}
	\end{equation}
and for all $\ell\in\{2,\cdots,\kappa\}$,
\begin{equation}
\label{eq:ad4g}
\|\mathbf{1}_{\Xi_{\alpha}}(\xi)\operatorname{ad}_{\chi}^{\ell}(Z_{4}^{(j,\alpha)})\|_{\Ysup{\alpha}} 
	\lesssim
	\epsilon^{\ell(\frac{3}{2}-\frac{1}{10^{4}}+\frac{j}{30})+\frac{1}{15}-\frac{3}{2}-\frac{1}{10^{4}}}\,.
\end{equation}
Finally, for all $\ell\in\{1,\cdots,\kappa\}$, 
\begin{equation}
\|\operatorname{ad}_{\chi}^{\ell}(Q^{(j,\alpha)})\|_{\Ysup{\alpha}} 
	\leq \epsilon^{\frac{3}{2}\ell}\|\Pi_{\Lambda_{\alpha+1}}Q^{(j,\alpha)}(\xi)\|_{\Ysup{\alpha}}^{\ell}\|Q^{(j,\alpha)}(\xi)\|_{\Ysup{\alpha}}
	\lesssim_{r}\epsilon^{\ell(\frac{3}{2}-\frac{1}{10^{4}})-\frac{1}{10^{4}}}\,,
\label{eq:adQ}
\end{equation}
and, for all $\ell\in\{1,\cdots,\kappa\}$,
\begin{equation}
\label{eq:adQg}
\|\mathbf{1}_{\Xi_{\alpha}}(\xi)\operatorname{ad}_{\chi}^{\ell}(Q^{(j,\alpha)})\|_{\Ysup{\alpha}} 
	\leq
	\epsilon^{\ell(\frac{3}{2}-\frac{1}{10^{4}}+\frac{j}{30})-\frac{1}{10^{4}}}\,.
\end{equation}
Collecting these contributions we deduce that
\begin{align}
\label{eq:Ptilde}
\|P^{(j,\alpha)}\|_{\Ysup{\alpha}}
	&\lesssim_{r}\epsilon^{\frac{1}{15}-\frac{2}{10^{4}}} \leq \epsilon^{\frac{1}{20}}\,,
\intertext{and}
\label{eq:Ptilde2}
\|\mathbf{1}_{\Xi_{\alpha}}(\xi)P^{(j,\alpha)}\|_{\Ysup{\alpha}}
	&\leq\epsilon^{\frac{1}{20}+\frac{j}{30}}\,.
\end{align}
Corollary \ref{cor:ad} provides the analog estimates for the $\Ylip{\alpha}$ norms of the different terms (but this time we do not need the extra gain when $\xi\in\Xi_{\alpha}$). We deduce form the Leibniz rule that 
\[
\|\operatorname{ad}_{\chi}(Z_{2}^{(j,\alpha)})\|_{\Ylip{\alpha}}\leq\epsilon^{-\frac{1}{15}}\,.
\] 
Indeed, for $|k|\leq M$ and $\nb\in\Lambda_{\alpha+1}$,
\[
|\mathfrak{h}^{(\alpha)}(\xi)\partial_{\xi_{k}}(Q^{(j,\alpha)}[\xi])_{\nb}|
	\leq
	\|Q^{(j,\alpha)}\|_{\Ylip{\alpha}}\mathrm{w}_{\nb}^{1}(\alpha)\,,
\]
and, on the other hand, when $\partial_{\xi_{k}}$ falls on the truncation function $\mathfrak{h}^{(\alpha)}$ then we use the bound \eqref{eq:dhf} (from the proof of the small divisor bound) to obtain
\begin{align}
\nonumber
|\partial_{\xi_{k}}\mathfrak{h}^{(\alpha)}(\xi)Q^{(j,\alpha)}_{\nb}(\xi)|
	&\leq 
	\epsilon^{-\frac{1}{10^{4}}}\gamma(\alpha)^{-1}\epsilon^{-2}N_{\alpha+1}^{2s}
	\|Q^{(j,\alpha)}\|_{\Ysup{\alpha}}\mathrm{w}_{\nb}^{0}(\alpha) \\
	\nonumber
	&\lesssim\epsilon^{-\frac{2}{10^{4}}}\gamma(\alpha)^{-1}
	(\epsilon^{-1}\eta)^{2}
	(\frac{N_{\alpha+1}}{N_{\alpha}})^{2s}
	\mathrm{w}_{\nb}^{1}(\alpha) \\
	\nonumber
	&\lesssim_{\alpha}
	\epsilon^{-\frac{2}{10^{4}}-\frac{1}{30}-\frac{1}{50}-\frac{1}{100}} \mathrm{w}_{\nb}^{1}(\alpha)
	\leq
	\epsilon^{-\frac{1}{15}}\mathrm{w}_{\nb}^{1}(\alpha)\,.
	\label{eq:toulon}
\end{align}
In particular $\mathrm{ad}_{\chi}(Z_{2}^{(j,\alpha)})$ verifies the assumptions \eqref{eq:as-14} and \eqref{eq:as-15} of Corollary \ref{cor:ad}, and we deduce by recurrence that for all $\ell\geq2$, 
\begin{equation}
\label{eq:adZ1}
\|\mathrm{ad}_{\chi}^{\ell}(Z_{2}^{(j,\alpha)})\|_{\Ylip{\alpha}}
	=
	\|
	\mathrm{ad}_{\chi}^{\ell-1}(\mathrm{ad}_{\chi}(Z_{2}^{(j,\alpha)}))
	\|_{\Ylip{\alpha}}
	\leq 
	\epsilon N_{\alpha}^{-2s}\,.	
\end{equation}
Similarly, using Corollary \ref{cor:ad} we show that 
\[
\|\operatorname{ad}_{\chi}(Z_{4}^{(j,\alpha)})\|_{\Ylip{\alpha}} \leq \epsilon^{\frac{1}{50}}\,, 
\]
and therefore, for all $\ell\in\{2,\cdots,\kappa\}$,
\begin{equation}
\label{eq:adZ41}
\|\operatorname{ad}_{\chi}^{\ell}(Z_{4}^{(j,\alpha)})\|_{\Ylip{\alpha}} \leq \epsilon N_{\alpha}^{-2s}\,, 
\end{equation}
Finally, we deduce from Corollary \ref{cor:ad} that for all $\ell\geq1$, 
\begin{equation}
\label{eq:adQ1}
\|\mathrm{ad}_{\chi}^{\ell}(Q^{(j,\alpha)})\|_{\Ylip{\alpha}}
	\leq \epsilon N_{\alpha}^{-2s}\,.
\end{equation}
In particular, we proved that
\begin{equation}
\label{eq:Ptilde1}
\|P^{(j,\alpha)}\|_{\Ylip{\alpha}}\lesssim \epsilon^{\frac{1}{50}}\,.
\end{equation}

 Let us now explain how to organize the new terms generated from the expansions \eqref{eq:exp2}, \eqref{eq:exp4} and \eqref{eq:expQ}, which are collected in $P^{(j,\alpha)}$, and the new remainder terms. 
\medskip

\noindent{$\triangleright$ \it New terms of type $Q$:}
We let
\[
Q^{(j+1,\alpha)}(u) := (\mathrm{Id}-\mathfrak{h}^{(\alpha)}\Pi_{\Lambda_{\alpha+1}})Q^{(j,\alpha)}(u) + \Pi_{6\leq\deg\leq\overline{r}}P^{(j,\alpha)}(u)\,.
\]
Since $\mathfrak{h}^{(\alpha)}(\xi)=1$ for all $\xi\in\Xi_{\alpha}$, we have that  for all $\xi\in\Xi_{\alpha}$,
\[
\Pi_{\Lambda_{\alpha+1}}Q^{(j+1,\alpha)}(\xi;u) 
	= \Pi_{{\Lambda}_{\alpha+1}}\Pi_{6\leq\deg\leq\overline{r}}P^{(j,\alpha)}(\xi;u)\,,
\]
which in turn implies from the bound \eqref{eq:Ptilde2} that 
\[
	\|\mathbf{1}_{\Xi_{\alpha}}(\xi)\Pi_{\Lambda_{\alpha+1}}Q^{(j+1,\alpha)}(\xi) \|_{\Ysup{\alpha}} 
	\leq \epsilon^{\frac{1}{20}+\frac{j}{30}} \leq \epsilon^{\frac{j+1}{30}}\,.
\]
This proves \eqref{eq:it-alpha-2} at step $j+1$. We now show that $Q^{(j+1,\alpha)}$ still satisfies \eqref{eq:boQr}. On the one hand, since $\mathfrak{h}^{(\alpha)}$ is valued in $[0,1]$ we have 
\[
\|(\mathrm{Id}-\mathfrak{h}^{(\alpha)}(\xi)\Pi_{\Lambda_{\alpha+1}})Q^{(j,\alpha)}\|_{\Ysup{\alpha}} \leq \|Q^{(j,\alpha)}\|_{\Ysup{\alpha}}\,,
\]
and we conclude from the bound \eqref{eq:Ptilde} that 
\[
\|Q^{(j+1,\alpha)}\|_{\Ysup{\alpha}} 
	\leq \|Q^{(j,\alpha)}\|_{\Ysup{\alpha}} 
	+\|P^{(j,\alpha)}\|_{\Ysup{\alpha}}
	\leq 
	\|Q^{(j,\alpha)}\|_{\Ysup{\alpha}} + \epsilon^{\frac{1}{20}} \lesssim_{\alpha}\epsilon^{-\frac{1}{10^{4}}}\,.
\]
On the other hand, we have from the chain rule and bound that 
\[
\|(\mathrm{Id}-\mathfrak{h}^{(\alpha)}(\xi)\Pi_{\Lambda_{\alpha+1}})Q^{(j,\alpha)}\|_{\Ylip{\alpha}} 
	\leq \|Q^{(j,\alpha)}\|_{\Ylip{\alpha}} 
	+ \epsilon^{-\frac{1}{15}}\,.
\]
We deduce from the bound \eqref{eq:Ptilde1} that 
\[
\|Q^{(j+1,\alpha)}\|_{\Ylip{\alpha}} \leq \|Q^{(j,\alpha)}\|_{\Ylip{\alpha}} 
	+ \epsilon^{-\frac{1}{15}} + \|P^{(j,\alpha)}\|_{\Ylip{\alpha}}
	\leq 
	\|Q^{(j,\alpha)}\|_{\Ylip{\alpha}} 
	+
	\epsilon^{-\frac{1}{15}}
	+
	\epsilon^{\frac{1}{50}} \lesssim_{\alpha}\epsilon^{-\frac{1}{15}}\,,
\]
which gives the bound \eqref{eq:boQr}. Finally, to prove \eqref{eq:c-4r}, we observe that the resonance function of the unmodulated frequencies is additive under the Poisson bracket of two monomials. Since there are at most $\kappa$ Poisson brackets in the new terms, we obtain \eqref{eq:c-4r}. 
\medskip

\noindent{$\triangleright$ \it New modulated frequencies:} The polynomial $P^{(j,\alpha)}$  contains terms of degree $2$, corresponding to $\nb''=\mathbf{e}_{\mathfrak{m}}(n)$ for $n\in\Z_{M}^{d}$:
\[
\Pi_{\deg=2}P^{(j,\alpha)}(\xi;u)
	:= \sum_{n\in\Z_{M}^{d}}(P^{(j,\alpha)}[\xi])_{\mathbf{e}_{\mathfrak{m}}(n)}(|u_n|^{2}-\xi_{n})\,.
\]
These terms modulate the frequencies. For $n\in\Z_{M}^{d}$, we set 
\[
\omega_{n}^{(j+1,\alpha)}(\xi) 
	:= \omega_{n}^{(j,\alpha)}(\xi) 
	+ (P^{(j,\alpha)}[\xi])_{\mathbf{e}_{\mathfrak{m}}(n)}(j,\xi)\,.
\]
By Definition \ref{def:norm} and from the bound \eqref{eq:Ptilde}, of the $\Ysup{\alpha}$-norm we have, for all $n\in\Z_{M}^{d}$ and $\xi\in\mathcal{U}_{s}(\epsilon)$
\[
|(P^{(j,\alpha)}[\xi])_{\mathbf{e}_{\mathfrak{m}}(n)}| 
	\leq \mathrm{w}_{\mathbf{e}_{\mathfrak{m}}(n)}^{0}(\alpha)\|P^{(j,\alpha)}\|_{\Ysup{\alpha}}
	\leq \epsilon^{\frac{1}{20}}
	N_{\alpha}^{-6s}\eta^{6}\mathrm{D}(\alpha)
	\leq \epsilon^{3+\frac{1}{10}}N_{\alpha}^{-4s}\,.
\] 
Similarly we deduce from the bound \eqref{eq:Ptilde1} that for $(k,n)\in(\Z_{M}^{d})^{2}$ and $\xi\in\mathcal{U}_{s}(\epsilon)$,
\[
|\partial_{\xi_{k}} (P^{(j,\alpha)}[\xi])_{\mathbf{e}_{\mathfrak{m}}(n)}|
	\leq 
	\mathrm{w}_{\mathbf{e}_{\mathfrak{m}}(n)}^{1}(\alpha)
	\|P^{(j,\alpha)}\|_{\Ylip{\alpha}} 
	\lesssim \epsilon^{\frac{1}{50}}\eta^{4}N_{\alpha}^{-4s}\operatorname{D}(\alpha)
	\leq
	\epsilon^{1+\frac{1}{10}}N_{\alpha}^{-2s}\,.
\]
This prove the stability estimates \eqref{eq:j-omega} and \eqref{eq:j-domega} for the frequencies.

\medskip

\noindent{$\triangleright$ \it New integrable quartic terms:} The polynomial $P^{(j,\alpha)}$ also contains terms of degree 4 corresponding to monomials $\mathbf{e}_{\mathfrak{m}}(n_1)+\mathbf{e}_{\mathfrak{m}}(n_2)$ for $n_{1},n_{2}\in\Z_{M}^{d}$. To estimate this contributions, and especially their $\Ylip{\alpha}$- norms, we need to be slightly more precise. Actually, the terms
\[
\operatorname{ad}_{\chi}^{\ell}(Z_{2}^{(j,\alpha)})\,,\quad \operatorname{ad}_{\chi}^{\ell}(Z_{4}^{(j,\alpha)})
\]
generate new integrable terms only when $\ell\geq2$. Therefore, we let
\begin{equation}
\label{eq:4+1}
Z_{4}^{(j+1,\alpha)} := Z_{4}^{(j,\alpha)} + \Pi_{\deg=4}\widetilde{P}^{(j,\alpha)}\,,
\end{equation}
where 
\[
\widetilde{P}^{(j,\alpha)} 
	:= P^{(j,\alpha)} - \{Z_{4}^{(j,\alpha)},\chi\}
	=
	\mathrm{ad}_{\chi}(Q^{(j,\alpha)})
	+\sum_{\ell=2}^{\kappa-1}
	\frac{1}{\ell!}
	\mathrm{ad}_{\chi}^{\ell}
	\Big(
	Z_{2}^{(j,\alpha)}+Z_{4}^{(j,\alpha)}+Q^{(j,\alpha)}
	\Big)\,.
\]
The reason for introducing $\widetilde{P}^{(j,\alpha)}$ is that the estimates \eqref{eq:Ptilde} and \eqref{eq:Ptilde1} are slightly better for $\widetilde{P}(j,\xi)$ than they are for $P(j,\xi)$ (we gain an important factor $\epsilon$ for the $\Ylip{\alpha}$-norm): indeed, we actually proved that
\[
\|\widetilde{P}(j,\xi)\|_{\Ysup{\alpha}}	
	\leq
	\epsilon^{\frac{3}{2}-\frac{2}{10^{4}}}\,,
	\quad 
	\|\widetilde{P}(j,\xi)\|_{\Ylip{\alpha}}
	\leq
	\epsilon N_{\alpha}^{-2s}\,.
\]
We write
\begin{multline*}
\Pi_{\deg=4}\widetilde{P}^{(j,\alpha)}(\xi;u)
	:= 
	2\sum_{n_{1},n_{2}\in\Z_{M}^{d}}
	(\widetilde{P}^{(j,\alpha)}[\xi])_{\mathbf{e}_{\mathfrak{m}}(n_1)+\mathbf{e}_{\mathfrak{m}}(n_2)}
	(|u_{n_{1}}|^{2}-\xi_{n_{1}})(|u_{n_{2}}|^{2}-\xi_{n_{2}})
	\\
	-
	\sum_{n\in\Z_{M}^{d}}
	(\widetilde{P}^{(j,\alpha)}[\xi])_{\mathbf{e}_{\mathfrak{m}}(n)+\mathbf{e}_{\mathfrak{m}}(n)}
	(|u_{n}|^{2}-\xi_{n})^{2}
\,.
\end{multline*}
For all $\xi\in\mathcal{U}_{s}(\epsilon)$ and $(n_{1},n_{2})\in(\Z_{M}^{d})^{2}$, we have from the definition of the $\Ysup{\alpha}$-norm that  
\begin{multline*}
|(\widetilde{P}^{(j,\alpha)}[\xi])_{\mathbf{e}_{\mathfrak{m}}(n_1)+\mathbf{e}_{\mathfrak{m}}(n_2)}| 
	\leq 
	\mathrm{w}_{\mathbf{e}_{\mathfrak{m}}(n_1)+\mathbf{e}_{\mathfrak{m}}(n_2)}^{0}(\alpha)
	\|\widetilde{P}^{(j,\alpha)}\|_{\Ysup{\alpha}} 
	\\
	\leq
	\epsilon^{\frac{3}{2}-\frac{2}{10^{4}}}
	N_{\alpha}^{-6s}\eta^{6}
	\mathrm{D}(\alpha)^{2} 
	\leq\epsilon^{\frac{3}{2}-\frac{2}{5}-\frac{2}{10^{4}}}N_{\alpha}^{-2s}\eta^{2}\,.
\end{multline*}
Similarly, for all $k\in\Z_{M}^{d}$,
\begin{multline*}
|\partial_{\xi_{k}}
	(\widetilde{P}^{(j,\alpha)}[\xi])_{\mathbf{e}_{\mathfrak{m}}(n_1)+\mathbf{e}_{\mathfrak{m}}(n_2)}| 
	\leq 
	\mathrm{w}_{\mathbf{e}_{\mathfrak{m}}(n_1)+\mathbf{e}_{\mathfrak{m}}(n_2)}^{1}(\alpha)
	\|\widetilde{P}^{(j,\alpha)}\|_{\Ylip{\alpha}} \\
	\leq \epsilon N_{\alpha}^{-2s}N_{\alpha}^{-4s}\eta^{4}\mathrm{D}(\alpha)^{2}
	\leq 
	\epsilon^{1-\frac{2}{5}}N_{\alpha}^{-2s}\,.
\end{multline*}
This shows that for all $\xi\in\mathcal{U}_{s}(\epsilon)$,
\[
\|\Pi_{\deg=4}\widetilde{P}^{(j,\alpha)}\|_{\Ysup{\alpha}} 
	\leq \epsilon^{\frac{3}{2}} N_{\alpha}^{-2s}\,,
	\quad 
	\|\Pi_{\deg=4}\widetilde{P}^{(j,\alpha)}\|_{\Ylip{\alpha}} 
	\leq \epsilon^{1-\frac{2}{5}}N_{\alpha}^{-2s}\,,
\]
and therefore we have from the expression \eqref{eq:4+1} of $Z_{4}^{(j+1,\alpha)}$ that under the recurrence assumption~\eqref{eq:bo4r}
\begin{align*}
\|Z_{4}^{(j+1,\alpha)}\|_{Z^{\mathrm{sup}}}
	&\leq
	\|Z_{4}^{(j,\alpha)}\|_{Z^{\mathrm{sup}}}+
	 \epsilon^{\frac{3}{2}}N_{\alpha}^{-2s}
	 \lesssim_{\alpha}\epsilon^{-\frac{1}{10^{4}}}\,,
\intertext{and}
 	\|Z_{4}^{(j+1,\alpha)}\|_{Z^{\mathrm{lip}}}
 	&\leq 
	\|Z_{4}^{(j,\alpha)}\|_{Z^{\mathrm{lip}}}+\epsilon^{1-\frac{2}{5}}N_{\alpha}^{-2s}
	\lesssim_{\alpha}\epsilon^{-\frac{1}{10^{4}}}\,.
\end{align*}

\medskip

\noindent{$\triangleright$ \it New remainder terms:} Four types of terms contribute to the remainder: the former remainder in the new coordinates, the terms with degree larger than $\overline{r}$, the integral remainders in the Taylor expansions and the constants. We define
\begin{equation*}
\begin{split}
R^{(j+1,\alpha)}(\xi;u) :=& R^{(j,\alpha)}(\xi;\Phi_{j,\alpha,\xi}^{1}(u)) 
	+ \Pi_{\deg>\overline{r}}P^{(j,\alpha)}(\xi;u) \\
	&+ I^{(j,\alpha)}(\xi;u)\, +  \Pi_{\deg=0}P^{(j,\alpha)}(\xi;u),
\end{split}
\end{equation*}
and we justify that these terms are actually some remainders in the sense of the vector field estimate \eqref{eq:boRr}, when $\xi\in\Xi_{\alpha}$ and $u\in\mathcal{V}_{\alpha,s}(\epsilon_{j+1,\alpha},\xi)$, with $\epsilon_{j,\alpha}=10\epsilon-(\alpha\kappa+j)\epsilon^{\frac{3}{2}}$. 

\medskip

 Fix $\xi\in\Xi_{\alpha}$ and $u\in\mathcal{V}_{\alpha,s}(\epsilon_{\alpha,j+1},\xi)$. First, we note that by definition 
 $$\nabla \Pi_{\deg=0}P^{(j,\alpha)}(\xi;u)=0$$ and so that this term can be ignored. Then we deduce from the estimate on the symplectic transformation \eqref{eq:inc-v} that for all $t\in[-1,1]$,
\[
v(t) := \Phi_{j,\alpha,\xi}^{t}(u)\in\mathcal{V}_{\alpha,s}(\epsilon_{j,\alpha},\xi)\,.
\]
Using that $\Phi_{j,\alpha,\xi}^{1}$ is symplectic, we obtain 
\begin{align*}
\nabla R^{(j,\alpha)}(\xi;\Phi_{j,\alpha,\xi}^{1}(u)) 
	&= (\mathrm{d}\Phi_{j,\alpha,\xi}^{1}(u))^{\ast}
	\nabla R^{(j,\alpha)}(\xi;v(1)) \\
	&= -i(\mathrm{d}\Phi_{j,\alpha,\xi}^{1}(u))^{-1}i
	\nabla R^{(j,\alpha)}(\xi;v(1)) \\
	&= -i\mathrm{d}\Phi_{j,\alpha,\xi}^{-1}(v)i
	\nabla R^{(j,\alpha)}(\xi;v(1))\,.
\end{align*}
We deduce from the estimate on the symplectic transformation \eqref{eq:33} and from the vector field estimate \eqref{eq:boRr} at step $j$ on $\mathcal{V}_{\alpha,s}(\epsilon_{j,\alpha},\xi)$ that for all $\xi\in\mathcal{U}_{s}(\epsilon)$, 
\[
\begin{split}
\|\nabla R^{(j,\alpha)}(\xi;
	\Phi_{j,\alpha,\xi}^{1}(u))\|_{h^{s}} 
	&\leq 
	\|\mathrm{d}\Phi_{j,\alpha,\xi}^{-1}(v(1))\|_{h^{s}\to h^{s}}
	\|\nabla R^{(j,\alpha)}(\xi;v(1))\|_{h^{s}}  \\
	&\lesssim_{r,\alpha} (1+\epsilon^{\frac{1}{2}})\epsilon^{2r+1}\,.
\end{split}
\]
Similarly, given $H\in X_{\leq2\overline{r}}(\epsilon)$ we have that for all $t\in[0,1]$,
\[
\nabla
	\operatorname{ad}_{\chi}^{\ell}(H)(\xi;\Phi_{j,\alpha,\xi}^{t}(u))
	= -i \mathrm{d}\Phi_{j,\alpha,\xi}^{-t}(v)i\nabla \operatorname{ad}_{\chi}^{\ell}(H)(\xi;v(t))\,.
\]
The vector field estimate then follows from Proposition \ref{prop:vec-alpha} and from the bounds \eqref{eq:ad2}, \eqref{eq:ad4} and \eqref{eq:adQ}: for $\xi\in\mathcal{U}_{s}(\epsilon)$ and $u\in\mathcal{V}_{\alpha,s}(\epsilon_{j+1,\alpha},\xi)$,
\begin{align*}
&\|\nabla I^{(j,\alpha)}(\xi;u)\|_{h^{s}}\\
&\lesssim  
	\sup_{t\in[0,1]}\|\mathrm{d}\Phi_{j,\alpha,\xi}^{-t}(v(t))\|_{h^{s}\to h^{s}}
	\|\nabla \mathrm{ad}_{\chi}^{\kappa}
	(Z_{2}^{(j,\alpha)}
	+Z_{4}^{(j,\alpha)}
	+Q^{(j,\alpha)})
	(\xi;v(t))\|_{h^{s}}
	\\
	&\lesssim 
	\| \mathrm{ad}_{\chi}^{\kappa}
	(Z_{2}^{(j,\alpha)}+Z_{4}^{(j,\alpha)}+Q^{(j,\alpha)})
	\|_{\Ysup{\alpha}}N_{\alpha}^{-4s}\epsilon^{3}
	\sup_{t\in[0,1]}\|v(t)\|_{h^{s}}
	\leq\epsilon^{3+\frac{\kappa}{20}}\|u\|_{h^{s}}\,,
\end{align*}
which is conclusive provided $\kappa\geq 40r$. As for the terms with large degree, we exploit Lemma \ref{lem:deg} and the assumption $\overline{r}\tau\geq 6s$. We get
\[
	\|
	\Pi_{\deg>\overline{r}}P^{(j,\alpha)}\|_{\Ysup{\beta}} 
	\leq
	\|\Pi_{\deg>\overline{r}}P^{(j,\alpha)}
	\|_{\Ysup{\alpha}}
	\leq
	\epsilon^{\frac{1}{20}}\,,
\]
where we used the bound obtained in \eqref{eq:Ptilde}. On the other, since $\deg(P^{(j,\alpha)})\leq\overline{r}^{2}$, we deduce from the vector field estimate at scale $\beta$ of Proposition \ref{prop:vec-alpha} that for all $u\in\mathcal{V}_{\alpha,s}(10\epsilon,\xi)$, 
\[
\|\nabla \Pi_{\deg>\overline{r}}P^{(j,\alpha)}(\xi;u)\|_{h^{s}}
	\leq  \|\Pi_{\deg>\overline{r}}P^{(j,\alpha)}\|_{\Ysup{\beta}}
	N_{\beta}^{-4s}\epsilon^{3}\|u\|_{h^{s}}
	\leq 
	\epsilon^{\frac{1}{20}+3}N_{\beta}^{-4s}\|u\|_{h^{s}}\,,
\]
and we conclude that under our choice of parameter \eqref{eq:s-beta} (at scale $\beta$,  $N_{\beta}^{-2s}=\epsilon^{r}$) 
\[
\|\nabla \Pi_{\deg>\overline{r}}P^{(j,\alpha)}(\xi;u)\|_{h^{s}}
	\leq  
	\epsilon^{2r+3}\|u\|_{h^{s}}\,.
\]
This concludes the proof of  \eqref{eq:boRr} at step $j+1$ and completes the proof of Proposition \ref{prop:it-j}.
 \end{proof}

\subsection{From step $\alpha$ to step $\alpha+1$:}
\label{sec:+1}
We now have all the ingredients to deduce Theorem \ref{thm:nf-alpha} at scale $\alpha+1$ from Theorem  \ref{thm:nf-alpha} at scale $\alpha$.

\begin{proof}[Proof of Theorem \ref{thm:nf-alpha}:] The initialization $\alpha=0$ was done in section \ref{sec:init}, with $R^{(0)}=\Pi_{\deg=0}Q$.  We fix $\alpha\in\{0,\cdots,\beta-1\}$ and we suppose that Theorem \ref{thm:nf-alpha} is proved up to scale $\alpha$. The inputs are the transformation $\tau_{\alpha}$ and the Hamiltonian functions 
\[
Z_{2}^{(\alpha)}\,,\quad Z_{4}^{(\alpha)}\,,\quad Q^{(\alpha)}\,,\quad R^{(\alpha)}
\]
where $Z_{2}^{(\alpha)}$ has modulated frequencies $\omega^{(\alpha)}$. We constructed the new objects at scale $\alpha+1$ in Proposition \ref{prop:it-j}: for $\epsilon\in(0,10^{-2}\epsilon_{\ast})$ and $\xi\in\mathcal{U}_{s}(\epsilon)$, we set 
\[
\tau_{\alpha+1,\xi}:= \tau_{\kappa,\alpha,\xi}\,,\quad Z_{2}^{(\alpha+1)} := Z_{2}^{(\kappa,\alpha)}\,,\quad Z_{4}^{(\alpha+1)} := Z_{4}^{(\kappa,\alpha)}\,.
\]
We also set
\[
Q^{(\alpha+1)} :=(\operatorname{Id}- \Pi_{\Lambda_{\alpha+1}})Q^{(\kappa,\alpha)}\,,
	\quad 
	R^{(\alpha+1)} := \Pi_{\Lambda_{\alpha+1}}Q^{(\kappa,\alpha)}
	+ R^{(\kappa,\alpha)}\,.
\]
By construction
\[
H_{\lo}\circ\tau_{\alpha+1,\xi}(\xi;u) = 
	Z_{2}^{(\alpha+1)}(\xi;u)
	+ Z_{4}^{(\alpha+1)}(\xi;u)
	+ Q^{(\alpha+1)}(\xi;u)
	+R^{(\alpha+1)}(\xi;u)\,.
\]
Then, we make sure that the new objects satisfy the properties claimed in Theorem \ref{thm:nf-alpha} at scale $\alpha+1$.  First, note that 
\[
\epsilon_{\kappa,\alpha} := 10\epsilon - (\alpha r + \kappa)\epsilon^{\frac{3}{2}} 
	\geq
	10\epsilon - cr(\alpha+1)\epsilon^{\frac{3}{2}}
	 =\epsilon_{\alpha+1}\,, 
\]
provided that $c$ is large enough to ensure that $\kappa\leq cr$ ($c\geq40$ is acceptable). Hence, $\mathcal{V}_{\alpha+1,s}(\epsilon_{\alpha+1},\xi) \subset \mathcal{V}_{\alpha,s}(\epsilon_{\kappa,\alpha},\xi)$.

\medskip

\noindent{$\triangleright$ \it The new variables:} We deduce from the above observation and from the estimates on the transformation \eqref{eq:phi-j}, \eqref{eq:d-phi-j} and \eqref{eq:d-phi-xi-j} proved at step $\kappa$, that the estimates \eqref{eq:tau}, \eqref{eq:d-tau} and \eqref{eq:d-phi-xi} (claimed in  the normal form Theorem \ref{thm:nf-alpha}) hold at scale $\alpha+1$. 

\medskip

\noindent{$\triangleright$ \it Modulated frequencies at scale $\alpha+1$:} We deduce the stability property from the bound \eqref{eq:j-omega}: for all $n\in\Z_{M}^{d}$,
\[
|\omega_{n}^{(\alpha+1)}(\xi)-\omega_{n}^{(\alpha)}(\xi)| 
	\leq \sum_{j=0}^{\kappa-1}
	|\omega_{n}^{(j+1,\alpha)}(\xi) - \omega_{n}^{(j,\alpha)}(\xi)|
	\leq \kappa N_{\alpha}^{-4s}\epsilon^{3+\frac{1}{10}}
	\leq N_{\alpha}^{-4s}\epsilon^{3}\,.
\]
This proves \eqref{eq:st-fr} at scale $\alpha+1$.  Similarly, we deduce from \eqref{eq:j-domega} and \eqref{eq:omega} at scale $\alpha$ that for all $k\in\Z_{M}^{d}$,
\begin{align*}
|\partial_{\xi_{k}}(\omega_{n}^{(\alpha+1)}(\xi)-\xi_{n})|
	&\leq |\partial_{\xi_{k}}(\omega_{n}^{(\alpha)}(\xi)-\xi_{n})|
	+ 
	\sum_{j=0}^{\kappa-1}
	|\partial_{\xi_{k}}(\omega_{n}^{(j+1,\alpha)}(\xi)-\omega_{n}^{(j,\alpha)}(\xi))| \\
	&\leq \alpha\epsilon + \kappa\epsilon^{1+\frac{1}{10}}N_{\alpha}^{-2s} 
	\leq (\alpha+1)\epsilon\,.
\end{align*}
This proves \eqref{eq:omega} at scale $\alpha+1$. 
\medskip

\noindent{$\triangleright$ \it Quartic integrable terms at scale $\alpha+1$:} According to \eqref{eq:bo4r}, 
\begin{align*}
\max(\|Z_{4}^{(\alpha+1)}\|_{Z^{\mathrm{sup}}}
	\,,\,
	\|Z_{4}^{(\alpha+1)}\|_{Z^{\mathrm{lip}}}
	)
	&=\max(\|Z_{4}^{(\kappa,\alpha)}\|_{Z^{\mathrm{sup}}}
	\,,\,
	\|Z_{4}^{(\kappa,\alpha)}\|_{Z^{\mathrm{lip}}}
	)\\
	&\lesssim_{\alpha}\epsilon^{-\frac{1}{10^{4}}} + \alpha+\kappa\epsilon^{\frac{1}{2}}
	\lesssim_{\alpha}\epsilon^{-\frac{1}{10^{4}}}\,,
\end{align*}
and this proves \eqref{eq:boZ} at scale $\alpha+1$.
 
 \medskip

 \noindent{$\triangleright$ \it Terms of type $Q$ at scale $\alpha+1$:} We now verify that $Q^{(\alpha+1)}$ operates at scale $\alpha+1$: we have from \eqref{eq:boQr} that  
\[
	\|Q^{(\kappa,\alpha)}\|_{\Ysup{\alpha}} \lesssim_{\alpha} \epsilon^{-\frac{1}{10^{4}}}
	\,,
	\quad
	\|Q^{(\kappa,\alpha)}\|_{\Ylip{\alpha}}
	\lesssim
	\epsilon^{-\frac{1}{20}}\,.
\]
According to Lemma \ref{lem:scale},
\[
\|Q^{(\alpha+1)}\|_{\Ysup{\alpha+1}}=
	\|
	(\operatorname{Id}-\Pi_{\Lambda_{\alpha+1}})
	Q^{(\kappa,\alpha)}
	\|_{\Ysup{\alpha+1}}
	\leq
	\|
	(\operatorname{Id}-\Pi_{\Lambda_{\alpha+1}})Q^{(\kappa,\alpha)}
	\|_{\Ysup{\alpha}}
	\lesssim_{\alpha}\epsilon^{-\frac{1}{10^{4}}}\,,
\]
and, similarly, 
\[
\|Q^{(\alpha+1)}\|_{\Ylip{\alpha+1}}=
	\|
	(\operatorname{Id}-\Pi_{\Lambda_{\alpha+1}})
	Q^{(\kappa,\alpha)}
	\|_{\Ylip{\alpha+1}}
	\leq
	\|
	(\operatorname{Id}-\Pi_{\Lambda_{\alpha+1}})Q^{(\kappa,\alpha)}
	\|_{\Ylip{\alpha}}
	\lesssim_{\alpha}\epsilon^{-\frac{1}{20}}\,.
\]
This proves \eqref{eq:boQ} at scale $\alpha+1$. 
\medskip

\noindent{$\triangleright$ \it Remainder term at scale $\alpha+1$:} As for the remainder term, when $\xi\in\Xi_{\alpha}$ then $\xi\in\Xi_{\alpha-1}$ and we deduce from \eqref{eq:boRr} at step $\kappa$ that for all $u\in\mathcal{V}_{\alpha+1,s}(\epsilon_{\alpha+1}, \xi)$, 
\[
\|\nabla R^{(\kappa,\alpha)}(\xi;u)\|_{h^{s}} 
	\lesssim_{\alpha,r}
	\epsilon^{2r+1}
	\|u\|_{h^{s}}\,.
\]
Moreover, we have from \eqref{eq:it-alpha-2} that 
\[
\|\Pi_{\Lambda_{\alpha+1}}Q^{(\kappa,\alpha)}
	\|_{\Ysup{\alpha}}
	\leq \epsilon^{\frac{\kappa}{30}-\frac{1}{10^{4}}} \leq \epsilon^{2r}\,,
\]
provided, say, that $\kappa\geq100r$. We conclude from Proposition \ref{prop:vec-alpha} that for all $u\in\mathcal{V}_{\alpha+1,s}(\epsilon_{\alpha+1},\xi)$, 
\[
\|\nabla \Pi_{\Lambda_{\alpha+1}}Q^{(\kappa,\alpha)}(\xi;
	u)\|_{h^{s}}
	\leq \epsilon^{2r+1}\|u\|_{h^{s}}\,.
\]
This proves \eqref{eq:boR} at scale $\alpha+1$ and completes the proof of Theorem \ref{thm:nf-alpha}. 
\end{proof}

\section{Proof of the Theorem}
\label{sec:pr-thm}

In this section we apply the normal form Theorem \ref{thm:nf-alpha} at scale $\beta$ together with the measure estimates from section \ref{sec:smd} to prove Theorem \ref{thm:main_low}. Then, the finite dimensional reduction strategy exposed in section \ref{sec:low-freq} completes the proof of the main Theorem \ref{thm:main}. 
\medskip

Let us start with the construction of the non-resonant set of initial data $\Theta_{\epsilon}^{\flat}$ and its measure estimate. We finally handle the dynamics of the low-frequency actions in subsection \ref{sec:dyn-low}.

\subsection{The good modulation data set, and its measure estimate}
\label{sec:meas}
Let $\tau_{\xi}:=\tau_{\beta,\xi}$ be the parameter-dependent symplectic transformations obtained in Theorem \ref{thm:nf-alpha} after $\beta$ steps. We set
\begin{equation}
\label{eq:psi}
\begin{array}{ccccc}
\Psi &\colon &\Pi_{M}B_s(9\epsilon) &\longrightarrow & h^s(\Z_{M}^{d}) \\
&&&&\\
& &\phi& \longmapsto & \tau_{\beta,\xi(\phi)}(\phi)\,.\\
\end{array}
\end{equation}
Let us state some properties of $\Psi$ that we  essentially deduce from the properties of $\tau_{\xi,\beta}$ claimed in Theorem \ref{thm:nf-alpha}. 
\begin{lemma}
\label{lem:psi-lip} 
The function $\Psi$ is a $C^{1}$-diffeomorphism from $\Pi_{M}B_{s}(9\epsilon)$  into it image which is close to the identity: for all $\phi\in \Pi_{M}B_{s}(9\epsilon)$, 
\begin{align}
\label{eq:psi-id}
\|\Psi(\phi)-\phi\|_{h^{s}}
	&\lesssim_{r}\epsilon^{\frac{3}{4}}\|\phi\|_{h^{s}}\,,\\
\label{eq:d-psi-id}
\|\mathrm{d} \Psi(\phi) -\Id\|_{h^s\to h^s} 
	&\leq \epsilon^\frac{1}{2}\,. 
\end{align}
\end{lemma}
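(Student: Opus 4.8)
The statement is Lemma \ref{lem:psi-lip}, asserting that $\Psi(\phi) = \tau_{\beta,\xi(\phi)}(\phi)$ is a $C^1$-diffeomorphism onto its image, close to the identity in the two senses \eqref{eq:psi-id}--\eqref{eq:d-psi-id}. The plan is to extract everything from the properties of $\tau_{\beta,\xi}$ collected in Theorem \ref{thm:nf-alpha} at scale $\alpha=\beta$, namely \eqref{eq:tau}, \eqref{eq:d-tau} and \eqref{eq:d-phi-xi}, together with the trivial remark that $\phi\in\Pi_M B_s(9\epsilon)$ implies $\xi(\phi)\in\mathcal{U}_s(\epsilon)$ and also $\phi\in\mathcal{V}_{\beta,s}(\epsilon_\beta,\xi(\phi))$ (since the centred actions $|\phi_n|^2-\xi_n(\phi)$ all vanish, the annulus condition \eqref{eq:cal-V} is satisfied trivially, and $9\epsilon < \epsilon_\beta = 10\epsilon - c\beta r\epsilon^{3/2}$ for $\epsilon_\ast$ small). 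So the relevant estimates of Theorem \ref{thm:nf-alpha} are all available at the point $u=\phi$.

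First I would prove \eqref{eq:psi-id}: since $\Psi(\phi)-\phi = \tau_{\beta,\xi(\phi)}(\phi) - \phi$, this is exactly \eqref{eq:tau} evaluated at $u=\phi$, giving $\|\Psi(\phi)-\phi\|_{h^s}\lesssim_r \epsilon^{3/4}\|\phi\|_{h^s}$. Next, \eqref{eq:d-psi-id}: by the chain rule, differentiating $\phi\mapsto \tau_{\beta,\xi(\phi)}(\phi)$ in $\phi$ produces two contributions — one where the derivative hits the ``point'' argument $u=\phi$, controlled by $\|\mathrm{d}_u\tau_{\beta,\xi(\phi)}(\phi) - \Id\|_{h^s\to h^s}\lesssim_r \epsilon^{3/4}$ from \eqref{eq:d-tau}, and one where it hits the parameter $\xi(\phi)$ (through $\phi\mapsto\xi(\phi)=(|\phi_n|^2)_n$), controlled by $\|\mathrm{d}_\phi\tau_{\beta,\xi(\phi)}(\phi)\|_{h^s\to h^s}\lesssim_r \epsilon^{1-1/7}$ from \eqref{eq:d-phi-xi}. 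Summing, $\|\mathrm{d}\Psi(\phi)-\Id\|_{h^s\to h^s}\lesssim_r \epsilon^{3/4}+\epsilon^{6/7}\lesssim_r\epsilon^{3/4}\leq\epsilon^{1/2}$ once $\epsilon\leq\epsilon_\ast(r)$ is small enough. The map $\phi\mapsto\xi(\phi)$ is smooth on the finite-dimensional space $\Pi_M h^s$, and $\tau_{\beta,\cdot}(\cdot)$ is jointly $C^1$ in $(\xi,u)$ by construction in Theorem \ref{thm:nf-alpha} (it is built from finitely many Lie transforms of parameter-dependent polynomials), so $\Psi\in C^1$.

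Finally, injectivity and the diffeomorphism property: from $\|\mathrm{d}\Psi(\phi)-\Id\|_{h^s\to h^s}\leq \epsilon^{1/2} < 1$ on the convex set $\Pi_M B_s(9\epsilon)$, a standard argument gives that $\Psi$ is a bi-Lipschitz embedding — for $\phi_1,\phi_2\in\Pi_M B_s(9\epsilon)$, writing $\Psi(\phi_1)-\Psi(\phi_2) = (\phi_1-\phi_2) + \int_0^1 (\mathrm{d}\Psi((1-t)\phi_2+t\phi_1)-\Id)(\phi_1-\phi_2)\,\mathrm{d}t$ yields $\|\Psi(\phi_1)-\Psi(\phi_2)\|_{h^s}\geq (1-\epsilon^{1/2})\|\phi_1-\phi_2\|_{h^s}$, hence injectivity; combined with $\mathrm{d}\Psi(\phi)$ being invertible at every point (its distance to $\Id$ is $<1$), the inverse function theorem in finite dimensions gives that $\Psi$ is a $C^1$-diffeomorphism onto the open set $\Psi(\Pi_M B_s(9\epsilon))$. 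I do not expect a genuine obstacle here; the only point requiring minor care is the bookkeeping of the parameter-dependence in the chain rule for \eqref{eq:d-psi-id}, i.e. making sure the base point $\phi$ lies in the domain $\mathcal{V}_{\beta,s}(\epsilon_\beta,\xi(\phi))$ where \eqref{eq:d-tau} and \eqref{eq:d-phi-xi} are valid, which is immediate since the centred actions vanish at $u=\phi$.
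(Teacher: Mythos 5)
Your proposal is correct and follows essentially the same route as the paper: \eqref{eq:psi-id} is \eqref{eq:tau} at $u=\phi$ (after noting $\phi\in\mathcal{V}_{\beta,s}(\epsilon_{\beta},\xi(\phi))$ and $9\epsilon\leq\epsilon_{\beta}$), \eqref{eq:d-psi-id} comes from the chain rule combining \eqref{eq:d-tau} and \eqref{eq:d-phi-xi}, and injectivity plus the diffeomorphism property follow from the mean value theorem on the convex ball, exactly as in the paper's proof.
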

\begin{proof}
Observe that $9\epsilon\leq \epsilon_{\beta}$ and, by construction, that $\phi\in\mathcal{V}_{\beta,s}(\xi(\phi),\epsilon_{\beta})$. Hence, the bound \eqref{eq:psi-id} corresponds to the bound \eqref{eq:tau}. Applying the chain rule, we obtain that for all $\phi\in \Pi_{M}B_{s}(9\epsilon)$, 
\[
\mathrm{d}\Psi(\phi) = \mathrm{d}_{\phi}\tau_{\beta,\xi(\phi)}(u)\mid_{u=\phi}+\mathrm{d}\tau_{\beta,\xi(\phi)}(\phi)\,.
\]
Hence, \eqref{eq:d-psi-id} is a consequence of  \eqref{eq:d-phi-xi} at scale $\beta$, with $u=\phi\in\mathcal{V}_{s}(\epsilon_{\beta},\xi(\phi))$, and of \eqref{eq:d-tau}. 
\medskip

It follows from \eqref{eq:d-psi-id} is that $\Psi$ is a local diffeomorphism. According to the mean value Theorem applied on the convex set $\Pi_{M}B_{s}(9\epsilon)$, 
$\Psi-\Id$ is $\epsilon^{\frac{1}{2}}$-Lipschitz, which in turn implies that $\Psi$ is an injective function on $\Pi_{M}B_{s}(9\epsilon)$. This proves that $\Psi$ is a $C^{1}$-diffeomorphism on $\Pi_{M}B_{s}(9\epsilon)$.
\end{proof}
Note that another consequence of \eqref{eq:psi-id} is that 
\[
\Psi(\Pi_{M}B_{s}(9\epsilon)) \subset \Pi_{M}B_{s}(10\epsilon)\,.
\]
The next Lemma is useful to estimate the measure of sets transported by the function $\Psi$. 
\begin{lemma}
\label{lem:meas-psi}
For all $\phi\in\Pi_{M}B_{s}(9\epsilon)$,
\begin{equation}
\label{eq:jac-psi}
	\Big|
	|\det(\mathrm{d}\Psi(\phi))|-1 
	\Big| 
	\leq
	\epsilon^\frac{1}{20}\,.
\end{equation}
\end{lemma}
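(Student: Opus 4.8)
The goal is to upgrade the operator-norm bound \eqref{eq:d-psi-id} into an estimate on the Jacobian determinant. First I would write, for $\phi\in\Pi_{M}B_{s}(9\epsilon)$,
\[
\mathrm{d}\Psi(\phi) = \Id + A(\phi)\,,\qquad A(\phi):=\mathrm{d}\Psi(\phi)-\Id\,,
\]
and recall from Lemma \ref{lem:psi-lip}, precisely from \eqref{eq:d-psi-id}, that $\|A(\phi)\|_{h^{s}\to h^{s}}\leq \epsilon^{1/2}$. Since we are on the finite-dimensional space $\Pi_{M}h^{s}(\Z_{M}^{d})$, of real dimension $D:=2\,\sharp\Z_{M}^{d}\leq 2(9M)^{d}$, both $\det$ and the multiplicative structure are perfectly well-defined, and everything reduces to a linear-algebra estimate on a $D\times D$ matrix whose perturbation from the identity is small in operator norm.

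The key step is the elementary inequality: if $A$ is a $D\times D$ matrix with $\|A\|\leq \theta<1$ (operator norm), then
\[
|\det(\Id+A)-1| \leq (1+\theta)^{D}-1 \leq e^{\theta D}-1\,.
\]
This follows, e.g., from $\det(\Id+A)=\prod_{i=1}^{D}(1+\mu_i)$ where $\mu_i$ are the eigenvalues of $A$ (each with $|\mu_i|\leq\theta$), or from expanding the determinant as a sum over permutations and bounding each $D$-fold product of entries by $\theta^{D}$ times the number of terms — but the eigenvalue argument is cleanest. Applying this with $\theta=\epsilon^{1/2}$ and $D\leq 2(9M)^{d}$, and using the constraint $M\leq \epsilon^{-\nu}$ together with $\nu\leq 1/(2d)$ (so that $M^{d}\leq\epsilon^{-1/2}$, hence $\theta D\lesssim \epsilon^{1/2}\epsilon^{-1/2}=1$, which is far too weak). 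So I must be more careful: the point is that $\theta D \lesssim \epsilon^{1/2}M^{d}\lesssim \epsilon^{1/2-\nu d}\leq \epsilon^{1/2-1/2}$, which again only gives $O(1)$. Therefore the naive bound $\|A\|\leq\epsilon^{1/2}$ is \emph{not} enough, and the main obstacle is precisely this: one needs that $A(\phi)$ is small in a norm that controls the determinant dimension-freely — e.g. a trace-norm (nuclear norm) bound $\|A(\phi)\|_{\mathrm{tr}}\leq\epsilon^{1/20}$, or at the very least $\|A(\phi)\|_{\mathrm{HS}}\,\sqrt{D}\ll 1$ is still not enough, so really one wants $\|A(\phi)\|_{\mathrm{tr}}\ll 1$.

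Consequently the real content of the lemma is to revisit the construction of $\Psi=\tau_{\beta,\xi(\cdot)}$ and extract, in addition to \eqref{eq:d-tau}–\eqref{eq:d-phi-xi}, a quantitative bound on the \emph{trace norm} (or Hilbert–Schmidt norm times a harmless dimensional factor absorbed by the extra powers of $\epsilon$) of $\mathrm{d}\Psi(\phi)-\Id$. Structurally, $\tau_{\beta,\xi}$ is a finite composition (at most $\beta\kappa\lesssim r^{2}$ factors) of time-one Lie transforms $\Phi^{1}_{\mathcal{L}(Q^{(j,\alpha)})}$, each of which has differential $\Id + (\text{something of operator norm }\lesssim\epsilon^{3/4})$ by Corollary \ref{cor:dphi}; moreover each such generator is a polynomial vector field whose $h^{s}\to h^{s}$ differential, by Proposition \ref{prop:vec-alpha} and Lemma \ref{lem:d2F}, factors through finitely many ``modes'' and therefore has finite rank controlled by the degree — so the increments $\mathrm{d}\Phi^{t}-\Id$ are actually small in trace norm, with the dimensional loss $\sharp\Z_{M}^{d}\leq(9M)^{d}\leq\epsilon^{-1/2}$ being dominated by the available power $\epsilon^{3/4}$, leaving a net $\epsilon^{1/4}$ or better; the $\phi$-derivative contribution $\mathrm{d}_{\phi}\tau$ carries the even smaller factor $\epsilon^{1-1/7}$ from \eqref{eq:d-phi-xi}. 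Multiplying these $O(r^{2})$ near-identity factors and using $|\det(\prod(\Id+A_i))-1|\leq \prod(1+\|A_i\|_{\mathrm{tr}})-1 \lesssim_{r}\max_i\|A_i\|_{\mathrm{tr}}$ then yields $|\,|\det\mathrm{d}\Psi(\phi)|-1\,|\lesssim_{r}\epsilon^{1/4}\leq\epsilon^{1/20}$ once $\epsilon_{\ast}(s,r)$ is small. Thus the plan is: (i) reduce to the trace-norm estimate via the linear-algebra inequality above; (ii) write $\mathrm{d}\Psi-\Id$ as a telescoping product of near-identity factors coming from the Lie transforms of Theorem \ref{thm:nf-alpha}; (iii) bound each factor's increment in trace norm by revisiting the vector-field estimates of Section \ref{sec:main-nf-su} (Proposition \ref{prop:vec-alpha}, Lemma \ref{lem:d2F}) and noting that the relevant operators have rank $\lesssim\sharp\Z_{M}^{d}$, so the dimensional factor $(9M)^{d}\leq\epsilon^{-\nu d}\leq\epsilon^{-1/2}$ is absorbed by the $\epsilon^{3/4}$ (and $\epsilon^{1-1/7}$) gains; (iv) multiply. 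The main obstacle is step (iii): one must be sure the gains in the symplectic-transformation estimates genuinely survive multiplication by the dimension $\sharp\Z_{M}^{d}$, which is where the somewhat generous exponents (e.g. $\epsilon^{3/4}$ rather than $\epsilon^{1/2}$, and $\nu d\leq 1/2$) are used.
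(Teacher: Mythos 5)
Your reduction to linear algebra is the right first move, but the place where you declare the direct argument ``far too weak'' is exactly where the paper's proof lives, and your obstacle comes from a misreading of the parameter regime. In the proof of Theorem \ref{thm:main_low} the exponent $\nu$ is not merely $\leq \min((2c_\ast)^{-1},(2d)^{-1})$: the theorem only asserts the \emph{existence} of some admissible $\nu$, and the set-up of Section \ref{sec:su} fixes it through \eqref{eq:upsilon}, i.e. $10^{8}c_\ast d r\nu\leq 10^{-4}$. Consequently $d_M:=\sharp\Z_M^d\lesssim M^d\leq\epsilon^{-\nu d}$ is bounded by $\epsilon^{-\frac{1}{10^4}}$ (this is the same smallness already recorded in \eqref{eq:counting}), not by $\epsilon^{-1/2}$. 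With that, the elementary bound you wrote down and then discarded closes the proof immediately: from \eqref{eq:d-psi-id}, $(1-\epsilon^{1/2})^{d_M}\leq|\det \mathrm{d}\Psi(\phi)|\leq(1+\epsilon^{1/2})^{d_M}$ (up to the harmless factor $2$ for real dimension), and $|(1\pm\epsilon^{1/2})^{d_M}-1|\lesssim d_M\epsilon^{1/2}\leq\epsilon^{\frac12-\frac{1}{10^4}}\ll\epsilon^{\frac{1}{20}}$. This is precisely the paper's argument; no trace-norm information is needed.

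Your fallback plan is therefore unnecessary machinery, and as stated it contains a false claim: the differentials $\mathrm{d}\nabla\mathcal{L}(Q)(u)$, and hence $\mathrm{d}\Phi^1_\chi(u)-\Id$, are \emph{not} of ``finite rank controlled by the degree'' (already for an integrable quartic such as $\|u\|_{\ell^2}^4$ the differential of the vector field contains a multiple of the identity on $\Pi_M h^s$). What survives of your step (iii) is only the trivial bound $\|A\|_{\mathrm{tr}}\leq 2d_M\|A\|_{op}$, which, combined with the factor-wise bounds $\epsilon^{3/4}$ (Corollary \ref{cor:dphi}) and $\epsilon^{1-\frac17}$ (eq. \eqref{eq:d-phi-xi}) and with the $O(r^2)$ factors in the composition, could indeed be pushed through even if one only knew $M^d\leq\epsilon^{-1/2}$ — but you have not carried out these estimates, and in the paper's parameter regime they buy nothing over the one-line determinant bound you rejected.
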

\begin{proof}
Seeing $\Pi_{M}B_{s}$ as a $\R$-vectorial space of dimension $d_{M}:=2\sharp(\Z_{M}^{d})$, we deduce from \eqref{eq:d-psi-id} that 
\[
(1-\epsilon^{\frac{1}{2}})^{d_{M}}\leq|\det(\mathrm{d}\Psi(\phi))| \leq (1+\epsilon^{\frac{1}{2}})^{d_{M}}\,.
\]
It remains to prove that
\begin{equation}
\label{eq:epsilon}
|(1\pm\epsilon^\frac{1}{2})^{d_{M}}-1| \leq \epsilon^\frac{1}{20}\,.
\end{equation}
We have
\begin{equation}
\label{eq:pluie}
|(1\pm\epsilon^\frac{1}{2})^{d_{M}}-1| 
	= |\exp(d_{M}\log(1\pm\epsilon^\frac{1}{2}))-1| \leq 2d_{M}|\log(1\pm\epsilon^\frac{1}{2})|\,.
\end{equation}
Recall that 
\[
d_{M} \leq 2\epsilon^{-\frac{1}{10^{4}}}\,.
\]
Using that for all $x\in[0,1)$, 
\[
|\log(1-x)|\leq \frac{x}{1-x}\,,\quad \log(1+x)\leq x\,,
\]
we obtain 
\[
d_{M}|\log(1\pm\epsilon^\frac{1}{2})|\leq 2\epsilon^{\frac{1}{2}-\frac{1}{10^{4}}} \leq \frac{1}{2}\epsilon^{\frac{1}{20}}\,.
\]
We conclude from \eqref{eq:pluie} that 
\[
|(1\pm\epsilon^\frac{1}{2})^{d_{M}}-1| \leq \epsilon^\frac{1}{20}\,,
\]
which gives \eqref{eq:epsilon} and completes the proof of \eqref{eq:jac-psi}.
\end{proof}
\begin{definition}[Non-resonant initial data]
\label{def:u-frak}
Let $\omega^{(\beta-1)}$ be the modulated frequencies at step $\beta-1$ of the normal form iteration scheme.  The set of non-resonant internal parameters is $\Xi_{\beta-1}$ as defined in \eqref{eq:tw} and the set of non-resonant data in final coordinates is
\begin{equation}
\label{eq:U}
\operatorname{U}_{\beta}
	= \operatorname{U}_{\epsilon,\gamma(\beta)}(\omega^{(\beta-1)},\Lambda_{\beta}) 
	= \Big\{\phi\in\Pi_{M}B_{s}(\epsilon)\ |\ \xi(\phi)\in\Xi_{\beta-1}\Big\}\,.
\end{equation}
\end{definition}
We first deduce from Proposition \ref{prop:modulation} an estimate on the density of $\operatorname{U}_{\beta}$ in $\Pi_{M}B_{s}(\epsilon)$. 
\begin{lemma}
\label{lem:meas-}
For all $\epsilon\in(0,\epsilon_{\ast})$ we have 
\begin{equation}
\label{eq:lem-meas}
\meas(\operatorname{U}_{\beta})
	\geq
	(1 - \epsilon^{\frac{1}{38}})
	\meas(\Pi_{M}B_{s}(\epsilon))\,.
\end{equation}
\end{lemma}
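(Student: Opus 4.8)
The plan is to combine the modulation measure estimate of Proposition \ref{prop:modulation} with the fact that the frequencies $\omega^{(\beta-1)}$ built by the normal form Theorem \ref{thm:nf-alpha} satisfy the Lipschitz hypothesis required there. The starting point is the observation that $\operatorname{U}_{\beta} = \operatorname{U}_{\epsilon,\gamma(\beta)}(\omega^{(\beta-1)},\Lambda_{\beta})$; since $\Lambda_{\beta}\subset \Nb_{\leq 2\overline{r}}$ (indeed $\Lambda_{\beta}$ only restricts the support of multi-indices of degree at most $2\overline{r}$ in the definition of the truncation and of $\Xi_{\beta-1}$), we have $\operatorname{U}_{\epsilon,\gamma(\beta)}(\omega^{(\beta-1)}) \subset \operatorname{U}_{\beta}$, so it suffices to lower bound the measure of the smaller set $\operatorname{U}_{\epsilon,\gamma(\beta)}(\omega^{(\beta-1)})$.

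First I would check the hypothesis \eqref{eq:as-omega} of Proposition \ref{prop:modulation} for $\omega = \omega^{(\beta-1)}$. By the estimate \eqref{eq:omega} in Theorem \ref{thm:nf-alpha} applied at scale $\alpha = \beta-1$, we have for all $|k|,|n|\leq M$ and all $\xi\in\mathcal{U}_{s}(\epsilon)$,
\[
|\partial_{\xi_{k}}(\omega_{n}^{(\beta-1)}(\xi) - \xi_{n})| \leq \beta\epsilon = 100 r\epsilon \leq \epsilon^{\frac{1}{2}}
\]
once $\epsilon_{\ast}(r,s)$ is small enough (recall $\beta = 100r$ is fixed, so $100r\epsilon \leq \epsilon^{1/2}$ for $\epsilon$ small). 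Thus Proposition \ref{prop:modulation} applies with $\gamma = \gamma(\beta) = 4^{\beta}\gamma = 4^{100r}\epsilon^{1/30}$, and yields
\[
\meas(\operatorname{U}_{\epsilon,\gamma(\beta)}(\omega^{(\beta-1)})) \geq \meas(\Pi_{M}B_{s}(\epsilon))\,(1 - \gamma(\beta)\epsilon^{-\frac{1}{10^{4}}})\,.
\]

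It then remains to absorb the combinatorial constant: $\gamma(\beta)\epsilon^{-1/10^{4}} = 4^{100r}\epsilon^{\frac{1}{30} - \frac{1}{10^{4}}}$, and since $4^{100r}$ is a fixed constant depending only on $r$ while $\tfrac{1}{30} - \tfrac{1}{10^{4}} > \tfrac{1}{38}$, we get $\gamma(\beta)\epsilon^{-1/10^{4}} \leq \epsilon^{1/38}$ for $\epsilon \leq \epsilon_{\ast}(r)$ small enough. Combining with the inclusion $\operatorname{U}_{\epsilon,\gamma(\beta)}(\omega^{(\beta-1)}) \subset \operatorname{U}_{\beta}$ gives \eqref{eq:lem-meas}. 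The only mild subtlety — which I would spell out carefully — is the precise relation between $\operatorname{U}_{\beta}$ as defined via $\Xi_{\beta-1} = \Xi_{\epsilon,\gamma(\beta)}(\omega^{(\beta-1)},\Lambda_{\beta})$ and the set $\operatorname{U}_{\epsilon,\gamma(\beta)}(\omega^{(\beta-1)})$ of Definition \ref{def:non-res}: since restricting the family $\mathcal{E}$ of multi-indices over which the minimum is taken can only enlarge the non-resonant set, one has $\operatorname{U}_{\epsilon,\gamma(\beta)}(\omega^{(\beta-1)},\Nb_{\leq2\overline{r}}) \subset \operatorname{U}_{\epsilon,\gamma(\beta)}(\omega^{(\beta-1)},\Lambda_{\beta}) = \operatorname{U}_{\beta}$, which is exactly what is needed. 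There is no real obstacle here; the content is entirely in Proposition \ref{prop:modulation} and the bookkeeping of the parameter powers.
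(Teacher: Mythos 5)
Your proof is correct and follows essentially the same route as the paper: verify the Lipschitz hypothesis \eqref{eq:as-omega} via \eqref{eq:omega} at scale $\beta-1$, apply Proposition \ref{prop:modulation} with $\gamma(\beta)\lesssim_r\epsilon^{1/30}$, and absorb the constant using $\tfrac{1}{30}-\tfrac{1}{10^4}>\tfrac{1}{38}$. Your explicit remark that shrinking the class $\mathcal{E}$ of multi-indices only enlarges the non-resonant set is a detail the paper leaves implicit, but it is the same argument.
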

\begin{proof}
For all $\xi\in\mathcal{U}_{s}(\epsilon)$, the modulated frequencies $(\omega_{n}^{(\beta-1)}(\xi))_{n}$ satisfy \eqref{eq:omega} at scale $\beta-1$. Hence, they satisfy the assumption of Proposition \ref{prop:modulation}, and we obtain that the associated non-resonant set of data $\operatorname{U}_{\beta}$ has large density in $\Pi_{M}B_{s}(\epsilon)$: 
\[
\meas\Big(
	\Pi_{M}B_{s}(\epsilon)\setminus \operatorname{U}_{\beta}
	\Big) 
	\leq
	\gamma(\beta)\epsilon^{-\frac{1}{10^{4}}}\meas(\Pi_{M}B_{s}(\epsilon))\,.
\] 
Under our choice of parameters (in particular $\gamma(\alpha)=4^{\alpha}\gamma=4^{\alpha}\epsilon^{\frac{1}{30}}$) we deduce that 
\[
\meas(\Pi_{M}B_{s}(\epsilon)
	\setminus\operatorname{U}_{\beta}) 
	\leq 
	\epsilon^{\frac{1}{38}}\meas(\Pi_{M}B_{s}(\epsilon))\,.
\]
This gives \eqref{eq:lem-meas} and concludes the proof of the lemma.
\end{proof}
We are now ready to estimate from below the density of the set of non-resonant data in the coordinates obtained after the preliminary transformation $\tau_{0}$ of section \ref{sec:low-freq}.
\begin{proposition}
\label{prop:xi0} For all $\epsilon\in(0,\epsilon_{\ast})$, let
\begin{equation}
\label{eq:theta}
\Theta_{\epsilon}^{\flat}
	:= 
	\Psi(\operatorname{U}_{\beta})
	\cap
	\Pi_{M}B_{s}(\epsilon)
	\,.
\end{equation}
Then, $\Theta_{\epsilon}^{\flat}$ is an open subset of $\Pi_{M}B_{s}(\epsilon)$, with
\begin{equation}
\meas(\Theta_{\epsilon}^{\flat})\geq(1-\epsilon^{\frac{1}{39}})\meas(\Pi_{M}B_{s}(\epsilon))\,.
\end{equation}
\end{proposition}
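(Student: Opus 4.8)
\textbf{Proof plan for Proposition \ref{prop:xi0}.} The strategy is to combine the density estimate for the non-resonant set $\operatorname{U}_{\beta}$ in $\Pi_{M}B_{s}(\epsilon)$ from Lemma \ref{lem:meas-} with the fact that $\Psi$ is a $C^{1}$-diffeomorphism close to the identity (Lemma \ref{lem:psi-lip}) whose Jacobian is close to $1$ (Lemma \ref{lem:meas-psi}), and then to control the small boundary layer that is created or destroyed when we intersect $\Psi(\operatorname{U}_{\beta})$ with the ball $\Pi_{M}B_{s}(\epsilon)$. First I would note that $\Theta_{\epsilon}^{\flat}$ is open: $\operatorname{U}_{\beta}$ is open as an intersection of finitely many open conditions (Definition \ref{def:non-res} together with the counting bound \eqref{eq:counting}), $\Psi$ is an open map being a $C^{1}$-diffeomorphism, and $\Pi_{M}B_{s}(\epsilon)$ is open, so the intersection is open.

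For the measure estimate, the plan is to write
\[
\meas(\Theta_{\epsilon}^{\flat}) \geq \meas\big(\Psi(\operatorname{U}_{\beta}\cap\Pi_{M}B_{s}((1-\epsilon^{1/2})\epsilon))\big) - \meas(\mathcal{A}_{\epsilon}),
\]
where $\mathcal{A}_{\epsilon}$ is an annular region of the form $\{\phi\in\Pi_{M}B_{s}\mid (1-\epsilon^{1/2})\epsilon \le \|\phi\|_{h^{s}}\le \epsilon\}$ (adjusting constants as needed). The point of shrinking the radius is that since $\|\Psi(\phi)-\phi\|_{h^{s}}\lesssim_{r}\epsilon^{3/4}\|\phi\|_{h^{s}}$ by \eqref{eq:psi-id}, the image $\Psi(\operatorname{U}_{\beta}\cap\Pi_{M}B_{s}((1-\epsilon^{1/2})\epsilon))$ is contained in $\Pi_{M}B_{s}(\epsilon)$ for $\epsilon$ small, so it is a genuine subset of $\Theta_{\epsilon}^{\flat}$. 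Then, by the change of variables formula and the Jacobian bound \eqref{eq:jac-psi},
\[
\meas\big(\Psi(\operatorname{U}_{\beta}\cap\Pi_{M}B_{s}((1-\epsilon^{1/2})\epsilon))\big) \geq (1-\epsilon^{1/20})\,\meas\big(\operatorname{U}_{\beta}\cap\Pi_{M}B_{s}((1-\epsilon^{1/2})\epsilon)\big).
\]
The last measure is bounded below by $\meas(\Pi_{M}B_{s}((1-\epsilon^{1/2})\epsilon)) - \meas(\Pi_{M}B_{s}(\epsilon)\setminus\operatorname{U}_{\beta})$, and using Lemma \ref{lem:meas-} together with the homogeneity identity $\meas(\Pi_{M}B_{s}(\rho\epsilon)) = \rho^{2d_{M}}\meas(\Pi_{M}B_{s}(\epsilon))$ from Lemma \ref{lem:meas-ball} (with $d_{M}=\sharp\Z_{M}^{d}\le\epsilon^{-1/10^{4}}$ by \eqref{eq:counting}), the factor $(1-\epsilon^{1/2})^{2d_{M}}$ is $\geq 1-\epsilon^{1/20}$ by the same elementary estimate on $\exp(d_{M}\log(1-\epsilon^{1/2}))$ used in the proof of Lemma \ref{lem:meas-psi}. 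Collecting the losses $\epsilon^{1/38}$ (non-resonant density), $\epsilon^{1/20}$ (Jacobian) and $\epsilon^{1/20}$ (radius shrink), all of which are $o(\epsilon^{1/39})$, and absorbing them into a single $\epsilon^{1/39}$ for $\epsilon<\epsilon_{\ast}$ small enough, yields $\meas(\Theta_{\epsilon}^{\flat})\geq(1-\epsilon^{1/39})\meas(\Pi_{M}B_{s}(\epsilon))$.

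The main obstacle I anticipate is purely bookkeeping: making sure that all the shrinking and image inclusions are compatible, i.e. that $\Psi(\operatorname{U}_{\beta}\cap\Pi_{M}B_{s}((1-\epsilon^{1/2})\epsilon))$ really lands inside $\Pi_{M}B_{s}(\epsilon)$ (which forces the particular choice of radius reduction, tuned against the $\epsilon^{3/4}$ closeness of $\Psi$ to the identity) and that the final arithmetic of exponents genuinely closes with the exponent $\tfrac{1}{39}$ appearing in the statement — this is where the slack between $\tfrac{1}{38}$, $\tfrac{1}{39}$ and $\tfrac{1}{40}$ (used later in deducing Theorem \ref{thm:main}) is consumed. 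No new analytic idea is needed beyond the three lemmas already proved; the only subtlety is that the change of variables must be applied on the open set $\operatorname{U}_{\beta}\cap\Pi_{M}B_{s}((1-\epsilon^{1/2})\epsilon)$ where $\Psi$ is a diffeomorphism onto its (open) image, so that $\meas(\Psi(E)) = \int_{E}|\det \mathrm{d}\Psi|$ holds without any injectivity worry, Lemma \ref{lem:psi-lip} having already established global injectivity on $\Pi_{M}B_{s}(9\epsilon)$.
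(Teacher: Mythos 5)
Your proof is correct and follows essentially the same route as the paper: openness from the diffeomorphism property of $\Psi$, the change-of-variables bound via the Jacobian estimate of Lemma \ref{lem:meas-psi}, the density estimate of Lemma \ref{lem:meas-}, and an annulus of relative width $\epsilon^{1/2}$ costing $\epsilon^{1/20}$, all absorbed into $\epsilon^{1/39}$. The only (harmless) difference is that you shrink the domain ball so that $\Psi(\operatorname{U}_{\beta}\cap\Pi_{M}B_{s}((1-\epsilon^{1/2})\epsilon))\subset\Theta_{\epsilon}^{\flat}$ and pay the inner annulus by homogeneity, whereas the paper keeps all of $\operatorname{U}_{\beta}$ and subtracts the measure of the outer annulus $\{\epsilon\le\|u\|_{h^{s}}\le\epsilon+\epsilon^{3/2}\}$ containing the overshoot of $\Psi$; the exponent arithmetic closes identically in both versions.
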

\begin{proof} 
Since $\operatorname{U}_{\beta}$ is also an open subset of $\Pi_{M}B_{s}(\epsilon)$ and $\Psi$ is bi-Lipschitz, $\Theta_{\epsilon}^{\flat}$ is an open subset of $\Pi_{M}B_{s}(\epsilon)$. 

\medskip

As for the measure estimate, we estimated the measure of $\operatorname{U}_{\beta}$ in Lemma \ref{lem:meas-} and it essentially remains to estimate its image by $\Psi$, intersected with $\Pi_{M}B(\epsilon)$. Note that, according to \eqref{eq:psi-id}, we have 
\[
\Psi(\Pi_{M}B_{s}(\epsilon))\subset \Pi_{M}B_{s}(\epsilon+C(r)\epsilon^{\frac{7}{4}})
	\subset \Pi_{M}B_{s}(\epsilon+\epsilon^{\frac{3}{2}})\,, 
\]
provided $\epsilon_{\ast}(r)$ is small enough, and therefore, defining 
\[
\mathcal{A}_{\epsilon} = \{ u\in \Pi_{M}h^{s}\ |\ \epsilon \leq \|u\|_{h^{s}} \leq \epsilon + \epsilon^{\frac{3}{2}}\}\,.
\]
we obtain that
\[
\Psi(\operatorname{U}_{\beta}) 
	=
	\Big(
	\Psi(\operatorname{U}_{\beta})
	\cap
	\Pi_{M}B_{s}(\epsilon)
	\Big)
	\sqcup 
	\Big(
	\Psi(\operatorname{U}_{\beta})\cap \mathcal{A}_{\epsilon}
	\Big)
	= \Theta_{\epsilon}^{\flat} 
	\sqcup 
	\Big(
	\Psi(\operatorname{U}_{\beta})\cap \mathcal{A}_{\epsilon}
	\Big)\,,
\]
where the second equality follows from the definition of the set $\Theta_{\epsilon}^{\flat}$. We deduce that 
\begin{equation}
\label{eq:sqcup}
\meas(\Theta_{\epsilon}^{\flat}) 
	\geq 
	\meas(\Psi(\operatorname{U}_{\beta}))
	- \meas(\mathcal{A}_{\epsilon})\,.
\end{equation}
Then, we do a change of variable  
\[
\begin{split}
\meas(\Psi(\operatorname{U}_{\beta})) 
	= 
	\int_{\Psi(\operatorname{U}_{\beta})}
	\mathrm{d}u 
	&= 
	\int_{\operatorname{U}_{\beta}}
	|\det(\mathrm{d}\Psi(\phi))|\mathrm{d}\phi \\
	&\geq 
	\meas(\operatorname{U}_{\beta})
	\min_{\phi\in \Pi_{M}B_s(\epsilon)}
	|\det(\mathrm{d}\Psi(\phi))| \,,
	\end{split}
\]
and we deduce from \eqref{eq:jac-psi} and \eqref{eq:lem-meas} that
\[
\meas
	(
	\Psi(\operatorname{U}_{\beta})
	)
	\geq 
	(1-\epsilon^{\frac{1}{38}})(1-\epsilon^{\frac{1}{20}})\meas(\Pi_{M}B_s(\epsilon))\,.
\]
On the other hand,  we deduce from the bound \eqref{eq:epsilon} that, by homogeneity,
\[
	\frac{\meas(\mathcal{A}_{\epsilon})}
	{\meas(\Pi_{M}(\epsilon))} 
	=
	(1+\epsilon^{\frac{1}{2}})^{d_{M}}
	-1
	\leq \epsilon^{\frac{1}{20}}\,.
\]
We conclude from \eqref{eq:sqcup} that 
\[
\meas(\Theta_{\epsilon}^{\flat}) \geq (1-\frac{1}{2}\epsilon^{\frac{1}{39}}- \epsilon^{\frac{1}{20}})\meas(\Pi_{M}B_{s}(\epsilon)) \geq (1-\epsilon^{\frac{1}{39}})\meas(\Pi_{M}B_{s}(\epsilon))\,.
\]
This completes the proof of Proposition \ref{prop:xi0}. 
\end{proof}

\subsection{Dynamics} 
\label{sec:dyn-low}
It remains to prove the dynamical part of Theorem \ref{thm:main_low}, which is precisely the stability of the low frequency actions for initial data in the non-resonant set $\Theta_{\epsilon}^{\flat}$ defined in \eqref{eq:theta}.

\begin{proof}[Proof of Theorem \ref{thm:main_low}] Let $u(0)\in\Pi_{M}B_{s}(\epsilon)$, with $\|u(0)\|_{h^{s}}\leq \rho \leq \epsilon$. We suppose that $0<T\leq \epsilon^{-r}$ and $u\in C^{1}([-T,T],\Pi_{M}h^{s})$ is such that 
\[
i\partial_{t}u = \nabla H_{\lo}(u) + f(t)\,,
\]
with $u(t=0)=u(0)$ and, for all $|t|\leq T$,
\[
\|f(t)\|_{h^{s}}\leq \epsilon^{3r}\rho\,.
\]
The Hamiltonian $H_{\lo}$ is given by \eqref{eq:Hlo-0}. We constructed in Theorem \ref{thm:nf-alpha} for each $\xi\in\mathcal{U}_{s}(\epsilon)$ a symplectic transformation $\tau_{\beta,\xi}$, and we defined the bi-Lipschitz function $\Psi$ in \eqref{eq:psi}. The set of non-resonant parameters $\Theta_{\epsilon}^{\flat}\subset \Pi_{M}B_{s}(\epsilon)$ was introduced in \eqref{eq:theta}.

\medskip

We proved in Proposition \ref{prop:xi0} the density estimate \eqref{eq:density} of $\Theta_{\epsilon}^{\flat}$ in $\Pi_{M}B_{s}(\epsilon)$, and we now suppose that $u(0)\in\Theta_{\epsilon}^{\flat}$. By construction of the set $\Theta_{\epsilon}^{\flat}$ (see \eqref{eq:theta}) one can find $\phi\in\operatorname{U}_{\beta}$ such that 
\[
u(0) = \Psi(\phi) = \tau_{\beta,\xi(\phi)}(\phi)\,.
\]
Then, we set for $|t|\leq T$
\[
\xi:=\xi(\phi)=(|\phi_{n}|^{2})_{|n|\leq M}\,,\quad v(t) := \tau_{\beta,\xi}^{-1}(u(t))\,.
\]
In particular, $v$ is solution in $C([-T,T],\Pi_{M}h^{s})$ to the Cauchy problem
\begin{equation}
\label{eq:cau-v}
\begin{cases}
i\partial_{t}v = \nabla H^{(\beta)}(\xi;v(t)) - i(\mathrm{d}\tau_{\beta,\xi}(v(t)))^{-1}(if(t))\,,\\
v(0) = \phi\,,
\end{cases}
\end{equation}
where 
\[
H^{(\beta)}(\xi;u) := H_{\lo}^{(\beta)}(\xi;\tau_{\beta,\xi}(v) = Z_{2}^{(\beta)}(\xi;u)+Z_{4}^{(\beta)}(\xi;u)+Q^{(\beta)}(\xi;u)+R^{(\beta)}(\xi;u)\,,
\]
as in Theorem \ref{thm:nf-alpha}. Note that we used 
\[
\mathrm{d}\tau_{\beta,\xi}^{-1}(u(t)) = (\mathrm{d}\tau_{\beta,\xi}(v(t)))^{-1}\,.
\]
We now perform a bootstrap argument to show that for all $|t|\leq T\leq \epsilon^{-r}$, 
\begin{equation}
\label{eq:boo-v}
\sum_{|n|\leq M}\langle n\rangle^{2s}||v_{n}(t)|^{2} - \xi_{n}| \leq N_{\beta}^{-2s}\epsilon^{\frac{1}{2}}\rho^{2}\,. 
\end{equation}
This in particular implies that 
\begin{equation}
\label{eq:boo-v-c}
v(t)\in\mathcal{V}_{\beta,s}(2\epsilon,\xi)\,,\quad \|v(t)\|_{h^{s}}\leq 2\rho\,,
\end{equation}
and, according to the bound \eqref{eq:tau} on the symplectic transformation $\tau_{\beta,\xi}$, we deduce that
\begin{align*}
\sum_{|n|\leq M}&\langle n\rangle^{2s}||u_{n}(t)|^{2} - |u_{n}(0)|^{2}| \\
	\leq&
	\sum_{|n|\leq M}\langle n\rangle^{2s}||u_{n}(t)|^{2} - |v_{n}(t)|^{2}|+ \sum_{|n|\leq M}\langle n\rangle^{2s}||v_{n}(t)|^{2} - |v_{n}(0)|^{2}|  \\
	&+
	\sum_{|n|\leq M}\langle n\rangle^{2s}||v_{n}(0)|^{2} - |u_{n}(0)|^{2}| \\
	\lesssim_{r}&
	\epsilon^{\frac{3}{2}}(\|v(t)\|_{h^{s}}^{2}+\|\phi\|_{h^{s}}^{2}) + N_{\beta}^{-2s}\epsilon^{\frac{1}{2}}\rho^{2}  \leq \frac{\epsilon}{2}\rho^{2}\,,
\end{align*}
uniformly in $|t|\leq T$. This is precisely the desired bound $\eqref{eq:boo-u}$ for the dynamics of $u(t)$.

\medskip

It remains to prove \eqref{eq:boo-v}. At time $t=0$, since $v(0)=\phi$ and $\xi_{n}=|\phi_{n}|^{2}$ for all $|n|\leq M$ the left-and-side of \eqref{eq:boo-v} vanishes. For $|t|\leq T$, we deduce from the equation \eqref{eq:cau-v} a priori bounds for $v$ under the assumption \eqref{eq:boo-v}, from which \eqref{eq:boo-v} follows by a standard bootstrap argument.

\medskip

Since, by assumption, $\phi\in\operatorname{U}_{\beta-1}$ then $\xi(\phi)\in\Xi_{\beta-1}$ and, assuming that $v(t)\in\mathcal{V}_{\beta,s}(2\epsilon,\xi)$, we obtain
\[
\|\nabla R^{(\beta)}(\xi,v(t))\|_{h^{s}}\lesssim_{r}\epsilon^{2r+1}\|v(t)\|_{h^{s}}\,.
\]
where we used the bound \eqref{eq:boR} for the remainder $R$. 
In addition, when $v(t)\in\mathcal{V}_{\beta,s}(2\epsilon,\xi)$, we obtain from Proposition \ref{prop:vec-alpha} and the bound \eqref{eq:boQ} at scale $\beta$ that 
\begin{equation}
\label{eq:jeveuxteciterdanslintro}
\|\nabla Q^{(\beta)}(\xi;v(t))\|_{h^{s}} \leq \|Q^{(\beta)}\|_{\Ysup{\beta}}N_{\beta}^{-4s}\epsilon^{3}\|v(t)\|_{h^{s}}
	\lesssim N_{\beta}^{-4s}\epsilon^{3-\frac{1}{10^{4}}}\rho\,.
\end{equation}
Using the equation \eqref{eq:cau-v} we obtain that for all $|n|\leq M$, 
\begin{multline*}
\frac{d}{dt}|v_{n}(t)|^{2} 
= 2\im
	\Big(
	\partial_{\overline{v_{n}}}(Z_{2}^{(\beta)}+Z_{4}^{(\beta)}+Q^{(\beta)}+R^{(\beta)})(\xi;v(t))\overline{v_{n}(t)}
	\Big) \\
	+
	2\im
	\Big(i
	\Big((\mathrm{d}\tau_{\beta,\xi}(v(t)))^{-1}(if(t))\Big)_{n}\overline{v_{n}(t)}
	\Big)
\end{multline*}
Using that $Z_{2}^{(\beta)},Z_{4}^{(\beta)}\in X_{\mathrm{Int}}(\epsilon)$ is integrable (and, consequently, does not contribute to the dynamics of the actions), we deduce that 
\[
\frac{d}{dt}|v_{n}(t)|^{2} 	=2\im
	\Big(
	\partial_{\overline{v_{n}}}(Q^{(\beta)}+R^{(\beta)})(\xi;v(t))\overline{v_{n}}
	\Big)
	+
	2\im
	\Big(i\Big(\mathrm{d}\tau_{\beta,\xi}(v(t)))^{-1}(if(t))\Big)_{n}\overline{v_{n}}
	\Big)\,.
\]
We deduce from the above an energy estimate that
\begin{multline*}
\sum_{|n|\leq M}\langle n\rangle^{2s}|\frac{d}{dt}|v_{n}(t)|^{2}| \leq \|v(t)\|_{h^{s}}
	\\
	\Big(
	\|\nabla Q^{(\beta)}(\xi;v(t))\|_{h^{s}}
	+
	\|\nabla R^{(\beta)}(\xi;v(t))\|_{h^{s}}
	+
	\|(\mathrm{d}\tau_{\beta,\xi}(v(t)))^{-1}\|_{h^{s}\to h^{s}}\|f(t)\|_{h^{s}}
	\Big)
\end{multline*}
 Under the bootstrap assumption \eqref{eq:boo-v} and its consequences \eqref{eq:boo-v-c}, we have therefore 
\begin{align*}
\sum_{|n|\leq M}\langle n\rangle^{2s}|\frac{d}{dt}|v_{n}(t)|^{2}| 
	&\lesssim_{r}
	\rho^{2}
	\Big(
	N_{\beta}^{-4s}\epsilon^{3-\frac{1}{10^{4}}}
	+
	\epsilon^{2r+1}
	+\epsilon^{3r}
	\Big) \\
	&\lesssim_{r}
	\epsilon^{r}\rho^{2}
	\Big(
	N_{\beta}^{-4s}\epsilon^{-r}\epsilon^{3-\frac{1}{10^{4}}}+\epsilon^{r+1} + \epsilon^{2r}
	\Big)\,.
\end{align*}
where we used the bound \eqref{eq:d--1} on the symplectic transformation and  the assumption on $f(t)$. Under the conditions of subsection \eqref{sec:par} on the parameters (in particular the choice \eqref{eq:s-beta} of $N_{\beta}$ such that $N_{\beta}^{-2s}=\epsilon^{r}$), we deduce that 
\[
\sum_{|n|\leq M}\langle n\rangle^{2s}|\frac{d}{dt}|v_{n}(t)|^{2}|
	\lesssim_{r} N_{\beta}^{-2s}\epsilon^{r+1}\rho^{2}\,.
\]
Hence, integrating in time (with $|t|\leq T\leq \epsilon^{-r}$) gives 
\[
\sum_{|n|\leq M}\langle n\rangle^{2s}||v_{n}(t)|^{2}-\xi_{n}|
	\lesssim_{r} N_{\beta}^{-2s}\epsilon\rho^{2}\,. 
\]
This is a stronger bound than the bootstrap assumption \eqref{eq:boo-v}, and this completes the proof of Theorem \ref{thm:main_low}.
\end{proof}

\medskip
\appendix
\section{Appendix}

\subsection{Proof of Proposition \ref{prop:bracket1}}
\label{app-lp}

\begin{proof}[Proof of Proposition \ref{prop:bracket1}] The proof follows the same lines as the proof of Proposition \ref{prop:bracket0}, with some adjustments. We first prove \eqref{eq:braket1-alpha}: given $Q$ and $ H$ we set
\[
P(\xi;u) := \{Q,H\}(\xi;u) = \sum_{\nb''\in\Nb_{\leq2\overline{r}^{2}}}(P[\xi])_{\nb''}z_{\nb''}(u,I(u)-\xi)\,.
\]
As in the proof of Proposition \ref{prop:bracket0} we first reduce to fixed monomials $(\nb,\nb')$: for all $\nb''$ and $k\in\Z_{M}^{d}$ we have 
\begin{multline}
|\partial_{\xi_{k}}(P[\xi])_{\nb''}|
	\leq \epsilon^{-\frac{1}{10^{4}}}\sup_{\mathbf{a},\mathbf{b},n,\nb,\nb'}
	\Big|
	\partial_{\xi_{k}}
	\Big(\prod_{|j|\leq M}\xi_{j}^{a_{j}-b_{j}}
	(Q[\xi])_{\nb}(H[\xi])_{\nb'}
	\Big)
	\Big|  \\
	\Big[
	\mathbf{1}_{E_{\nb'',\mathbf{a},\mathbf{b}}^{(1)}(n)}(\nb,\nb')|k_{n}'\ell_{n}-k_{n}\ell_{n}'|
	+\mathbf{1}_{E_{\nb'',\mathbf{a},\mathbf{b}}^{(2)}(n)}(\nb,\nb') 
	|m_{n}(k_{n'}-\ell_{n}')+m_{n'}(k_{n}-\ell_{n})|
	\Big]
	\label{eq:pr-dxi}
\end{multline}
We separate three main cases, depending on where the derivative $\partial_{\xi_{k}}$ falls.  
\medskip

\noindent$\triangleright$ {\it Case 1:} $\partial_{\xi_{k}}$ falls on a term $\xi_{k}$ coming from the re-centered actions. In this case, we have
\begin{multline}
	\Big|
	\partial_{\xi_{k}} 
	\Big(
	\prod_{|j|\leq M}\xi_{j}^{a_{j}-b_{j}}
	\Big)
	(Q[\xi])_{\nb}(H[\xi])_{\nb'}
	\Big| \\
	\leq 
	\|Q\|_{\Ysup{\alpha}}\|H\|_{\Ysup{\alpha}} 
	\Big(
	\mathbf{1}_{a_{k}-b_{k}\geq1}(a_{k}-b_{k})\xi_{k}^{a_{k}-b_{k}-1}
	\prod_{j\neq k}\xi_{j}^{a_{j}-b_{j}}\Big)
	\mathrm{w}_{\nb}^{0}(\alpha)
	\mathrm{w}_{\nb'}^{0}(\alpha)\,.
	\label{eq:nuit}
\end{multline}
Hence, when $a_{k}-b_{k}\geq1$, a term $\xi_{k}$ is removed, and this costs a factor $\operatorname{C}_{k}(\alpha)^{2}$. Following the proof of Proposition \ref{prop:bracket0} we deduce that the contribution of \eqref{eq:nuit} to \eqref{eq:pr-dxi} is bounded by
\begin{align*}
\eqref{eq:nuit}
	&\leq
	\|Q\|_{\Ysup{\alpha}}\|H\|_{\Ysup{\alpha}}
	\eta^{4-\frac{1}{5}}N_{\alpha}^{-4s}
	\mathrm{w}_{\nb''}^{0}(\alpha)
	\operatorname{C}_{k}(\alpha)^{2}
	(a_{k}-b_{k})
	 \\
	&\lesssim_{r}
	\|Q\|_{\Ysup{\alpha}}\|H\|_{\Ysup{\alpha}}
	\eta^{2-\frac{1}{5}}N_{\alpha}^{-2(s-\tau)}
	\mathrm{w}_{\nb''}^{0}(\alpha)
	\\
	&=
	\|Q\|_{\Ysup{\alpha}}\|H\|_{\Ysup{\alpha}}
	\eta^{4-\frac{1}{5}}N_{\alpha}^{-4s+2\tau}
	\mathrm{w}_{\nb''}^{1}(\alpha)
	\\
	&\leq
	\|Q\|_{\Ysup{\alpha}}\|H\|_{\Ysup{\alpha}}
	 \eta^{4-\frac{1}{4}}N_{\alpha}^{-4s}\mathrm{w}_{\nb''}^{1}(\alpha) \,,
\end{align*}
where we used Definition \ref{def:weight} for the weights and the condition \eqref{eq:eta-tau} on the parameters. 
\medskip

\noindent$\triangleright${\it Case 2:} In this case, $\partial_{\xi_{k}}$ falls on either $(Q[\xi])_{\nb}$ or $(H[\xi])_{\nb'}$. Since the two situations are symmetric, we suppose that it falls on $(Q[\xi])_{\nb}$. In Case 2 we have to control the following quantity:
\begin{multline}
\epsilon^{-\frac{1}{10^{4}}}
	\|Q\|_{\Ylip{\alpha}}\|H\|_{\Ysup{\alpha}}
	\sup_{\mathbf{a},\mathbf{b},n,\nb,\nb'}
	(\prod_{|j|\leq M}\xi_{j}^{a_{j}-b_{j}})\mathrm{w}_{\nb}^{1}(\alpha)\mathrm{w}_{\nb'}^{0}(\alpha) \\
	\Big[\mathbf{1}_{E_{\nb''}^{(1)}(n,\mathbf{a},\mathbf{b})}(\nb,\nb')
	|k_{n}'\ell_{n}-k_{n}\ell_{n}'|
	+\mathbf{1}_{E_{\nb''}^{(1)}(n,\mathbf{a},\mathbf{b})}(\nb,\nb')
	|m_{n}(k_{n}'-\ell_{n'})+m_{n'}(\ell_{n}-k_{n})|
	\Big]\,.
\label{eq:pinot} 
\end{multline}
In comparison to \eqref{eq:pogi}, $\mathrm{w}_{\nb}^{0}(\alpha)$ is substituted with $\mathrm{w}_{\nb}^{1}(\alpha)$ in \eqref{eq:pinot}.  Note that the weight $\mathrm{w}_{\nb''}^{1}(\alpha)$ has the same pre-factor as $\mathrm{w}_{\nb}^{1}(\alpha)$, so it suffices to prove that $\mathrm{w}_{\nb'}^{0}(\alpha)$ absorbs the losses. Reproducing the analysis performed  in Cases 1 and 2 in the proof of Proposition \ref{prop:bracket0} gives 
\[
|\eqref{eq:pinot}| \leq 
	\|Q\|_{\Ylip{\alpha}}\|H\|_{\Ysup{\alpha}}\eta^{4-\frac{1}{4}}N_{\alpha}^{-4s}\mathrm{w}_{\nb''}^{1}(\alpha)\,,
\]
which is conclusive. 
\medskip

Let us now turn to the proof of \eqref{eq:braket1-alpha-Z}. Given an integrable quartic term $Z\in X_{4}(\epsilon)$, the Poisson bracket $\{Q,Z\}=:P$ is explicitly written in \eqref{eq:poiss-z}. Once again we stress out that $P$ has no new integrable terms (hence there is no Case 1 as above). Differentiating  the coefficient $(P[\xi])_{\nb''}$ with respect to $\xi_{k}$ gives
\begin{multline}
\label{eq:midi}
\partial_{\xi_{k}}(P[\xi])_{\nb''} 
	= 2i\sum_{|j_{1}|,|j_{2}|\leq M}
	\sum_{\nb\in\Nb_{\leq\overline{r}^{2}}}
	\mathbf{1}
	_{E_{\nb'',0,0}^{(2)}(j_{1})}
	(
	\nb,
	\mathbf{e}_{\mathfrak{m}}(j_{1})+\mathbf{e}_{\mathfrak{m}}(j_{2})
	)
	(k_{j_{1}}-\ell_{j_{1}}) \\
	\Big(
	\partial_{\xi_{k}}(Q[\xi])_{\nb}
	(Z[\xi])_{\mathbf{e}_{\mathfrak{m}}(j_{1})+\mathbf{e}_{\mathfrak{m}}(j_{2})}
	+
	(Q[\xi])_{\nb}
	\partial_{\xi_{k}}
	(Z[\xi])_{\mathbf{e}_{\mathfrak{m}}(j_{1})+\mathbf{e}_{\mathfrak{m}}(j_{2})}
	\Big)\,.
\end{multline}
As detailed in the proof of Proposition \ref{prop:bracket0}, we have the relation \eqref{eq:rel} and therefore 
\[
\mathrm{w}_{\nb}^{1}(\alpha) = \frac{1}{\operatorname{D}(\alpha)}\mathrm{w}_{\nb''}^{1}(\alpha) = N_{\alpha}^{-2s}\eta^{2+\frac{1}{5}}\mathrm{w}_{\nb''}^{1}(\alpha)\,.
\]
We deduce that when $\partial_{\xi_{k}}$ falls on $(Q[\xi])_{\nb}$ the contribution is bounded by 
\[
\lesssim_{r}
	\|Q\|_{\Ylip{\alpha}}\|Z\|_{Z^{\mathrm{lip}}}	
	\epsilon^{-\frac{1}{10^{4}}}
	\mathrm{w}_{\nb}^{1}(\alpha) 
	\leq 
	\|Q\|_{\Ylip{\alpha}}\|Z\|_{Z^{\mathrm{sup}}}
	N_{\alpha}^{-2s}\eta^{2+\frac{1}{5}}
	\mathrm{w}_{\nb''}^{1}(\alpha)\,.
\]
On the other hand, when $\partial_{\xi_{k}}$ falls on $(Z[\xi])_{\mathbf{e}_{\mathfrak{m}}(j_{1})+\mathbf{e}_{\mathfrak{m}}(j_{2})}$ we reproduce the proof of \eqref{eq:pr-z} to deduce that this contribution is bounded by 
\[	
	\leq
	\|Q\|_{\Ysup{\alpha}}
	\| Z\|_{Z^{\mathrm{lip}}}
	N_{\alpha}^{-2s}\eta^{2+\frac{1}{5}-\frac{2}{10^{4}}}
	\mathrm{w}_{\nb''}^{0}(\alpha)
	\leq 
	\|Q\|_{\Ysup{\alpha}}
	\| Z\|_{Z^{\mathrm{lip}}}
	N_{\alpha}^{-4s}\eta^{4+\frac{1}{5}-\frac{2}{10^{4}}}
	\mathrm{w}_{\nb''}^{1}(\alpha)
	\,.
\]
This concludes the proof of Proposition \ref{prop:bracket1}.  
\end{proof}

\subsection{Proof of Lemma \ref{lem:d2F}}
\label{sub:d2F}
\begin{proof}[Proof of Lemma \ref{lem:d2F}]
We consider $Q\in X_{\leq2\overline{r}}(\epsilon)$. By homogeneity we can assume that
\[
\|Q\|_{\Ysup{\alpha}}=1\,. 
\]
Given $\xi\in\mathcal{U}_{s}(\epsilon)$, $u\in\mathcal{V}_{\alpha,\xi}(20\epsilon,\xi)$, and a normalized $w\in h^{s}(\Z_{M}^{d})$ with $\|w\|_{h^{s}}=1$, we have
\[
[\mathrm{d}\nabla \mathcal{L}(Q)(\xi)(u)](w) = \sum_{\nb\in\Lambda_{\alpha+1}}\frac{\mathfrak{h}^{(\alpha)}(\xi)}{\Omega_{\nb}(\widetilde{\omega}(\xi))}(Q[\xi])_{\nb}[\mathrm{d}\nabla z_{\nb}(u,I(u)-\xi)](w)\,.
\]
The small-divisor estimate \eqref{eq:smd0} together with the definition of the $\Ysup{\alpha}$-norm and the counting estimate \eqref{eq:counting} give
\begin{multline}
\label{eq:d2F0}
	\|[\mathrm{d}\nabla \mathcal{L}(Q)(\xi)(u)](w)\|_{h^{s}}
	\\
	\lesssim \epsilon^{-\frac{1}{10^{4}}}\gamma(\alpha)^{-1}\epsilon^{-2}N_{\alpha+1}^{2s}
	\max_{\nb\in\Lambda_{\alpha+1}\,,\ \deg(\nb)\leq 2\overline{r}} 
	\mathrm{w}_{\nb}^{0}(\alpha)
	\|
	[\mathrm{d}\nabla z_{\nb}(u,I(u)-\xi)](w)
	\|_{h^{s}}\,.
\end{multline}
We will prove that 
\begin{equation}
\label{eq:d2F}
\max_{\nb\in\Lambda_{\alpha+1}\,,\ \deg(\nb)\leq 2\overline{r}} 
	\mathrm{w}_{\nb}^{0}(\alpha)
	\|
	[\mathrm{d}\nabla z_{\nb}(u,I(u)-\xi)](w)
	\|_{h^{s}}
	\lesssim_{r}\epsilon^{-\frac{1}{10^{4}}}\eta^{4-\frac{2}{5}}N_{\alpha}^{-2s}\,.
\end{equation}
and conclude from \eqref{eq:d2F0} and the relation \eqref{eq:par} between the parameters:
\[
|\eqref{eq:d2F0}|
	\lesssim_{r}\epsilon^{-\frac{2}{10^{4}}}\gamma(\alpha)^{-1}
	(\frac{N_{\alpha+1}}{N_{\alpha}})^{2s}(\eta\epsilon^{-1})^{2}\eta^{2-\frac{2}{5}}
	\leq \epsilon^{1-\frac{1}{8}}\,.
\]
Let us show \eqref{eq:d2F}. Using that the Hamiltonian is real, we have 
\begin{align}
\nonumber
	\|
	[\mathrm{d}\nabla z_{\nb}(u,I(u)-\xi)](w)
	\|_{h^{s}}
	&\lesssim
	\Big(\sum_{n'\in\Z_{M}^{d}}\langle n'\rangle^{2s}|\sum_{n\in\Z_{M}^{d}}	\partial_{u_{n}}\partial_{\overline{u_{n'}}}
	z_{\nb}(u,I(u)-\xi)w_{n}|^2
	\Big)^\frac{1}{2} \\
	\nonumber
	&+
	\Big(\sum_{n'\in\Z_{M}^{d}}\langle n'\rangle^{2s}|\sum_{n\in\Z_{M}^{d}}	\partial_{u_{n}}\partial_{\overline{u_{n'}}}
	z_{\nb}(u,I(u)-\xi)\overline{w_{n}}|^2
	\Big)^\frac{1}{2} \\
	\label{eq:wasa}
	&\lesssim \epsilon^{-\frac{1}{10^{4}}}
	\max_{|n|,|n'|\leq M}
	\langle n'\rangle^{s}
	|\partial_{u_{n}}\partial_{\overline{u_{n'}}}z_{\nb}(u,I(u)-\xi)| 
	|w_{n}|\,.
\end{align}
For $\nb\in\Lambda_{\alpha+1}$ and $(n,n')\in(\Z_{M}^{d})^{2}$ we have
\begin{align}
\label{eq:df2-1}
	\partial_{u_n}\partial_{\overline{u_{n'}}}
	z_{\nb}(u,I(u)-\xi) 
	&= k_{n}\ell_{n'}
	\frac{z_{\nb}(u,I(u)-\xi)}{u_{n}\overline{u_{n'}}}\\
	\label{eq:df2-2}
	&+m_n\ell_{n'} 
	\frac{z_{\nb}(u,I(u)-\xi)}{(|u_{n}|^{2}-\xi_{n})\overline{u_{n'}}}\overline{u_{n}}\\
\label{eq:df2-3}
	&
	+k_{n}m_{n'}
	\frac{z_{\nb}(u,I(u)-\xi)}{u_{n}(|u_{n'}|^{2}-\xi_{n'})}
	u_{n'}\\
\label{eq:df2-4}
	&+m_nm_{n'} 
	\frac{z_{\nb}(u,I(u)-\xi)}{(|u_{n}|^{2}-\xi_{n})(|u_{n'}|^{2}-\xi_{n'})}
	\overline{u_{n}}u_{n'}\,.
\end{align}
In the subsequent case by case analysis, we estimate each contribution separately.  

\medskip

We deduce from the zero momentum condition that when $\ell_{n'}\geq1$, there exists
$j\in\Z_{M}^{d}\setminus\{n\}$ such that 
\begin{equation}
\label{eq:j}
\max(k_j,\ell_{j})\geq1\,,\quad |j|\geq\frac{|n'|}{2\overline{r}}\,.
\end{equation}
Without loss of generality, we suppose that $k_{j}\geq1$.
\medskip

$\bullet$ {\bf Case 1:} In \eqref{eq:df2-1}, when $k_{n}\ell_{n'}\geq1$ a pair $(u_{n},\overline{u_{n'}})$ goes away. We separate two cases. 
\medskip

\ $\triangleright$ {\bf Case 1a):} If 
\[
|n|\gtrsim |n'|\,,
\]
then
\begin{align*}
	\langle n'\rangle^{s} 
	k_{n}\ell_{n'}
	\mathrm{w}_{\nb}^{0}(\alpha)
	|
	\frac{z_{\nb}(u,I(u)-\xi)}{u_{n}\overline{u_{n'}}}
	w_{n'}
	| 
	&\leq \langle n\rangle^{s}|w_{n}|\mathrm{w}_{\nb}^{0}(\alpha)\ell_nk_{n'}|\frac{z_{\nb}(u,I(u)-\xi)}{u_{n}\overline{u_{n'}}}| \\
	&\lesssim_{r} 
	\operatorname{C}_{n}(\alpha)\mathrm{C}_{n'}(\alpha)
	\mathrm{w}_{\nb'}^{0}(\alpha)
	|z_{\nb'}(u,I(u)-\xi)|\,,
\end{align*}
where 
\[
\nb' =
	 \nb - \mathbf{e}_{\mathfrak{k}}(n)-\mathbf{e}_{\mathfrak{l}}(n') \in \widetilde{\Nb}\,,
\]
and 
\[
	\mathrm{w}_{\nb'}^{0}(\alpha) 
	= \operatorname{C}_{n}(\alpha)^{-1}
	\mathrm{C}_{n'}(\alpha)^{-1}
	\mathrm{w}_{\nb}^{0}(\alpha)\,.
\]
We deduce from the weighted estimate \eqref{eq:wz} for the monomials that, in Case 1a),
\[
\eqref{eq:d2F}\lesssim_{r} 
	N_{\alpha}^{2(s+\tau)}
	\eta^{-2} N_{\alpha}^{-6s}\eta^{6} \lesssim_{r} \eta^{4}N_{\alpha}^{4s-2\tau}
	\,.
\]
\ $\triangleright$ {\bf Case 1b):} In this case, however, we have 
\[
|n|\ll |n'|\,, 
\]
and the factor $w_{n}$ cannot  absorb the derivative $\langle n'\rangle^{s}$. Nevertheless, the frequency $j$ given by \eqref{eq:j} is different from $n$ and it is therefore still available. There holds
\begin{align*}
\langle n'\rangle^{s} \mathrm{w}_{\nb}^{0}(\alpha)
	k_{n}\ell_{n'}
	|
	\frac{z_{\nb}(u,I(u)-\xi)}
	{u_{n}\overline{u_{n'}}}w_{n'}
	| 	
	&\lesssim_{r} \langle j\rangle^{s}|u_{j}| |w_{n'}|
	\mathrm{w}_{\nb}^{0}(\alpha)
	k_{n}\ell_{n'}
	|
	\frac{z_{\nb}(u,I(u)-\xi)}
	{u_{n}\overline{u_{n'}}u_{j}}
	|\\
	&\lesssim_{r}\epsilon\mathrm{C}_j(\alpha)\operatorname{C}_{n}(\alpha)\mathrm{C}_{n'}(\alpha)\mathrm{w}_{\nb'}^{0}(\alpha)|z_{\nb'}(u,I(u)-\xi)|\,,
\end{align*}
where
\[
\nb' = \nb - \mathbf{e}_{\mathfrak{k}}(n)-\mathbf{e}_{\mathfrak{l}}(n')-\mathbf{e}_{\mathfrak{k}}(j) \in \widetilde{\Nb}\,,\quad \mathrm{w}_{\nb'}^{0}(\alpha) = \Big(\operatorname{C}_{n}(\alpha)\mathrm{C}_{n'}(\alpha,\epsilon)\mathrm{C}_{j}(\alpha)\Big)^{-1}\mathrm{w}_{\nb}^{0}(\alpha) 
\]
We deduce from the weighted estimate \eqref{eq:wz} for monomials  that, in Case 1b), 
\[
\eqref{eq:d2F}\lesssim_{ r}
	\epsilon N_{\alpha}^{3(s+\tau)}\eta^{-3}N_{\alpha}^{-6s}\eta^{6}
	\lesssim_{r} \eta^{4}N_{\alpha}^{-3(s-\tau)}\,.
\]
$\bullet$ {\bf Case 2:} In \eqref{eq:df2-2}, when $m_{n}\ell{_n'}\geq1$ a pair $(|u_{n}|^{2}-\xi_{n},\overline{u_{n'}})$ goes away and a term $\overline{u_{n}}$ appears. 
\medskip

\ $\triangleright$ {\bf  Case 2a):} If 
\[
|n|\gtrsim |n'|\,,
\]
then 
\begin{align*}
\langle n'\rangle^{s} 
	\mathrm{w}_{\nb}^{0}(\alpha)
	m_{n}\ell_{n'}
	|
	\frac{z_{\nb}(u,I(u)-\xi)}
	{(|u_{n}|^{2}-\xi_{n})\overline{u_{n'}}}\overline{u_{n}}w_{n'}
	| 
	&
	\lesssim 
	\langle n\rangle^{s}
	|w_{n}u_{n}|
	\mathrm{w}_{\nb}^{0}(\alpha)
	m_{n}\ell_{n'}
	|
	\frac{z_{\nb}(u,I(u)-\xi)}
	{(|u_{n}|^{2}-\xi_{n})\overline{u_{n'}}}
	| \\
	&\lesssim_{r}
	\epsilon
	\operatorname{C}_{n'}(\alpha)\mathrm{D}(\alpha)
	\mathrm{w}_{\nb'}^{0}(\alpha)|z_{\nb'}|\,,
\end{align*}
where 
	\[
\nb' 
	= \nb-\mathbf{e}_{\mathfrak{m}}(n) -\mathbf{e}_{\mathfrak{l}}(n')\in\widetilde{\Nb}\,,
	\quad 
	\mathrm{w}_{\nb'}^{0}(\alpha) = (\operatorname{C}_{n'}(\alpha)\mathrm{D}(\alpha))^{-1}\mathrm{w}_{\nb}^{0}(\alpha)\,.
	\]
We deduce from the weighted estimate for monomials that, in Case 2a),
\[
\eqref{eq:d2F}
	\lesssim_{r}
	\epsilon N_{\alpha}^{3s+\tau}\eta^{-3-\frac{1}{5}}N_{\alpha}^{-6s}\eta^{6}
	\lesssim_{r}\eta^{4-\frac{1}{5}}N_{\alpha}^{-(3s-\tau)}\,.
\]

\ $\triangleright$ {\bf  Case 2b):} If 
\[
|n|\ll |n'|\,,
\]
then we can find $j \in \Z_{M}^{d}\setminus\{n,n'\}$ such that \eqref{eq:j} holds, and deduce that 
\begin{align*}
&\langle n'\rangle^{s} \mathrm{w}_{\nb}^{0}(\alpha)m_{n}\ell_{n'}
	|
	\frac{z_{\nb}(u,I(u)-\xi)}
	{(|u_{n}|^{2}-\xi_{n})\overline{u_{n'}}}
	\overline{u_{n}}w_{n'}
	| \\
	&
	\lesssim_{r}\langle j\rangle^{s}|u_{j}w_{n}u_{n}| \mathrm{w}_{\nb}^{0}(\alpha)m_{n}\ell_{n'}
	|
	\frac{z_{\nb}(u,I(u)-\xi)}
	{(|u_{n}|^{2}-\xi_{n})\overline{u_{n'}}u_{j}}
	| \\
	&\lesssim_{r}
	\epsilon^{2}\mathrm{C}_{j}(\alpha)\operatorname{C}_{n'}(\alpha)\mathrm{D}(\alpha)
	\mathrm{w}_{\nb'}^{0}(\alpha)
	|z_{\nb'}(u,I(u)-\xi)|\,,
\end{align*}
where
\[
	\nb'= 
	\nb-\mathbf{e}_{\mathfrak{m}}(n)-\mathbf{e}_{\mathfrak{l}}(n')-\mathbf{e}_{\mathfrak{k}}(j)\,,
	\quad 
	\mathrm{w}_{\nb'}^{0}(\alpha)=(\operatorname{C}_{n'}(\alpha)\mathrm{C}_{j}(\alpha)\operatorname{D}(\alpha))^{-1}\mathrm{w}_{\nb}^{0}(\alpha)\,.
\]
We deduce from the weighted bound \eqref{eq:wz} for the monomials that, in Case 2b),
\[
	\eqref{eq:d2F}
	\lesssim_{r}\epsilon^{2}
	N_{\alpha}^{-4s-2\tau}\eta^{-4-\frac{1}{5}}N_{\alpha}^{6s}\eta^{6}
	\lesssim_{r}
	\eta^{4-\frac{1}{5}}N_{\alpha}^{-2(s-\tau)}
	\,.
\]
$\bullet$ {\bf Case 3:} In \eqref{eq:df2-3}, when $k_{n}m_{n'}\geq1$ a pair $(u_{n},|u_{n'}|^{2}-\xi_{n'})$ goes away, and a term $u_{n'}$ appears.  It holds
\begin{align*}
&\langle n'\rangle^{s} \mathrm{w}_{\nb}^{0}(\alpha)k_{n}m_{n'}
	|\frac{z_{\nb}(u,I(u)-\xi)}{u_{n}(|u_{n'}|^{2}-\xi_{n'})}u_{n'}w_{n}| \\
	&
	\leq \langle n'\rangle^{s} 
	|u_{n'}w_{n}|\mathrm{w}_{\nb}^{0}(\alpha)k_{n}m_{n'}
	|
	\frac{z_{\nb}(u,I(u)-\xi)}
	{u_{n}(|u_{n'}|^{2}-\xi_{n'})}
	|
	 \\
	&\lesssim_{r}\epsilon\mathrm{D}(\alpha)\mathrm{C}_{n}(\alpha)
	\mathrm{w}^{0}_{\nb'}(\alpha)|z_{\nb'}(u,I(u)-\xi)|\,,
\end{align*}
with 
\[
	\nb' 
	= \nb 
	-\mathbf{e}_{\mathfrak{k}}(n)
	-\mathbf{e}_{\mathfrak{m}}(n')\,,\quad 
	\mathrm{w}_{\nb'}^{0}(\alpha) = (\mathrm{D}(\alpha)\mathrm{C}_{n}(\alpha))^{-1}\mathrm{w}_{\nb}^{0}(\alpha)\,.
\]
We deduce from the weighted estimate \eqref{eq:wz} for monomials that 
\[
\eqref{eq:d2F}
	\lesssim_{r}
	\epsilon N_{\alpha}^{3s+\tau}\eta^{-3-\frac{1}{5}}N_{\alpha}^{-6s}\eta^{6}
	\lesssim_{r} \eta^{4-\frac{1}{5}}N_{\alpha}^{-3s+\tau}\,.
\]
$\bullet$ {\bf Case 4:} The last case \eqref{eq:df2-4} is a priori the worst since a pair of centered actions goes away, and induces a loss $\mathrm{D}(\alpha)^{2}$. Nevertheless, a pair $(\overline{u_{n}},u_{n'})$ appears and will absorb the derivative in $n$. There is also a term $w_{n'}$, which we will not exploit. It holds
\begin{align*}
\langle n'\rangle^{s}
	&\mathrm{w}_{\nb}^{0}(\alpha)
	m_nm_{n'}
	|
	\frac{z_{\nb}(u,I(u)-\xi)}
	{(|u_{n}|^{2}-\xi_{n})(|u_{n'}|^{2}-\xi_{n'})}
	\overline{u_{n}}u_{n'}w_{n'}
	| \\
	&\leq 
	\langle n'\rangle^{s}|u_{n}||u_{n'}||w_{n'}|
	\mathrm{w}_{\nb}^{0}(\alpha)
	m_nm_{n'}
	|
	\frac{z_{\nb}(u,I(u)-\xi)}
	{(|u_{n}|^{2}-\xi_{n})(|u_{n'}|^{2}-\xi_{n'})}
	| \\
	&\lesssim_{r}
	\epsilon^{2}
	\operatorname{D}(\alpha)^{2}
	\mathrm{w}_{\nb'}^{0}(\alpha)
	|z_{\nb'}(u,I(u)-\xi)|\,,
\end{align*}
with 
\[
\nb' = \nb - \mathbf{e}_{\mathfrak{m}}(n)-\mathbf{e}_{\mathfrak{m}}(n')
	\,,\quad 
	\mathrm{w}^{0}_{\nb'}(\alpha)
	=\operatorname{D}(\alpha)^{-2}\mathrm{w}_{\nb'}^{0}(\alpha)\,.
\]
We deduce from the weighted estimate for the monomials \eqref{eq:wz} that this contribution is bounded by 
\[
\eqref{eq:d2F}\lesssim_{r}
	\epsilon^{2}
	N_{\alpha}^{4s}\eta^{-4-\frac{2}{5}}
	N_{\alpha}^{-6s}\eta^{6}
	 \lesssim_{r}\eta^{4-\frac{2}{5}}N_{\alpha}^{-2s}
	\,.
\]
This proves that all contributions to \eqref{eq:wasa} are acceptable to obtain \eqref{eq:d2F}, which completes the proof of Lemma \ref{lem:d2F}.
\end{proof}

\end{document}